\newcommand{\hide}[1]{}
\pgfplotsset{compat=1.15}
\newcommand{\N}{\mathbb{N}}
\newcommand{\R}{\mathbb{R}}
\newcommand{\C}{\mathbb{C}}
\newcommand{\disk}{\mathbb{D}}
\newcommand{\Cc}{\widehat{{\C}}}
\DeclareMathOperator{\Dom}{\mathcal R} 
\DeclareMathOperator{\DomE}{\mathcal L}		
\DeclareMathOperator{\DomL}{\hat{\mathcal L}}	
\newcommand{\U}{\mathcal U}
\newcommand{\V}{\mathcal V}
\newcommand{\W}{\mathcal W}
\newcommand{\Y}{\mathcal Y}
\newcommand{\Pp}{\mathcal P}
\newcommand{\X}{\mathcal X}
\def\J{\mathcal J} 
\def\K{\mathcal K} 
\newcommand{\bV}{V} 
\newcommand{\bI}{I} 
\newcommand{\bU}{U} 
\newcommand{\ld}{\ldots}
\DeclareMathOperator{\Df}{D\!}
\DeclareMathOperator{\modulus}{mod}
\DeclareMathOperator{\inter}{int}
\DeclareMathOperator{\cl}{cl\,}
\renewcommand{\tilde}{\widetilde}
\renewcommand{\rho}{\varrho}
\renewcommand{\phi}{\varphi}
\renewcommand{\theta}{\vartheta}
\newcommand{\Kwi}{K_{\text{well-inside}}}
\newcommand{\Koc}{K_{\text{off-crit}}}
\DeclareMathOperator{\fib}{fib}
\DeclareMathOperator{\Crit}{Crit}
\DeclareMathOperator{\CritP}{Crit_{ac}}
\DeclareMathOperator{\orb}{orb}
\DeclareMathOperator{\diam}{diam}
\DeclareMathOperator{\dist}{dist}
\DeclareMathOperator{\area}{meas}
\DeclareMathOperator{\Comp}{Comp}
\DeclareMathOperator{\PC}{PC}
\DeclareMathOperator{\Back}{Back}
\DeclareMathOperator{\Forw}{Forw}
\newcommand\ovl[1]{\overline{#1}}
\newcommand{\sm}{\setminus}
\renewcommand{\ge}{\geqslant}
\renewcommand{\le}{\leqslant}
\theoremstyle{theorem}
\newtheorem{theorem}{Theorem}[section]
\newtheorem{lemma}[theorem]{Lemma}
\newtheorem{proposition}[theorem]{Proposition}
\newtheorem{corollary}[theorem]{Corollary}
\newtheorem*{QCrigidity}{QC Rigidity for Complex Box Mappings}
\newtheorem*{Pathologies}{Pathologies of General Complex Box Mappings}
\newtheorem*{ComplexBounds}{Complex bounds for non-renormalizable complex box mappings}
\newtheorem*{ErgodicComponents}{Number of ergodic components}
\newtheorem*{ManeTheorem}{Ma\~n\'e-type theorem for box mappings}
\theoremstyle{definition}
\newtheorem{definition}[theorem]{Definition}
\newtheorem{question}{{Question}}
\theoremstyle{remark}
\newtheorem*{remark}{\textsc{Remark}}
\theoremstyle{claim}
\newtheorem*{claim}{Claim}
\numberwithin{equation}{section}
\title{The dynamics of complex box mappings}
\author{Trevor Clark}
\author{Kostiantyn Drach}
\author{Oleg Kozlovski}
\author{Sebastian van Strien}
\date{{\tiny \today}}
\address{Open University, School of Mathematics and Statistics, Milton Keynes MK7 6AA, UK}
\email{trevorcclark@gmail.com}
\address{IST Austria, Am Campus 1, 3400 Klosterneuburg, Austria} 
\email{kostya.drach@gmail.com}
\address{University of Warwick, Mathematics Institute, Zeeman Building, Coventry CV4 7AL, UK}
\email{oleg.kozlovski@warwick.ac.uk}
\address{Imperial College London, Department of Mathematics, 180 Queen's Gate, London SW7 2AZ, UK}
\email{s.van-strien@imperial.ac.uk}
\thanks{{\tiny\textbf{Acknowledgments.}
The research of the second author was partially supported by the advanced grants 695\,621 ``HOLOGRAM'' and 885\,707 ``SPERIG'' of the European Research Council (ERC), which is gratefully acknowledged. We would also like to thank Dzmitry Dudko and Dierk Schleicher for many stimulating  discussions and encouragement during our work on this project, and Weixiao Shen, Mikhail Hlushchanka and the referee for helpful comments. We are grateful to Leon Staresinic who carefully read the revised version of the manuscript and provided many helpful suggestions.}
}
\begin{document}

\maketitle

\begin{abstract}
In holomorphic dynamics, complex box mappings arise as first return maps to well-chosen 
domains. They are a generalization of polynomial-like mapping, where the domain of the
return map can have infinitely many components. They turned out to be extremely useful
in tackling diverse problems. 
The purpose of this paper is:
\smallskip
\begin{enumerate}[leftmargin=1mm,labelsep=2mm,itemsep=1mm]
\item[-]  To illustrate some pathologies that can occur when a complex box mapping is
not induced by a globally defined map and when its domain  has infinitely many components, and to give conditions to avoid these issues. 
\item[-] To show that once one has a box mapping for a rational map, 
these conditions can be assumed to hold  in a very natural setting.  
Thus we call such complex box mappings  \emph{dynamically natural}. Having such box mappings is the first step in tackling many problems in one-dimensional dynamics.
\item[-] 
Many results in holomorphic dynamics rely on an interplay between 
combinatorial and analytic techniques. In this setting some of these tools are: 
\begin{itemize}[leftmargin=5mm,labelsep=1mm,itemsep=1mm,topsep=1mm]
\item[-] the Enhanced Nest (a  nest of puzzle pieces around critical points)  from \cite{KSS};
\item[-] the Covering Lemma (which controls the moduli of pullbacks of annuli)
 from \cite{KL09}; 
 \item[-] the QC-Criterion and the Spreading Principle from \cite{KSS}.
 \end{itemize}
 The purpose of this paper is to make these tools more accessible so that they can be  used as a {\lq}black box{\rq}, so one does not have to redo the proofs  in new settings.
\item[-] To give an  intuitive, but also rather detailed,  outline of the proof from \cite{KvS,KSS}  of the following results for non-renormalizable dynamically natural complex box mappings:
\begin{itemize}[leftmargin=5mm,labelsep=2mm,itemsep=1mm,topsep=1mm]
\item[-] puzzle pieces shrink to points,
\item[-]  (under some assumptions) topologically conjugate 
non-renormalizable polynomials and box mappings are quasiconformally conjugate.
\end{itemize}
\item[-] We prove the fundamental ergodic properties for dynamically natural box mappings. This leads to some necessary conditions for when such a box mapping supports a measurable invariant line field on its filled Julia set. These mappings are the analogues of Latt\`es maps in this setting.
\item[-] We prove a version of Ma\~n\'e's Theorem for complex box mappings concerning expansion along orbits of points that avoid a neighborhood of the set of critical points.
\end{enumerate} 
\end{abstract}


\newpage

\setcounter{tocdepth}{1} 
{\hypersetup{linktoc=page}
\tableofcontents
}
\section{Introduction}

\definecolor{fgreen}{rgb}{0.2,0.6,0.}


\begin{figure}[htbp]
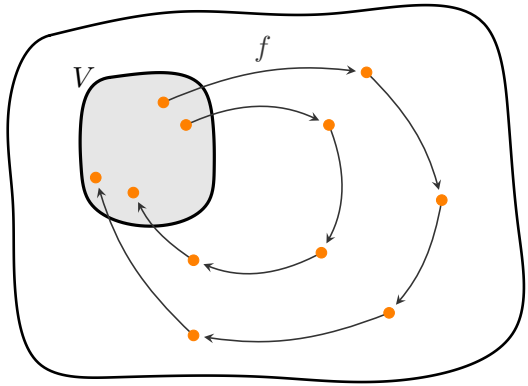

\begin{center}
\definecolor{ffxfqq}{rgb}{1.,0.5,0.}
\definecolor{ttzzqq}{rgb}{0.2,0.6,0.}
\definecolor{ttzzqq}{rgb}{0.,0.,0.}

\caption{The first return map to $V$.}
\label{Fig:FirstReturn}
\end{center}
\end{figure}


In dynamical systems, a natural and effective way to understand the 
detailed behavior of a given map $f$ is to consider the first return map under $f$ to a certain set $V$, see Figure~\ref{Fig:FirstReturn}. In this way, if the orbit of a point under iteration of $f$ intersects $V$ infinitely many times, then the first return map will send each point of intersection to the next point of intersection along the orbit. Therefore, by iterating the first return map instead of $f$, one can study properties of certain orbits of $f$ ``at a faster speed''. However, there is a trade-off: the first return map might have a rather complicated, even undesirable,\footnote{For example, a component of the domain might not properly map onto a component of the range.} structure. 

In the analytic setting, a return map 
might have the structure of a polynomial-like map. A \emph{polynomial-like map} is a holomorphic branched covering $F \colon U \to V$ of degree at least two between a pair of open topological disks $U \subset V$ such that $U$ is relatively compact in $V$. The restriction of a complex polynomial to a sufficiently large topological disk in $\C$ is an example of a polynomial-like map. Such mappings are an indispensable tool in the field of holomorphic dynamics due to their fundamental role in renormalization and self-similarity phenomena.

However, in general, the topological dynamics of a return mapping even under a polynomial cannot be described by a polynomial-like mapping.
This motivates and explains the need for a more flexible class of mappings, namely, \emph{complex box mappings} (see Figure~\ref{Fig:ExampleBoxMapping}):

\begin{definition}[Complex box mapping] 
\label{Def:BM}
A holomorphic map $F \colon \U \to \V$ between two open sets $\U \subset \V \subset \C$ is a \emph{complex box mapping} if the following holds:
\begin{enumerate}
\item
\label{It:DefBM1}
$F$ has finitely many critical points;
\item
\label{It:DefBM2}
$\V$ is the union of finitely many open Jordan disks with disjoint closures;
\item
\label{It:DefBM3}
every component $V$ of $\V$ is either a component of $\U$, or $V \cap \U$ is a union of Jordan disks with pairwise disjoint closures, each of which is compactly contained in $V$;
\item
\label{It:DefBM4}
for every component $U$ of $\U$ the image $F(U)$ is a component of $\V$, and the restriction $F \colon U \to F(U)$ is a proper map\footnote{In \cite{KvS}, where the definition of complex box mapping was given, the requirement that $F$ maps each component of $\U$ onto a component of $\V$ \emph{properly} was implicit.}.
\end{enumerate}
\end{definition}

\begin{figure}[h]
\begin{center}
\includegraphics[scale=.5, trim=0 0 0 0,clip]{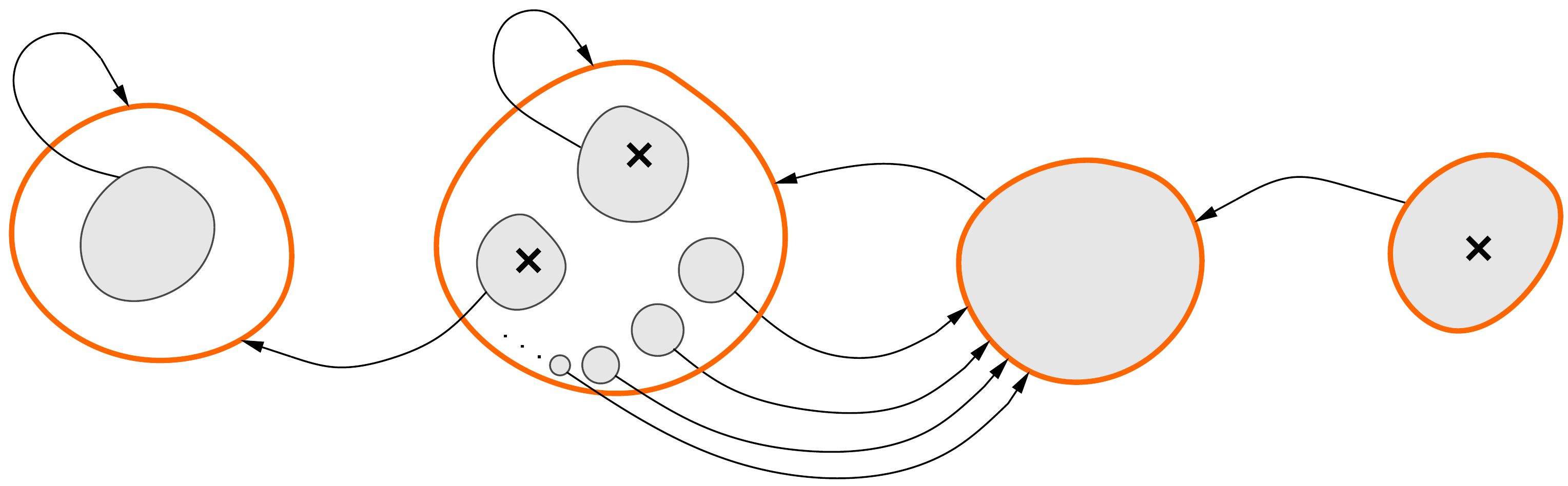}
\caption{An example of a complex box mapping $F \colon \U \to \V$. The components of $\U$ are shaded in grey, there might be infinitely many of those; the components of $\V$ are the topological disks bounded by orange curves. The critical points of $F$ are marked with crosses.} 
\label{Fig:ExampleBoxMapping}
\end{center}
\end{figure}

In the above definition of complex box mapping we assumed that each component of $\U$ and $\V$  is a Jordan domain. In some settings it is convenient to relax this, and assume that
each component $U$ of $\U$ and $V$ of $\V$
\begin{itemize}
\item[a)] is simply connected;
\item[b)] when $U\Subset V$, then $V\setminus \ovl{U}$ is a topological annulus;
\item[c)] has a locally connected boundary.
\end{itemize}
In Section~\ref{SSec:NotJordan} we will discuss why and when 
the theorems in this paper will go through in this setting. 

In one-dimensional holomorphic dynamics, it is often the case that the first step in understanding the dynamics of a family of mappings is to obtain a good combinatorial model for the dynamics in the family. Once that is at hand, one can go on to study deeper properties of the dynamics in the family, for example, rigidity, ergodic properties and the geometric properties of the Julia sets. It turns out that complex box mappings are often part of these combinatorial models.
Indeed,
recent progress on the rigidity question for a large family of non-polynomial rational maps, so-called \emph{Newton maps} \cite{DS} made it clear that complex box mappings can be effectively used in the holomorphic setting well past polynomials: by ``boxing away'' the most essential part of the ambient dynamics into a box mapping one can readily apply the existing rigidity results without redoing much of the theory from  scratch.

These developments motivated us to give a comprehensive survey of the dynamics of such mappings. The subtlety is that Definition~\ref{Def:BM} is extremely flexible and thus allows for undesirable pathologies:
\begin{Pathologies}
There exists
\begin{itemize}
\item a complex box mapping with  empty Julia set,
\item a complex box mapping with a set of positive measure that does not accumulate on the postcritical set, and
\item a complex box mapping with a wandering domain.
\end{itemize}
\end{Pathologies}
These examples are given in Section~\ref{Sec:Pathologies}. We go on to introduce a class of complex box mappings, called \emph{dynamically natural} (see Definition~\ref{Def:NaturalBoxMapping}), for which none of these pathologies can occur. This class of mappings includes many of the complex box mappings induced by rational maps and real-analytic maps (and even in some weaker sense by a broad class of $C^3$ interval maps), see Section~\ref{Sec:RealLifeExamples}. Moreover, quite often general complex box mappings induce dynamically natural complex box mappings, see Proposition~\ref{Prop:Inducing}.

We go on to give a detailed outline of the proof of qc (quasiconformal) rigidity of dynamically natural complex box mappings. This result was first proved in \cite{KvS}  relying on techniques introduced in \cite{KSS}:

\begin{QCrigidity}
Every two combinatorially equivalent non-renormalizable dynamically natural complex box mappings (satisfying some \emph{standard} necessary conditions) are quasiconformally conjugate.
\end{QCrigidity}

We state a precise version of this result in Theorem~\ref{Thm:BoxMappingsMain2}. For the definition of combinatorial equivalence of box mappings see Definition~\ref{DefA:CombEquivBoxMappings}. A dynamically natural complex box mapping is \emph{non-renormalizable} if none of its iterates  admit a polynomial-like restriction with connected filled Julia set; renormalization of box mappings is discussed in detail in Section~\ref{subsec:renorm}. 

Since the proofs in \cite{KvS} and \cite{KSS} are quite involved, and part of \cite{KSS} only deals with real polynomials, 
we will give a quite detailed outline of the proof of quasiconformal rigidity for complex box mappings (our Theorem~\ref{Thm:BoxMappingsMain2}). We will pay particular attention to the places in~\cite{KvS} where the assumption that the complex box mappings are dynamically natural is used while applying the results from~\cite[Sections 5-6]{KSS}. 

The main technical result that underlies qc rigidity are the complex or {\it a priori} bounds. These results give compactness properties for certain return mappings and control on the geometry of their domains.

\begin{ComplexBounds}
Suppose that $F\colon\U\to\V$ is a non-renormalizable dynamically natural complex box mapping. Then there exist arbitrarily small combinatorially defined critical puzzle pieces $P$ (components of $F^{-n}(\V)$ for some $n\in\mathbb N$) with complex bounds.
\end{ComplexBounds}

See Theorem~\ref{Prop:UC} for a precise statement of what we mean by complex bounds.  	
Roughly speaking, 
for a combinatorially defined sequence ${\V_n}$  of pullbacks of $\V$,
complex bounds express how well return domains to $\V_n$ are inside of $\V_n$ in terms of a lower bound of the modulus of the corresponding annulus. 

One of the first applications of complex bounds in one-dimensional dynamics was in the study of the renormalization of interval mappings. Suppose that $f$ is unimodal mapping with critical point at $0$. One says that $f$ is {\em infinitely renormalizable} if there exist a sequence $\{I_i\}$ of intervals about $0$ and an increasing sequence $\{s_i\},$ $s_i\in\mathbb N$, so that $s_i$ is the first return time of 0 to $I_i$ and $f^{s_i}(\partial I_i)\subset\partial I_i$. For certain analytic infinitely renormalizable mappings with bounded combinatorics, Sullivan \cite{Sullivan} proved that for all $i$ sufficiently large, we have that the first return mapping $f^{s_i}\colon I_i\to I_i$  extends to a polynomial-like mapping $F_i\colon U_i\to V_i$ (see also \cite{RealBook}). Moreover, he proved that 
 one has the following {\em complex bounds}: 
there exist  a constant $\delta>0$ and $i_0$ so that  $\modulus(V_i\sm \ovl U_i)>\delta$ for all $i\ge i_0$.   In fact, here $\delta$ is universal in the sense that it can be chosen so that it only depends on the order 
of the critical point of $f$, but $i_0$ does depend on $f$.  This property is known as {\em beau bounds}\footnote{Beau, from French \emph{beautiful, nice}, is a mixed acronym ``{\bf a} priori {\bf b}ounds that are {\bf e}ventually {\bf u}niversal''.}.  If there are several critical points we say that such a $\delta$ is beau if it depends only on the number and degrees of the critical points of $f$ (and not on $f$ itself).

In fact, the complex bounds given by Theorem~\ref{Prop:UC} for non-renormalizable box mappings are also beau for persistently recurrent critical points, see Section~\ref{SSec:Rec} for the definition.
 For reluctantly recurrent or non-recurrent critical points
the estimates are not beau -- they depend on the initial geometry of the box mapping, and there is no general mechanism for them to improve at small scales.

We will not go into the history of results on complex bounds both for real and 
complex maps, but refer for references to \cite{CvST}. That paper deals with 
real polynomial maps and shows that  beau complex bounds hold
in both the finitely renormalizable and the infinitely renormalizable cases, regardless
whether critical points have even or odd order, or are persistently recurrent or not. This result
is proved in \cite{CvST}.  In fact, in \cite{CvST} these complex bounds are also established for real analytic maps (and even more general maps). 
Note that in the  infinitely renormalizable case,  such bounds cannot be obtained from Theorem~\ref{Prop:UC}.  Indeed, in that case puzzle pieces do not shrink to points, and so one has to  restart the initial puzzle partition at deeper and deeper levels.  It turns out that  for  non-real infinitely renormalizable polynomials such complex bounds do not hold in general \cite{milnor-non-locally}.  

As discussed below, complex bounds have many applications:  that high renormalizations of infinitely renormalizable maps belong to a compact family of polynomial-like maps; local connectivity of the Julia sets;  absence of measurable invariant line fields; quasiconformal rigidity for real polynomial mappings with real critical points;  density of hyperbolicity in real one-dimensional dynamics, etc.

In addition to their use in proving quasiconformal rigidity, we use complex bounds to study ergodic properties of complex box mappings. For example, we prove:

\begin{ErgodicComponents}
If $F \colon \U \to \V$ is a non-renormalizable dynamically natural complex box mapping, then 
for each ergodic component $E$ of $F$ there exist one or more critical points
$c$ of $F$ so that $c$ is a Lebesgue density point of $E$. In particular, the number of ergodic components of $F$ is bounded above by the number of critical points of $F$.
\end{ErgodicComponents}

The analogous result in the real case was proved in \cite{vSV}. For rational maps there is the theorem of Ma\~n\'e~\cite{Man} which states that each forward invariant set of positive Lebesgue measure
accumulates to the $\omega$-limit set of a recurrent critical point. The above result strengthens
this in the setting of non-renormalizable dynamically natural complex box mappings. 

The above result is proved in Corollary~\ref{cor:num erg comps} of Theorem~\ref{Thm:ErgodicNatural}. In that theorem, we prove some fundamental ergodic properties of non-renormalizable dynamically natural complex box mappings. Our study of the ergodic properties of such box mappings leads us to some necessary conditions for when such a box mapping supports an invariant line field on its filled Julia set, see Proposition~\ref{prop:lattes-description}.  Complex box mappings with a measurable invariant line field we call {\em Latt\`es box mappings}. Their properties are analogous to those of Latt\`es rational maps. Latt\`es box mappings cannot arise for complex box mappings induced by polynomials, real maps or Newton maps; however, we give an example of a dynamically natural complex box mapping that is Latt\`es, see 
Proposition~\ref{prop:lattes}.

The rigidity theorem stated above deals only with non-renormalizable complex box mappings. In the setting of dynamically natural mappings, if a map is non-renormalizable, then all its periodic points are repelling, see Section~\ref{subsec:renorm}. However, it is possible, and in fact quite useful to work with complex box mappings that do admit attracting or neutral periodic points. Some results in this direction were obtained in \cite[Section 3]{DS}. In this paper, we push it a bit further and establish in Section~\ref{Sec:Mane} a Ma\~n\'e-type theorem for complex box mappings. For non-renormalizable maps this result reads as follows:

\begin{ManeTheorem}
Let $F\colon \U\to \V$ be a non-renormalizable dynamically natural complex box mapping so that {\em each} component of $\U$  is  either {\lq}$\delta$-well-inside{\rq} or equal to a component of $\V$, see equation (\ref{eq:maneassump}).
For each neighborhood $\mathcal B$ of $\Crit(F)$ and each $\kappa>0$ there exist $\lambda>1$ and $C>0$ so that for all $k\ge 0$
and each $x$ so that $x,\dots,F^{k-1}(x)\in \U\setminus \mathcal B$ and $d(F^k(x),\partial \V)\ge \kappa$, one has 
$$|\Df F^k(x)|\ge C \lambda^k.$$
\end{ManeTheorem} 

In fact, we prove a slightly more general version of this theorem that does not depend on $F$ being non-renormalizable, see Theorem~\ref{Thm:Mane}. Also, this version of the theorem does not require that the orbits of points are bounded away from $\omega(c)$ for $c\in\Crit(F)$ as in the classical Ma\~n\'e Theorem for rational maps, see \cite{Man}.

When a holomorphic mapping induces a complex box mapping, such expansivity results are useful in the study of the measurable dynamics of the mapping and the fractal geometry of their (filled) Julia sets.
For example, this result can be applied to rational maps for which there is an induced complex box mapping that contains the orbits of all recurrent critical  points intersecting the Julia set, see~\cite{Dr}.

\subsection{Some history of the notion}
\label{sec:history} 

The idea of considering successive first return maps to neighborhoods of a critical point of {\em an interval map} 
is extremely natural, and was used extensively to show absence of wandering intervals and to obtain various
metric properties. The notion of {\lq}box mapping{\rq} was implicitly used in papers such as  \cite{BL,GJ, J, JS,MMvS,NvS}\footnote{To the best of our knowledge, the term ``box mapping'' was introduced in \cite{GJ}, but with a meaning slightly different from ours.}
and the terminology {\lq}nice interval{\rq} (i.e.\ an interval $V$ so that $f^n(\partial V)\cap \inter(V) = \emptyset$ for $n\in\mathbb N$)  was introduced in Martens' PhD thesis \cite{M}.  A natural way 
of obtaining such intervals is by taking intervals in the complement of $\cup_{j=0}^n f^{-j}(p),$ where $p$ is a periodic point and $n\ge 1$.

For {\em complex mappings}, box mappings are of course already implicit in Julia and Fatou's work, after all that is how you can show 
that the Julia set of a  quadratic map with the escaping critical point forms a Cantor set: in this case, one sees that the filled Julia set of the polynomial is the filled Julia set of a complex box mapping with no critical point and whose domain has exactly two components. More generally, Douady and Hubbard went further by introducing the notion of a {\em polynomial-like mapping} where $\U$ and $\V$ each consist of just one component. Using this notion they were able to explain some of the similarities that one sees between the Julia sets of different mappings, similarities within the Mandelbrot set, and the appearance of sets which look like the Mandelbrot set in various families of mappings \cite{PolyLike}. One beautiful property of polynomial-like mappings
is the Douady--Hubbard Straightening Theorem:  a polynomial-like mapping $F$ on a neighborhood of its filled Julia set $K(F)$ is {\em hybrid} conjugate to a polynomial; that is, there exist a polynomial $P$, a neighborhood $U'$ of $K(F)$ and a quasiconformal mapping $H$ so that 
$P\circ H(z) = H\circ F(z)$ for $z\in U'$ and $\bar\partial H$ vanishes on $K(F)$. When the Julia set of $F$ is connected, $P$ is unique. Thus from a topological point of view, the family of polynomial-like mappings is no richer than the family of polynomials.

Suppose one has a unicritical map $F$ with a critical point $c$ so that some iterate $F^s$ maps
some domain $\U\ni c$ as a branched covering onto $\V\Supset  \U$. If $F^{is}(c)\in \U$ 
for all $i\ge 0$ then $F$ is called {\em renormalizable} and $F^s\colon \U\to \V$
is a  polynomial-like map. When $F\colon \U\to \V$ is not renormalizable, then there exists an element in the forward orbit of $c$ that intersects the annulus $\V \sm \U$. In order to keep track of the full forward orbit of critical points, one would need to include in the domain of $F$  all components of the domain of the return mapping which intersect the postcritical set. This is a typical use of a complex box mapping.
If there are at most a finite number of such components, then the analogy between complex box mappings and polynomial-like mappings is strongest. 
Such mappings appeared in, for example  Branner--Hubbard \cite{BH}, Yoccoz's work on puzzle maps \cite{HY,Mil}, in work 
showing that the Julia set of a hyperbolic polynomial map is conjugate to a subshift of finite type \cite{Jak69}, and was widely used in the case of real interval maps.  They are also called \emph{polynomial-like box mappings} or \emph{generalized polynomial-like mappings}.  This terminology was introduced by Lyubich in the early 90's, who suggested to study maps that are non-renormalizable in the Douady--Hubbard sense as instances of renormalizable maps in the sense of such generalized renormalizations. This renormalization idea turned out to be extremely fruitful, see for example 
 \cite{BKNS,Kozlovski-thesis, LvS-lc, LvSBox,  Lyubich - measure, Lyu, LM,  Sm, vS, Yar}.
However, in general, one needs to consider complex box mappings whose domains have infinitely many components, and even allow for several components in $\V$. This motivates Definition~\ref{Def:BM}. By allowing $\U$ to have infinitely many components this notion becomes very flexible at the expense of admitting pathological behavior, which we will discuss in Section~\ref{Sec:Pathologies}. In the literature, complex box mappings are sometimes called \emph{puzzle mappings}, or \emph{R-mappings} (where `R' stands for \emph{r}eturn; see, for example, \cite{ALdM}). 

Yoccoz gave a practical way of constructing complex box mappings induced by polynomials.
Using the property that periodic points have {\em external rays} landing on them, Yoccoz introduced what are now known as {\em Yoccoz puzzles} \cite{HY,Mil}. In this case $\V$ consists of disks whose boundary consists of pieces of external rays and pieces of equipotentials.  
Yoccoz used these puzzles to prove local connectivity of the Julia sets of non-renormalizable quadratic polynomials, $P_c\colon z \mapsto z^2+c$, and by transferring the bounds that he used to prove local connectivity of $J(P_c)$ to the parameter plane, he proved local connectivity of the Mandelbrot set at parameters $c$ for which $P_c$ is non-renormalizable \cite{HY}.

One very important step in Yoccoz' result is to obtain lower bounds for the moduli of certain annuli which one 
encounters when taking successive first return maps to puzzle pieces containing the critical point (the {\em principal nest}). Such a property 
is usually called {\em complex bounds} or {\em a priori bounds}. Yoccoz was able to obtain these bounds for 
 non-renormalizable polynomial maps with a unique quadratic critical point that is recurrent: here one uses that the first return map 
 to a puzzle piece has at least two components intersecting the critical orbit (this is related to the notion
 of \emph{children} that we will discuss later on). 

It was clear from the early 1990s that complex methods would have wide applicability in one-dimensional dynamics, even when considering interval maps. In the 1990s complex bounds were established in the setting of real unimodal mappings, for certain infinitely renormalizable mappings in \cite{Sullivan}, and for unimodal polynomials in \cite{LvS-lc}. These results depended on additional properties of interval mappings. Soon further results were established for the quadratic family $z\mapsto z^2 + c$ \cite{Lyu}, and for classes of real-analytic mappings with a single critical point \cite{McM, McM2, LY, LvSBox}. Such results led to fantastic progress for real quadratic and unicritical maps: 
\begin{itemize}
\item local connectivity of Julia sets \cite{LvS-lc, Lyu,LY,LvSBox};
\item monotonicity of entropy \cite{milnor-thurston,Do2,Tsuji};
\item density of hyperbolicity in the {\em real quadratic family} \cite{Lyu, GS} (via quasiconformal rigidity);
\item hyperbolicity of renormalization and the Palis Conjecture for mappings with a non-degenerate critical point \cite{Lyu2, Lyu3, Lyu4, ALdM}. 
\end{itemize}

While some results held for unimodal maps with a higher
degree critical point {\it e.g.} \cite{Sullivan, LvS-lc, McM, McM2}, in general, it took some time for the theory for  interval mappings with several critical points (or a critical point with degree greater than two)
and for non-real polynomials,  to catch up to that of quadratic mappings. In the setting of interval mappings, building on the work of \cite{Sullivan} (described in \cite{RealBook}) and applying results of \cite{McM2},  complex bounds were obtained for certain infinitely renormalizable multicritical mappings together with a proof of exponential convergence of renormalization for such mappings \cite{Sm-cb, Sm-universality}. Towards the goal of proving density of hyperbolic mappings for one-dimensional mappings, \cite{Kozlovski-thesis}
proved density of hyperbolicity for smooth unimodal maps and 
\cite{Sh} proved
$C^2$ density of Axiom A interval mappings  by first proving complex bounds and certain local rigidity properties for certain multimodal interval mappings. 

To prove quasiconformal rigidity, the techniques from \cite{Lyu,GS} rely on the map to
be unimodal and the critical point to be quadratic, because that implies that the moduli
of certain annuli grow exponentially. To deal with general real multimodal maps,  \cite{KSS} introduced a sophisticated combinatorial construction (the Enhanced Nest). Using this, together with additional results for interval maps, complex bounds and quasiconformal rigidity for real polynomial mappings with real critical points were proved in \cite{KSS}.  This implies density of hyperbolicity for such polynomials. 
In  \cite{KSS-density}, complex box mappings in the non-minimal case were constructed, 
and these were used to study {\lq}global perturbations{\rq}
of real analytic maps thus establishing density of hyperbolicity for one-dimensional mappings in the $C^k$ topology for $k = 1,2,\dots, \infty, \omega$. One can also show that one has density of hyperbolicity for real transcendental maps and within full families of real analytic maps \cite{RvS2,CvS2}. 
One can apply the techniques of \cite{KSS} to prove complex bounds and qc rigidity for real analytic mappings and even in some sense for smooth interval maps, see \cite{CvST,CvSlong}.  The rigidity result from \cite{KSS} can also 
be used to extend the monotonicity of entropy for cubic interval maps \cite{milnor-tresser}
to  general multimodal interval maps (with only real critical points) \cite{BvS,Kozlovski-entropy}
and to show that zero entropy maps are on the boundary of chaos \cite{CT}.

The results mentioned above rely on complex bounds which were initially only available for 
real maps. Complex mappings lack the order structure of the real line, and some tools 
that are very useful for real mappings such as real bounds \cite{vSV} and 
Poincar\'e disks cannot be used to prove complex bounds for complex mappings. Thus, new analytic tools were required, namely 
the Quasiadditivity Law and the Covering Lemma of \cite{KL09}. 
Using this new ingredient, the theory for non-renormalizable complex polynomials was brought up to the same level  as that of real quadratic polynomials in the unicritical setting in \cite{KLUnicr, AKLS, ALS} and  in the general non-renormalizable polynomial case in \cite{KvS}. 
 Going beyond mappings with non-degenerate critical point, for real analytic unimodal maps with critical points of even degree, exponential convergence of renormalization was proved in \cite{AL} and the Palis Conjecture in this setting was proved in \cite{C}, see also 
\cite{BSvS}. The Covering Lemma was also used in \cite{KvS} to prove local connectivity of Julia sets, absence of measurable invariant line fields and qc rigidity of non-renormalizable polynomials.

\subsection{The Julia set and filled Julia set of a box mapping} 

Given a complex box mapping $F \colon \U \to \V$, the \textit{filled Julia set of $F$} is defined as $$K(F) := \bigcap_{n \ge 0} F^{-n}(\V).$$
The set $K(F)$ consists of \textit{non-escaping} points: those whose orbits under $F$ never escape the domain of the map. 

As for the \emph{Julia set of $F$}, there are a few candidates which one could take as the definition. No matter which definition one uses, the properties of the Julia set in the context of complex box mappings are often quite different from those of the Julia set of a rational mapping, mainly due to the fact that $\U$ can have infinitely many components. We discuss this in Section~\ref{sec:Julia}, where we define two sets 
$$J_\U(F) := \partial K(F)\cap\U\quad\text{ and }\quad J_K(F) := \partial K(F)\cap K(F),$$
each of which could play the role of the Julia set.

\subsection{Puzzle pieces of box mappings} 
One of the most important properties of complex box mappings is that they provide a kind of ``Markov partition" for the dynamics of $F$ on its filled Julia set: the components of $\V$ and their iterative pullbacks are analogous to Yoccoz puzzle pieces for polynomials. Having such a topological structure is a starting point for the study of many questions in dynamics. So, following \cite{KvS, KSS}, for a given $n \ge 0$, a connected component of $F^{-n}(\V)$ is called a \textit{puzzle piece of depth $n$} for $F$. 
For $x\in \V$, the {\em puzzle piece of depth $n$ containing $x$}  is the connected component of $F^{-n}(\V)$ 
containing $x$ (when $x$ escapes $\U$ in less than $n$ steps, this set will be empty). 

It is easy to see that $F$ properly maps a puzzle piece of depth $n+1$ onto a puzzle piece of depth $n$, for every $n \ge 0$, and any two puzzle pieces are either nested, or disjoint. In the former case, the puzzle piece at larger depth is contained in the puzzle piece of smaller depth.

\begin{remark}
In the literature, the puzzle pieces that are constructed for polynomials or rational maps are usually assumed to be closed. In the context of polynomial-like mappings, or generalized polynomial-like mappings, similar to ours, the puzzle pieces are often defined as open sets. 
\end{remark}

\subsection{Structure of the paper}

The paper is organized as follows.

In the last subsection of the introduction, Section~\ref{SSec:Defns}, we introduce some notation and terminology that will be used throughout the paper. We encourage the reader to consult this terminological subsection if some notion was used in some section of the paper but was not defined earlier in that section.

In Section~\ref{Sec:RealLifeExamples}, we give several examples of complex box mappings that we have alluded to above and which appear naturally in the study of real analytic mappings on the interval and rational maps on the Riemann sphere. The purpose of this section is to give additional motivation for Definition~\ref{Def:BM} and to convince the reader that in a great variety of setups the study of a dynamical system can be almost reduced to the study of an induced box mapping, and hence one can prove rigidity and ergodicity properties of various dynamical systems simply by ``importing'' the general results on box mappings discussed in this paper. We end Section~\ref{Sec:RealLifeExamples} by posing some open questions about box mappings.

In Section~\ref{Sec:Pathologies}, we present some of the pathologies that can occur for a box mapping due to the fairly general definition of this object. The discussion in that section will lead to the definition of a dynamically natural box mapping in Section~\ref{ASec:Subsets} for which such undesirable behavior cannot happen. These box mappings arise naturally in the study of both real and complex one-dimensional maps, hence the name.

In Section~\ref{Sec:CombRen}, we recall the definitions and properties of some objects of a combinatorial nature that can be associated to a given box mapping (fibers, recurrence of critical orbits, etc.). We end that section with the definitions of renormalizable and combinatorially equivalent box mappings. 

In Section~\ref{Sec:QCStatement}, we state the main result on the rigidity and ergodic properties of non-renormalizable dynamically natural box mappings, Theorem~\ref{Thm:BoxMappingsMain2}. This result includes shrinking of puzzle pieces, a necessary condition for a complex box mapping to support an invariant line field on its Julia set, and quasiconformal rigidity of topologically conjugate complex box mappings, with a compatible ``external structure". This result was proven in \cite{KvS} for box mappings induced by non-renormalizable polynomials, and the proof extends to our more general setting. We will elaborate on its proof with a two-fold goal: on one hand, to emphasize on several details and assumptions that were not mentioned or were implicit in \cite{KvS}, and on the other hand, to make the underlying ideas and proof techniques better accessible to a wider audience beyond the experts. 

The rest of the paper, namely Sections~\ref{Sec:Ing}--\ref{Sec:ILF}, is dedicated to reaching this goal. The structure of the reminder of the paper, starting from Section~\ref{Sec:Ing}, is outlined in Section~\ref{SSec:Outline}, and we urge the reader to consult that subsection for details. 

In Section~\ref{Sec:ILF} we build on the description of the ergodic properties
of complex box mappings to give conditions for the absence
of measurable invariant line fields for box mappings. In Section~\ref{sec:lattes} 
we give examples of box mappings which have invariant line fields and which are the analogue
of rational Latt\`es maps in this setting. We also derive a Ma\~n\'e Theorem showing 
that one has expansion for orbits of complex box mappings which stay away from critical points, see Section~\ref{Sec:Mane}. These results were not explicit in the literature. 

Finally, Appendix~\ref{A1} contains some ``well-known to those who know it well'' facts, in particular, about various return constructions that are routinely used to build complex box mappings.

\subsection{Notation and terminology}
\label{SSec:Defns}

We refer the reader to some standard reference background sources in one-dimensional real \cite{RealBook} and complex \cite{MiIntro, LyuBook} dynamics, as well as to our Appendix~\ref{A1}.

\subsubsection{Generalities}

We let $\C$ denote the complex plane, $\Cc$ be the Riemann sphere, we write $\disk = \disk_1$ for the open unit disk and $\disk_r$ for the open disk of radius $r$ centered at the origin. A \emph{topological disk} is an open simply connected set in $\C$. A {\em Jordan disk} is a topological disk whose topological boundary is a Jordan curve. 

An \emph{annulus} is a doubly connected open set in $\C$. The conformal modulus of an annulus $A$ is denoted by $\modulus(A)$. 

By a \emph{component} we mean a connected component of a set. For a set $B$ and a connected subset $C \subset B$ or a point $C\in B$, we write $\Comp_C B$ for the connected component of $B$ containing $C$. 

We let $\ovl A$ and $\cl A$ stand for the closure of a set $A$ in $\C$ (we will use both notations interchangeably, depending on typographical convenience); $\inter A$ is the interior of the set $A$; $\# A$ is the cardinality of $A$. An open set $A$ is \emph{compactly contained} in an open set $B$, denoted by $A \Subset B$, if $\ovl A \subset B$.

We let $\diam(A)$ be the Euclidean diameter of a set $A \subset \C$. A topological disk $P \subset \C$ has \emph{$\eta$-bounded geometry at $x \in P$} if $P$ contains the open round disk of radius $\eta \cdot \diam(P)$ centered at $x$. We simply say that $P$ has \emph{$\eta$-bounded geometry} if it has $\eta$-bounded geometry with respect to some point in it.

For a Lebesgue measurable set $X \subset \C$, we write $\area(X)$ for the Lebesgue measure of $X$.

\subsubsection{Notation and terminology for general mappings}

In this paper, we will restrict our attention to the dynamics of holomorphic mappings. For a given holomorphic map $f$, $f^n$ denotes the $n$-th iterate of the map. We let $\Crit(f)$ denote the set of critical points of $f$, and $\PC(f)$ stands for the union of forward orbits of $\Crit(f)$, {\em i.e.}
$$\PC(f) = \left\{f^n(c) : c \in \Crit(f), n \ge 0\right\}$$ 
as long as $n$ is chosen so that $f^{n}(c)$ is well-defined.  
For a point $z$ in the domain of $f$, we write $\orb(z)$ for the forward orbit of $z$ under $f$, {\em i.e.}\ $\orb(z) = \{f^n(z) : n \ge 0\}$. We also write $\omega(z)$ for the $\omega$-limit set of $\orb(z)$, i.e.\ $\omega(z) = \bigcap_{k \ge 0} \ovl{\{f^n(z) : n > k\}}$. 

For a holomorphic map $f$ and a set $B$ in the domain of $f$, a \emph{line field} on $B$ is the assignment of a real line through each point $z$ in a positive measure subset $E \subset B$ so that the slope is a measurable function of $z$. A line field is \emph{invariant} if $f^{-1}(E) = E$ and the differential $\text{D}_z f$ transforms the line at $z$ to the line at $f(z)$. 

A set $A \subset U$ is \emph{minimal} for a map $f \colon U \to V$, $U \subset V$ if the orbit of every point $z \in A$ is dense in $A$.  Periodic orbits are examples of minimal sets. 

\subsubsection{Nice sets and return mappings}
\label{SSSec:Return}

For a map $f$, an open set $B$ in the domain of $f$ is called \emph{wandering} if $f^k(B) \neq f^\ell(B)$ for every $k \neq \ell$ and $B$ is not in the basin of a periodic attractor. The set $B$ is called \emph{nice} if $f^k(\partial B) \cap \inter B = \emptyset$ for all $k \ge 1$; it is \emph{strictly nice} if $f^k(\partial B) \cap \ovl B = \emptyset$ for all $k \ge 1$.

Given a holomorphic map $f \colon U \to V$ and a nice open set $B \subset U$, define 
\[
\DomE(B) := \{z \in U \colon \exists k \ge 1, f^k(z) \in B\}, \quad \Dom(B) := \DomE(B) \cap B, \quad \DomL(B):=\DomE(B) \cup B.
\]
The components of $\DomE(B)$, $\Dom(B)$ and $\DomL(B)$ are called called, respectively, \emph{entry, return and landing domains (to $B$ under $f$)}. 

For a point $z \in U$, we will use the following short notation:
\[
\DomE_z(B) := \Comp_z \DomE(B), \quad \DomL_z(B) := \Comp_z \DomL(B).
\]

The \emph{first entry map} $E_B \colon \DomE(B) \to B$ is defined as $z \mapsto f^{k(z)}(z)$, where $k(z)$ is the minimal positive integer with $f^{k(z)}(z) \in B$. The restriction of $E_B$ to $B$ is \emph{the first return map to $B$}; it is defined on $\Dom(B)$ and is denoted by $R_B$. \emph{The first landing map} $L_B \colon \DomL(B) \to B$ is defined as follows: $L_B(z) = z$ for $z \in B$, and $L_B(z) = E_B(z)$ for $z \in \DomE(B) \sm B$. 

We will explain some basic properties of these maps in Appendix~\ref{A1}.

\subsubsection{Notation and terminology for complex box mappings}

In this paper, our object of study is a complex box mapping $F\colon\U\to\V$. Sometimes will abbreviate this terminology and simply refer to a {\em box mapping} $F$, and unless otherwise stated, by box mapping we mean a complex box mapping.

We define the puzzle pieces of depth $n$ to be the components of $F^{-n}(\V)$. If $A\subset\C$, we call a collection of puzzle pieces whose union contains $A$ a {\em puzzle neighborhood} of $A$.

The \emph{non-escaping (or filled Julia) set of $F$} is defined as $K(F) := \bigcap_{n \ge 0} F^{-n}(\V).$

A box mapping $F' \colon \U' \to \V'$ is \emph{induced} from $F \colon \U \to \V$ if $\U'$ and $\V'$ are unions of puzzle pieces of $F$ and the branches of $F'$ are compositions of the branches of $F$. 

Given a point $z \in K(F)$, a {\em nest of puzzle pieces about} $z$ or simply a {\em nest} is a sequence of puzzle pieces that contain $z$. We say that puzzle pieces {\em shrink to points} if for any infinite nest $P_1 \supset P_2 \supset P_3 \supset \ld$ of puzzle pieces, we have that $\diam(P_n)$ tends to zero as $n \to \infty$.

We will say that a complex box mapping $F\colon\U\to\V$ has {\em moduli bounds} if there exists $\delta>0$ so that for every component $U$ of $\U$ that is compactly contained in a component $V$ of $\V$ we have that
$\modulus(V\setminus\ovl U)>\delta.$ 

\subsubsection{$\delta$-nice and $\delta$-free puzzle pieces}
\label{SSSec:NiceFree}

A puzzle piece $P$ is called $\delta$-{\em nice}\label{page:nice} if for any point $z\in \PC(F)\cap P,$ whose orbit returns to $P$, we have that $\modulus(P\sm\ovl{\mathcal{L}_z(P)})>\delta$. Note that strictly nice puzzle pieces are automatically $0$-nice.

A puzzle piece $P$ is called $\delta$-{\em free}\footnote{In \cite{KvS}, the terminology $\delta$-fat was used instead of $\delta$-free. In the present paper, we prefer the latter, more friendly and more transparent terminology.}, if there exist puzzle pieces $P^-$ and $P^+$ with $P^-\subset P\subset P^+$, so that $\modulus(P^+\sm\ovl P)$ and $\modulus(P\sm\ovl{P^-})$ are both at least $\delta$ and the annulus $P^+ \sm \ovl{P^-}$ does not contain the points in $\PC(F)$.

\section{Examples of box mappings induced by analytic mappings}
\label{Sec:RealLifeExamples}

In this section, we give quite a few examples of complex box mappings that appear in the study of complex rational and real-analytic mappings.

We end this section with an outlook of further perspectives and open questions.

\subsection{Finitely renormalizable polynomials with connected Julia set, see Figure~\ref{Fig:ExampleBoxMapping2}.} 
\label{SSec:Examples}

Suppose that $P\colon\C\to\C$ is a polynomial mapping of degree $d$ with connected Julia set. 
By B\"ottcher's Theorem, $P$ restricted to $\C\setminus K(P)$ is conjugate to $z \mapsto z^d$ on $\C\setminus \overline{\disk}$ by a conformal mapping $\phi\colon\C\setminus\overline{\disk}\to\C\setminus K(P)$.
There are two foliations that are invariant under $z\mapsto z^d$: the foliation by straight rays through the origin, and the foliation by round circles centered at the origin. Pushing these foliations forward by $\phi$, one obtains two foliations of $\C\setminus K(P),$ which are invariant under $P$. The images of the round circles under $\phi$ are called {\em equipotentials} for $P$, and the images of the rays are called {\em rays} for $P$. Quite often the terminology {\em dynamic rays} is used to distinguish these rays from rays in parameter space, but we will not need to make that distinction in this paper. When constructing the Yoccoz puzzle,  an important consideration is whether the rays {\em land} on the filled Julia set (that is, the limit set of the ray on the filled Julia set consists of a single point). Fortunately, rays always land at repelling periodic points and consequently, such rays form the basis for the construction of the Yoccoz puzzle.

\begin{figure}[h]
\begin{center}
\includegraphics[scale=0.4]{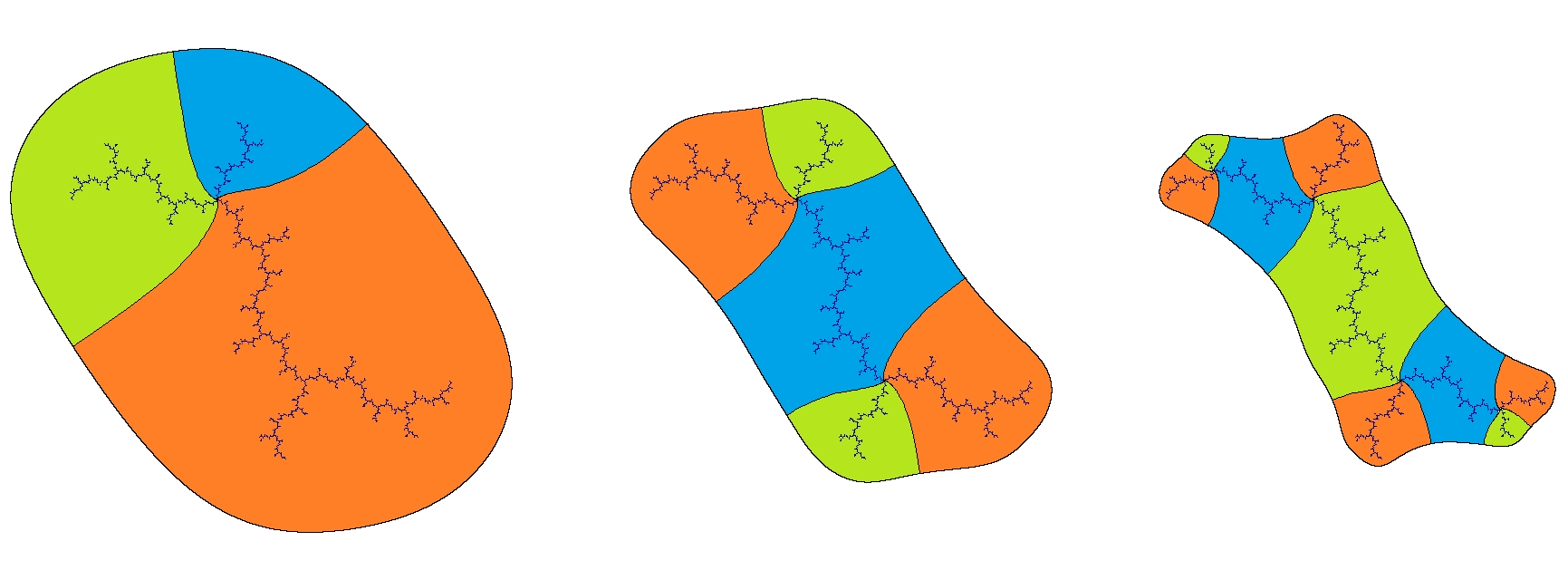}
\caption{The Yoccoz puzzle pieces of depths 0, 1 and 2 for the polynomial $z \mapsto z^2+i$ are shown in color on the left, middle and right picture respectively. In this example, $\mathcal Y^0$ consists of three pieces shown in green, blue and orange on the left. The pieces at lager depths are iterative preimages of pieces at smaller depths and are shown in corresponding colors.}
\label{Fig:ExampleBoxMapping2}
\end{center}
\end{figure}

To construct the Yoccoz puzzle, one needs to be able to  make an initial choice of an equipotential and rays which land at repelling periodic or preperiodic points and separate the plane\footnote{This can be done, for example, when all finite periodic points of the polynomial are repelling.}. The top level $\mathcal Y^0$ of the Yoccoz puzzle is then defined as the union of bounded components of $\C$ in the complement of the chosen equipotential and rays, see Figure~\ref{Fig:ExampleBoxMapping2}. For $n \ge 0$, a Yoccoz puzzle piece of depth $n$ is then a component of $P^{-n}(\mathcal Y^0)$. For unicritical mappings, $z\mapsto z^d+c$, to define $\mathcal Y^0$, one typically takes the rays landing at the dividing fixed points (fixed points $\alpha$ so that $K(P)\setminus \{\alpha\}$ is not connected), their preimages, and an arbitrary equipotential.

The main feature of Yoccoz puzzle pieces is that they are \emph{nice}, i.e.\ $P^n(\partial V) \cap V = \emptyset$ for every piece $V$ and $n \in \N$. This property guarantees that puzzle pieces at larger depths are contained in puzzle pieces at smaller depths (compare Figure~\ref{Fig:ExampleBoxMapping2}). Moreover, if all the rays in the boundary of the puzzle piece $V$ land at strictly preperiodic points, then $V$ is {\em strictly nice}: $P^n(\partial V) \cap \ovl V=\emptyset$ for $n\in\mathbb N$. In this case, all of the return domains to $V$ are compactly contained in $V$, and so the first return mapping to $V$ has the structure of a complex box mapping. This construction is the prototypical example of a complex box mapping. 

In general, if $P$ is at most finitely renormalizable, that is $P$ has at most finitely many distinct polynomial-like restrictions, then as was shown in \cite[Section 2.2]{KvS}, for such polynomials with additional property of having no neutral periodic points one can always find a strictly nice neighborhood $\V$ of the critical points of $P$ lying in the Julia set, with this neighborhood being a finite union of puzzle pieces, so that the first return mapping to $\V$ under $P$ has the structure of a complex box mapping.

\subsection{Real analytic mappings, see Figure~\ref{Fig:Dense}.} 
\label{SSec:raExamples}

For a real analytic mapping $f$ of a compact interval, in general, one does not have Yoccoz puzzle pieces. Recall that to construct the Yoccoz puzzle for a polynomial $P$ one makes use of the conjugacy between $P$ and $z^d$ in a neighborhood of $\infty$, where $d$ is the degree of $P$. Nevertheless, one can construct by hand complex box mappings, which extend the real first return mappings, in the following sense. There exists a nice neighborhood $\mathcal I\subset\mathbb R$ of the critical set of $f$ with the property that each component of $\mathcal{I}$ contains exactly one critical point of $f$, and a complex box mapping $F \colon \mathcal U\to\mathcal V$, so that the real trace of $\mathcal V$ is $\mathcal I$, and for any component $U$ of $\mathcal U$, the real trace of $U$ is a component of the (real) first return mapping to $\mathcal I$. Suppose that $f$ is an analytic unimodal mapping with critical point $c$. If there exists a nice interval $I\owns c$, so that so-called \emph{scaling factor} $|I|/|\mathcal{L}_c(I)|$ is sufficiently big, then one can construct a complex box mapping which extends the real first return mapping to $\mathcal{L}_c(I)$ by taking $\mathcal{V}$ to be a Poincar\'e lens domain with real trace $\mathcal{L}_c(I)$, i.e.\ see Figure~\ref{Fig:Dense}. It turns out that for non-renormalizable unimodal mappings with critical point of degree two or a reluctantly recurrent critical point of any even degree (see Section~\ref{SSec:Rec} for the definition), one can always find such a nice neighborhood $I$ of $c$ so that $|I|/|\mathcal{L}_c(I)|$ is arbitrarily large. For infinitely renormalizable mappings, and for non-renormalizable mappings with degree greater than 2, this need not be the case. If all the scaling factors are bounded, then the construction is trickier, see for example \cite{LvSBox}. Such complex box mappings were constructed for general real-analytic interval mappings in \cite{CvST, CvSlong}, see also 
\cite{Kozlovski-thesis} for unicritical real analytic maps with a quadratic critical point.

\begin{center}
\begin{figure}[h]
\definecolor{dgreen}{rgb}{0.0,0.5,0.}
\begin{tikzpicture}
\clip(-5,-1.4) rectangle (5,1.3);
\node[yshift=0] at (0,0) {
                \includegraphics[keepaspectratio,
                                scale=0.9]{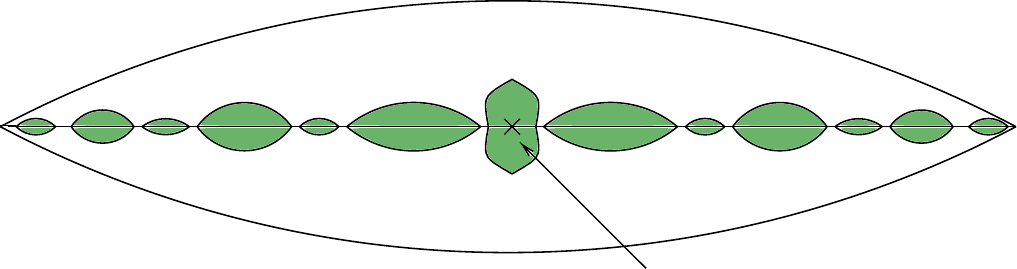}
            };
\draw (1.4,-1.32) node {$c$};
\draw[color=dgreen] (-2,0.5) node {$\U$};
\draw (-3,1) node {$\V$};
\end{tikzpicture}
\caption{An example of a complex box mapping $F \colon \U \to \V$ associated to a real-analytic unimodal mapping $f$ with critical point $c$ whose orbit is dense in the interval. $\V$ is a Poincar\'e lens domain such that $\V \cap \R$ is a nice interval containing $c$, and $\U$ is the domain of the first return map to $\V$ under $f$. In this example, $\U \cap \R$ is dense in $\V \cap \R$, i.e.\ the real trace of $\U$ ``tiles'' the real trace of $\V$.}
\label{Fig:Dense}
\end{figure}
\end{center}

\subsection{Newton Maps, see Figure~\ref{Fig:NewtonPuzzles}.}

Given a complex polynomial $P \colon \C \to \C$, the \textit{Newton map} of $P$ is the rational map $N_P \colon \Cc \to \Cc$ on the Riemann sphere $\Cc$ defined as 
\[
N_P(z) := z - \frac{P(z)}{P'(z)}.
\]     
These maps are coming from Newton's iterative root-finding method in numerical analysis and hence provide examples of well-motivated dynamical systems. The roots of $P$ are attracting or super-attracting fixed points of $N_P$, while the only remaining fixed point of $N_P$ in $\Cc$ is $\infty$ and it is repelling. The set of points in $\Cc$ converging to a root is called \emph{the basin} of this root, and the component of the basin containing the root is \emph{the immediate basin}.

Newton maps are arguably the largest family of rational maps, beyond polynomials, for which several satisfactory global results are known. This progress was possible due to abundance of touching points between the boundaries of components of the root basins: these touchings provide a rigid combinatorial structure. Similarly to the real-analytic mappings, Newton maps do not have global B\"ottcher coordinates. However, the local coordinates in each of the immediate basins of roots allow one to find local equipotentials and local (internal) rays that nicely co-land from the global point of view. In this way, Newton maps posses forward invariant graphs (called \emph{Newton graphs}) that provide a partition of the Riemann sphere into pieces similar to Yoccoz puzzle pieces, see Figure~\ref{Fig:NewtonPuzzles}. Contrary to the Yoccoz construction, building the Newton puzzle is not a straightforward task. For Newton maps of degree $3$ it was carried out in \cite{Roe}, while for arbitrary degrees in was done in a series of papers \cite{DMRS, DLSS}, and independently in \cite{WYZ}. Once the Newton puzzles are constructed, one can induce a complex box mapping as the first return map to a certain nice union of Newton puzzle pieces containing the critical set; this was done in \cite{DS}, where the rigidity results for box mappings that we discuss in the present paper were applied to conclude rigidity of Newton dynamics.

\begin{figure}[htb]
\centering
\includegraphics[scale=0.22, trim=10 10 10 10, clip]{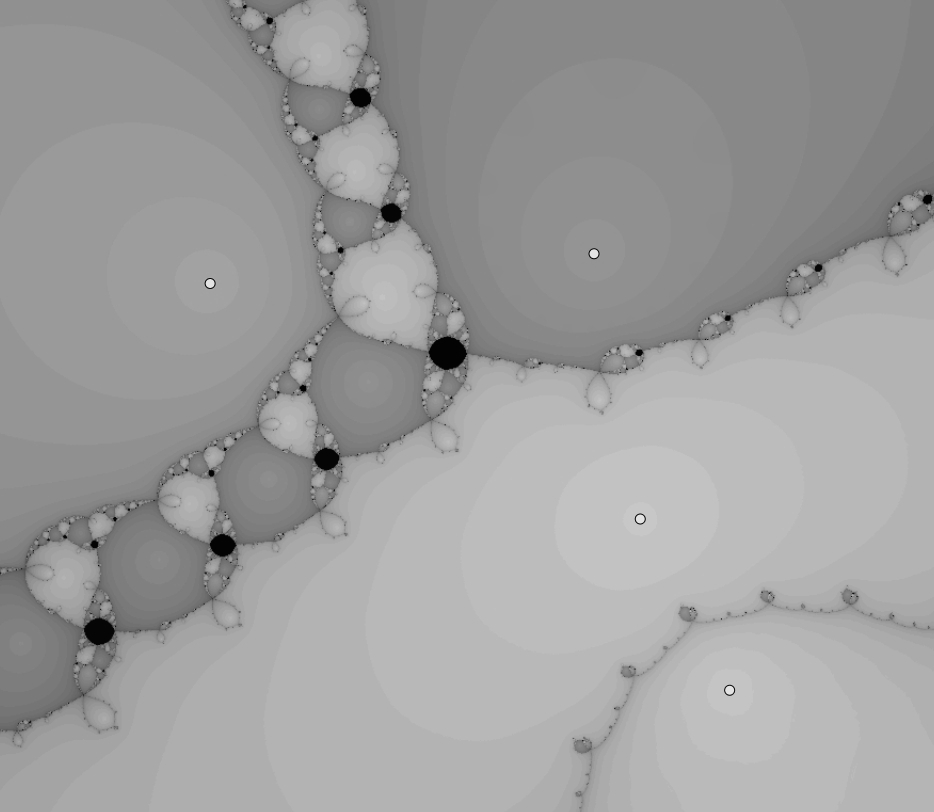}
\includegraphics[scale=0.22, trim=10 10 10 12, clip]{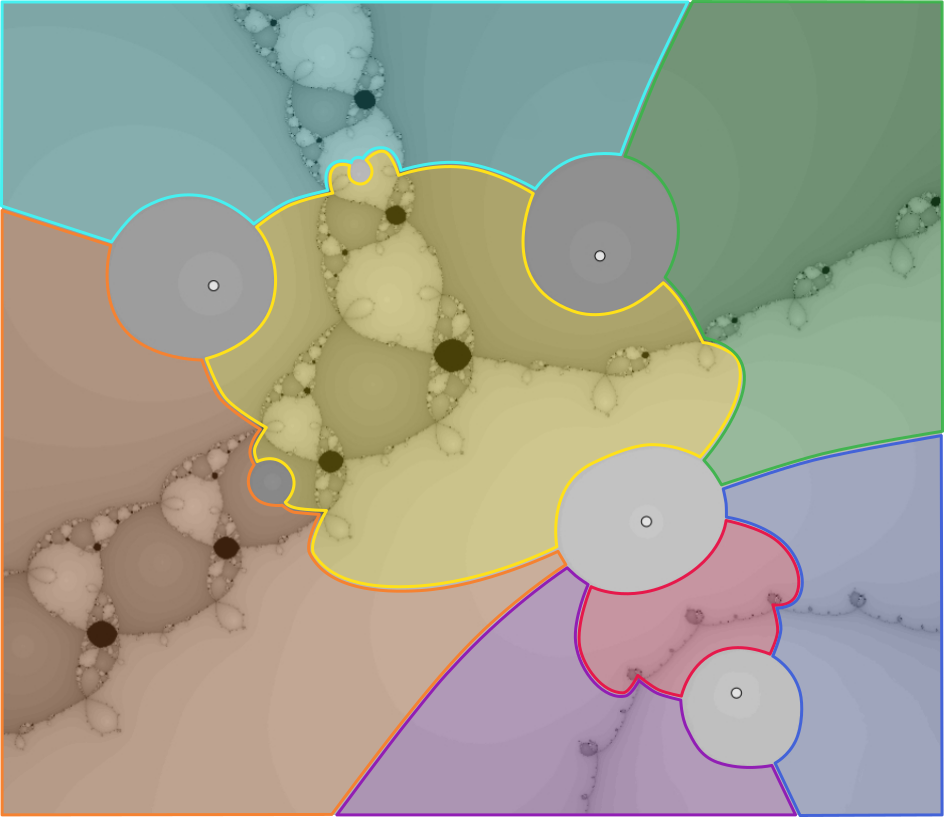}
\caption{An example of the Newton puzzle partition for the degree 4 Newton map. The dynamical plane of the map is shown on the left, with the roots of the corresponding polynomial being marked with circles. For each $n \ge 0$, the \emph{Newton puzzle partition of depth $n$} is a tiling of a neighborhood of the Julia set on the Riemann sphere with topological disks (\emph{Newton puzzle pieces}). These partitions have nice mapping properties (\emph{the Markov property}), and as $n$ grows, they become finer and finer: the number of pieces of depth $n$ grows and depth $n + 1$ pieces tile the pieces of depth $n$ (modulo truncation in the basins of roots). On the right, the Newton puzzle partition of depth $0$ is shown (7 puzzle pieces are drawn in different colors).
\vspace{-3mm}}
\label{Fig:NewtonPuzzles}
\end{figure}

\subsection{Other examples}
\label{SSec:OtherEx}

Let us move to further examples of dynamical systems existing in the literature where a complex box mapping with nice dynamical properties can be induced. 

\subsubsection{Box mappings from nice couples}
\label{SSSec:NiceCouples}

In \cite{RL}, the concept of \emph{nice couples} is defined. Given a rational map $f \colon \Cc \to \Cc$, a \emph{nice couple for $f$} is a pair of nice sets $(\widehat V, V)$ such that $V \Subset \widehat V$, each component of $\widehat V, V$ is an open topological disk that contains precisely one element of $\Crit(f) \cap J(f)$, and so that for every $n \ge 1$ one has $f^n(\partial V) \cap \widehat V = \emptyset$. Note that if $(\widehat V, V)$ is a nice couple, then $V$ is strictly nice. This implies that if $\V := V$ and $F \colon \U \to \V$ is the first return map to $\V$ under $f$, then each component of $\U$ is compactly contained in $\V$. It then follows that $F$ is a complex box mapping in the sense of Definition~\ref{Def:BM}. We see that if $f$ has a nice couple, then $f$ induces a complex box mapping $F \colon \U \to \V$ such that $\U$ is the return domain to $\V$ under $f$ and $\Crit(f) \subset \V$.

In \cite{PRL07, PRL}, the authors study thermodynamic formalism for rational maps that have arbitrarily small nice couples. They show that certain weakly expanding maps, like  
{\em topological Collet--Eckmann} rational mappings (which include rational mappings that are exponentially expanding along their critical orbits), have nice couples, and hence induce complex box mappings (see also \cite{RLS}). 

\subsubsection{Box mappings associated to Fatou components}
\label{SSec:2Fatou}

In several scenarios, one can construct a complex box mapping associated to a periodic Fatou component of a rational map.

One instance when it is particularity easy to do is when a rational map $f$ possess a \emph{fully invariant infinitely connected attracting Fatou component} $U$. 
An example of such a rational map is a complex polynomial with an escaping critical point. For such $f$, as a starting set of puzzle pieces one can use, for instance, the disks bounded by the level lines of a Green's function associated to $U$ for some sufficiently small potential (see Figure~\ref{Fig:CantorJulia}). The first return mapping under $f$ to the union of those disks that contain the non-escaping critical points would then induce a complex box mapping. The results from the current paper, namely, those described in Theorem~\ref{Thm:BoxMappingsMain2}, Sections~\ref{SSec:RigidityGeneral} and~\ref{SSec:ErgodicityDynNat}, can be then used to deduce various rigidity and ergodicity results for such $f$'s. Thus one can rephrase (and possibly shorten) 
 the proofs in \cite{QY}, \cite{YZ} and \cite{Z}, where the authors deal with the above mentioned class of rational maps,  by using the language and machinery of complex box mappings more explicitly.     

\begin{figure}[h]
\begin{center}
\definecolor{ffxfqq}{rgb}{1.,0.5,0.}
\definecolor{ffffff}{rgb}{1.,1.,1.}
\begin{tikzpicture}[line cap=round,line join=round,>=stealth,x=1.0cm,y=1.0cm]
\clip(-5.4,-2.7) rectangle (5.5,5.1);
\node[yshift=21pt,xshift=3pt] at (0,0) {
                \includegraphics[scale=0.46]{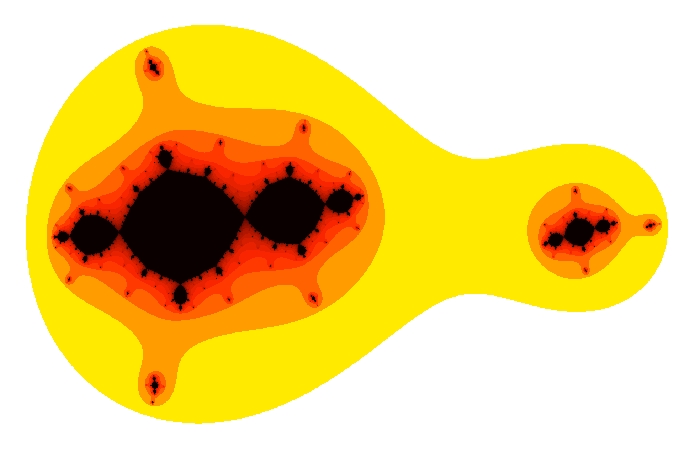}};
\draw[->,line width=1.pt,color=ffxfqq] (-1.6729580638794315,2.6535235964623425) -- (-1.6749082031115907,2.6852403077321814) -- (-1.6768560794093668,2.716976054768385) -- (-1.6787974322952002,2.748717395187883) -- (-1.6807290000631192,2.7804671256823226) -- (-1.6826475210071528,2.8122280429433504) -- (-1.684549733421329,2.8440029436626144) -- (-1.6864323755996768,2.875794624531761) -- (-1.6882921858362248,2.9076058822424375) -- (-1.6901259024250015,2.9394395134862914) -- (-1.6919302636600353,2.9712983149549697) -- (-1.693702007835355,3.003185083340119) -- (-1.6954378732449893,3.0351026153333875) -- (-1.6971345981829664,3.0670537076264215) -- (-1.6987889209433154,3.099041156910868) -- (-1.7003975798200643,3.131067759878375) -- (-1.701957313107242,3.1631363132205887) -- (-1.7034648590988772,3.1952496136291573) -- (-1.7049169560889983,3.2274104577957274) -- (-1.7063087404659367,3.2596207287078633) -- (-1.7076232863434035,3.291875429204331) -- (-1.7088381729008602,3.3241664278920457) -- (-1.7099309514421965,3.356485577478097) -- (-1.7108791732713038,3.3888247306695742) -- (-1.7116603896920721,3.4211757401735667) -- (-1.7122521520083924,3.453530458697164) -- (-1.7126320115241551,3.4858807389474555) -- (-1.7127775195432509,3.5182184336315303) -- (-1.7126662273695703,3.550535395456478) -- (-1.7122756863070039,3.5828234771293874) -- (-1.7115834476594423,3.6150745313573487) -- (-1.710567062730776,3.647280410847451) -- (-1.709204082824896,3.679432968306784) -- (-1.7074720592456925,3.7115240564424363) -- (-1.705348543297056,3.7435455279614978) -- (-1.7028110862828774,3.775489235571058) -- (-1.6998372395070473,3.807347031978206) -- (-1.696404554273456,3.8391107698900306) -- (-1.6924905818859945,3.8707723020136227) -- (-1.688072873648553,3.9023234810560705) -- (-1.6831289808650225,3.933756159724464) -- (-1.6776364548392932,3.9650621907258925) -- (-1.6715728468752558,3.9962334267674446) -- (-1.6649157082768011,4.0272617205562105) -- (-1.6576425903478194,4.058138924799279) -- (-1.6497310443922015,4.088856892203741) -- (-1.641158621713838,4.119407475476684) -- (-1.6319031973568865,4.149779917300438) -- (-1.621944154646177,4.179951300455317) -- (-1.6112613158094782,4.209895169244505) -- (-1.5998345031240433,4.239585067572226) -- (-1.587643538867125,4.268994539342704) -- (-1.5746682453159766,4.2980971284601655) -- (-1.5608884447478513,4.326866378828833) -- (-1.546283959440002,4.355275834352932) -- (-1.5308346116696818,4.383299038936687) -- (-1.5145202237141437,4.410909536484322) -- (-1.497320617850641,4.4380808709000625) -- (-1.4792156163564267,4.464786586088133) -- (-1.4601850415087536,4.491000225952757) -- (-1.440208715584875,4.516695334398159) -- (-1.4192664608620442,4.541845455328565) -- (-1.3973380996175138,4.566424132648199) -- (-1.3744034541285373,4.590404910261284) -- (-1.3504423466723674,4.6137613320720465) -- (-1.3254345995262575,4.636466941984709) -- (-1.2993600349674603,4.658495283903499) -- (-1.2722010205824539,4.679818531639739) -- (-1.2439982447264146,4.700377466013569) -- (-1.2148505938189373,4.720081540903278) -- (-1.1848594750479737,4.738838853304197) -- (-1.1541262956014684,4.756557500211651) -- (-1.1227524626673713,4.773145578620969) -- (-1.0908393834336323,4.788511185527474) -- (-1.058488465088189,4.802562417926497) -- (-1.025801114819,4.81520737281336) -- (-0.9928787398140102,4.826354147183393) -- (-0.9598227472611605,4.835910838031921) -- (-0.9267345443484061,4.843785542354274) -- (-0.8937155382636917,4.849886357145776) -- (-0.8608671361949654,4.854121379401748) -- (-0.8282907453301718,4.856398706117531) -- (-0.7960877728572591,4.856626434288435) -- (-0.7643596259641825,4.854712660909801) -- (-0.7332077118388796,4.850565482976943) -- (-0.702733437669302,4.8440929974852) -- (-0.6730382106434014,4.835203301429889) -- (-0.6442234379491154,4.823804491806344) -- (-0.6163903905908876,4.809805992258777) -- (-0.5896302513986429,4.7932155016560465) -- (-0.564017351490766,4.774204881941262) -- (-0.5396244204337828,4.752961596776874) -- (-0.5165241877942277,4.729673109825345) -- (-0.49478938313863363,4.7045268847491535) -- (-0.4744927360335245,4.677710385210755) -- (-0.4557069760454402,4.6494110748726065) -- (-0.4385048327409047,4.619816417397171) -- (-0.42295903568645254,4.589113876446927) -- (-0.40914231444861837,4.557490915684308) -- (-0.39712739859392787,4.525134998771815) -- (-0.3869870176889174,4.492233589371875) -- (-0.3787939013001127,4.458974151146972) -- (-0.3726207789940501,4.425544147759563) -- (-0.36854038033725356,4.392131042872091) -- (-0.3666254348962683,4.358922300147054) -- (-0.36694855805200177,4.3261052859656495) -- (-0.3695182121984004,4.293812711127355) -- (-0.3741719215326498,4.262031654450283) -- (-0.38071904988818517,4.230725203335631) -- (-0.38896896109861245,4.199856445184583) -- (-0.39873101899738117,4.169388467398353) -- (-0.40981458741806875,4.139284357378127) -- (-0.4220290301941816,4.109507202525105) -- (-0.4351837111591692,4.0800200902404775) -- (-0.44908799414662326,4.050786107925443) -- (-0.4635512429900359,4.0217683429811935) -- (-0.4783828215229562,3.9929298828089266) -- (-0.4933920935788194,3.9642338148098446) -- (-0.5084077500492628,3.935645979506168) -- (-0.5233837240186987,3.9071500580512772) -- (-0.5383238674576707,3.8787368424963864) -- (-0.5532321615362337,3.8503971432970534) -- (-0.5681125874244368,3.822121770908833) -- (-0.5829691262923307,3.7939015357872794);
\draw[->,line width=1.pt,color=ffxfqq] (3.535666796775557,1.395709023330939) -- (3.5289678143751497,1.4203007038470978) -- (3.522277666254425,1.4449056479073932) -- (3.515612048695255,1.46951193673595) -- (3.508983226978418,1.4941202427168985) -- (3.502403466384691,1.5187312382343698) -- (3.495885032194852,1.5433455956724946) -- (3.4894401896896787,1.5679639874154034) -- (3.4830812041499484,1.5925870858472269) -- (3.4768203408564387,1.6172155633520957) -- (3.470669865089927,1.6418500923141406) -- (3.464642042131191,1.6664913451174923) -- (3.4587491372610084,1.6911399941462815) -- (3.4530034157601572,1.7157967117846384) -- (3.447417142909414,1.7404621704166945) -- (3.442002583989557,1.7651370424265798) -- (3.4367720042813636,1.7898220001984253) -- (3.431737669065612,1.8145177161163617) -- (3.4269118436230785,1.8392248625645193) -- (3.4223067932345415,1.8639441119270292) -- (3.4179347831807787,1.8886761365880218) -- (3.4138080787425675,1.913421608931628) -- (3.409938945200685,1.9381812013419784) -- (3.40633964783591,1.9629555862032033) -- (3.4030224519290186,1.9877454358994342) -- (3.399999622760789,2.0125514228148007) -- (3.3972834256119993,2.0373742193334343) -- (3.394886125763426,2.0622144978394656) -- (3.3928199884958476,2.0870729307170253) -- (3.3910972790900415,2.1119501903502433) -- (3.3897299744061784,2.1368468754020524) -- (3.388726091811325,2.1617625724767002) -- (3.3880907731033925,2.1866961331740655) -- (3.3878290954213637,2.211646392567006) -- (3.3879461359042224,2.2366121857283785) -- (3.388446971690951,2.26159234773104) -- (3.389336679920533,2.286585713647847) -- (3.390620337731951,2.3115911185516573) -- (3.392303022264189,2.336607397515327) -- (3.3943898106562296,2.3616333856117144) -- (3.396885780047056,2.386667917913676) -- (3.3997960075756515,2.4117098294940673) -- (3.403125570380999,2.4367579554257475) -- (3.406879545602082,2.461811130781572) -- (3.4110630103778834,2.4868681906343992) -- (3.415681041847386,2.511927970057085) -- (3.420738717149574,2.5369893041224865) -- (3.4262411134234294,2.5620510279034616) -- (3.4321933078079354,2.5871119764728663) -- (3.438600377442076,2.6121709849035577) -- (3.445467399464834,2.637226888268393) -- (3.452799451015192,2.66227852164023) -- (3.4606038858466714,2.687319735834576) -- (3.46891104340597,2.712294058431213) -- (3.4777645917211393,2.737115836365405) -- (3.4872083623192935,2.7616990586192416) -- (3.4972861867275458,2.78595771417481) -- (3.508041896473011,2.809805792014201) -- (3.519519323082802,2.833157281119504) -- (3.531762298084034,2.855926170472808) -- (3.5448146530038196,2.8780264490562026) -- (3.558720219369273,2.899372105851777) -- (3.5735228287075094,2.919877129841622) -- (3.5892663125456403,2.9394555100078255) -- (3.605994502410782,2.958021235332479) -- (3.6237512298300465,2.9754882947976697) -- (3.642580326330549,2.9917706773854866) -- (3.662525623439403,3.0067823720780225) -- (3.6836117488231874,3.0204461395991635) -- (3.7057580173745155,3.0327328443579593) -- (3.7288478300648134,3.043629755154813) -- (3.7527645573306163,3.0531241547375365) -- (3.777391569608456,3.0612033258539437) -- (3.8026122373348787,3.0678545512518487) -- (3.8283099309464053,3.0730651136790623) -- (3.854368020879578,3.0768222958834) -- (3.880669877570936,3.0791133806126725) -- (3.9070988714570074,3.0799256506146944) -- (3.933538372974331,3.079246388637279) -- (3.959871752559442,3.0770628774282387) -- (3.9859823806488794,3.0733623997353874) -- (4.011753627679175,3.068132238306537) -- (4.037068864086864,3.0613596758895016) -- (4.061812383700564,3.0530330345258996) -- (4.085901964061936,3.0431783227575018) -- (4.109297532513436,3.0318689894583635) -- (4.131961701842599,3.0191815060175635) -- (4.153857084836943,3.005192343824202) -- (4.174946294284004,2.9899779742673473) -- (4.195191942971304,2.973614868736103) -- (4.21455664368637,2.9561794986195444) -- (4.233003009216729,2.937748335306754) -- (4.25049365234991,2.9183978501868317) -- (4.266991185873437,2.898204514648839) -- (4.282458222574839,2.8772448000818933) -- (4.296857375241643,2.8555951778750597) -- (4.310151256661374,2.833332119417438) -- (4.322302479621561,2.810532096098097) -- (4.33327365690973,2.7872715793061325) -- (4.343027424817732,2.7636270275053882) -- (4.3515472393630095,2.739663450207957) -- (4.358875592813641,2.715413392368023) -- (4.365065369652671,2.690903684168609) -- (4.3701694543631096,2.666161155792745) -- (4.37424073142798,2.641212637423456) -- (4.377332085330304,2.6160849592437647) -- (4.37949640055311,2.590804951436699) -- (4.3807865615794075,2.565399444185284) -- (4.381255452892235,2.5398952676725437) -- (4.380955958974591,2.5143192520814996) -- (4.379940964309528,2.488698227595185) -- (4.378263353380037,2.4630590243966233) -- (4.375976010669168,2.4374284726688384) -- (4.373131820659927,2.4118334025948553) -- (4.369783667835344,2.3863006443577004) -- (4.365984436678419,2.3608570281404) -- (4.361787011672215,2.3355293841259805) -- (4.357244252149302,2.3103445020935656) -- (4.3524020320139645,2.2853179498037566) -- (4.347289736773808,2.2604388065780068) -- (4.341934373939319,2.235692331509906) -- (4.336362951020975,2.2110637836930476) -- (4.3306024755292665,2.1865384222210285) -- (4.324679954974664,2.162101506187442) -- (4.318622396867651,2.13773829468588);
\draw [yshift=-2pt](-1.0763031646845418,-0.4587683934690605) node[anchor=north west] {$U_1$};
\draw [yshift=-2pt](4.180701471477519,0.4604011288046755) node[anchor=north west] {$U_2$};
\draw (0.7942874420479706,-0.9102902640596676) node[anchor=north west] {$\mathcal V$};
\draw (1.8908405563394435,0.7829167506551091) node[anchor=north west] {$1$};
\draw [yshift=-2pt,color=ffffff](-2.7533843983067943,0.8796714372102392) node[anchor=north west] {$-1$};
\draw [xshift=-2pt](-2.140604716790971,5.072374521265877) node[anchor=north west] {$2:1$};
\draw [xshift=-2pt](2.7938842975206564,3.330790163273535) node[anchor=north west] {$1:1$};
\draw [fill=ffffff] (-2.624378149566621,0.7506651884700658) circle (1.5pt);
\draw [fill=black] (1.9959725861721391,0.7184136262850224) circle (1.0pt);
\end{tikzpicture}
\caption{The filled Julia set of the cubic polynomial $P \colon z \mapsto c - (z^3-3z-2)/(c^2-c-2)$ for $c = -0.2 + 0.1 i$. In this example, $\Crit(P) = \{-1,1\}$ with $1$ an escaping critical point and $-1$
of period two. Since the critical point escapes, the complex box mapping $F \colon U_1 \sqcup U_2 \to \V$ can be induced using the equipotentials $\partial \V$ and $\partial U_1 \cup \partial U_2 = P^{-1}(\partial \V)$ in the basin of $\infty$. Puzzle pieces of larger depth are shown in deeper shades of red. This example illustrates a standard way of inducing a box mapping for rational maps with an infinitely connected fully invariant attracting Fatou component.}
\label{Fig:CantorJulia}
\end{center}
\end{figure}

In \cite{RY08}, it was shown that for a complex polynomial $P$ every \emph{bounded Fatou component} $U$, which is not a Siegel disk, is a Jordan disk. For the proof, it is enough to consider the situation when $U$ is an immediate basin of a super-attracting fixed point (in the parabolic case, the corresponding parabolic tools should be used instead, see \cite{PR}). The authors then build puzzle pieces in a \emph{neighborhood} of $\partial U$ by using pairs of periodic internal (w.r.t.\ $U$) and external rays that co-land on $\partial U$, as well as equipotentials in $U$ and in the basin of $\infty$ for $P$. Using these puzzle pieces, it is possible to construct a strictly nice puzzle neighborhood of $\Crit(P) \cap \partial U$. The first return map to this neighborhood then defines a complex box mapping. Using this box mapping, the main result of \cite{RY08} follows from Theorem~\ref{Thm:BoxMappingsMain2} and the discussion in Section~\ref{SSec:RigidityGeneral}. A similar strategy was used in \cite{DS} in order to prove local connectivity for the boundaries of root basins for Newton maps.

\subsubsection{Box mappings in McMullen's family}
    
In \cite{QWY}, puzzle pieces were constructed for certain McMullen maps $f_{\lambda} \colon z \mapsto z^n + \lambda / z^n$, $\lambda \in \C \sm \{0\}$, $n \ge 3$.  This family includes maps
with a  Sierpi\'nski carpet Julia set.  In contrast to the previously discussed examples, where the puzzle pieces where constructed using internal and external rays, the pieces constructed in \cite{QWY} are bounded by so-called \emph{periodic cut rays}: these are forward invariant curves that intersect the Julia set in uncountably many points and whose union separates the plane. Properly truncated in neighborhoods of $0$ and $\infty$ (the latter is a super-attracting fixed point of $f_\lambda$, and $f_\lambda(0) = \infty$), these curves and their pullbacks provide an increasingly fine subdivision of a neighborhood of $J(f_\lambda)$ into puzzle pieces. Using these pieces, a complex box mapping can be induced. Similarly to the examples above, the main results of \cite{QWY} can be then obtained by importing the corresponding results on box mappings.

\subsection{Outlook and questions}

In this subsection we want to mention some research questions related to the notion of complex box mapping and to the techniques presented in this paper.

\subsubsection{Combinatorial classification of analytic mappings via box mappings} 

The classification of box mappings goes via a combinatorial construction involving itineraries with respect to curve families discussed in Section~\ref{SSec:CombEquiv}. For polynomials and rational maps one often uses 
trees and tableaux to obtain combinatorial information. This is encoded
in  the {\em pictograph} introduced in \cite{dMP} for the case where infinity is a super attracting 
fixed point whose basin is infinitely connected.  
It would be interesting to  explore the relationship in more depth. 

\subsubsection{Metric properties of analytic mappings via box mappings} 

Complex box mappings play a crucial role in the study of the measure theoretic dynamics of rational mappings and the fractal geometry of their Julia sets.  This has been a very active area of research, and here we just provide a snapshot of some of the results in these directions. Some of the natural questions in this setting concern the following:

\begin{enumerate}
\item 
\label{it:conf}
The existence and properties of conformal measures supported on the Julia set, and the existence and properties of invariant measures that are absolutely continuous with respect to a conformal measure.

\item Finding combinatorial or geometric conditions on a mapping that have consequences for the measure or Hausdorff dimension of its Julia set, {\em i.e.} if the measure is positive or zero, or whether the Hausdorff dimension is two or less than two.

\item When is the Julia set holomorphically removable\footnote{Let $\Omega \subseteq \Cc$ be a domain, $E \subset \Omega$ be a compact set, and $f \colon \Omega \sm E \to \Cc$ be a holomorphic map. A set $E$ is called \emph{(holomorphically) removable} if $f$ extends to a holomorphic mapping on the whole $\Omega$.}?

\item
\label{it:exps}
 There are several different quantities which are related to the complexity of a fractal or expansion properties of a mapping on its Julia set, among them, the Hausdorff dimension, the hyperbolic dimension, and the Poincar\'e exponent. While it is known that these quantities are not always all equal \cite{AL3}, in many circumstances they are, and it would be very interesting to characterize those mappings for which equality holds.
\end{enumerate}

The most complete results are known for rational mappings that are weakly expanding on their Julia sets, for example see \cite{PRL, RLS}. Both conformal measures and absolutely continuous invariant measures as in \eqref{it:conf} are well-understood. It is known that whenever the Julia set of such a mapping is not the whole sphere that it has Hausdorff dimension less than two, and  for such mappings all the aforementioned quantities in \eqref{it:exps} are equal. Moreover, when additionally such a mapping is polynomial, its Julia set is removable.

For infinitely renormalizable mappings with bounded geometry and bounded combinatorics, \cite{AL2} establishes existence of conformal measures; equality of the quantities mentioned in \eqref{it:exps}, when the Julia has measure zero; and that for such mappings if the Hausdorff dimension is not equal to the hyperbolic dimension, then the Julia set has positive area. The existence of mappings with positive area (and hence with Hausdorff dimension two) Julia set, but with hyperbolic dimension less than two was proved in \cite{AL3}.

A final class of mappings for which many such results are known are non-renormalizable quadratic polynomials. It was proved in \cite{Lyubich - measure, S} that their Julia sets have measure zero, in \cite{Ka} that they are removable, and in \cite{Pr} that their Hausdorff and hyperbolic dimensions coincide. 

Further progress on these questions for rational maps (or polynomials) 
will most likely involve complex box mappings, and moreover
any such questions could also be asked for complex box mappings. 
Indeed, this point of view is taken in, for example, \cite{PZ}.

\subsubsection{When can box mappings be induced?} 

Once one has a box mapping for a given analytic map, one can use the tools
discussed in this paper.  Therefore it is very interesting to find more 
classes of maps for which box mappings exist: 

\begin{question}
Is it true that for every rational (or meromorphic) map with a non-empty Fatou set one has an associated 
 non-trivial  box mapping?
\end{question} 

We say that a box mapping $F$ is \emph{associated} to a rational (meromorphic) map $f$ if every branch of $F$ is a certain restriction of an iterate of $f$, and the critical set of $F$ is a subset of the critical set of the starting map $f$. Furthermore, here we say that a box mapping is {\em non-trivial} if the critical set of the box mapping is non-empty
and it satisfies  the no permutation condition (see Definition~\ref{Def:NaturalBoxMapping}). 

In fact, the authors are not aware of any general procedure which associates a non-trivial box mapping
to a transcendental function. Examples of box mappings in the complement of the postsingular set for transcendental maps were constructed in \cite{Dob}.

\begin{question}
Are there examples of rational maps whose Julia set is the entire sphere 
and for which one cannot find an associated   non-trivial  box mapping?
\end{question} 

For example, for {\em topological Collet--Eckmann} rational maps, \cite{PRL} uses the strong expansion properties 
to construct box mappings. The previous two questions ask whether one can also do this when there is no such expansion.  Similarly:

\begin{question} Can one associate a box mapping to a rational map with Sierpi\'nski carpet
Julia set beyond the examples discussed previously (where symmetries are used)?
\end{question} 

Another class of maps for which the existence of  box mappings is not clear is when 
one has neutral periodic points: 

\begin{question}
Let $f$ be a complex polynomial with a Siegel disk $S$. Under which conditions is it  possible to construct a puzzle partition of a neighborhood of $\partial S$ and use this partition to  associate a complex box mapping to $f$?
\end{question}

There is a recent result in this direction, namely, in \cite{Jonguk} a puzzle partition was constructed for polynomial Siegel disks of bounded type rotation number. Using this partition and the Kahn--Lyubich Covering Lemma (see Lemma~\ref{Lem:CoveringLemma}), local connectivity of the boundary of such Siegel disks was established. In that paper, the author exploits the Douady--Ghys surgery and the Blaschke model for Siegel disks with bounded type rotation number. This model allows one to construct ``bubble rays'' growing out of the boundary of the disk. These rays, properly truncated, then define the puzzle partition. However, the resulting puzzle pieces have more complicated mapping properties than traditional Yoccoz puzzles (for example, they develop slits under forward iteration). Hence it is not clear at this point 
whether the tools presented in this paper can be applied (see also Section~\ref{SSec:NotJordan}). 

\subsection{Further extensions} 
The notion of complex box mapping has been extended in two directions. The first of these considers {\em multivalued} generalized polynomial-like maps $F\colon U\to V$. This means that we consider open sets $U_i$   and a holomorphic map $F_i\colon U_i\to V$ on each of these sets. If these sets  $U_i$ are not assumed to be disjoint, the map  $F(z):=F_i(z)$ when $z\in U_i$ becomes multivalued. Such maps are considered in  \cite{LvS-lc, LvSBox, Sh} as a first step to obtain a generalized polynomial-like map {\em with moduli bounds} (because the Yoccoz puzzle construction may not apply). As is shown in those papers one can often work with such multivalued generalized polynomial-like maps almost as well as with their single valued analogues. 

The other extension of the notion of complex box mapping is to assume that $F$ is \emph{asymptotically holomorphic} (along, for example,  the real line) rather than holomorphic. Here we say that $F$ is asymptotically holomorphic of order $\beta>0$ along some set $K$ if
$\frac{\partial}{\partial\bar{z}}F(z)= O(\dist(z,K)^{\beta-1})$. 
This point of view is considered in \cite{CvST} and \cite{CdFvS}. For example, in the latter paper
$C^{3+\alpha}$-interval maps with $\alpha > 0$ are considered. Such maps have an asymptotically holomorphic extension to the complex plane of order $3+\alpha$. 
 The analogue of the Fatou--Julia--Sullivan theorem and a topological straightening theorem is shown in this setting. In particular, these maps do not have wandering domains and their Julia sets are locally connected.

\section{Examples of possible pathologies of general box mappings}
\label{Sec:Pathologies}

The goal of this section is to point out some ``pathological issues'' that can occur if we consider a general box mapping, without knowing that it comes from a (more) globally defined holomorphic map. We start with the following result:

\begin{theorem}[Possible pathologies of general box mappings] 
\label{Thm:Path}
There are complex box mappings  $F_i \colon \U_i \to \V_i$, $i \in \{1,2,3\}$ with the following properties:
\begin{enumerate}
\item  
\label{It:PathAs1}
We have that $K(F_1) = \V_1$ and $J_\U(F_1) = \emptyset$.
\item  
\label{It:PathAs2}
The filled Julia set $K(F_2)$ has full Lebesgue measure in $\U_2$, empty interior,
and there exists a positive (indeed full) measure set of points in $K(F_2)$ that does not accumulate on any critical point.
Moreover, both $J_K(F_2)$ and $K(F_2)$ carry invariant line fields.
\item 
\label{wandering}
$\V_3$ is a disk and each connected component $U$ of $\U_3$ is compactly contained in $\V_3$ and 
contains a wandering disk for $F_3$.
\end{enumerate}
These examples are constructed so that $\mathcal U_i, \mathcal V_i$ and $F_i$ are
symmetric with respect to the real line. 
\end{theorem}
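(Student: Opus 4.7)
The plan is to construct the three complex box mappings separately, choosing the simplest available model in each case and arranging the data to be symmetric with respect to complex conjugation.

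For (1) the simplest choice suffices: set $\V_1=\U_1=\disk$ and $F_1(z)=z^2$. Definition~\ref{Def:BM}(3) explicitly allows a component of $\U$ to coincide with a component of $\V$, so this is a valid complex box mapping (one critical point at the origin, proper of degree $2$). Since $F_1(\disk)\subset\disk$, every forward orbit remains in $\disk$, so $K(F_1)=\bigcap_n F_1^{-n}(\disk)=\V_1$, which is open; hence $J_\U(F_1)=\partial K(F_1)\cap\U_1=\partial\disk\cap\disk=\emptyset$. The construction is real-symmetric.

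For (3) I would build a shift-like map. Take $\V_3=\disk$ and choose a pairwise disjoint family $\{U_n\}_{n\ge 0}$ of Jordan disks $U_n\Subset\disk$ with pairwise disjoint closures, all symmetric under complex conjugation, together with real base points $p_n\in U_n\cap\R$. Let $F_3\colon U_n\to\disk$ be the unique real-symmetric biholomorphism satisfying $F_3(p_n)=p_{n+1}$. Then $F_3\colon\U_3\to\V_3$ with $\U_3=\bigsqcup_n U_n$ is a complex box mapping with no critical points. For each $n$, by continuity a sufficiently small disk $W_n\Subset U_n$ around $p_n$ satisfies $F_3^k(W_n)\Subset U_{n+k}$ for all $k\ge 0$. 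Because the $U_m$ are pairwise disjoint and the index strictly increases along the forward orbit, the disks $W_n,F_3(W_n),F_3^2(W_n),\dots$ lie in pairwise distinct components, there are no attracting cycles, and thus $W_n$ is a wandering disk inside $U_n$.

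For (2) I would use a Lattès-inspired construction, since Lattès dynamics is the canonical source of measurable invariant line fields. Take $\V_2=\disk$ and cover $\V_2$, up to a set of Lebesgue measure zero that is nevertheless dense in $\V_2$, by a pairwise disjoint countable family of Jordan disks $\{U_n\}_n$ compactly contained in $\V_2$. Define each branch $F_2|_{U_n}\colon U_n\to\V_2$ to be a biholomorphism (so $F_2$ has no critical points, and the condition on $\PC(F_2)$ is vacuous) chosen so that the whole system is the pullback, via a quasiconformal uniformization of $\disk$ by a suitable fundamental domain of the underlying torus, of an affine expansion on the flat torus associated with a degree $d$ Lattès map $L\colon\Cc\to\Cc$; this coordinated choice lets one pull back the $L$-invariant horizontal line field $\mu_L$ to an $F_2$-invariant measurable line field on $K(F_2)$. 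Since $\V_2\setminus\U_2$ has Lebesgue measure zero and each branch of $F_2$ is holomorphic and nonconstant, every iterated preimage of $\V_2\setminus\U_2$ has Lebesgue measure zero, so by a countable union the escape set $\U_2\setminus K(F_2)$ has measure zero, giving $\area(K(F_2))=\area(\U_2)$. Density of $\V_2\setminus\U_2$ in $\V_2$ and density of its iterated preimages in $\U_2$ force $K(F_2)$ to have empty interior. All data is chosen to be complex-conjugation symmetric.

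The main obstacle is part (2): the coordination of the inverse branches so as to descend from a genuine Lattès map while simultaneously achieving compactly contained Jordan components, dense measure-zero complement of $\U_2$ in $\V_2$, and an honest invariant line field on $K(F_2)$. Parts (1) and (3) are essentially elementary; the content of the theorem lies in (2), and it is best viewed as a special case of the Latt\`es box mappings discussed later in the paper.
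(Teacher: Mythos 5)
Your part (1) works: $z\mapsto z^2$ on $\U_1=\V_1=\disk$ realizes the same pathology as the paper's choice $F_1=\mathrm{id}$ (the point is simply that a component of $\V_1$ coincides with a component of $\U_1$, so nothing escapes).

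The serious gap is in part (3). The claim that ``by continuity a sufficiently small disk $W_n\Subset U_n$ around $p_n$ satisfies $F_3^k(W_n)\Subset U_{n+k}$ for all $k\ge 0$'' is false for a generic choice of the $U_n$. Continuity gives, for each fixed $k$, a disk $W_{n,k}$ that works up to time $k$, but these may shrink to a point as $k\to\infty$; to produce a single positive-radius wandering disk you must control the accumulated expansion, and this fails without tuning the $U_n$. Precisely: each branch $F_3\colon U_m\to\disk$ is a biholomorphism, hence an isometry from the hyperbolic metric $\rho_{U_m}$ onto $\rho_{\disk}$, while by the Schwarz lemma $\rho_{U_m}\ge\rho_{\disk}$ on $U_m$ (strictly, and uniformly by a factor $>1$ if $\modulus(\disk\setminus\overline{U_m})\ge\mu>0$). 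Thus $\diam_{\rho_{\disk}}(F_3^{k+1}(W_n))=\diam_{\rho_{U_{n+k}}}(F_3^{k}(W_n))\ge\diam_{\rho_{\disk}}(F_3^{k}(W_n))$, and if the $U_m$ are uniformly compactly contained in $\disk$ the hyperbolic diameter of $F_3^{k}(W_n)$ grows geometrically, so it must eventually exceed the (bounded) hyperbolic diameter of $U_{n+k}$: no wandering disk can survive. The paper's construction is tuned exactly against this obstruction: the disks $g_k(\disk_{a_k})$ degenerate towards $\partial\disk$ because $a_k\nearrow 1$, and the normalization $\prod_k a_k=1/2$, via the explicit identity $F^n(x)=g_{n+1}(x/(a_1\cdots a_n))$, guarantees that $|x/(a_1\cdots a_n)|<a_{n+1}$ whenever $|x|<1/2$, so that $\disk_{1/2}$ and all its forward images fit inside the nest. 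This degeneration must be built into the choice of the $U_n$; it does not come for free.

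Part (2) is both overbuilt and slightly inconsistent. First, $\V_2\setminus\U_2$ cannot be dense in $\V_2$: since $\U_2$ is open, $\V_2\setminus\U_2$ is closed in $\V_2$, and a dense closed subset of $\V_2$ would be all of $\V_2$. (What you actually want, and what the paper uses to conclude that $K(F_2)$ has empty interior, is that the escape set $\bigcup_{n\ge 1}F_2^{-n}(\V_2\setminus\U_2)$ is dense in $\U_2$; this holds because each puzzle piece maps onto $\V_2$ and therefore contains preimages of the nonempty set $\V_2\setminus\U_2$.) Second, the Latt\`es-plus-quasiconformal-uniformization route is heavy machinery and is not carried out, whereas a completely elementary construction suffices: the paper takes $\V_2$ to be a square, $\U_2$ the union of squares complementary to the Sierpi\'nski carpet (so $\V_2\setminus\U_2$ has measure zero and each component of $\U_2$ is compactly contained with disjoint closures), each branch the affine surjection of its square onto $\V_2$, and then the horizontal line field is manifestly invariant under real-affine maps of squares. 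This yields full measure, empty interior, and the invariant line field in a few lines, with no critical points and no Latt\`es or quasiconformal input at all.
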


\begin{remark} 
Assertion (\ref{wandering}) shows that for general box mappings the diameter of an infinite sequence of distinct puzzle pieces $P_k$ does not need to shrink to zero as $k\to \infty$, even if their depths tend to infinity. 
Assertion~\eqref{It:PathAs2} shows that, even though a complex box mapping may be ``expanding", its Julia set can have positive measure.
\end{remark}

\begin{proof} To prove \eqref{It:PathAs1}, take $\U_1=\V_1 = \disk$ and $F_1$ is the identity map. Then $K(F_1) = \V_1$ and $J_\U(F_1)=\emptyset$ (in particular $K(F_1)$ is not closed).  

The example of \eqref{It:PathAs2} is based on the {\em Sierpi\'nski carpet} construction. Consider the square $\V_2 := (-1,1)\times (-1,1)$ cut into $9$ congruent sub-squares in a regular 3-by-3 grid, and let $U_1$ be the central open sub-square. The same procedure is then applied recursively to the remaining 8 sub-squares; this defines $U_2$ as the union of the $8$ central open sub-sub-squares. Repeating this ad infinitum, we define an open set $\U_2$ to be the union of all $U_i$. Note that $\U_2$ has full Lebesgue measure in $\V_2$ because the Lebesgue measure of $\V_2 \sm \left(\cup_{i \le n} U_i \right)$ is equal to $4(8/9)^n$. 

Define $F_2$ on each component $U$ of $\U_2$ as the affine conformal surjection from $U$ onto $\V_2$. Since $K(F_2) = \bigcap_{n \ge 1} K_n$, where $K_n := F_2^{-n}(\V_2)$, the set $K(F_2)$ also has full Lebesgue measure in $\U_2$, and $J_K(F_2) = K(F_2)$ as $K(F_2)$ has no interior points. Clearly, the horizontal line field in both $K(F_2)$ and $J_K(F_2)$ is invariant under $F_2$. 

To prove \eqref{wandering}, take a monotone sequence of numbers $a_i \in (0,1)$ such that $a_i \nearrow 1$ and
\begin{equation}
\label{Eq:Product}
\prod_{i=1}^\infty a_i = 1/2.
\end{equation}

Construct \emph{real} M\"obius maps $g_i \colon \disk \to \disk$ inductively as follows. Let $g_1$ be the identity map. Take $g_2$ such that it maps $\disk$ onto $\disk$ and $\disk_{a_2}$ to some disk to the right of $g_1(\disk_{a_1})=\disk_{a_1}$. Then assuming that $g_1,\ldots, g_{k-1}$ are defined for some $k \ge 2$, define $g_k$ to be so that it maps $\disk$ onto $\disk$ and so that 
$g_k(\disk_{a_k})$ is strictly to the right of $g_{k-1}(\disk_{a_{k-1}})$. It follows that $g_k(\disk_{a_k})$ is disjoint from $g_i(\disk_{a_i})$ for all $
i<k$. 

Next define a box mapping $F \colon \U \to \V$ (which will play the role of $F_3 \colon \U_3 \to \V_3$) by taking
\[
\V = \disk_1, \quad \U = \bigcup_{k \ge 1} g_k(\disk_{a_k}), \quad F(x) = g_{k+1}(g_k^{-1}(x)/a_k) \text{ if } x \in g_k(\disk_{a_k})
\]
(see Figure~\ref{Fig:WanderingDisk}). 

\begin{figure}[htbp]
\begin{center}
\includegraphics[scale=2, trim=32 20 30 20, clip]{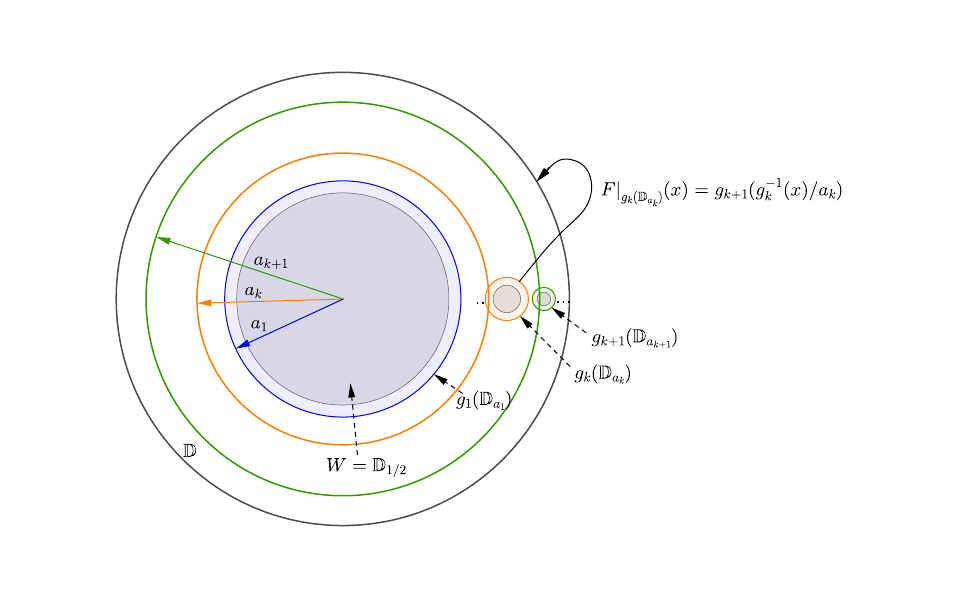}
\caption{The wandering disk construction in Theorem~\ref{Thm:Path}~\eqref{wandering}: the colored disks are components of the domain of the box mapping $F$, while the gray shaded disks are part of the trajectory of the wandering disk $W$.}
\label{Fig:WanderingDisk}
\end{center}
\end{figure}

Let us show that  $W:=\disk_{1/2}$ is a wandering disk. Observe that~\eqref{Eq:Product} implies $a_1 \cdot \ldots \cdot a_n > 1/2$ for every $n \ge 1$, and thus $a_1 \cdot \ldots \cdot a_n > |x|$ for every $x \in W$. Therefore, if $x \in W$, then $x \in g_1(W)$ and so $F(x)=g_2(g_1^{-1}(x)/a_1)=g_2(x/a_1)\in g_2(\disk_{a_2})$. Similarly,
\[
F^2(x)=g_3(g_2^{-1}(g_2(x/a_1))/a_2)=g_3(x/(a_1a_2))\in g_3(\disk_{a_3}).
\]
Continuing in this way, for each $x \in W$ and each $n\ge 0$ we have $F^n(x)=g_{n+1}(x/(a_1 \cdot \ldots \cdot a_n))$. It follows that $F^n(W)\subset g_{n+1}(\disk_{a_{n+1}})$ and therefore $W$ is a wandering disk. 
\end{proof}

\subsection{A remark on the definitions of $K(F), J_\U(F)$ and $J_K(F)$}
\label{sec:Julia}

There is no canonical definition of the Julia set of a complex box mapping, so we have given two possible contenders: $J_\U(F) = \partial K(F)\cap \U$ and $J_K(F)=\partial K(F)\cap K(F)$. In routine examples, neither $J_K(F)$ nor $K(F)$ is closed, but $J_\U(F)$ is relatively closed in $\U$. Moreover, $J_\U(F)$ 
can strictly contain $K(F)$. 

While the definitions of $K(F), J_\U(F)$ and $J_K(F)$ are similar to the definitions of the filled Julia set and Julia set of a polynomial-like mapping, when a complex box mapping has infinitely many components in its domain, the properties of its $K(F), J_\U(F)$ and $J_K(F)$ can be quite different. For example, let $F\colon\U\to\V$ be a complex box mapping associated to a unimodal, real-analytic mapping $f\colon [0,1]\to[0,1]$, with critical point $c$ and the property that its critical orbit is dense in $[f(c), f^2(c)]$ (see Figure~\ref{Fig:Dense} and the discussion about real-analytic maps in Section~\ref{SSec:raExamples} on how to construct such a box mapping). Then $\V$ will be a small topological disk containing the critical point, and $\U$, the domain of the return mapping to $\V$ will be a union of countably many topological disks contained in $\V$ with the property that $\U\cap\mathbb R$ is dense in $\V\cap\R$. In this case, one can show that
$$
K(F) \cap \R = (\V\cap\R)\setminus \bigcup_{n \ge 0} F^{-n}(E),$$ where $E$ is the hyperbolic set of points in the interval whose forward  orbits under $f$
avoid $\V$. Thus $K(F) \cap \R$ is a dense set of points in the interval $\V\cap\R$. Thus $J_\U(F)$ is the union of open intervals $\U\cap\mathbb{R}$, and it is neither forward invariant nor contained in the filled Julia set. Nevertheless it is desirable to consider $J_\U(F)$, since it agrees with the set of points in $\U$ at which the iterates of $F$ do not form a normal family.

\subsection{An example of a box mapping for which a full measure set of points converges to the boundary} 
\label{SSec:FullMeasureToBoundary}

In this section, we complement example \eqref{It:PathAs2} in Theorem~\ref{Thm:Path} by showing that not only we can have the non-escaping set of a general box mapping $F \colon \U \to \V$ to be of full measure in $\U$, but also almost all points in $K(F)$ are ``lost in the boundary'' as their orbits converges to the boundary under iteration of $F$; an example with such a pathological behavior is constructed in Proposition~\ref{Prop:TEx} below. 

Note that the box mapping $F_2 \colon \U_2 \to \V_2$ constructed in Theorem~\ref{Thm:Path}~(\ref{It:PathAs2})
had no critical points. It is not hard to modify this example so that the modified map has a non-escaping critical point. Indeed, let $\hat \V_2 := \V \sqcup U$ for some topological disk $U$, $\hat \U_2 := \U_2 \sqcup U$, and define a map $\hat F_2 \colon \hat \U_2 \to \hat \V_2$ by setting $\hat F_2|_{\U_2} = F_2$, $\hat F_2 (U) = \V_2$ and so that $\hat F_2|_U$ is a branched covering of degree at least two so that the image of a critical point lands in $K(F_2)$. This critical point for $\hat F_2$ will then be non-escaping and non-recurrent. Contrary to this straightforward modification, the box mapping constructed in the proposition below has a recurrent critical point and the construction is more intricate.

\begin{proposition}[Full measure converge to a point in the boundary]
\label{Prop:TEx}
There exists a complex box mapping $F\colon\U\to\V$ with $\Crit(F) \cap K(F) \neq \emptyset$ and with the property that the set of points $z\in K(F)$ whose orbits converge to a boundary point 
of $\V$ has full measure in $\U$.
\end{proposition}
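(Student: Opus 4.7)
The plan is to build upon the Sierpi\'nski-carpet construction of Theorem~\ref{Thm:Path}\eqref{It:PathAs2} by introducing a spatial asymmetry so that typical orbits drift toward a chosen boundary point $p\in\partial\V$. The idea is to shape the components of $\U$ as ``peanut''-like Jordan domains with very narrow necks, chosen so that the conformal isomorphism from each peanut onto $\V$ sends a small region near $p$ onto the big lobe, which contains nearly all of the peanut's area; this forces a single iterate of $F$ at a typical point to drop into a deeper layer around $p$.

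To set up, I would fix $\V$ to be a Jordan disk with a distinguished boundary point $p\in\partial\V$, and pick a nested sequence $\V\supset V_1\supset V_2\supset\cdots$ of Jordan sub-disks with $\bigcap_n V_n=\{p\}$ and $\diam V_n\to 0$. I would then recursively tile $\V$ by a countable family of Jordan domains $U_n^{(j)}$ with pairwise disjoint closures, each $U_n^{(j)}\subset L_n:=V_n\setminus\cl V_{n+1}$ (the $n$-th layer), so that $\U:=\bigsqcup_{n,j} U_n^{(j)}$ has full Lebesgue measure in $\V$. Each $U_n^{(j)}$ would be shaped as a peanut whose narrow neck connects a big lobe (containing most of $\area(U_n^{(j)})$, positioned adjacent to $V_{n+1}$) to a small lobe lying farther from $p$. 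I would define $F|_{U_n^{(j)}}$ to be the conformal isomorphism of $U_n^{(j)}$ onto $\V$ arranged so that its inverse sends $V_{n+1}$ onto the big lobe. By the extremal-length characterization of conformal modulus, making the neck sufficiently narrow would force the measure of $F|_{U_n^{(j)}}^{-1}(V_{n+1})$ to be at least $(1-\varepsilon_n)\area(U_n^{(j)})$, with a sequence $\varepsilon_n$ that can be chosen summable.

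To secure $\Crit(F)\cap K(F)\neq\emptyset$, on finitely many peanuts---say on $U_0^{(0)}$---I would replace the conformal iso by a degree-$2$ proper branched covering of $\V$, placing the unique critical point $c$ in the big lobe and its critical value in some $V_N$ with $N$ large. The drift mechanism coming from the other components would then carry the forward orbit of $c$ into successively deeper layers, so that $c\in K(F)$ and $F^k(c)\to p$. The finite-critical-points requirement of Definition~\ref{Def:BM}\eqref{It:DefBM1} is met because only finitely many components are modified this way. A first Borel--Cantelli argument, applied to the drift estimate with $\sum_n\varepsilon_n<\infty$, then shows that for Lebesgue-a.e.\ $z\in K(F)$ one has $F^k(z)\in V_{n_k}$ with $n_k\to\infty$ and hence $F^k(z)\to p$. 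Combined with $\area(\U)=\area(\V)$ (which gives $\area(K(F))=\area(\U)$ by the usual Sierpi\'nski-type iteration), this yields the desired full-measure convergence to $p$ in $\U$.

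The principal obstacle will be the combinatorial and geometric balancing act of simultaneously realizing (i) a disjoint-closure tiling of $\V$ by peanut-shaped Jordan domains $U_n^{(j)}$ of full Lebesgue measure, (ii) necks sharp enough at every level to yield the drift estimate with summable $\varepsilon_n$, and (iii) consistency of the conformal maps $F|_{U_n^{(j)}}$ across all layers so that the ``big lobe'' of each peanut precisely corresponds to the preimage of $V_{n+1}$. The construction will have to be carried out inductively, with each layer's parameters chosen in terms of those of all earlier layers, and will rely on delicate extremal-length/conformal-modulus estimates combined with the standard ``remove-a-central-piece'' Sierpi\'nski tiling logic.
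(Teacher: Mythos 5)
Your overall architecture is broadly parallel to the paper's: both build a layered tiling that accumulates on a single boundary point, use conformal branches that concentrate mass toward that point, insert a single degree-two component to produce the critical point, and use a summable error sequence $\varepsilon_n$ to obtain a uniform positive escape proportion. The paper realizes the drift by post-composing with inductively chosen M\"obius transformations inside Riemann coordinates for a square (rather than using ``peanut'' domains with pinched necks), but that is a cosmetic difference -- the conformal-modulus estimate you would need is the same. Where you differ substantively, and where there are genuine gaps, is in two places.

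First, the measure-theoretic conclusion. ``A first Borel--Cantelli argument'' does not go through as stated. The events ``the orbit fails to go deeper at time $k$'' depend on the branch of $F^{k}$, whose components can sit in shallow layers for all $k$, so their Lebesgue measures are not summable in $k$. What the summability $\sum_n \varepsilon_n < \infty$ actually buys, as in the paper, is that a uniform proportion $C_0 = \prod_n (1-\varepsilon_n) > 0$ of every single tile escapes monotonically to $p$. One must then combine this with a pre-fixed Koebe distortion constant for branches of $F^k$ restricted to single-tile preimages and finish with a Lebesgue density point argument: if the exceptional set had positive measure, the uniform escape proportion inside the pullbacks of tiles around a density point gives a contradiction. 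Your proposal needs to replace the Borel--Cantelli step by this density-point argument (or spell out how to make a conditional Borel--Cantelli rigorous under the distortion control), and it should isolate a distortion constant $K$ at the outset, as the paper does.

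Second, and more serious, the critical point. You place the critical value ``in some $V_N$'' and then claim the drift mechanism forces $c \in K(F)$. But the drift is an almost-everywhere statement, not a pointwise one, so you cannot invoke it for a specific point without more. Worse, there is a circularity: the critical value $v$ determines $F|_{U_0^{(0)}}$, hence determines $F$, hence determines $K(F)$ -- so ``choose $v \in K(F)$'' is not an a priori well-posed instruction. You would need to choose $v$ so that its forward orbit never re-enters $U_0^{(0)}$ (eliminating the circularity), which is possible but requires an argument. The paper avoids this entirely by making $F|_{S_0}$ a polynomial-like Fibonacci map: the critical orbit is then confined to the measure-zero filled Julia set $K_0 \subset S_0$, so $c \in K_0 \subset K(F)$ without any reference to the external dynamics, and simultaneously $\area(\bigcup_n F^{-n}(K_0)) = 0$, so the full-measure escape is unaffected. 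Note also that the paper explicitly constructs a \emph{recurrent} critical point for this reason (the paragraph preceding Proposition~\ref{Prop:TEx} flags the non-recurrent case as the easy modification); your construction, with a critical value escaping deep into the nest, gives a non-recurrent critical point, which does literally satisfy the statement, but you must also verify that the non-escaping set of $F|_{U_0^{(0)}}$ (a degree-two map whose critical point escapes $U_0^{(0)}$) has measure zero so it does not spoil the a.e.\ convergence -- this holds by hyperbolicity but should be said.
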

 
 \begin{proof}
Let $\V$ the square with corners at $(3/2, 3/2), (-3/2, 3/2), (-3/2, -3/2)$ and $(3/2, -3/2)$. We will construct $\U$ so that it tiles 
$\V$.
Let $S_0$ be the open square with side-length one centered at the origin. It has corners at $(1/2,1/2), (-1/2,1/2), (-1/2,-1/2),$ and $(1/2,-1/2).$ Let $S_1$ be the union of twelve open squares, $Q_{i,j}$, each with side-length $1/2$, surrounding $S_0$,  so that $S_1$ together with $S_0$ tiles the square with corners at $(1,1), (-1,1), (-1, 1),$ and $(1,-1)$. We call $S_1$ the first {\em shell}. Inductively, we construct the $i$-th shell as the union of open squares with side length $1/2^i$, surrounding $S_{i-1}$, so that
$S_0\cup S_1\cup\dots\cup S_{i-1}\cup S_i$ tiles the square centered at the origin with side length $2(3/2 - 1/2^i)$. Inside of each square $Q_{i,j}$ in shell $i$, we repeat the Sierpi\'nksi carpet construction of 
Theorem~\ref{Thm:Path}~\eqref{It:PathAs2}. 
These open sets, which consist of small open squares,  together with the central component $S_0$, will be the domain of the complex box mapping we are constructing.

Let us fix a uniformization $\psi:\disk\to \hat Q$, where $\hat Q$ is the square with side length $3$, which we may identify with $\V$ by translation. Fix any $i\in\mathbb N$, and let $R$ be a square given by the Sierpi\'nski carpet construction in the $i$-th shell. 

Let $A_i$ be the linear mapping that rescales $R$ so that $A_i(R)$ has side length 3. 
We will define $F|_{R}\colon R\to \V$ by
$$F|_R =  \psi \circ M_i \circ \psi^{-1}\circ A_i,$$
where $M_i$ is a M\"obius transformation  that we pick inductively. 
To make the construction explicit, we take $M_i(-1)=-1$, $M_i(1)=1$ and determine
$M_i$ by choosing a point $z_i\in (-1,0)$ so that $M_i(0)=z_i$. 
Note that for any disks $D,D'$ centered at $-1$ respectively $1$
one can choose $z_i$ so that $M_i(\disk\setminus D')\subset \disk\cap D$. 
For later use, let $K$ be so that for any univalent map $\phi\colon U\to \V$
and for any square $Q_{i,j}$ from the initial partition the following inequality holds: 
$$\dfrac{|\Df\phi(z)|}{|\Df\phi(z')|} \le K$$
for all $z,z'\in U$ with $\phi(z),\phi(z')\in R$.

For any $j\in\mathbb N\cup\{0\}$, we let $T_j = \overline{\cup _{k=0}^j S_k}$.
To construct $M_1$,
we choose $z_1$ so close to $\partial \disk$ that the set of points in $R$ which are mapped to $T_1$ by $F|_R = \psi\circ M_1\circ \psi^{-1}\circ A_1$ has area at most $\mathrm{meas}(R)/2$. Up to different choices of rescaling, we define $F$ in the same way on each component $R$ contained in $S_1$.
Assuming that $M_{i-1}, i \ge 2,$ has been chosen, 
let $R$ be a square in $S_i$ and
pick $z_{i}\in (-1,0)$ close enough to $-1$ so that  
\begin{equation} 
\frac{\area(\{z\in R : F(z)\in L_i
\})}
{\area(R)} >1 -  \frac{1}{K^2 \cdot 2^{i}}, \label{eq:ind-li}
\end{equation} 
where 
$$L_i:= (-3/2,-3/2+2^{-i})\times (-2^{-i},2^{-i}).$$
Again, up to rescaling, we define $F$ identically on each component of the domain in $S_i$.
Continuing in this way, we extend $F$ to each shell.

Let $z_0\in S_{i_0}$, and for $k\in\mathbb N$ define $i_k$ so that  $F^k(z_0)\in S_{i_k}$.
We say that the orbit of $z_0$  {\em escapes monotonically to} $\partial \V$ if $i_k$ is a strictly increasing sequence. Let $W$ denote the set of points in $\cup_{i=1}^\infty S_i$ whose orbits escape monotonically to $(-3/2,0)\in \partial V$.

\begin{claim}
There exists $C_0>0$ so that for any $i\ge 1$ and any component of the domain $R$ in $S_i$,
${\area(W\cap R)}/{\area(R)}$ is bounded from below by $C_0$. 
\end{claim}

\medskip
\noindent
\textit{Proof of the claim.}
Note that the square $L_i$ is disjoint from $T_i$ and that $L_i$ is a union of squares from the initial
partition used to define the shells. 
Hence $W\cap R$ contains the set of points so that 
$\{z\in R; F(z)\in L_i, F^2(z)\in L_{i+1},\dots,\}$. To obtain a lower bound for the Lebesgue measure of this set, let $R(F^k(z))$
be the rectangle containing $F^k(z)$ and 
notice that by (\ref{eq:ind-li}) and the Koebe Distortion Theorem (see Appendix~\ref{A1}), 
$$\frac{\mathrm{meas}(\{z\in R; F(z)\in L_i, F^2(z)\in L_{i+1},\dots,F^k(z)\in L_{k}, F^{k+1}(z)\in L_{k+1}\})}{ \mathrm{meas}(\{z\in R; F(z)\in L_i, F^2(z)\in L_{i+1},\dots,F^k(z)\in L_{k})\}}\ge 1-2^{-i}.
$$
The claim follows. 
\checkmark

\medskip

Let us now define $F$ on $S_0$. Let $f\colon z\mapsto z^2$,
$Y$ be the disk of radius 16 centered at the origin and $X = f^{-1}(Y)$. Choose real symmetric conformal mappings $H_0:S_0\to X$ and $H_1:\V\to Y$ which sends the origin to itself. Let $m_{a} = \frac{z-a}{1-\bar a z}$ be the family of real symmetric M\"obius transformations of $Y$.
Consider the family of mappings 
$$F_a = H_1^{-1} \circ m_a \circ f  \circ H_0: S_0\to\V.$$ For $a=0,$ $F_a$ is a real-symmetric, polynomial-like  mapping with a super-attracting critical point at 0 of degree 2, which is a minimum for the mapping restricted to its real trace. As $a$ varies along the positive real axis from 0, the critical value of the mapping $F_a$ varies along the negative real axis from 0 until it escapes $\V\sm S_0$. Thus the family of mappings $F_a|_{S_0}$ is a full real family of mappings \cite{RealBook}, and hence it contains a mapping conjugate to the real Fibonacci mapping. Let $F\colon S_0\to\V$ denote this mapping. Now we have defined $F\colon\U\to\V$, where $\U$ tiles $\V$.

Let us now show that under $F$ a full measure set of points in $\U$ converge to $(-3/2,0)\in \partial \V$.
First, recall that the filled Julia set $K(f)$ of the quadratic Fibonacci mapping has measure zero \cite{Lyubich - measure}. Since $F:S_0\to \V$ is a polynomial-like mapping that is quasiconformally conjugate to the quadratic Fibonacci mapping, and quasiconformal mappings are absolutely continuous, we have that the filled Julia set of $F\colon S_0\to \V$ has measure zero. We will denote this set by $K_0$. Since the set of points whose orbits eventually enter $K_0$ is contained in the union of the countably many preimages of $S_0$, we have that the set of points that eventually enter $K_0$ has measure zero too. From the construction of $F$, we have that every puzzle piece contains points that map to $S_0$, so we have that almost every point in $K(F)$ either accumulates on the critical point of $F$ or converges to $\partial \V$. By the comment above, we may assume that this set of points is disjoint from the preimages of $K_0$.

Let us show that a.e.\ point in  $X_1=\{z; F^i(z)\notin S_0, i\ge 0\}$ converges to $(-3/2,0)$. 
Suppose not. Then there exists a set $X_1'\subset X_1$ for which this is not the case. 
Let $z_0$  be a Lebesgue density point of $X_1'$.  Let $Q_k$ be the puzzle piece containing 
$z_0$ of level $k$. Then $F^{k-1}(Q_k)=R$ for some rectangle $R$ with $R\cap S_0=\emptyset$. 
By the above claim  and the Koebe Distortion 
Theorem a definite proportion of $Q_k$ is mapped into the set $W$ of points which converge
monotonically to $(-3/2,0)$, thus contradicting that $z_0$ is a Lebesgue density point of $X_1'$. 

Let $X_0$ be the set of points which enter $S_0$ and 
which are not eventually mapped into the zero Lebesgue measure set $K_0$. 
Let $X_0'$ be the set of points in $X_0$ which do not converge to $(-3/2,0)$
and let $z_0$ be  a Lebesgue density point of $X_0'$. Note that by the previous paragraph a.e. point 
in $X_0'$ enters $S_0$ infinitely many times. 
Let $r_0 \ge 0$ be minimal so that $F^{r_0}(z_0)$ is contained in $S_0$. 
Let $k_0>r_0$ be minimal so that $F^{k_0}(z_0)\in S_i$ for some $i>0$, and let $R_{k_0}$ denote the square of $S_i$ that contains $F^{k_0}(z_0)$. Inductively, define $r_\ell>r_{\ell-1}$ minimal so that $F^{r_\ell}(z_0)\in S_0,$ and $k_\ell>r_\ell$, minimal so that $F^{k_\ell}(z_0)\in S_i, i>0,$ and let $R_{k_\ell}$ be the square so that $F^{k_\ell}(z_0)\in R_{k_\ell}$. By the choice of 
$K$ at the start of the proof, the distortion of the mapping $F^{k_\ell}|_{Q_\ell}\colon Q_\ell\to R_{k_\ell}$ where $Q_\ell  = \Comp_{z_0} F^{-k_\ell}(R_{k_\ell})$ is bounded by $K$.
We have already proved that in each rectangle $R\subset S_i$, $i\ge 1$ a definite proportion of points converge monotonically to the boundary point $(-3/2,0)$; hence at arbitrarily small scales around $z_0$, a definite mass of points escapes is mapped into the set $W$ (which converge to $(-3/2,0)\in \partial \V$), which implies that $z_0$ cannot be a Lebesgue density point of $X_0'$.  
\end{proof}

\section{Dynamically natural box mappings} 
\label{ASec:Subsets}

In this section we introduce the concept of a \emph{dynamically natural} complex box mapping. These are the maps for which various pathologies from Section~\ref{Sec:Pathologies} disappear, and which arise {naturally} in the study of rational maps on $\Cc$.

In order to define the concept, let us start by introducing two dynamically defined subsets of the non-escaping set. 

\subsection{Orbits that avoid critical neighborhoods}

The first subset consists of points whose orbits avoid a neighborhood of $\Crit(F)$.
Let $A \subset K(F)$ be a finite set and $W$ be a union of finitely many puzzle pieces. We say that $W$ is a \textit{puzzle neighborhood} of $A$ if $A \subset W$ and each component of $W$ intersects the set $A$.

If $\Crit(F) \neq \emptyset$, define 
\[
\Koc(F) := \left\{x \in K(F) \colon \exists W \text{ puzzle neighborhood of }\Crit(F) : \orb(x) \cap W = \emptyset\right\};
\]
otherwise, i.e.\ when $F$ has no critical points, we set $\Koc(F) \equiv K(F)$.

It is easy to see that the set $\Koc(F)$ is forward invariant with respect to $F$.

\subsection{Orbits that are well inside}
The second subset consists of any point in $K(F)$ whose orbit from time to time visits components of $\U$ that are well-inside of the corresponding components of $\V$. More precisely, let 
\[
m_F(x) := \modulus \left(\Comp_x \V \sm \overline{\Comp_x \U}\right),
\] 
and for a given $\delta > 0$, set 
\[
K_\delta (F) := \left\{y \in K(F) : \limsup_{k \ge 0} m_F(F^k(y)) > \delta \right\}.
\]
Define 
\[ 
\Kwi(F) := \bigcup_{\delta>0} K_\delta(F).
\] 
A component $U$ of $\U$ is said to be \emph{$\delta$-well-inside $\V$} if $m_F(x) > \delta$ for some (and hence for all) $x \in U$. Thus the set $\Kwi(F)$  consists of the points $x\in K(F)$  whose orbit visits infinitely often components of $\U$ that are $\delta$-well inside for some $\delta = \delta(x)>0$. By definition, the set $\Kwi(F)$ is forward invariant with respect to $F$.

\subsection{Dynamically natural box mappings}

\begin{definition}[No permutation condition and dynamical naturality]
\label{Def:NaturalBoxMapping}
A complex box mapping $F \colon \U \to \V$ satisfies the 
{\em no permutation condition} if  
\begin{enumerate}
\item
\label{It:Nat1}
for each component $U$ of $\U$ there exists $n \ge 0$ so that $F^n(U) \sm \U \neq \emptyset$.
\end{enumerate}
The mapping $F  \colon \U \to \V$ is called 
\emph{dynamically natural} if it \emph{additionally} satisfies the following assumptions:
\begin{enumerate}
\setcounter{enumi}{1}
\item
\label{It:Nat2}
the Lebesgue measure of the set $\Koc(F)$ is zero;
\item
\label{It:Nat3}
$K(F) = \Kwi(F)$.
\end{enumerate}
\end{definition}

If $F$ does not satisfy the no permutation condition, then there exists a component
$U$ of $\U$  and an integer $k>0$ so that $F^k(U)=U$. Notice that 
this implies that $U,\dots,F^{k-1}(U)$ are all components of $\V$ and that $F$ cyclically permutes these components.

Let us return to the pathologies described in Theorem~\ref{Thm:Path}. 
Each of the box mappings $F_i$, $i \in \{1, 2, 3\}$, is not dynamically natural in the sense of Definition~\ref{Def:NaturalBoxMapping}: for the map $F_i$ the respective condition $(i)$ in that definition is violated. We see that 
\begin{itemize}
\item
the box mapping $F_1$ has no escaping points in $\U_1$; 
\item
the non-escaping set of $F_2$ is equal to $\Koc(F_2)$, and hence $F_2$ provides an example of a box mapping with $\Koc(F_2)$ of non-zero Lebesgue measure;
\item
the wandering disk constructed for $F_3$ does not belong to $\Kwi(F_3)$, 
and hence $K(F_3) \sm \Kwi(F_3)$ is non-empty.  
\end{itemize}

Moreover, the example constructed in Proposition~\ref{Prop:TEx} is also not a dynamically natural box mapping as it violates condition \eqref{It:Nat2} of the definition of naturality.

The following lemma implies that no dynamically natural box mapping has the pathology described in Theorem~\ref{Thm:Path}~\eqref{It:PathAs1}. 

\begin{lemma}[Absence of components with no escaping points]  
\label{Lem:NE}
If $F \colon \U \to \V$ is a complex box mapping that satisfies the no permutation condition (Definition~\ref{Def:NaturalBoxMapping} \eqref{It:Nat1}), then 
\begin{itemize}
\item
$J_K(F)\ne \emptyset$;
\item 
each component of $K(F)$ is compact;
\item 
if $\U$ has finitely many components, then $K(F)$ and $J_\U(F) = J_K(F)$ are compact.
\end{itemize}
\end{lemma}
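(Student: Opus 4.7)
All three assertions rest on the following consequence of the no permutation condition: for every component $U$ of $\U$, some iterate $F^n(U)$ equals a \emph{bad} component $V$ of $\V$ (one with $V\sm\U\ne\emptyset$), and any component of $V\cap\U$ is then compactly contained in $V$.

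To show $J_K(F)\ne\emptyset$, consider the directed graph whose vertices are the finitely many components of $\V$, with an edge $V\to F(U)$ for each component $U\subseteq V$ of $\U$. This graph contains a directed cycle $V_0\to\ldots\to V_k=V_0$, and at least one $V_i$ in the cycle must be bad: otherwise every $V_i$ would itself be a component of $\U$, so $F^n(V_0)\subset\U$ for all $n$, contradicting no permutation for $V_0$. Relabelling cyclically so that $V_0$ is bad and pulling back along the cycle yields a proper holomorphic map $g=F^k\colon U_0'\to V_0$ with $U_0'\Subset V_0$, i.e.\ a polynomial-like map. Its filled Julia set $K_g$ is a non-empty compact subset of $K(F)$, and contains at least one repelling periodic point $p$ of $F$ (Julia's theorem if $\deg g\ge 2$; the Schwarz lemma applied to $g^{-1}\colon V_0\to U_0'\Subset V_0$ if $\deg g=1$). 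Choosing any $q\in V_0\sm\U$, iterated $g$-preimages $w_m$ of $q$ accumulate on $p$ by standard polynomial-like dynamics; each such $w_m$ satisfies $F^{kN_m}(w_m)=q\notin\U$ and hence escapes under $F$, so $p\in\overline{\U\sm K(F)}\cap K(F)=J_K(F)$.

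For each component of $K(F)$ being compact, let $C$ be a component of $K(F)$, let $C_j$ be the component of $K(F)$ containing the connected set $F^j(C)$, and write $U_j\supseteq C_j$ and $V_j\supseteq U_j$ for the containing components of $\U$ and $\V$. The first step is to show $\overline C\subset U_0$. If $z^*\in\overline C\cap\partial U_0$, choose the smallest $n_0\ge 1$ with $V_{n_0}$ bad (which no permutation for $U_0$ provides); then $V_1,\ldots,V_{n_0-1}$ are components of $\U$, so $F^{n_0}\colon U_0\to V_{n_0}$ is a well-defined proper holomorphic map. For $z_n\in C$ with $z_n\to z^*$, we have $F^{n_0}(z_n)\in U_{n_0}$ with $\overline{U_{n_0}}\subset V_{n_0}$ compact; by properness, $(F^{n_0})^{-1}(\overline{U_{n_0}})$ is a compact subset of $U_0$, which however contains the sequence $z_n\to z^*\in\partial U_0$, a contradiction. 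The same argument applied at every level gives $\overline{C_j}\subset U_j$, and induction on $j$ then shows $F^j(z)\in\overline{C_j}\subset U_j\subset\U$ for every $z\in\overline C$; hence $\overline C\subset K(F)$, and by maximality of $C$ as a component, $\overline C=C$, so $C$ is closed and therefore compact.

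For the third bullet, assume $\U$ has finitely many components and let $z_n\in K(F)$ with $z_n\to z$. A diagonal argument produces a subsequence along which, for each $j$, all iterates $F^j(z_n)$ lie in a single component $U_j$ of $\U$; applying the previous paragraph's argument to this subsequence in place of $C$ shows $z\in K(F)$, so $K(F)$ is closed and therefore compact. Since $K(F)\subset\U$ is closed, $\partial K(F)\subset K(F)\subset\U$, whence $J_\U(F)=\partial K(F)=J_K(F)$ is a closed subset of the compact set $K(F)$. The most delicate point is the second bullet: the single-level containment $\overline C\subset U_0$ alone does not force $\overline C\subset K(F)$; one must apply the same argument to each $C_j$ and feed the containments $\overline{C_j}\subset U_j$ back into the inductive construction of $F^j$ on $\overline C$, and only then do closedness and hence compactness of $C$ follow.
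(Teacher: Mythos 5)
Your argument is correct but takes a genuinely different route from the paper, and it is considerably heavier.

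\textbf{What the paper does.} The paper's proof is one short paragraph: for any nested sequence $P_1\supset P_2\supset\cdots$ of puzzle pieces, each inclusion $P_{i+1}\subset P_i$ is either an equality or a compact containment; if all but finitely many were equalities, the corresponding component of $\V$ would be permuted among components of $\U$, contradicting the no permutation condition; hence infinitely many inclusions are compact, so $\fib=\bigcap P_i=\bigcap\overline{P_{i_k+1}}$ is a nonempty compact connected subset of $K(F)$. Any limit of points $y_k\in\partial P_{i_k+1}$ lies in $\fib\subset K(F)$ and in $\overline{\C\sm K(F)}$ (since $\partial P_{i_k+1}$ is disjoint from $F^{-(i_k+1)}(\V)$), giving $J_K(F)\ne\emptyset$. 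The second assertion follows because a component $C$ of $K(F)$ satisfies $C=\fib(x)$ for any $x\in C$, and the third by a similar diagonal refinement. Everything is local to the puzzle nest; no dynamics of return maps is invoked.

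\textbf{What you do differently.} You build a directed cycle of components of $\V$, show (via no permutation) at least one vertex is ``bad'', pull back along the cycle to get a polynomial-like restriction $g=F^k\colon U_0'\to V_0$ with $U_0'\Subset V_0$, and then extract a repelling periodic point $p\in K_g\subset K(F)$ that is accumulated by backward orbits of escaping points, hence $p\in J_K(F)$. For compactness you run a careful level-by-level properness argument. This is sound and instructive---it makes the mechanism producing $J_K$-points explicit via a concrete polynomial-like map, and the step-2/3 argument you give is essentially the detail the paper waves at with ``the second and third assertions also follow''. The trade-off is that you import polynomial-like dynamics (Julia's theorem, the density of backward orbits in $J_g$, the Schwarz lemma in degree one) for a statement the paper settles by a two-line topological observation about nested puzzle pieces.

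\textbf{One shared caveat worth flagging.} Your opening claim ``this graph contains a directed cycle'' is not a formal consequence of the hypotheses: one can have a box mapping satisfying the no permutation condition for which the component graph is acyclic and $K(F)=\emptyset$ (e.g.\ a single component $U\subset V_1$ of $\U$ with $F(U)=V_2$ and $V_2\cap\U=\emptyset$). In that degenerate case $J_K(F)=\emptyset$ and the first bullet fails. The paper's proof has exactly the same implicit assumption (that a nested sequence of puzzle pieces of every depth exists, equivalently $K(F)\ne\emptyset$), so I would not count this against you, but it is worth being aware that the existence of the cycle is equivalent to $K(F)\ne\emptyset$, not something you get for free from the no permutation condition.
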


\begin{proof} 
Take a nested sequence $P_1\supset P_2\supset \dots$ of puzzle pieces. Then for each $i\ge 0$ either $P_{i+1}=P_i$, or $P_{i+1}$ is compactly contained in $P_i$. If $F$ satisfies the no permutation condition, then necessarily $\bigcap P_i$ is compactly contained in $\V$ and so $J_K(F) \ne \emptyset$. The second and third assertions also follow.
\end{proof}

\subsection{Motivating the notion of {\lq}dynamically natural{\rq} from Definition~\ref{Def:NaturalBoxMapping}} 
First of all, condition \eqref{It:Nat1} prohibits  $F$ to simply permute components of $\U$. Equivalently, it guarantees that each component of $\U$ has escaping points under iteration of $F$. This is clearly something one should expect from a mapping induced by rational maps: for example, under this simple condition, as we saw in Lemma \ref{Lem:NE}, each component of $K(F)$ is a compact set. 

Furthermore, under this condition we can further motivate assumption \eqref{It:Nat2}, see the remark after Corollary~\ref{Cor:Typical}: for such complex box mappings a.e. point either converges to the boundary of $\V$
or accumulates to the set of critical fibers. For any known complex box mapping which is  induced by a rational map,  the boundary of $\V$ does not attract a set of  positive Lebesgue measure, and therefore automatically
  \eqref{It:Nat2} holds.   Thus it makes sense to   assume \eqref{It:Nat2}. 

Assumption \eqref{It:Nat3} is imposed because one is usually only interested in points
that visit the {\lq}bounded part of the dynamics{\rq} infinitely  often. 

In fact, as we will show in Proposition~\ref{Prop:Inducing}, under the no permutation condition it is always possible to improve any box mapping to be dynamically natural by taking some further first return maps; this way of ``fixing'' general box mappings is enough in many applications.

\begin{remark} In \cite{ALdM} a different condition was used to rule out the pathologies we discussed in Section~\ref{Sec:Pathologies}. That paper is concerned with unicritical complex box mappings $F\colon\U\to\V$, where $\V$ consists of a single domain. In \cite{ALdM} it is assumed that $\U$  is {\em thin} in $\V$ where $\U$ is called {\em thin} in $\V$ if there exist $L,\epsilon>0$, such that  for any point $z\in \U$ there is a open topological disk $D\subset \V$ of $z$ with $L$-bounded geometry at $z$ such that 
$\modulus(\V\setminus \ovl D)>\epsilon,$ and $\area(D\setminus \U)/\area(D) > \epsilon$.
See Section 5 and Appendix A of \cite{ALdM} for results concerning this class of mappings. In particular, when $\U$ is thin in $\V$ we have that $\Koc(F)$ has measure zero. On the other hand, the requirement that $\U$ is thin is $\V$ is
a stronger geometric requirement than $K(F) = \Kwi(F)$ (compare Figure~\ref{Fig:X}). 
\end{remark}

\subsection{An ergodicity property of box mappings} 

In this subsection, we study an ergodicity property in the sense of typical behavior of orbits of complex box mappings that are not necessarily dynamically natural, but satisfy condition the no permutation condition from Definition~\ref{Def:NaturalBoxMapping}. The results of this subsection will be used later in Section~\ref{SSec:Inducing}, where we will show how to induce a dynamically natural box mapping starting from an arbitrary one, and in Section~\ref{Sec:ILF},  we will strengthen the results
in this subsection and study invariant line fields of box mappings. 

Let us say that $\mathcal F$ is a \emph{critical fiber} of $F$ if it is 
the intersection of  all the puzzle pieces containing a critical point. 
We say that this set is a \emph{recurrent critical fiber} if there exist iterates 
$n_i\to \infty$ so that some (and therefore all) limit points of  $F^{n_i}(\mathcal F)$ are contained in $\mathcal F$ (we refer the reader to Section~\ref{Sec:CombRen} for a detailed discussion on fibers and types of recurrence).

\begin{lemma}[Ergodic property of general box mappings] 
\label{Lem:AlmostAll}
Let $F \colon \U \to \V$ be a complex box mapping that satisfies the no permutation condition. Let $\mathcal C$ be the union of recurrent critical fibers of $F$. Define 
\begin{equation*}
\begin{aligned}
X_1 &=\{z \in K(F) \colon F^i(z) \to \partial \V  \mbox{ as }i\to \infty \},\\
X_2 &=\{z \in K(F) \colon  F^{i_j}(z) \to \mathcal C \mbox{ for some sequence }i_j\to \infty\}
\end{aligned}
\end{equation*}
and $$X=K(F)\setminus (X_1\cup X_2).$$ 
Then $X$ is forward invariant. Moreover, if $X'\subseteq X$ is a forward invariant set of positive Lebesgue measure, then 
there exists a puzzle piece $\J$ of $F$ so that $\area(\J \cap X') = \area(\J)$.
\end{lemma}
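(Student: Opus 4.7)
The plan is to verify forward invariance of $X$ directly from the definitions, then to establish the main assertion by a density-point argument combined with pullback and Koebe distortion. For forward invariance: if $z\in X$, the orbit $\{F^i(F(z))\}_{i\ge 0}$ is a tail of $\{F^i(z)\}_{i\ge 0}$ and so neither converges to $\partial\V$ (since $z\notin X_1$) nor accumulates on $\mathcal C$ (since $z\notin X_2$); hence $F(z)\in X$.

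For the main claim, let $X'\subseteq X$ be forward invariant with $\area(X')>0$ and fix a Lebesgue density point $z_0$ of $X'$. Since $z_0\in X$, one has
\[
\limsup_{i\to\infty}\dist(F^i(z_0),\partial\V)>0 \quad\text{and}\quad \liminf_{i\to\infty}\dist(F^i(z_0),\mathcal C)>0,
\]
so there exist $\epsilon>0$ and a subsequence $n_k\to\infty$ with $\dist(F^{n_k}(z_0),\partial\V)\ge\epsilon$ while $\dist(F^j(z_0),\mathcal C)\ge\epsilon$ for every sufficiently large $j$. After passing to a subsubsequence, $F^{n_k}(z_0)\to y_0\in\V\setminus\mathcal C$, and the component of $\V$ containing $F^{n_k}(z_0)$ stabilizes to some fixed component $V$. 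Using $y_0\notin\mathcal C$, I would then select a puzzle piece $P_0\ni y_0$ with $P_0\subset V$ together with a strictly larger puzzle piece $Q_0\supset P_0$ providing Koebe space $\modulus(Q_0\setminus\ovl{P_0})\ge\delta>0$ and with $\ovl{Q_0}\cap\mathcal C=\emptyset$. For $k$ large, $F^{n_k}(z_0)\in P_0$; let $\widetilde P_k$ and $\widetilde Q_k$ be the connected components of $F^{-n_k}(P_0)$ and $F^{-n_k}(Q_0)$ containing $z_0$. Then $F^{n_k}\colon\widetilde Q_k\to Q_0$ is a branched cover of some degree $d_k$, and the pullback-of-modulus inequality gives $\modulus(\widetilde Q_k\setminus\ovl{\widetilde P_k})\ge\delta/d_k$.

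The hard part will be to bound $d_k$ uniformly in $k$ along a further subsequence. The essential input is that the orbit of $z_0$ stays at distance at least $\epsilon$ from $\mathcal C$: a combinatorial analysis of the chain $\widetilde Q_k\supset F(\widetilde Q_k)\supset\ldots\supset Q_0$, whose successive pieces lie in neighborhoods of the iterates $F^j(z_0)$ and hence away from $\mathcal C$, yields along a subsequence a uniform bound $d_k\le D$ (only boundedly many critical fibers can be approached, each contributing boundedly often). Once this is in hand, a Koebe/covering-lemma type distortion bound for bounded-degree branched covers with definite Koebe space (see \cite{KL09}) produces a uniform distortion constant $K$ for $F^{n_k}\colon\widetilde P_k\to P_0$ together with bounded geometry of the nested, shrinking pieces $\widetilde P_k$. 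Forward invariance of $X'$ gives $F^{n_k}(X'\cap\widetilde P_k)\subseteq X'\cap P_0$, so every point of $P_0\setminus X'$ is the image under $F^{n_k}$ of some point of $\widetilde P_k\setminus X'$; combined with the distortion and degree bounds, this yields
\[
\frac{\area(P_0\setminus X')}{\area(P_0)}\ \le\ K^2 D\cdot\frac{\area(\widetilde P_k\setminus X')}{\area(\widetilde P_k)}.
\]
Since $z_0$ is a density point of $X'$ and $\widetilde P_k$ shrinks to $z_0$ with bounded geometry, the right-hand side tends to $0$ as $k\to\infty$, while the left-hand side is independent of $k$, so it vanishes; hence $\area(P_0\cap X')=\area(P_0)$ and we take $\J:=P_0$.
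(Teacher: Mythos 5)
Your general shape — take a density point $z_0$ of $X'$, use $z_0\notin X_1\cup X_2$ to pull a fixed piece near an accumulation value $y_0$ back to $z_0$ with controlled degree and apply Koebe — is indeed the right skeleton, and the forward-invariance argument is fine. But the step you flag as ``the hard part'' is where your proof actually breaks: the degree bound does not follow from the orbit staying $\epsilon$-away from $\mathcal C$. The set $\mathcal C$ consists only of \emph{recurrent} critical fibers. Nothing in $z_0\notin X_2$ prevents the orbit of $z_0$ from accumulating on a \emph{non-recurrent} critical fiber $c\notin\mathcal C$, or from passing through arbitrarily deep puzzle pieces around several non-recurrent critical points infinitely often; each such passage multiplies the degree of $F^{n_k}\colon\widetilde Q_k\to Q_0$ by the local degree at $c$. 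So along every subsequence the degree $d_k$ may still tend to infinity, and the parenthetical ``only boundedly many critical fibers can be approached, each contributing boundedly often'' is precisely the nontrivial claim that has to be \emph{proved}, not a consequence of distance from $\mathcal C$. (Also, as a minor point, the chain is $\widetilde Q_k\to F(\widetilde Q_k)\to\cdots\to Q_0$ under iteration, not a nested chain $\supset$.)

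The paper closes exactly this gap by a device you don't use: it builds the tower of first return maps $F_0=F$, $F_{i+1}\colon\U^{i+1}\to\V^{i+1}$, where $\V^{i+1}$ is the union of \emph{all} critical components of $\U^i$. This tower absorbs the non-recurrent critical behavior level by level; one then shows (using $z_0\in E_{n_0}$, i.e.\ the orbit avoids depth-$n_0$ critical pieces around recurrent fibers) that there is a level $k$ at which the $F_k$-orbit of $z_0$ visits non-critical components of $\U^k$ infinitely often, and then splits into two cases depending on whether finitely many or infinitely many such non-critical components appear. In the infinite case the key combinatorial fact is that pullbacks of a \emph{newly visited} non-critical component along the $F_k$-orbit of $z_0$ hit each critical fiber at most once, giving the uniform degree bound. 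Without this tower (or some substitute argument to control how often the pullback chain meets non-recurrent critical pieces), the degree bound in your proposal is unjustified, and the subsequent Koebe/density computation, while correct modulo that input, does not go through.
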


\begin{proof} 
That $X$ is forward invariant is obvious. 
For any integer $n \ge 0$, let $Y_n$ be the union of all critical puzzle pieces of depth $n$ for $F$ containing a recurrent critical fiber. 
Define $E_n \subset K(F)$ to be the set of points whose forward orbits are disjoint from $Y_n$. By construction, $(E_n)$ is a growing sequence of forward invariant sets
and $X'\subset \bigcup_{n \ge 0} E_n$. If $X'$ has positive Lebesgue measure, then there exists $n_0$ so that $E_{n_0}\cap X'$ has positive Lebesgue measure. Let $z_0 \in E_{n_0}\cap X'$ be a Lebesgue density point of this set.

Starting with $\V^0 := \V$, $\U^0 := \U$ and $F_0:=F$, for each $n \ge 1$ inductively define $F_n \colon \U^n \to \V^n$ to be the first return map under $F_{n-1}$ to the union $\V^n$ of all critical components of $\U^{n-1}$. Note that $\Crit(F_n) \subset \Crit(F)$ and $K(F_n)$ contains all recurrent critical fibers of $F$ (it is straightforward to see that fibers of $F_n$ are also fibers of $F$ for each $n \ge 0$).   

We may assume that there exists $k \ge 0$ so that the $F_k$-orbit of $z_0$ visits non-critical components of $\U^k$ infinitely many times. Indeed, otherwise the $F$-orbit of $z_0$ accumulates at a recurrent critical fiber contrary to the definition of $E_{n_0}$. Fix this mapping $F_k$.

{\bf Case 1:}  the orbit $z_n := F_k^n (z_0)$, $n=0,1,\ldots$ visits non-critical components of $\U^k$ infinitely many times, but only finitely many different ones. Write $\U'$ for the union of these finitely many components and consider $K' := \{z \in E_{n_0} \colon F^n_k(z) \in \U' \,\,\, \forall n \ge 0\}$. By Lemma~\ref{Lem:NE}, the closure of $K'$ is a  subset of $E_{n_0}$, and hence of $K(F)$. Therefore, there exists a point $y \in \omega(z_0) \cap E_{n_0}$.
 Let $\J_0 \ni y$ be a puzzle piece of depth $n_0$, and let $D \Subset \J_0$ be another puzzle piece containing $y$ of depth larger than $n_0$; such a compactly contained puzzle piece exists again because of Lemma~\ref{Lem:NE}. Take the moments $n_i$ so that $F_k^{n_i}(z_0) \in \J_0$ and let $A_i := \Comp_{z_0} F_k^{-n_i}(\J_0)$. Then $F_k^{n_i} \colon A_i \to \J_0$ is a sequence of univalent maps with uniformly bounded distortion when restricted to $B_i:=\Comp_{z_0} F_k^{-n_i}(D)$. 
 Therefore $\modulus(A_i\setminus \ovl B_i)$ is uniformly bounded from below. 
We also have that $A_{i'}$ is contained in $B_i$ for $i'>i$ sufficiently large. 
It follows that $\diam B_i \to 0$ as $i \to \infty$.  
Thus, by the Lebesgue Density Theorem, $\area(D \cap E_{n_0}) = \area(D)$, and hence $\area(D \cap X') = \area(D)$ as $X'$ is forward invariant. The claim of the lemma follows with $\J = D$.

{\bf Case 2:} the orbit $z_n$ visits infinitely many different non-critical components of $F_k$. If $n_s$ is the first moment the orbit $z_n$ visits a new non-critical component $U_s$ of $\U^k$, then the pullbacks of $U_s$ under $F_k$ back to $z_0$ along the orbit hit each critical fiber at most once. Let $n_s' > n_s$ be the first subsequent time that the orbit $z_n$ enters
a critical component of $\U^k$.  If such an integer $n_s'$ exists, then we have that 
$F_k^{n_s'} \colon \Comp_{z_0} F_k^{-n'_s}(\V^k)\to \V^k$ has degree bounded independently of $s$. 
We can proceed again as in Case 1 (there are only finitely many critical components of $\U^k$). 
If $n_s'$ does not exist, then we have that $F_k^n\colon  \Comp_{z_0} F_k^{-n}(\V^k) \to \V^k$ has uniformly bounded degree for all $n\ge n_s$. Since we assumed that $z_0$ does not converge to the boundary, 
there exists an accumulation point $y\in \V^k \cap \omega(z_0)$ with the property that 
$\J_0 := \Comp_y \V^k$ is not a component of $\U^k$. 
Note that $y \in \ovl{\U^k}$, but $y$ may or may not lie in $\U^k$. Each component of $\U^k \cap \J_0$ is compactly contained in $\J_0$. This allows us to choose an open topological disk $D \Subset \J_0$ so that $y \in D$ and such that $D$ contains at least one puzzle piece $\J$. We conclude similarly to the previous case that $\area(D \cap X') = \area(D)$. Thus $\area(\J \cap X') = \area(\J)$ as $\J \subset D$. 
\end{proof} 

\begin{corollary}[Typical behavior of orbits]
\label{Cor:Typical}
Let $F \colon \U \to \V$ be a complex box mapping that satisfies the no permutation condition and such that 
each puzzle piece of $F$ either contains a point $w$ so that $F^n(w)$ is a critical point for some $n\ge 0$, or it contains an open set disjoint from $K(F)$ (a \emph{gap}).
Then for a.e. $z \in K(F)$ either 
\begin{enumerate}
\item $F^i(z)\to \partial \V$  as $i\to \infty$, or
\item the forward orbit of $z$ accumulates to the fiber of a critical point. 
\end{enumerate} 
\end{corollary}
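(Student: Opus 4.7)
The plan is to show that the forward invariant set $Z := \{z \in K(F) : z \notin X_1 \text{ and } \omega(z) \cap \mathcal F_c = \emptyset \text{ for all } c \in \Crit(F)\}$ has Lebesgue measure zero, which is precisely the assertion of the corollary. Since every recurrent critical fiber is a critical fiber, one has $Z \subseteq X$, where $X$ is the set of Lemma~\ref{Lem:AlmostAll}, so one is in a position to apply that lemma.

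The first step is to partition $Z$ into countably many forward invariant pieces. For every $z \in Z$ and every $c \in \Crit(F)$, the condition $\omega(z) \cap \mathcal F_c = \emptyset$ provides a puzzle piece $P_{n_c}(c)$ around $c$ that the forward orbit of $z$ enters only finitely many times. For each choice of depths $(n_c)_{c \in \Crit(F)}$, a countable family, I set $N := \bigcup_c P_{n_c}(c)$ and
\[
\tilde Z_N := Z \cap \{z : F^i(z) \notin N \text{ for all } i \ge 0\},
\]
which is forward invariant and contained in $X$. Since $F$ is holomorphic (so preimages of null sets are null) and every $z \in Z$ has some iterate lying in some $\tilde Z_N$, it suffices to show $\area(\tilde Z_N) = 0$ for each such $N$.

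Suppose for contradiction that $\area(\tilde Z_N) > 0$. Applying Lemma~\ref{Lem:AlmostAll} with $X' = \tilde Z_N$ produces a puzzle piece $\J$ with $\area(\J \cap \tilde Z_N) = \area(\J)$. By the hypothesis of the corollary, $\J$ either contains a gap or contains a pre-critical point $w$ with $F^m(w) = c \in \Crit(F)$. The gap case is immediate: a nonempty open subset of $\J$ disjoint from $K(F) \supseteq \tilde Z_N$ has positive measure, contradicting full measure of $\tilde Z_N$ in $\J$. In the pre-critical case I would take $Q$ to be the puzzle piece of depth $m + n_c$ containing $w$, which is well-defined since $P_{n_c}(c)$ is. Then $F^m(Q) = P_{n_c}(c) \subseteq N$, so every point of $Q$ enters $N$ at time $m$ and thus $Q \cap \tilde Z_N = \emptyset$. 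Since $Q$ and $\J$ both contain $w$, they are nested puzzle pieces, and either nesting ($Q \subseteq \J$ or $\J \subseteq Q$) contradicts the full measure of $\tilde Z_N$ in $\J$.

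The main subtlety I anticipate concerns critical points $c$ that escape $\U$ in finitely many steps, since this truncates the sequence of defined puzzle pieces around $c$; this is absorbed by letting $n_c$ range only over depths for which $P_{n_c}(c)$ is defined (for an escaping $c$ the fiber $\mathcal F_c$ is already an open puzzle piece, and one takes the deepest available depth), so that the pullback step $F^m(Q) = P_{n_c}(c)$ remains valid without modification.
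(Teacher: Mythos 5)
Your proof follows the paper's approach essentially verbatim, just with different bookkeeping. The paper defines, for each single depth $n$, the set $X'_n \subset X$ of points whose orbits stay outside all critical puzzle pieces of depth $n$, shows each $X'_n$ is null by applying Lemma~\ref{Lem:AlmostAll} and running the gap/pre-critical dichotomy on the resulting full-measure puzzle piece $\J$, and concludes $X' = \bigcup_n X'_n$ is null. You index instead by tuples $(n_c)_{c\in\Crit(F)}$, but any $\tilde Z_N$ with $N = \bigcup_c P_{n_c}(c)$ sits inside $X'_{\max_c n_c}$, and $X'_n$ is the particular $\tilde Z_N$ with $n_c \equiv n$; the pullback contradiction via $Q$ is exactly the paper's $\J'_n$ step.

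The one place you deviate from the correct argument is the last paragraph on escaping critical points, and there your proposed fix does not work as stated. Setting $n_c = t_c$ (the deepest available depth) and including $P_{t_c}(c)$ in $N$ makes the \emph{contradiction} step go through, but it silently breaks the \emph{decomposition} step: for $z \in Z$ there is no reason the orbit of $z$ should eventually avoid the fixed open piece $P_{t_c}(c)$, since the condition $\omega(z) \cap \mathcal F_{c'} = \emptyset$ for non-escaping $c'$ says nothing about $P_{t_c}(c)\cap K(F)$, which can be visited infinitely often. The correct fix goes the other way: omit escaping critical points from $N$ altogether (the decomposition needs nothing from them, since the corollary's conclusion only speaks of fibers, which are defined for points of $K(F)$), and then, in the pre-critical case, note that if $F^m(w) = c$ with $c$ escaping at time $t_c$, the piece $\J' := \Comp_w F^{-(m+t_c)}(\V)$ maps properly onto a component $V_0$ of $\V$ for which $V_0 \sm \ovl{\U}$ is nonempty open (by Definition~\ref{Def:BM}\eqref{It:DefBM3}, since $F^{t_c}(c) \in V_0\sm\U$ forces $V_0$ to not be a component of $\U$); the pullback of that open set is a gap in $\J'$, and necessarily $\J' \subseteq \J$ (otherwise $F^m(\J)$ would be a puzzle piece of depth $>t_c$ around $c$, which does not exist). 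So one falls back into the gap case. This is a small point — the paper's own proof elides it just as lightly — but your stated repair would not close it.
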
 
\begin{proof} Let $X \subset K(F)$ be as in the previous lemma. For each $n\ge 1$, let $X'_n \subset X$
be the set of points $z\in X$ so that the orbit of $z$ remains outside critical 
puzzle pieces of depth $n$.
By the previous lemma if $X'_n$ has positive Lebesgue measure, then there exists a puzzle piece $\J$ so that  $\area(\J \cap X_n') = \area(\J)$. But, under the assumption of the corollary, either $\J$ contains a gap, which is clearly impossible since $\area(\J \cap X_n') = \area(\J)$, or there exists a  puzzle piece $\J'_n\subset \J$ which is mapped under some iterate of $F$ to a critical piece
of depth $n$, which contradicts the definition of the set $X'_n$ since also $\area(\J'_n\cap X_n') = \area(\J_n')$. Hence the set $X'_n$
has zero Lebesgue measure. It follows that the set $X'=\cup X'_n$
of points in $X$ whose forward orbits stay outside {\em some} critical puzzle piece also has measure zero.
The corollary follows. 
\end{proof}

\begin{remark}
For a polynomial or rational map, preimages of critical points are dense in the Julia set
(except if the map is of the form $z\mapsto z^{n}$ or $z\mapsto z^{-n}$).
This means that if a box mapping is associated to a polynomial or rational map, 
then the assumption of Corollary~\ref{Cor:Typical} is satisfied. The conclusion of that corollary motivates the assumption $\area(\Koc(F)) = 0$ in the definition of dynamical naturality (Definition~\ref{Def:NaturalBoxMapping}). 
\end{remark}

\subsection{Improving complex box mappings by inducing.}
\label{SSec:Inducing}

We end this section by showing how to ``fix'' a general complex box mapping for it to become dynamically natural. In fact, every box mapping satisfying the no permutation condition from the definition of naturality, subject to some mild assumptions, induces a dynamically natural box mapping that captures all the interesting critical dynamics; working with such induced mappings is enough in many applications. The following lemma illustrates how it can be done in detail.

\begin{proposition}[Inducing dynamically natural box mapping] 
\label{Prop:Inducing}
Let $F \colon \U \to \V$ be a box mapping that satisfies the no permutation condition. 
Assume that each component of $\V$ contains a critical point, or 
contains an open set which is not in $\U$. Then we can induce a dynamically natural box mapping 
from $F$ in the following sense. 

Take $\hat \U$ to be a finite union of puzzle pieces of $F$ so that $\hat \U$ is a nice set compactly contained in $\V$,
and let $\Crit_{\hat \U}(F) \subset \Crit(F)$ be a subset of critical points whose orbits visit $\hat \U$ infinitely many times. 
Then there exists an induced, dynamically natural box mapping $F_1 \colon \U_1 \to \V_1$ such that $ \Crit_{\hat \U}(F)\subset \Crit(F_1)\subset \Crit(F)$. 
\end{proposition}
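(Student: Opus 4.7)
I would define $F_1$ as the first return map under $F$ to a strictly nice puzzle neighborhood $\V_1$ of $\Crit_{\hat\U}(F)$ with $\V_1\Subset\hat\U$. Concretely, for each $c\in\Crit_{\hat\U}(F)$, pick a puzzle piece $P^{(c)}$ of $F$ containing $c$ of depth large enough that $P^{(c)}\Subset \hat U^{(c)}$ (where $\hat U^{(c)}$ is the component of $\hat\U$ containing $c$) and that the $F$-orbit of $c$ returns to $P^{(c)}$; the latter is possible because $c\in\Crit_{\hat\U}(F)$ has $F$-orbit visiting $\hat U^{(c)}$ infinitely often, so an appropriate deep pullback of $\hat U^{(c)}$ along the return orbit contains $c$. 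Set $\V_1:=\bigsqcup_c P^{(c)}$ and $F_1:=R_{\V_1}$. Since $\hat\U$ is nice and $\V_1\Subset\hat\U$, the set $\V_1$ is \emph{strictly} nice (i.e., $F^k(\partial\V_1)\cap\overline{\V_1}=\emptyset$ for all $k\ge 1$), and the standard first-return-map theory in Appendix~\ref{A1} gives that $F_1\colon\U_1\to\V_1$ is a complex box mapping with $\Crit(F_1)\subseteq\Crit(F)$, $\Crit_{\hat\U}(F)\subseteq\Crit(F_1)$ by our choice of $P^{(c)}$, and every component of $\U_1$ compactly contained in its image component of $\V_1$.

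\textbf{Verifying dynamical naturality.} Condition \eqref{It:Nat1} (no permutation) follows from compact containment: for each component $U$ of $\U_1$ we have $F_1(U)=V\supsetneq U$, so $V\setminus\overline{U}\ne\emptyset$, and by Corollary~\ref{Cor:Typical} applied to $F$ (using the structural assumption on components of $\V$), generic orbits in $V\setminus\overline{U}$ either escape to $\partial\V$ or fail to return to $\V_1$, giving $F_1(U)\setminus\U_1\ne\emptyset$. For condition \eqref{It:Nat2}, $\area(\Koc(F_1))=0$: a point in $\Koc(F_1)\cap K(F_1)$ has $F$-orbit that returns to $\V_1$ infinitely often (and hence does not escape $\partial\V$) yet avoids a fixed puzzle neighborhood of $\Crit(F_1)\supseteq\Crit_{\hat\U}(F)$; since $\Crit(F)\setminus\Crit_{\hat\U}(F)$ consists of critical points whose $F$-orbits miss $\hat\U$ eventually, their critical fibers attract at most a Lebesgue-null set of orbits that return to $\V_1$ (by Lemma~\ref{Lem:AlmostAll}), so Corollary~\ref{Cor:Typical} yields $\area(\Koc(F_1))=0$.

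\textbf{Main obstacle: condition \eqref{It:Nat3}} ($K(F_1)=\Kwi(F_1)$). This demands a uniform lower bound on $m_{F_1}(x)=\modulus(V\setminus\overline{U})$ for each component $U\subset V$ of $\U_1\subset\V_1$ visited by orbits in $K(F_1)$. The compact containment $\V_1\Subset\hat\U$ only gives an \emph{outer} modulus bound $\delta_0:=\min_c\modulus(\hat U^{(c)}\setminus\overline{P^{(c)}})>0$, which does not directly give the required inner bound. To transfer the outer bound to an inner bound, I would extend the branch $F_1|_U=F^{n_0}|_U$ to a proper map $F^{n_0}\colon U^+\to\hat U^{(V)}$ with $U^+=\Comp_U F^{-n_0}(\hat U^{(V)})$; provided all critical values of this extended branch lie in $\overline{\V_1}$ (which holds after, if necessary, enlarging $\V_1$ to contain every critical point of $F$ lying in $\hat\U$), the outer annulus $\hat U^{(V)}\setminus\overline{V}$ pulls back as an unbranched cover of some degree $D$ that is uniformly bounded -- the bound on $D$ coming from finiteness of $\Crit(F)$ together with a combinatorial count (relying on the no permutation condition on $F$) of how often a first-return orbit segment can cross critical fibers of $F$ in $\V\setminus\V_1$. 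Combining the pullback estimate with a Gr\"otzsch-type comparison between the preimage annulus in $U^+$ and the inner annulus $V\setminus\overline{U}$ produces a uniform $\delta_0'>0$ with $m_{F_1}(x)\ge\delta_0'$ for every component of $\U_1$ that is visited, giving $K(F_1)=K_{\delta_0'}(F_1)\subseteq\Kwi(F_1)$. The chief technical difficulty, and the place where the no permutation condition on $F$ and the structural assumption on components of $\V$ are both essential, is establishing this uniform degree bound $D$.
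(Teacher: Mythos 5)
There is a genuine gap: your $\V_1$ contains only the critical points in $\Crit_{\hat\U}(F)$ (or, after your enlargement, at most those critical points of $F$ lying in $\hat\U$), but \emph{not} the remaining critical points of $F$. Lemma~\ref{Lem:FirstReturnConstruction} gives $\Crit(R)\subset\Crit(F)$ only under the hypothesis $\Crit(F)\subset W$; when a return branch $F^{n}|_U$ passes through a critical point $c'\in\Crit(F)\setminus\V_1$, the corresponding critical point of $F_1$ in $U$ is a preimage of $c'$, which generically is \emph{not} a critical point of $F$, so the required inclusion $\Crit(F_1)\subset\Crit(F)$ fails. This is precisely why the paper's $\V_1=\V_1'\cup\V_1''$ includes a second piece $\V_1''$: a deep, nice puzzle neighborhood of $\Crit(F)\setminus\V_1'$, disjoint from $\V_1'$ and compactly contained in $\V$, so that \emph{all} of $\Crit(F)$ lies in $\V_1$. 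Your construction omits this, and the omission is not cosmetic.

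Your treatment of \eqref{It:Nat3} is also incomplete: you identify the uniform degree bound as ``the chief technical difficulty'' but never prove it, and your proposed route (an unbranched cover of an outer annulus) leans on an unverified claim about where the critical values of the extended branch lie. The paper's argument is more direct and worth internalizing: along the return chain $F(U'),\dots,F^{s'-1}(U')$, any critical puzzle piece $P$ containing some $c\in\Crit(F)$ must \emph{properly} contain $\Comp_c\V_1$ (otherwise the orbit of $U$ would have already entered $\V_1$, contradicting that $F^s|_U$ is a branch of the \emph{first} return). Since there are only finitely many critical puzzle pieces of $F$ of depth smaller than the depths of components of $\V_1$, the degree of $F^{s'}\colon U'\to W'$ is bounded independently of $U$, and the moduli bound then comes from pulling back $W'\setminus W$. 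Finally, your verification of \eqref{It:Nat2} detours through Corollary~\ref{Cor:Typical} (which has its own hypotheses on $F$) when Lemma~\ref{Lem:AlmostAll} applied directly to $F_1$ suffices, as the paper does: a full-measure puzzle piece for $X_O$ would have a forward image covering some component of $\V$, contradicting the structural assumption that every component of $\V$ contains a critical point or a gap.
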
 

\begin{remark}
A typical choice for $\hat \U$ would be puzzle pieces of $\U$ that contain critical 
points and critical values of $F$. 
\end{remark}

\begin{proof}[Proof of Proposition~\ref{Prop:Inducing}]  
Let us define two disjoint nice sets $\V_1', \V_1'' \subset \V$ as follows. For the first set, we let 
\[
\V_1' := \bigcup_{c \in \Crit(F)} \DomL_c \hat \U, 
\]
where the union is taken over all points in $\Crit(F)$ whose orbits intersect $\hat \U$. For the second set, we define $\V_1''$ to be the union  of puzzle pieces of depth $n \ge 0$ containing all critical points in $\Crit(F) \sm \V_1'$ (if this set is empty, we put $\V_1'' = \emptyset$). Moreover, we can choose $n$ sufficiently large so that $\V_1'' \cap \V_1 = \emptyset$ and so that $\V_1''$ is compactly contained in $\V$; the latter can be achieved due to Lemma~\ref{Lem:NE}.

Now let $\V_1 := \V_1' \cup \V_1''$, and let $F_1 \colon \U_1 \to \V_1$ be the first return map to $\V_1$ under $F$. It easy to see that $F_1$ is a complex box mapping in the sense of Definition~\ref{Def:BM}; it also satisfies property \eqref{It:Nat1} of Definition~\ref{Def:NaturalBoxMapping} (notice that $K(F_1) \subset K(F)$). Moreover, from the first return construction it follows that $\Crit_{\hat \U}(F)\subset \Crit(F_1)\subset \Crit(F)$. 

Let us now show that $F_1$ is dynamically natural. For this we need to check properties \eqref{It:Nat2} and \eqref{It:Nat3} of Definition~\ref{Def:NaturalBoxMapping}.

\smallskip
\noindent
\textit{Property \eqref{It:Nat2} of Definition~\ref{Def:NaturalBoxMapping}:} Let $X_O \subset \Koc(F_1)$ be the set of points in $K(F_1)$ whose forward iterates under $F_1$ avoid a puzzle neighborhood $O$ of $\Crit(F_1)$; this is a forward invariant set. Hence, if $X_O$ has positive Lebesgue measure, then 
by Lemma~\ref{Lem:AlmostAll} either a set of positive Lebesgue measure of points in $X_O$ converge to the boundary of  $\V_1$, or  there exists a puzzle piece $\J$ of $F_1$ 
so that $\area(\J \cap X_O) = \area(\J)$. The first situation cannot arise due to the choice
of $\hat \U$ and $\V_1''$ (both are compactly contained in $\V$), and the second situation would imply that some forward iterate of $\J$ will cover 
a component of $\V_1$ and therefore a component $V$ of $\V$, and thus
we would obtain $\area(X_O\cap V)=\area(V)$, contradicting the assumption we made on $F$: the puzzle piece $V$ must contain either a critical point, or an open set in the complement of $\U$.

\smallskip
\noindent
\textit{Property \eqref{It:Nat3} of Definition~\ref{Def:NaturalBoxMapping}:} If there are only finitely many components in $\U_1$, this property is obviously satisfied. Therefore, we can assume that $\U_1$ has infinitely many components. Let $U$ be a component of $\U_1$ such that $U \Subset \Comp_U \V_1$, and let $F^s \colon U \to V$ be the corresponding branch of $F_1$; here $V$ is a component of $\V_1$. By the first return construction, the degree of this branch is bounded independently of the choice of $U$. By construction of $\V_1$, there exists $t \ge 0$ and a component $W$ of $\V_1$ such that $W \Subset \V$ and $F^{s+t}(U) = W$. Put $s' := s+t$. Again, the degree of the map $F^{s'} \colon U \to W$ is bounded independently of $U$. Let $W' \Supset W$ be a puzzle piece of $F$; it exists since $W \Subset \V$. We claim that the degree of the map $F^{s'} \colon U' \to W'$, where $U' = \Comp_{U} F^{-s'}(W')$, is independent of the choice of $U$.

Indeed, this is equivalent to saying that in the sequence $F(U'), \ldots, F^{s'-1}(U')$ the number of critical puzzle pieces is independent of the choice of $U$. Let $P$ be a critical puzzle piece in this sequence, and $c \in \Crit(F) \cap P$ be a critical point. Then $P \supsetneq \Comp_c \V_1$, because otherwise the $F$-orbit of $U$ would intersects $\V_1$ before reaching $V$, which is a contradiction to the fact that $F^s \colon U \to V$ is the branch of the first return map to $\V_1$. But there are only finitely many critical puzzle pieces of $F$ of depths larger than the depths of the components of $\V_1$, and this number does not depend on $U$. This yields the claim.

Pulling back the annulus $W' \sm W$ by the map $F^{s'} \colon U' \to W'$, we conclude that there exists $\delta>0$, independent of $U$, such that $\modulus(U' \sm \ovl U) \ge \delta$. Finally, $U' \subset \V_1$ because the depths of $U$ and $U'$, viewed as puzzle piece of $F$, differ by one. Hence $(\Comp_U \V_1) \sm U \supset U' \sm U$. This and the moduli bound imply $K(F_1) = K_\delta(F_1)$.  
\end{proof}

\section{Combinatorics and renormalization of box mappings}
\label{Sec:CombRen}
The combinatorics of critical points of complex box mappings are similar to those of complex polynomials. As is often the case in holomorphic dynamics, the tools used to study the orbit of a critical point depend on the combinatorial type of the critical point. In this section, we start by recalling the definitions of a fiber, of recurrent, persistently, and reluctantly recurrent critical points, and of a combinatorial equivalence of box mappings. We end this section by defining the notion of renormalization for complex box mappings.

In this section, $F \colon \U \to \V$ is a complex box mapping, not necessarily dynamically natural, but that satisfies the no permutation condition from Definition~\ref{Def:NaturalBoxMapping}.

\subsection{Fibers}
\label{SSec:Fibers}

While working with dynamically defined partitions, for example given by puzzles, it is convenient to use the language of fibers, introduced by Schleicher in \cite{Fibers, Fibers2}. 

In our setting, for $x \in K(F)$, let $P_n(x)$ be the puzzle piece of depth $n$ containing $x$. The \emph{fiber}\footnote{The term ``fiber'' was adopted from the theory of Douady's \emph{pinched disk models} for quadratic Julia sets and the Mandelbrot set: in this context, there is a natural continuous surjection from a Julia set to its pinched disk model, and the preimage of a point under this surjection --- i.e.\ the fiber in the classical sense --- is exactly a single combinatorial fiber in the sense of \cite{Fibers, Fibers2}. For details, see \cite{Do, SApp}.} of $x$ is the set 
\[
\fib(x) := \bigcap_{n \ge 0} P_n(x).
\]

By Lemma~\ref{Lem:NE}, $\fib(x)$ is a compact connected set. From the dynamical point of view, $\fib(x)$ consists of points that are ``indistinguishable'' by the puzzle partition and hence constitute a single combinatorial class. A fiber containing a critical point is called \emph{critical}.

\subsection{Persistently and reluctantly recurrent critical points}
\label{SSec:Rec}

A critical point $c \in \Crit(F)$ is said to be \emph{(combinatorially) non-recurrent} if the orbit of $F(c)$ is disjoint from some puzzle piece around $c$. 
If this is not the case, then $c$ is called \emph{(combinatorially) recurrent}. For such points, $\orb(F(c))$ intersects every puzzle neighborhood of $c$\footnote{In \cite{KSS}, the notion of $Z$-recurrent points appears. There, $Z$ refers an initial choice of puzzle partition and recurrence is defined with respect to that initial choice. In the setting of a given complex box mapping, one works with a single puzzle partition that is predefined by the map.}. In the language of fibers, a critical point $c$ is combinatorially recurrent if and only if the orbit of $F(c)$ accumulates at $\fib(c)$, i.e.\ $\ovl{\orb(F(c))} \cap \fib(c) \neq \emptyset$. 

It will be useful to define
\begin{equation*}
\begin{aligned}
\Back(c) &:= \{c' \in \Crit(F) \colon \ovl{\orb(c')} \cap \fib(c) \neq \emptyset\}, \\
\Forw(c) &:= \{c' \in \Crit(F) \colon \ovl{\orb(c)} \cap \fib(c') \neq \emptyset\},
\end{aligned}
\end{equation*}
to be the sets of critical points ``on the backward, resp.\ forward orbit of the critical point $c$'', and 
\[
[c] := \Back(c) \cap \Forw(c)
\]
to be the set of critical points that ``accumulate at each other's fibers''. With this definition, $c \in [c]$. Note that $\Back(c) = \Back(c')$, $\Forw(c) = \Forw(c')$,  and $[c] = [c']$ for every $c' \in \fib(c)$. 

A critical puzzle piece $P$ is a \emph{child} of a critical puzzle piece $Q$ if there exists $n \ge 1$ such that $F^n(P) = Q$ and $F^{n-1} \colon F(P) \to Q$ is a univalent map.

Now we want to distinguish different types of recurrence as follows.  Let $c$ be a combinatorially recurrent critical point of $F$. We say that $c$ is \emph{persistently recurrent} if for every critical point $c'\in [c]$,  every puzzle piece $P_n(c')$ has only finitely many children containing critical points in $[c]$.  Otherwise, $c$ is called \emph{reluctantly recurrent}.

One should observe that all critical points in a critical fiber are either simultaneously non-recurrent, or all persistently recurrent, or all reluctantly recurrent, and hence we can speak about non-, reluctantly, or persistently recurrent critical fibers\footnote{This observation demonstrates the idea that a given puzzle partition cannot distinguish points from their fibers. In this vein,  following Douady (with an interpretation by Schleicher), various rigidity results can be phrased as ``fibers are points'', i.e.\ each fiber is trivial and hence equal to a point.}.

\subsection{Renormalization}
\label{subsec:renorm}
Following~\cite[Definition 1.3]{KvS}, a complex box mapping $F\colon \U \to \V$ is called \textit{(box) renormalizable} if there exists $s \ge  1$\footnote{Usually we assume that 
$s$ is minimal with this property. Such minimal $s$ is called \emph{the period of renormalization}.}  and a puzzle piece $W$ of some depth containing a critical point $c \in \Crit(F)$ such that $F^{ks}(c) \in W$ for all $k \ge 0$. If this is not the case, then $F$ is called \emph{non-renormalizable}.

\begin{lemma}[Douady--Hubbard equivalent to box renormalization]
\label{Lem:DHB}
If $F\colon\U\to\V$ is a complex box mapping that satisfies the no permutation condition, then $F$ is box renormalizable if and only if it is renormalizable in the classical Douady--Hubbard sense~\cite{PolyLike}.
\end{lemma}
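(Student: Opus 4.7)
Plan. I would show the two implications separately, with the reverse direction (DH renormalizable $\Rightarrow$ box renormalizable) being the cleaner one.

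First I handle DH $\Rightarrow$ box. Suppose $F^s\colon U'\to V'$ is a polynomial-like restriction of some iterate with connected filled Julia set $K^\infty$ containing a critical point $c$. Since $F^s$ is well-defined on $U'$, we have $U'\subseteq\U$ and $F^j(U')\subseteq\U$ for $0\le j<s$; thus the full $F$-orbit of any $z\in K^\infty$ remains in $\V$, giving $K^\infty\subseteq K(F)\subseteq F^{-n}(\V)$ for every $n\ge 0$. Because $K^\infty$ is connected and contains $c$, it lies in the single connected component of $F^{-n}(\V)$ through $c$, namely the puzzle piece $P_n(c)$. Taking $W:=P_n(c)$ for any $n$ then yields $\{F^{ks}(c)\}_{k\ge 0}\subseteq K^\infty\subseteq W$, as required.

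For the forward direction box $\Rightarrow$ DH, suppose $F^{ks}(c)\in W=P_n(c)$ for all $k\ge 0$. I would define a nest of pullbacks $A_0:=W$, $A_{k+1}:=\Comp_c F^{-s}(A_k)$, so that each $A_k$ is the puzzle piece $P_{n+ks}(c)$ of depth $n+ks$. By the niceness of puzzle pieces, consecutive terms either coincide or satisfy $A_{k+1}\Subset A_k$. The no permutation condition rules out the sequence becoming eventually constant: if $A_k=A_{k_0}$ for all $k\ge k_0$, then $A_{k_0}$ would be a puzzle piece of arbitrarily large depth, forcing each $F^j(A_{k_0})$ with $j\ge n$ to be a component of $\U$, contradicting Definition~\ref{Def:NaturalBoxMapping}~(\ref{It:Nat1}) via the same type of argument as in Lemma~\ref{Lem:NE}.

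Therefore some $k^*$ satisfies $A_{k^*+1}\Subset A_{k^*}$, and the restriction $F^s\colon A_{k^*+1}\to A_{k^*}$ is a proper holomorphic map between Jordan disks of degree at least two (because $c\in A_{k^*+1}$ is critical), hence polynomial-like. The hard part will be to verify that the filled Julia set of this polynomial-like map is connected, equivalently that the entire $F^s$-orbit of $c$ is trapped inside $A_{k^*+1}$. To handle this, I would exploit that $W_\infty:=\bigcap_k A_k$ is compact, connected, and contains $c$ by Lemma~\ref{Lem:NE}, and is forward invariant under $F^s$ since $F^s(A_{k+1})\subseteq A_k$. By choosing $k^*$ large enough and, if necessary, replacing $s$ by a suitable multiple $s\cdot t$ corresponding to the first return time of $c$ to a deeper pullback under $F^s$, and then repeating the pullback construction about the critical orbit, one obtains a polynomial-like restriction of $F^{st}$ whose filled Julia set contains the whole critical orbit and is therefore connected, establishing classical Douady--Hubbard renormalizability.
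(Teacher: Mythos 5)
Your argument for DH $\Rightarrow$ box is correct and is essentially the same as the paper's: the filled Julia set $K^\infty$ of the polynomial-like restriction is a connected compact subset of $K(F)$, hence sits inside a single puzzle piece $P_n(c)$ at every depth; since $K^\infty$ is $F^s$-invariant, $F^{ks}(c)\in K^\infty\subset P_n(c)$ for all $k$, which is precisely box renormalizability.

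The direction box $\Rightarrow$ DH, however, has a genuine gap, and in fact two closely linked gaps. First, your nest $A_{k+1}=\Comp_c F^{-s}(A_k)$ is not obviously well-defined for $k\ge 1$: to form $A_2=\Comp_c F^{-s}(A_1)$ you need $F^s(c)\in A_1=\Comp_c F^{-s}(W)$, but the hypothesis only gives $F^s(c)\in W$, and a priori $F^s(c)$ could lie in a component of $F^{-s}(W)$ other than the one containing $c$. Consequently the identification $A_k=P_{n+ks}(c)$ does not go through for $k\ge 2$ without further argument. Second, you explicitly flag that showing the filled Julia set of $F^s\colon A_{k^*+1}\to A_{k^*}$ is connected is ``the hard part'' and then offer only a hand-wave (``replacing $s$ by a suitable multiple \dots\ and then repeating the pullback construction''). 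This is not a proof, and it also ignores the possibility that $A_{k^*+1}$ contains critical points of $F^s$ coming from critical fibers other than $\fib(c)$, whose orbits may escape.

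What the paper does differently, and what you are missing, is exactly the step that closes both gaps at once: it first upgrades the hypothesis $F^{ks}(c)\in W$ to the statement that $F^{ks}(\fib(c))\subset\fib(c)$ for all $k$, and then chooses $V=\Comp_c F^{-ns}(W)$ deep enough that the only critical points of $F$ meeting $V$ lie in $\fib(c)$. With this choice, $U=\Comp_c F^{-s}(V)\Subset V$ by the no permutation condition (as in your argument), and the filled Julia set of the resulting polynomial-like map $F^s\colon U\to V$ is \emph{exactly} $\fib(c)$, which is compact and connected by Lemma~\ref{Lem:NE}. You come very close — you introduce $W_\infty=\bigcap_k A_k$ and note it is compact, connected, and $F^s$-forward invariant — but you never identify $W_\infty$ with $\fib(c)$, never argue why the filled Julia set of your polynomial-like restriction equals $W_\infty$ rather than something larger and possibly disconnected, and never address the other critical points. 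Filling that in is precisely what makes the forward direction nontrivial.
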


Recall that one says that a box mapping $F\colon\U\to\V$ is {\em Douady--Hubbard renormalizable} if there exist $s\in\mathbb N$, $c \in \Crit(F)$ and open topological disks $c \in U\Subset V \subset \U$ so that $F^s\colon U\to V$ is a polynomial-like map with connected Julia set.

\begin{proof}[Proof of Lemma~\ref{Lem:DHB}]
First suppose that $F$ is box renormalizable, and let $c \in \Crit(F)$, $W \ni c$ and $s$ be as in the definition of box renormalizable. By Lemma~\ref{Lem:NE}, the fiber of $c$ is a compact connected set. The condition $F^{ks}(c) \in W$ for all $k \ge 0$ can be now re-interpreted as $F^{ks}(c') \in \fib(c) \subset W$ for all $k \ge 0$ and all critical points $c' \in \fib(c)$. Now let $n$ be large enough so that the only critical points of $F$ in the set $V:=\Comp_c F^{-ns}(W)$ are the one contained in the fiber of $c$. If $U := \Comp_c F^{-s}(V)$, then $U \Subset V$ because $F$ satisfies the no permutation condition (see assumption \eqref{It:Nat1} in Definition~\ref{Def:NaturalBoxMapping}), and hence $F^s \colon U \to V$ is a polynomial-like map. This mapping has connected Julia set equal to $\fib(c)$ by our choice of $n$. We conclude that $F$ is renormalizable in the Douady--Hubbard sense. 

Conversely, suppose that $F\colon\U\to \V$ is Douady--Hubbard renormalizable. Then there exist $c \in \Crit(F)$, $s \in \mathbb N$ and a pair of open topological disks $U \Subset V$ so that  $\rho:=F^s\colon U\to V$ is a polynomial-like map with connected Julia set $K(\rho)$ with $c  \in K(\rho)$. Note that $U$ and $V$ are not necessarily puzzle pieces of $F$, however $K(\rho) \subset \fib(c)$. The last inclusion implies that for every puzzle piece $W$ of $F$ with $W \ni c$ it is also true that $W \supset K(\rho)$. Since $K(\rho)$ is forward invariant under $F^s$, we have that for all $k\in\mathbb N$, $F^{ks}(c)\in W$. Picking a smaller $W$ to assure minimality of $s$ in the definition of box renormalization, we conclude the claim. 
\end{proof}

We now want to relate the existence of non-repelling periodic points for a complex box mapping to its renormalization. 

\begin{lemma}[Non-repelling cycles imply renormalization]
\label{Lem:NonRepRen}
If $F\colon\U\to\V$ is a complex box mapping that satisfies the no permutation condition, then the orbit of the fiber of any attracting or neutral periodic point contains a critical point of $F$. Moreover, if $F$ is non-renormalizable, then all the periodic points of $F$ are repelling. 
\end{lemma}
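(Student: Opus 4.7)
I would prove both assertions together, treating Part~2 as a formal corollary of Part~1 via the definition of box renormalization. For Part~1, let $p$ be a non-repelling periodic point of period $s$ with multiplier $\lambda=(F^s)'(p)$, $|\lambda|\le 1$. If $\lambda=0$ then some $F^j(p)\in\Crit(F)$, and since $p\in\fib(p)$ this critical point lies in $\orb(\fib(p))$. So assume $\lambda\ne 0$ and, for contradiction, that $\fib(F^j(p))\cap\Crit(F)=\emptyset$ for every $0\le j<s$. Since $\Crit(F)$ is finite, for each $j$ pick the minimal $n_j\ge 0$ with $P_n(F^j(p))\cap\Crit(F)=\emptyset$ for $n\ge n_j$, and set $N:=\max_j n_j$. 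After relabeling along the cycle (which preserves $\lambda$ and $N$), I may assume $n_0=N$.

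The main step applies the Schwarz lemma to the inverse branch of $F^s$ on pullbacks of $P_n(p)$. For $n\ge N-1$ let $Q_n:=\Comp_p F^{-s}(P_n(p))$, a puzzle piece of depth $n+s$; the branch locus of $F^s\colon Q_n\to P_n(p)$ consists of preimages of critical points of $F$ in $F^k(Q_n)=P_{n+s-k}(F^k(p))$, $0\le k<s$. The smallest depth among these is $n+1\ge N$, so by the choice of $N$ all of them are critical-free and $F^s\colon Q_n\to P_n(p)$ is a conformal isomorphism. Its inverse $\psi\colon P_n(p)\to Q_n\subseteq P_n(p)$ fixes $p$ with derivative $1/\lambda$; uniformizing $P_n(p)$ to $\disk$ sending $p\mapsto 0$, Schwarz's lemma yields $|1/\lambda|\le 1$ with equality iff $\psi$ is an automorphism. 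Combined with $|\lambda|\le 1$ this forces $|\lambda|=1$ and $Q_n=P_n(p)$, so $P_{n+s}(p)=P_n(p)$. Interpolating with the nesting $P_{n+s}(p)\subseteq P_{n+1}(p)\subseteq P_n(p)$ collapses the tail: $P_{N-1}(p)=P_N(p)=\cdots=\fib(p)$.

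The contradiction then splits into two cases. If $N\ge 1$, the minimality of $n_0=N$ forces $P_{N-1}(p)$ to contain a critical point of $F$; since $P_{N-1}(p)=\fib(p)$, this contradicts the standing assumption. If $N=0$, the Schwarz argument still applies at $n=0$ and yields $V^*:=P_0(p)=P_s(p)$, a component of $\V$. The inclusion $V^*=P_s(p)\subseteq F^{-s}(\V)\subseteq\U$ shows that $V^*$ is also a component of $\U$, and properness of $F$ propagates this along the cycle: $V^*,F(V^*),\ldots,F^{s-1}(V^*)$ are components of both $\U$ and $\V$, cyclically permuted by $F$. Hence $F^n(V^*)\subseteq\U$ for every $n\ge 0$, contradicting the no permutation condition applied to $V^*$. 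For Part~2, if $p$ is non-repelling then Part~1 gives a critical point $c\in\orb(\fib(p))$; shifting along the cycle, assume $c\in\fib(p)$. Since $F^{ks}(\fib(p))\subseteq\fib(F^{ks}(p))=\fib(p)\subseteq P_0(p)$ for all $k\ge 0$, the puzzle piece $W:=P_0(p)$ witnesses that $F$ is box renormalizable, contradicting non-renormalizability.

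The delicate part of the argument is the base case $N=0$: Schwarz's equality then propagates all the way down to a component of $\V$, producing a conformal-automorphism cycle on $\V$-components that is completely consistent with the holomorphic structure and can only be excluded by invoking the no permutation condition (without it, this degenerate scenario is genuinely realizable, as in the pathology of Theorem~\ref{Thm:Path}\eqref{It:PathAs1}). Accurate bookkeeping of the depths of the intermediate puzzle pieces $F^k(Q_n)$ is also essential, since it is exactly this bookkeeping that pins down the admissible range $n\ge N-1$ of the Schwarz step and, together with the minimality of $n_0=N$, produces the critical point in $\fib(p)$ itself rather than in some strictly larger puzzle piece.
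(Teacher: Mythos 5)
Your proposal is correct, and at its core it is the same strategy as the paper's: apply the Schwarz lemma to an inverse branch of $F^s$ on a pullback of a critical-free puzzle piece around $p$, and invoke the no-permutation condition to exclude the degenerate possibility that the pullback coincides with the original piece. The paper's own proof is considerably terser: it runs the Schwarz argument only once, at the top level, with $U_0' = \Comp_{z_0}\U$ and $U_0 = \Comp_{z_0}F^{-p}(U_0')$, using no-permutation to guarantee $U_0 \Subset U_0'$ so that the inverse branch is strictly contracting. Read literally, that argument produces a critical point of $F$ inside $\bigcup_{j<p} F^j(U_0)$, i.e.\ in puzzle pieces of bounded depth around the orbit of $z_0$, which is strictly weaker than the stated conclusion that a critical point lies in the orbit of the $\fib$. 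What you add is the depth bookkeeping needed to close that gap: by defining $N$ as the depth beyond which the pieces $P_n(F^j(p))$ are critical-free, running Schwarz at every $n\ge N-1$, and exploiting the equality case of Schwarz to force $P_{n}(p) = P_{n+s}(p)$, you collapse the tail of the nest to $P_{N-1}(p) = \fib(p)$, whereupon the minimality of $N$ places a critical point in $\fib(p)$ itself (case $N\ge1$). Your separate base case $N=0$, where the Schwarz equality propagates all the way to a component of $\V$ and produces a permutation cycle of $\V$-components, is exactly the role that the no-permutation condition plays in the paper's line ``$U_0\Subset U_0'$.'' So the two proofs buy the same conclusion, but yours makes explicit the iteration over depths and the use of the Schwarz equality case that the paper elides, and this is precisely what is needed to conclude ``in the fiber'' rather than ``in some bounded-depth puzzle piece.'' Your derivation of the Moreover part from Part~1 via $F^{ks}(\fib(p))\subseteq\fib(p)\subseteq W$ is also exactly the intended argument.
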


\begin{proof} Suppose that $z_0$ is a periodic point with period $p$ for $F$. Let $U_0'$ denote the component of $\U$ that contains $z_0$, and let
$U_0=\Comp_{z_0}F^{-p}(U_0')$.  Since $F$ satisfies the no permutation condition (see assumption \eqref{It:Nat1} in Definition~\ref{Def:NaturalBoxMapping}), we have that $U_0\Subset U_0'$. 
We claim that the mapping $F^p\colon U_0\to U_0'$ has a critical point. 
If not, then consider the inverse of this map. By the Schwarz Lemma this  
inverse map will be strictly contracting at its fixed point.
\end{proof}

\subsection{Itineraries of puzzle pieces and combinatorial equivalence of box mappings}
\label{SSec:CombEquiv}

By Definition~\ref{Def:BM}, each component in the sets $\U$ and $\V$ for a box mapping $F\colon \U \to \V$ is a Jordan disk. Therefore, by the Carath\'eodory theorem for every branch $F \colon U \to V$ of this mapping there exists a well-defined continuous homeomorphic extension $\hat F \colon \ovl U \to \ovl V$, and by continuity this extension is unique. Let us denote by $\hat F \colon \cl \U \to \cl \V$ the total extended map. 

\begin{definition}[Itinerary of puzzle pieces relative to curve family]
\label{DefA:CurveFamily}
Let $F \colon \U \to \V$ be a box mapping satisfying the no permutation condition, and let $X\subset \partial \V$ be a finite set with one point on each component of $\partial \V$. Let $\Gamma$ be a collection of simple curves in $(\cl \V) \sm (\U \cup \PC(F))$, one for each $y \in \hat F^{-1}(X)$, that connects $y$ to a point in $X$ (see Figure~\ref{Fig:Itinerary}). Then for every $n \ge 0$ and for each component $U$ of $F^{-n}(\U)$ there exists a simple curve connecting $X$ to $\partial U$ of the form $\gamma_0\ldots\gamma_n$ where $\hat F^k(\gamma_k) \in \Gamma$. The word $(\gamma_0, \hat F(\gamma_1), \ldots, \hat F^n(\gamma_n))$ is called the \emph{$\Gamma$-itinerary} of $U$.
\end{definition}

If a component $V$ of $\V$ is also a component
of $\U$, then the corresponding curve will be contained in the Jordan curve $(\cl V) \sm (\U \cup \PC(F))
= \partial V$, because by definition $\PC(F)\subset \V$ and so no point in the boundary of $V$ lies
in $\PC(F)$. 

Note the $\Gamma$-itinerary of $U$ is not uniquely defined even though there is a unique finite word for every $y'\in \hat F^{-n}(x)\cap \partial U$. For example, in Figure~\ref{Fig:Itinerary} the critical component of $F^{-1}(\U)$ has four $\Gamma$-itineraries: $(a,a)$, $(a,b)$, $(b,a)$, and $(b,b)$. However, different components of $F^{-n}(\U)$ have different $\Gamma$-itineraries.

\begin{figure}[htbp]
\begin{center}
\includegraphics[scale=.5]{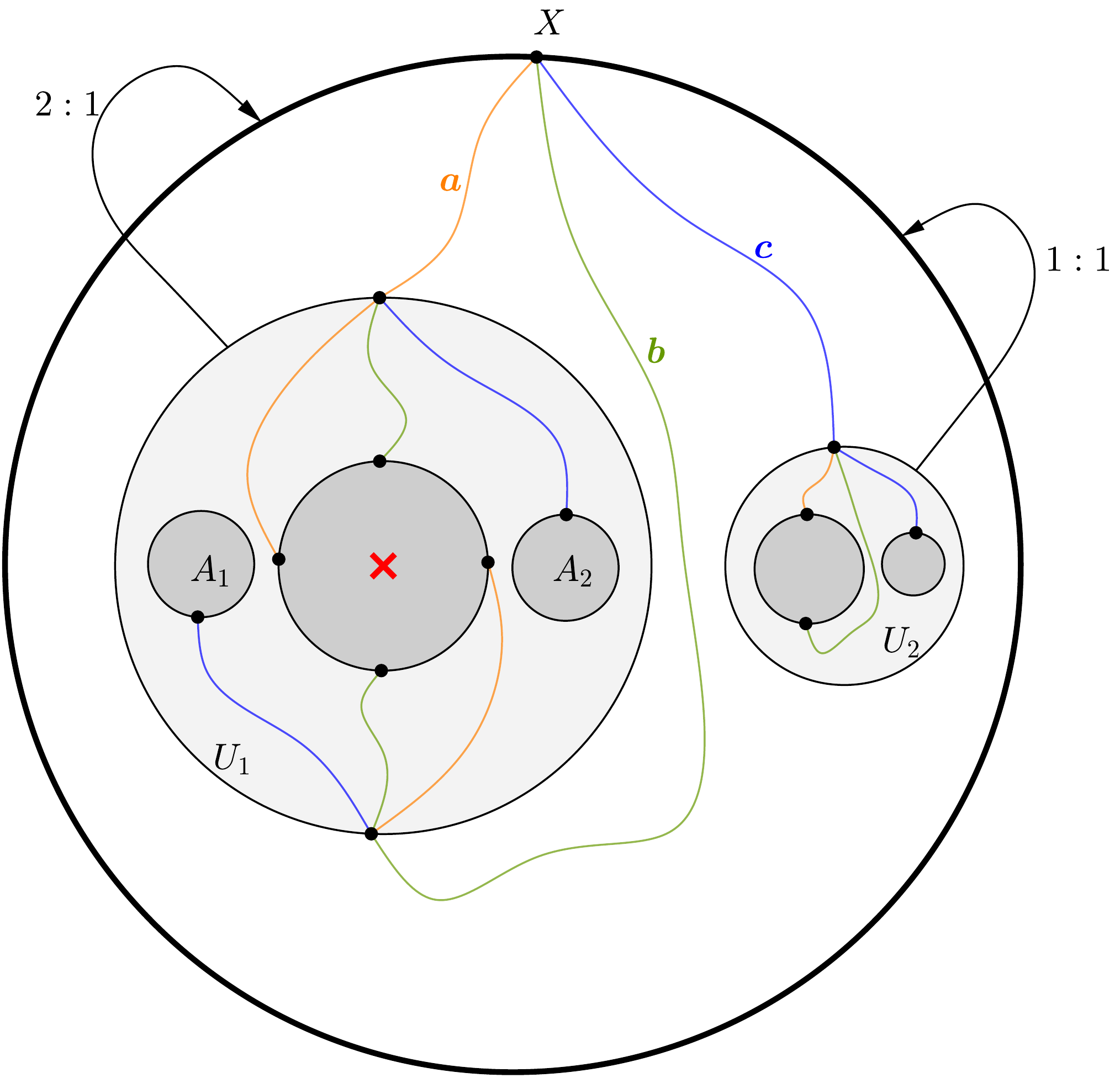}
\caption{An example of a complex box mapping $F \colon \U \to \V$ with $\U = U_1 \cup U_2$ (shaded in light grey), $\V$ having one component and a unique simple critical point (marked with the red cross) which mapped by $F$ to $U_1$. An example of a curve family $\Gamma = \{a,b,c\}$ is shown. The set $F^{-1}(\U)$ has $5$ components (shown in dark grey). Two of these, marked $A_1$ and $A_2$, are mapped over $U_2$ and hence have the same itineraries with respect to the components of $\U$. However, their $\Gamma$-itineraries are different: $A_1$ has $\Gamma$-itinerary $(b,c)$, while the $\Gamma$-itinerary of $A_2$ is $(a,c)$.}
\label{Fig:Itinerary}
\end{center}
\end{figure}

Definition~\ref{DefA:CurveFamily} might look a bit unnatural at first. A simpler approach might to define an itinerary of a component $U$ of $F^{-n}(\U)$ as a sequence of components of $\U$ through which the $F$-orbit of $U$ travels. However, such an itinerary would not distinguish components properly, an example is again shown in Figure~\ref{Fig:Itinerary}: both $A_1$ and $A_2$ have the same ``itineraries through the components of $\U$'', but clearly $A_1 \neq A_2$. This motivates using curves in order to properly distinguish components.

\begin{remark}[On the existence of $\Gamma$]
We observe that a collection $\Gamma$ in Definition~\ref{DefA:CurveFamily} always exists. Indeed, let $U$, $V$ be a pair of components of $\U$ resp.\ $\V$ with $\ovl U \subset V$, and $x \in \partial V$, $y \in \partial U$ be a pair of points we want to connect with a curve within $(\cl \V) \sm (\U \cup \PC(F))$. Clearly, the set $A:= \PC(F) \cap (\V \sm \ovl \U)$ is finite (the box mapping $F$ is not defined in $\V \sm \ovl \U$). Choose any simple curve $\gamma \colon [0,1] \to \ovl V \sm U$ so that $\gamma(0) = x$ and $\gamma(1) = y$. Such a curve exists because $\ovl V \sm U$ is a closed non-degenerate annulus bounded by two Jordan curves. We can choose $\gamma$ so to avoid $A$ as well as the closures of the components $W$ of $\U$ in $V$ with $\partial W \cap \PC(F) \not= \emptyset$. The set $A$ is finite, and there are only finitely many such $W$'s. Hence a curve $\gamma$ with such an additional property exists. This curve might intersect a component $U' \not= U$ of $\U$ within $V$. Let $t_1$ be the infimum over all $t \in [0,1]$ so that $\gamma(t) \in \partial U'$; similarly, let $t_2$ be the supremum over all such $t$'s (it follows that $0 < t_1 \le t_2 < 1$ as $\partial U' \cap \partial U = \emptyset$ and $\partial U' \cap \partial V = \emptyset$). Modify the curve $\gamma$ on $[t_1, t_2]$ by substituting it with the part of $\partial U'$ going from $\gamma(t_1)$ to $\gamma(t_2)$. Since $\partial U'$ is a Jordan curve, the new curve (we keep calling it $\gamma$) will be a Jordan curve. Moreover, $\gamma$ will avoid $U'$, will join $x$ to $y$ and will be disjoint from $A$. The desired curve in $\Gamma$ is obtained after resolving all intersections with all such $U'$.              
\end{remark}

Finally, let us give a definition of combinatorially equivalent box mappings (see~\cite[Definition 1.5]{KvS}). 

\begin{definition}[Combinatorial equivalence of box mappings]
\label{DefA:CombEquivBoxMappings}
Let $F \colon \U \to \V$ and $\tilde F \colon \tilde \U \to \tilde \V$ be two complex box mappings, both satisfying  the no permutation condition. Let $H \colon \V \to \tilde \V$ be a homeomorphism with the property that $H(\U)=\tilde \U$, $H(\PC(F)\sm\U) = \PC(\tilde F) \sm \tilde \U$ and such that it has a homeomorphic extension $\hat H \colon \cl \V \to \cl \tilde \V$ to the closures of $\V$ and $\tilde \V$.

The maps $F$ and $\tilde F$ are called \emph{combinatorially equivalent w.r.t.\ $H$} if:
\begin{itemize}
\item
$H$ is a bijection between $\Crit(F)$ and $\Crit(\tilde F)$; for $c \in \Crit(F)$, $\tilde c := H(c)$ is the \emph{corresponding} critical point;
\item
there exists a curve family $\Gamma$ as in Definition~\ref{DefA:CurveFamily} so that for every $n \ge 0$, and for each $k \ge 0$ such that both $F^k(c)$ and $\tilde F^k(\tilde c)$ are well-defined, the $\Gamma$-itineraries of $\Comp_{F^k(c)} F^{-n}(\V)$ coincide with the $\hat H(\Gamma)$-itineraries of $\Comp_{\tilde F^k(\tilde c)} \tilde F^{-n}(\tilde \V)$.
\end{itemize} 
\end{definition}

Using Definition~\ref{DefA:CombEquivBoxMappings}, we call a pair of puzzle pieces $P$, $\tilde P$ for $F$, $\tilde F$ \emph{corresponding} if all $\Gamma$-itineraries of $P$ coincide with the $\hat H(\Gamma)$-itineraries of $\tilde P$. The definition then requires that the orbits of the corresponding critical points $c$, $\tilde c$ travel through the corresponding puzzle pieces at all depths. This also gives rise to the notion of \emph{corresponding critical fibers} $\fib(c)$ and $\fib(\tilde c)$. Observe that the degrees of the corresponding critical fibers for combinatorially equivalent box mappings must be equal (here, the \emph{degree of a fiber} is the degree of $F$, respectively $\tilde F$, restricted to a sufficiently deep puzzle piece around the fiber).

\section{Statement of the QC-Rigidity Theorem and outline of the rest of the paper}
\label{Sec:QCStatement}

We are now ready to state the main result from~\cite{KvS}, namely, the rigidity theorem for complex box mappings, stated there as Theorem 1.4, but with the explicit assumption that the complex box mappings are dynamically natural. 

\begin{theorem}[Rigidity for complex box mappings]
\label{Thm:BoxMappingsMain2}
Let $F\colon \U\to \V$ be a dynamically natural complex box mapping that is non-renormalizable. Then 
\begin{enumerate}
\item 
\label{It:MainIt1}
each point $x \in K(F)$ is contained in arbitrarily small puzzle pieces.
\item 
\label{It:MainIt2}
If $F$ carries a measurable invariant line field  supported on a forward invariant set 
$X\subseteq K(F)$ of positive Lebesgue measure,
then 
\begin{enumerate}
\item\label{parta} there exists a puzzle piece $\J$ and a smooth foliation on $\J$ with the property that $\area(X\cap \J)=\area(\J)$ and the foliation is tangent to the invariant line field a.e.\ on $\J$;
\item\label{partb} the set $Y=\bigcup_{i\ge 0}F^i(\J)$ intersects the critical set of $F$ and each such critical point is strictly preperiodic.
\end{enumerate} 
\item 
\label{It:MainIt3}
Suppose $\tilde F \colon \tilde \U \to \tilde \V$ is another dynamically natural complex box mapping for which there exists a quasiconformal homeomorphism $H \colon \V \to \tilde\V$ so that
\begin{enumerate}
\item
\label{part3c}
$\tilde F$ is combinatorially equivalent to $F$ w.r.t.\ $H$, and so in particular $H(\U) = \tilde \U$,
\item
\label{part3b}
$\tilde F \circ H = H \circ F$ on $\partial \U$, i.e. $F$ is a conjugacy on $\partial U$, 
\end{enumerate}
Then $F$ and $\tilde F$ are quasiconformally conjugate, and this conjugation agrees with $H$ on $\V\setminus \U$.     
\end{enumerate}
\end{theorem}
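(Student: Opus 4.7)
My proof plan proceeds part by part, since the three conclusions are logically cascaded: shrinking drives the line-field dichotomy, and both together drive the qc rigidity.

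For part~(\ref{It:MainIt1}) (shrinking), I would split the proof into cases by the combinatorial type of the point $x \in K(F)$. If the $F$-orbit of $x$ eventually avoids some puzzle neighborhood of $\Crit(F)$, then by the Ma\~n\'e-type theorem for complex box mappings stated above, $F$ is uniformly expanding along $\orb(x)$, so any nest of puzzle pieces about $x$ is contracted at an exponential rate; such points belong to $\Koc(F)$, which by dynamical naturality has measure zero, but the expansion argument applies pointwise. For the remaining points --- the critical fibers and the points whose forward orbit accumulates on a critical fiber --- I would invoke the Complex Bounds Theorem cited in the excerpt: construct the Enhanced Nest of KSS around each recurrent critical fiber $\fib(c)$, yielding a combinatorially selected sequence $P_1 \supset P_2 \supset \cdots$ of critical puzzle pieces on which successive return maps have uniformly bounded degree, and apply the Covering Lemma of KL09 to upgrade these bounded-degree returns to a uniform lower bound on $\modulus(P_n \setminus \overline{P_{n+1}})$. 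Non-critical fibers are then dispatched by univalent pullback of these critical annuli along an orbit touching a critical puzzle piece, with Koebe distortion control.

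For part~(\ref{It:MainIt2}) (invariant line fields), I assume a measurable invariant line field on a forward-invariant $X \subset K(F)$ of positive measure, and study its Lebesgue density points. Since $\area(\Koc(F))=0$ by dynamical naturality, the density points of $X$ must have orbits that accumulate on $\Crit(F)$; using the ergodic-component theorem in the excerpt, almost every point in $X$ accumulates on a specific critical fiber $\fib(c)$. From such a density point $x_0$, I pull back a fixed puzzle piece by the branches of $F^{n_k}$ with uniformly bounded degree (which exist by the same Enhanced Nest / bounded-degree mechanism used in part~(1)), obtaining univalent branches with Koebe distortion at arbitrarily small scales around $x_0$. The standard McMullen rigidity argument then forces the line field on these small pullbacks to be the univalent image of a single line field on a critical piece, yielding the smooth foliation on a puzzle piece $\J$ claimed in~(\ref{parta}). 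Forward invariance of the line field, combined with the orbit of $x_0$ meeting $\fib(c)$, gives~(\ref{partb}); strict preperiodicity of $c$ follows because a periodic critical orbit would force a polynomial-like renormalization by Lemma~\ref{Lem:NonRepRen}, contradicting non-renormalizability.

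For part~(\ref{It:MainIt3}) (qc rigidity), I would build, following the Spreading Principle of KSS, a sequence of qc maps $\Phi_n \colon \V \to \tilde\V$ that agree with $H$ on $\V \setminus \U$ and that conjugate $F$ to $\tilde F$ on puzzle pieces up to depth $n$, with uniformly bounded qc dilatation. The starting map is $\Phi_0 := H$; given $\Phi_n$, on each component $U \subset \U$ with $F(U)=V$, I set $\Phi_{n+1}|_U$ to be the lift of $\Phi_n|_V$ through the branched covers $F\colon U \to V$ and $\tilde F\colon \tilde U \to \tilde V$, where the combinatorial equivalence via the $\Gamma$-itineraries (Definition~\ref{DefA:CurveFamily}, Definition~\ref{DefA:CombEquivBoxMappings}) uniquely identifies corresponding components $\tilde U$. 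The QC-Criterion controls the dilatation: the only obstacle is near critical points, and there the complex bounds from part~(1)'s proof supply the required $\delta$-free / $\delta$-nice annuli in which to perform a bounded qc interpolation. An Ascoli--Arzel\`a argument extracts a subsequential limit $\Phi$, which by the shrinking statement in part~(1) extends continuously to $K(F)$ and is a topological conjugacy there; the ``outer'' part $\V \setminus \U$ already agrees with $H$ by construction. Part~(2) enters negatively: the absence of an invariant line field on $K(F)$ (which must hold under the hypotheses, since a Latt\`es configuration would be forbidden by~(\ref{partb}) combined with the naturality of $F$) ensures that the limit $\Phi$ is genuinely qc rather than merely measurable.

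The main obstacle throughout is the control of the Enhanced Nest in the multicritical, mixed-recurrence setting: persistently recurrent critical points require iterating the Covering Lemma through a long combinatorial nest, while reluctantly recurrent and non-recurrent critical points demand a separate bookkeeping that does not yield beau bounds but still yields bounds at sufficient depth. A secondary subtlety in part~(3) is ensuring that the level-by-level lifts $\Phi_n$ glue compatibly across $\partial \U$ at every depth, which ultimately rests on the $\Gamma$-itinerary rigidity built into the definition of combinatorial equivalence.
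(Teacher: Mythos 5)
Your overall architecture matches the paper's (complex bounds via the Enhanced Nest and Covering Lemma, McMullen-style line-field rigidity, Spreading Principle + QC-Criterion), but there are three places where the proposal either takes a route that does not work under the stated hypotheses or skips the genuinely hard step.

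First, in part~(\ref{It:MainIt1}) you dispatch points whose orbits avoid a critical neighborhood by invoking the Ma\~n\'e-type theorem. That theorem requires that \emph{every} component of $\U$ be $\delta$-well-inside its component of $\V$ (the hypothesis in~(\ref{eq:maneassump})), plus a lower bound on $d(F^k(x),\partial\V)$ -- neither of which is supplied by dynamical naturality alone. A dynamically natural box mapping can have components of $\U$ arbitrarily close to $\partial\V$, and orbits may converge to $\partial\V$. The paper instead uses the definitional property $K(F)=\Kwi(F)$ directly: the orbit of $x$ visits $\delta(x)$-well-inside components infinitely often, and pulling those annuli back along the (bounded-degree, because $\orb(x)$ misses a fixed critical neighborhood) branches gives a nest of definite-modulus annuli around $x$ by Koebe distortion. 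You should replace the Ma\~n\'e argument with this direct pullback argument, or else prove shrinking only for $\Kwi(F)$ and note that the remaining set is covered by case~(1).

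Second, in part~(\ref{partb}) you derive only ``strictness'' (non-periodicity of $c$, via non-renormalizability), but the substantive content is that each critical point in $Y$ is \emph{eventually} periodic in the first place. This requires an argument: one must show that the quadratic differential inducing the foliation on $\J$ can only have finitely many singularities on $Y$, and that an infinite critical orbit would force a singularity-matching contradiction when a univalent branch and a branch passing through $c$ both map onto the same small puzzle piece near some $F^i(c)$ (this is Claim~1 of Proposition~\ref{prop:lattes-description}). ``Forward invariance of the line field plus the orbit meeting $\fib(c)$'' does not, by itself, yield preperiodicity.

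Third, in part~(\ref{It:MainIt3}) the naive iterative lifting $\Phi_{n+1}|_U := \tilde F^{-1}\circ\Phi_n\circ F$ does \emph{not} keep the qc dilatation bounded: each pass through a critical component can lose a factor, and as $n\to\infty$ the dilatation would blow up. The uniform bound comes instead from Proposition~\ref{Prop:KeyProp}: for each $n$, one builds a fresh $K$-qc map $h_n$ between depth-$n$ critical puzzle pieces using the QC-Criterion (where the bounded-geometry and moduli hypotheses are supplied by the complex bounds from part~(1)), and then spreads $h_n$ to all of $\V$ via the Spreading Principle; $K$ is independent of $n$ because the QC-Criterion's constant depends only on $k$ and $\eta$, not on the dilatation of the intermediate pullback. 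Relatedly, your final remark is backwards: the qc-ness of the subsequential limit $\Phi$ follows from compactness of families of $K$-qc maps, not from the absence of invariant line fields; the line-field result is used only to upgrade qc-conjugacy to \emph{hybrid} (conformal on $K(F)$) conjugacy, as discussed in Section~\ref{sssec:lattesqc}.
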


In the following sections, we sketch the proof of Theorem~\ref{Thm:BoxMappingsMain2}. Our aim is to make it more accessible to a wider audience.

\begin{remark}
Let us make several remarks regarding the statement of Theorem~\ref{Thm:BoxMappingsMain2}:

\begin{enumerate}
\item
In \cite{KvS}, it was assumed that the box mapping  in the statement of Theorem~\ref{Thm:BoxMappingsMain2} is non-renormalizable \emph{and} that all its periodic points are repelling. However, due to Lemma~\ref{Lem:NonRepRen}, it is enough to assume that the box mappings in question are non-renormalizable as the property that all periodic points are repelling then follows from that lemma. 

\item
In the language of fibers, shrinking of puzzle pieces stated in Theorem~\ref{Thm:BoxMappingsMain2}~\eqref{It:MainIt1} is equivalent to \emph{triviality of fibers} for points in the non-escaping set, i.e.\ $\fib(x) = \{x\}$ for every $x \in K(F)$. Note that if all the fibers are trivial, then $c$ is combinatorially recurrent if and only if $c \in \omega(c)$.

\item There exist dynamically natural box mappings which have invariant line fields, see Proposition~\ref{prop:lattes}.
These box mappings are rather special (and are the analogues of Latt\`es rational maps in this setting),
see Proposition~\ref{prop:lattes-description}. However, if a dynamically natural box mapping $F$ is coming from a Yoccoz-type construction, namely, when the puzzles are constructed using rays and equipotentials (see Section~\ref{SSec:Examples}), then each puzzle piece of $F$ contains an open set which is not in $K(F)$ (these sets are in the Fatou set of the corresponding rational map). In this situation, conclusion \eqref{parta} of Theorem~\ref{Thm:BoxMappingsMain2} cannot be satisfied, and thus such $F$ carries no invariant line field provided $F$ is non-renormalizable.

\item
\label{Rem:ext}
Conditions \eqref{part3c} and \eqref{part3b} should be understood in the following sense. Since, by assumption, $H \colon \V \to \tilde \V$ is a quasiconformal homeomorphism, it has a continuous homeomorphic extension $\hat H \colon \cl \V \to \cl \tilde \V$ to the closures (see \cite[Corollary 5.9.2]{AIM}). This allows to use $H$ in the definition of combinatorial equivalence (Definition~\ref{DefA:CombEquivBoxMappings}), and condition \eqref{part3b} should be understood as
\[
\hat{\tilde F} \circ \hat H = \hat H \circ \hat F \text{ on }\partial \U,
\]
where $\hat F$, $\hat{\tilde F}$ are the continuous extensions of $F$, $\tilde F$ to the boundary. 

\item 
If $\mathcal U$ only has a finite number of components, and each component of $\U,\V,\tilde \U,\tilde \V$ is a quasi-disk  and a combinatorial equivalence  $H \colon \V \to \tilde \V$ as in 
Definition~\ref{DefA:CombEquivBoxMappings}, then one can 
construct a new quasiconformal map $H$ (by hand) which is a combinatorial equivalence and is a conjugacy on the boundary of $\U$. Even if components of $\U,\V,\tilde \U,\tilde \V$ are not quasi-disks, then one can slightly shrink the
domains and still obtain a qc conjugacy between $F,\tilde F$ on a neighborhood
of their filled Julia sets.  One also can often find a qc homeomorphism which is a conjugacy on the boundary using B\"ottcher coordinates when they exist. So in that case one essentially has that combinatorially equivalent box mappings $F, \tilde F$ are necessarily qc conjugate.

On the other hand, if $\mathcal U$ has infinitely many components, then it is easy to construct 
examples so that $F,G$ are topologically conjugate while they are not qc conjugate.
\end{enumerate}
\end{remark}

\subsection{Applications to box mappings which are not dynamically natural or which are renormalizable}
\label{SSec:RigidityGeneral}
In this subsection we discuss what can be said about rigidity of box mappings when we relax some of the assumptions in Theorem~\ref{Thm:BoxMappingsMain2}, that is if we do not assume that the box mapping is dynamically natural and/or non-renormalizable.

First of all, the proof of Theorem~\ref{Thm:BoxMappingsMain2} \eqref{It:MainIt1} (given in Section~\ref{Sec:LC}) yields the following, slightly more general result: \emph{if $F \colon \U \to \V$ is a non-renormalizable complex box mapping satisfying the no permutation condition, then each point in $\Kwi(F)$ has trivial fiber}.

Furthermore, in \cite{DS} it was shown that Theorem~\ref{Thm:BoxMappingsMain2} \eqref{It:MainIt1} can be extended to the following result \cite[Theorem C]{DS}: \emph{for a complex box mapping $F \colon \U \to \V$ and for every $x \in K(F)$ at least one of the following possibilities is true: (a) the fiber of $x$ is trivial; (b) the orbit of $x$ lands into a filled Julia set of some polynomial-like restriction of $F$; (c) the orbit of $x$ lands into a permutation component of $\V$ (i.e. there exists $k\ge 1$ and a component $V$
of $\V$ which is also a component of $\U$ so that $F^k(V)=V$); (d) the orbit of $x$ converges to the boundary of $\V$}. 

It is easy to see that for dynamically natural complex box mappings the possibility (c) cannot occur, while points satisfying (d) have trivial fibers, and hence \emph{for dynamically natural mappings every point in its non-escaping set has either trivial fiber, or the orbit of this point lands in the filled Julia set of a polynomial-like restriction}. These polynomial-like restrictions can be then further analyzed using the results on polynomial rigidity. For example, if the corresponding polynomials are at most finitely many times renormalizable and have no neutral periodic cycles, then the result in \cite{KvS} tells that each point in the filled Julia set has trivial fiber, and hence the points in the whole non-escaping set of the original box mapping have trivial fibers.

The discussion in the previous two paragraphs indicates the way how to extend Theorem~\ref{Thm:BoxMappingsMain2} \eqref{It:MainIt2}, \eqref{It:MainIt3} for more general box mappings. 

For example, using Theorem~\ref{Thm:BoxMappingsMain2} \eqref{It:MainIt2}, one can show that \emph{for every dynamically natural complex box mapping $F$, the support of any measurable invariant line field on $K(F)$ is contained in puzzle pieces with the properties described in \eqref{parta} and \eqref{partb} union the cycles of little filled Julia sets of possible polynomial-like renormalizations of $F$} (compare \cite{Dr}). 

Similarly, if for a pair of dynamically natural complex box mappings $F$ and $\tilde F$ that satisfy assumptions of Theorem~\ref{Thm:BoxMappingsMain2} \eqref{It:MainIt3} all polynomial-like renormalizations can be arranged in pairs $F^s \colon U \to V$, $\tilde F^s \colon \tilde U \to \tilde V$ so that $U, \tilde U$ and $V, \tilde V$ are the corresponding puzzle pieces and $F^s|_U$, $\tilde F^s|_{\tilde U}$ are quasiconformally conjugate, then Theorem~\ref{Thm:BoxMappingsMain2} \eqref{It:MainIt3} can be used to conclude that $F$ and $\tilde F$ are quasiconformally conjugate   \cite{DS}. Some conditions, for example in terms of rays for polynomial-like mappings, can be imposed in order to \emph{conclude} rather than \emph{assume} quasiconformal conjugation of the corresponding polynomial-like restrictions.   

The circle of ideas on how to ``embed'' results on polynomial dynamics into some ambient dynamics using complex box mappings, similar to the ones described in this subsection, is discussed in \cite{DS} under the name \emph{rational rigidity principle}.

\subsection{Box mappings for which the domains are not Jordan disks.}
\label{SSec:NotJordan}

Let us now discuss Theorem~\ref{Thm:BoxMappingsMain2} in the setting where 
we no longer assume that the components of $\U$, $\V$ are Jordan disks, but instead assume 
that  each component  component $U$ of $\U$ and $V$ of $\V$
\begin{enumerate}[label=(\roman*)]
\item 
\label{It:i}
is simply connected;
\item
\label{It:ii}
when $U\Subset V$ then $V\setminus \overline{U}$ is a topological annulus;
\item  
\label{It:iii}
has a locally connected boundary.
\end{enumerate}
This means that $U,V$ could for example be sets of the form $\mathbb D\setminus [0,1)$.
Such domains are often used for box mappings associated to real polynomial maps or to 
maps preserving the circle. 
In this setting 
\begin{itemize}
\item [-] Parts \eqref{It:MainIt1} and \eqref{It:MainIt2} of Theorem~\ref{Thm:BoxMappingsMain2}  hold, 
provided Assumptions \ref{It:i}, \ref{It:ii} are satisfied.  The  proof is identical as in the Jordan case, because 
these assumptions ensure that one  still has  annuli, $V\setminus U$, as before. 
\item[-] Part \eqref{It:MainIt3} of Theorem~\ref{Thm:BoxMappingsMain2} holds,  provided 
Assumptions \ref{It:i}, \ref{It:ii}, \ref{It:iii} are satisfied and, moreover,  the map $H\colon \V\to \tilde \V$ 
extends to a homeomorphism $\hat H\colon \cl{\V} \to \cl{\tilde \V}$. 
To see this, observe that the QC-Criterion, Theorem~\ref{Thm:QCCriterion}, also holds
for simply connected domains  $\Omega,\tilde \Omega$  provided that we now also assume 
that the homeomorphism   $\phi\colon \Omega\to \tilde \Omega$ extends 
to a homeomorphism $\hat \phi\colon \cl{\Omega}\to \cl{\tilde \Omega}$.  (The proof of this version
of the QC-criterion is the same as in \cite{KSS}, since Assumption \ref{It:iii} implies that we can apply
the Carath\'eodory Theorem to get that the 
Riemann mapping $\pi\colon \disk\to \Omega$ extends
to a continuous map $\hat \pi\colon \cl \disk \to \cl{\Omega}$ and thus the map 
 $\phi\colon \Omega\to \tilde \Omega$ induces a homeomorphism $\ovl \phi \colon \cl \disk \to \cl \disk$. Because $\phi$ extends continuously to $\partial \Omega$, the map $\ovl \phi$
 has the additional property that if $x,y\in \partial \disk$ and 
$\hat\pi(x)=\hat\pi(y)$, then $\ovl \phi(x)=\ovl \phi(y)$. The $K$-qc map 
$\hat \psi\colon \disk \to \disk$ so that $\hat \psi=\ovl \phi$   on $ \partial \disk$ 
which is produced in the proof the QC-criterion, 
will now again have the same property that if $x,y\in \partial \disk$ and 
$\hat\pi(x)=\hat\pi(y)$, then $\hat \psi(x)=\hat \psi(y)$.
Therefore, $\hat \psi$ produces a $K$-qc map $\psi\colon \Omega \to \tilde \Omega$ which agrees with $\phi$ on $\partial \Omega$.)  Moreover, since $H$ now is assumed to extend to a homeomorphism
on the closure of the domains, the assumption that $H$ is a conjugacy on $\partial U$ makes sense.  
Finally, the gluing theorems from Section~ \ref{SSec-QC-gluing} and the Spreading Principle 
go through without any further change. 
\end{itemize}

\subsection{Outline of the remainder of the paper}
\label{SSec:Outline}
Let us briefly describe the structure of the upcoming sections. Theorem~\ref{Thm:BoxMappingsMain2} contains three assertions about dynamically natural complex box mappings with all periodic points repelling: 
\begin{itemize}
\item[ \eqref{It:MainIt1}] on shrinking of puzzle pieces; 
\item[\eqref{It:MainIt2}] properties of box mappings with measurable invariant line fields;  and 
\item[\eqref{It:MainIt3}] on quasiconformal rigidity (\emph{qc rigidity} for short) of combinatorially equivalent, dynamically natural box mappings, $F\colon\U\to\V$ and $\tilde F\colon\tilde\U\to\tilde\V$.
\end{itemize}

We outline the main ingredients that go into the proof of these assertions in Sections~\ref{Sec:Ing} and \ref{Sec:Ing2}. For clarity and better referencing, these ingredients are shown in the diagram in Figure~\ref{Fig:Deps}, with arrows indicating dependencies between them. 

The main ingredients can be roughly divided into two groups: complex {\em a priori} bounds (discussed in Section~\ref{Sec:Ing}), and analytic tools (the Spreading Principle and the QC-Criterion, discussed in Section~\ref{Sec:Ing2}). Complex bounds, see Theorem~\ref{Prop:UC}, gives us uniform control of the geometry of arbitrarily deep, combinatorially defined puzzle pieces. Complex bounds is the chief technical result needed to prove both shrinking of puzzle pieces (assertion \eqref{It:MainIt1}) and ergodic properties of box mappings and properties of mappings with measurable invariant line fields (assertion \eqref{It:MainIt2}). The shrinking of puzzle pieces is proved, assuming complex bounds,  in Section \ref{Sec:LC}. That the existence of an invariant 
line field implies the existence of a smooth foliation on some puzzle piece $\J$ is shown in 
 Section~\ref{Sec:ILF} (again assuming complex bounds).  In Section~\ref{sec:lattes}
 we give an example of a dynamically natural box mapping with an invariant line field and 
 show that such mappings have a very particular structure.  We call these \emph{Latt\`es box mappings}.
We outline the proof of complex bounds in Section~\ref{Sec:UC}.

It is important that the complex bounds are obtained for combinatorially defined puzzle pieces, since this ensures that corresponding puzzle pieces for combinatorially equivalent complex box mappings have good geometric properties simultaneously. This is a starting point of the proof of qc rigidity (assertion \eqref{It:MainIt3}) and it allows us to use the QC-Criterion (Theorem~\ref{Thm:QCCriterion}). 

 In Section~\ref{Sec:Ing2},  we continue the buildup to the explanation of the proof of qc rigidity. In that section, we explain the Spreading Principle and the QC-Criterion. The Spreading Principle, Theorem~\ref{Thm:Spreading}, allows one to obtain a partial quasiconformal conjugacy between a pair of combinatorially equivalent complex box mappings $F$ and $\tilde F$ outside of some puzzle neighborhood $\Y$ of $\Crit(F)$, provided that one has a quasiconformal mapping from $\Y$ to the corresponding puzzle neighborhood $\tilde \Y$ for $\tilde F$ which respects the boundary marking, see Definition~\ref{Def:BdMark}. Using both the QC-Criterion and the Spreading Principle, one can construct quasiconformal mappings that respect boundary marking between \emph{arbitrarily deep} puzzle neighborhoods $\Y$ and $\tilde \Y$. 
 
 In Section~\ref{Sec:Sketch}, we combine all of the ingredients and explain the proof of qc rigidity. 

While we will assume that complex box mappings are dynamically natural, quite often we will only use part of the definition: to prove complex bounds around recurrent critical points, we will only need Definition~\ref{Def:NaturalBoxMapping}\eqref{It:Nat1}; we will use Definition~\ref{Def:NaturalBoxMapping}\eqref{It:Nat2} to prove the Spreading Principle; and we will use Definition~\ref{Def:NaturalBoxMapping}\eqref{It:Nat3} to obtain complex bounds around non-recurrent critical points.

In Section~\ref{Sec:Mane}, we prove a Ma\~n\'e-type result for dynamically natural complex box mappings that was mentioned earlier in the introduction.


\hskip -2.4cm
\begin{minipage}{\textwidth}
\hskip -2cm 
\definecolor{ColorKSS}{rgb}{0.89, 0.02, 0.17}
\definecolor{ColorKvS}{rgb}{0.13, 0.55, 0.13}
\definecolor{ColorKL}{rgb}{0.0, 0.45, 0.73}
\definecolor{CSingle}{rgb}{0.4, 0.26, 0.13}
\definecolor{CPair}{rgb}{0.73, 0.4, 0.16}


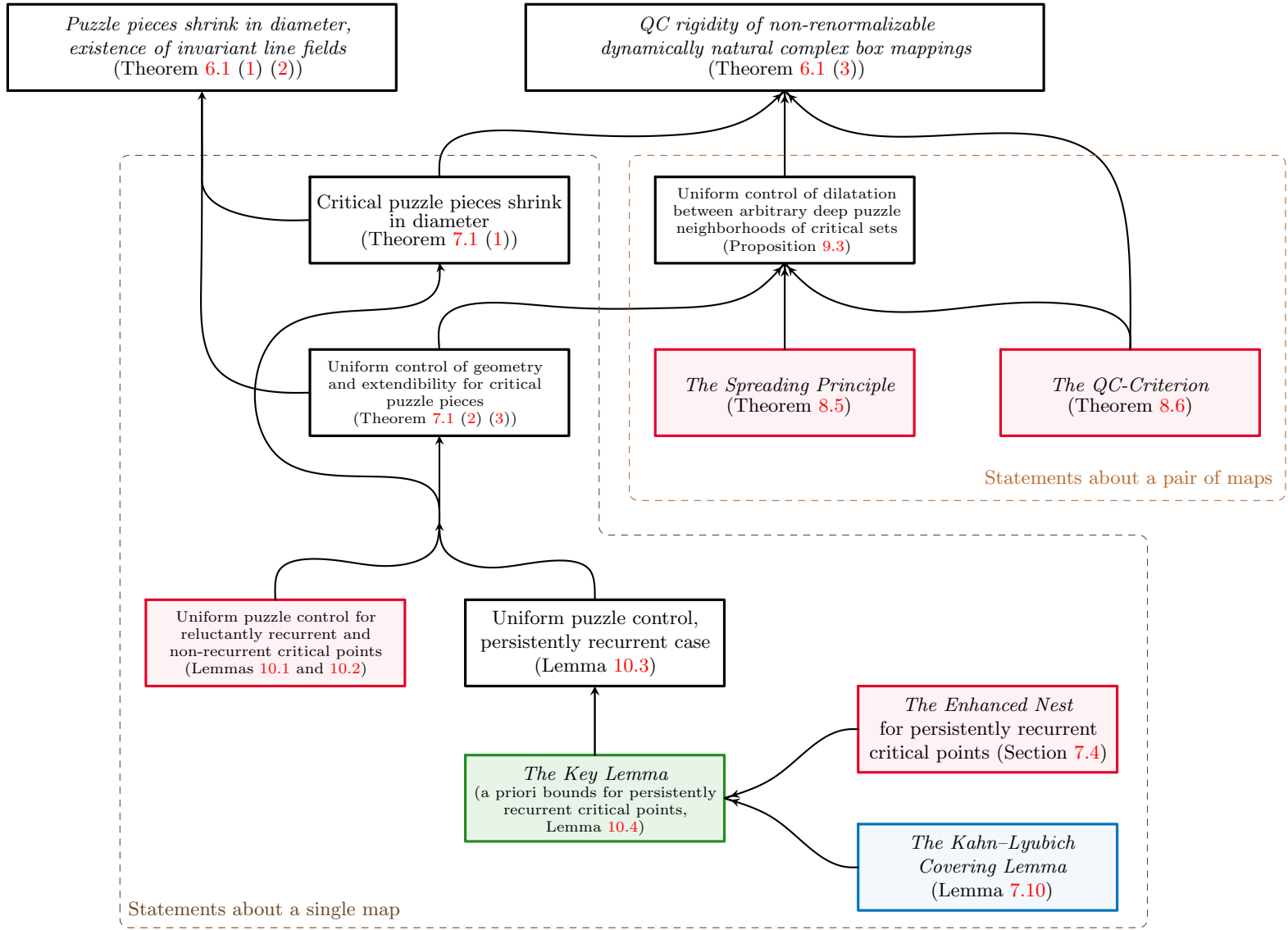
\captionof{figure}{The connection between the main ingredients that go into the proof of rigidity for non-renormalizable dynamically natural complex box mappings. The input ingredients are marked according to the following color scheme: the red boxes contain the ingredients coming from \cite{KSS}, the green box contains the ingredient coming from \cite{KvS}, and finally, the blue box contains the ingredient proven in \cite{KL09} (and used in \cite{KvS}).}
\label{Fig:Deps}
\end{minipage}



\section{Complex bounds, the Enhanced Nest and the Covering Lemma}
\label{Sec:Ing}

Complex bounds, also known as {\em a priori} bounds for a complex box mapping $F\colon\U\to\V$ concern uniform geometric bounds on arbitrarily small, combinatorially defined puzzle pieces. We will focus on two types of geometric control: \emph{moduli} bounds and \emph{bounded geometry}. This is the geometric control needed to apply the QC-Criterion in the proof of quasiconformal rigidity.

It is not possible to control the geometry of every puzzle piece, and to obtain complex bounds, one must restrict ones attention to a chosen, combinatorially defined, nest of critical puzzle pieces, and show that one has complex bounds for the puzzle pieces in this nest. One example of a nest of puzzle pieces is the principal nest. In this section, we will discuss some of the geometric properties of the principal nest. We will also describe the Enhanced Nest, which was introduced in \cite{KSS}. It is combinatorially more difficult to define than the principal nest and it is only defined around persistently recurrent critical points, but it also provides much better geometric control on the puzzle pieces than the principal nest, and it is especially useful for mappings with more than one critical point.

\subsection{Complex bounds for critical neighborhoods}
\label{SSec:GeomControl}

Define $\CritP(F) \subset \Crit(F)$ to be the set of critical points whose orbits \textbf{a}ccumulate on at least one \textbf{c}ritical fiber. Since it is a combinatorially defined set, it follows that for a combinatorially equivalent box mapping $\tilde F$ the set $\CritP(\tilde F)$ consists of the corresponding critical points. Note that in real one-dimensional dynamics if $\CritP(F) = \emptyset$, then $F$ is usually called a \emph{Misiurewicz map}. 

The following theorem proven in \cite[Proposition 6.2]{KSS} in the real case and extended to non-renormalizable complex maps in \cite{KvS}.

\begin{theorem}[Shrinking neighborhoods with uniform control]
\label{Prop:UC}
If $F \colon \U \to \V$ is a non-renormalizable dynamically natural complex box mapping, then there exist a $\delta > 0$ and an integer $N$ such that the following holds. For every $\epsilon >0$, there is a combinatorially defined nice puzzle neighborhood $W$ of $\Crit(F)$ with the following properties:
\begin{enumerate}
\item
\label{It:Shrink}
Every component of $W$ has diameter $< \epsilon$;
\item
\label{It:BddGeom}
Every component of $W$ intersecting $\CritP(F)$ has $\delta$-bounded geometry;
\item
\label{It:Ext}
The first landing map under $F$ to $W$ is $(\delta, N)$-extendible with respect to $\Crit(F)$.
\end{enumerate}
Moreover, if $\tilde F$ is another non-renormalizable dynamically natural box mapping combinatorially equivalent to $F$, then the same claims as above hold for $\tilde F$ and the corresponding objects with tilde.
\end{theorem}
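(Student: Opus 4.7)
The plan is to prove the theorem by treating the critical points of $F$ separately according to their combinatorial recurrence type introduced in Section~\ref{SSec:Rec}: non-recurrent, reluctantly recurrent, and persistently recurrent. For each class of critical point we would construct a combinatorially defined sequence of nested critical puzzle pieces with uniform moduli bounds, and then pull back along the orbits of the remaining (non-critical) components of $\U$ to produce the required puzzle neighborhood $W$. The hard part will be the persistently recurrent case, where the desired moduli bounds cannot be obtained by any real-variable technique; this is the heart of the matter and is where the Kahn--Lyubich Covering Lemma enters.

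First I would treat the non-recurrent case. If $c$ is non-recurrent, the forward orbit of $F(c)$ eventually avoids a puzzle piece $P \ni c$, and by assumption~\eqref{It:Nat3} of dynamical naturality the orbit must infinitely often visit components of $\U$ that are $\delta_0$-well-inside. Pulling back such a well-inside component along the non-recurrent branch (which is univalent on a definite puzzle neighborhood) yields a pair $P^- \Subset P^+$ with definite modulus and bounded degree, giving the required uniform geometric control. For reluctantly recurrent critical points, the definition supplies some $P_n(c')$ (with $c' \in [c]$) with infinitely many children containing critical points in $[c]$; a standard Schwarz/pigeonhole argument on these children, combined with the no permutation condition and condition~\eqref{It:Nat3}, produces a deeper puzzle piece of $c$ with a definite modulus of extendibility. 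Both cases yield the desired moduli bound, but the constants depend on the initial combinatorics of $F$ (i.e.\ they are not beau).

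The persistently recurrent case is treated by the Enhanced Nest machinery. One builds a combinatorially defined nest $W_0(c) \supset W_1(c) \supset \ld$ around each $c \in \CritP(F)$ so that the transition maps $F^{k_n} \colon W_{n+1}(c) \to W_n(c)$ are proper of degree bounded in terms of the number and degrees of $\Crit(F)$, and so that the pullbacks involved satisfy the hypotheses of the Covering Lemma (Lemma~\ref{Lem:CoveringLemma}). Applying the Covering Lemma to the pair $(W_n,W_{n+1})$ one obtains the Key Lemma (Lemma~\ref{Lem:KeyLemma}): once some ancestor $W_{n_0}$ already has sufficiently large modulus of extendibility, every subsequent $W_n$, $n \ge n_0$, satisfies a uniform (indeed beau) lower bound $\modulus(W_n^+ \setminus \ovl W_n) > \delta$. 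The initial modulus of extendibility is supplied by feeding the output of the reluctantly recurrent/non-recurrent analysis into the Enhanced Nest, using the fact that no critical orbit can avoid persistently recurrent critical fibers forever without falling into the previous cases. From the moduli bound, bounded geometry of each critical component (property~\eqref{It:BddGeom}) follows by Koebe-type arguments, and $(\delta,N)$-extendibility of the landing map (property~\eqref{It:Ext}) follows because the pullbacks from the Enhanced Nest bound the degree of every branch of the first landing map to $W$ uniformly.

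To conclude shrinking (property~\eqref{It:Shrink}), I would pass to an arbitrarily deep level in each Enhanced/principal nest: the uniform moduli bound on each transition annulus, together with Gr\"otzsch's inequality, forces the diameters along the nest to shrink, and this produces critical puzzle pieces of arbitrarily small diameter. Taking $W$ to be the union of these deep critical puzzle pieces together with the pullbacks along landing branches of bounded degree delivers the full statement. Finally, the last sentence about a combinatorially equivalent $\tilde F$ is automatic: the Enhanced Nest is defined purely in combinatorial terms (via $\Gamma$-itineraries of puzzle pieces as in Section~\ref{SSec:CombEquiv}), the Covering Lemma depends only on degrees, and the beau constants depend only on the number and degrees of the critical points; all these are preserved under combinatorial equivalence, so the corresponding $\tilde W$ inherits the same $\delta$ and $N$.
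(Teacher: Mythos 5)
Your overall decomposition matches the paper's approach: treat critical points by recurrence type, use the Enhanced Nest and Covering Lemma for the persistently recurrent case (this is the paper's Key Lemma), obtain moduli bounds, then deduce bounded geometry and shrinking via Koebe and Gr\"otzsch, and observe that all constructions are combinatorial so the {\lq}moreover{\rq} clause is free. However, there is a genuine reversal in your logical dependency structure. You write that the initial modulus of extendibility for the Enhanced Nest is \emph{supplied by feeding the output of the reluctantly recurrent/non-recurrent analysis into the Enhanced Nest}. This is backwards. The Enhanced Nest around a persistently recurrent critical point is a self-contained argument: one first induces a persistently recurrent complex box mapping $F'$ with $\Crit(F')=[c]$ via Lemma~\ref{Lem:PRBM}, and then the initial positivity of the modulus $\mu_0$ (i.e.\ $\bI_0$ being $\mu_0$-nice) comes from compactness of $\PC(F')$, which is guaranteed because $F'$ is persistently recurrent. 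Proposition~\ref{prop:KvSnice} then shows that the $\mu_n$ stay bounded away from zero by a beau constant once $n$ is large. The correct direction of dependency is the opposite of what you wrote: the Enhanced Nest bounds feed \emph{into} the non-recurrent analysis, in Case~3 of Lemma~\ref{Lem:NonR}, where one handles a non-recurrent $c$ whose $\Forw(c)$ contains a persistently recurrent critical point.

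Two further imprecisions. First, the non-recurrent case has three sub-cases in the paper (Lemma~\ref{Lem:NonR}): $\Forw(c)=\emptyset$, $\Forw(c)$ containing only non-recurrent critical points, and $\Forw(c)$ containing a persistently recurrent critical point. Your sketch only addresses the first. The second is handled by a pigeonhole argument showing that each critical point can be passed at most once along the relevant pullback, and the third requires the Enhanced Nest output, as just discussed. Second, you say the Enhanced Nest is built around each $c \in \CritP(F)$; but $\CritP(F)$ contains non-recurrent and reluctantly recurrent critical points whose orbits merely accumulate on some critical fiber, and the Enhanced Nest construction is defined only around persistently recurrent critical points (equivalently, around a persistently recurrent critical fiber of the induced map $F'$). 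These points together make the middle of your argument not quite assemble correctly, though the ingredients are the right ones.
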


\begin{remark}
In fact, for persistently recurrent critical points, we have better control.
There exists a {\em beau} constant $\delta'>0$ such that if a component $W_c$ of $W$ is a neighborhood of a persistently recurrent critical point, then $W_c$ has $\delta'$-bounded geometry and the first landing map to $W_c$ is $(\delta', N)$-extendible with respect to $\Crit(F)$. The constant $N$ is bounded by some universal function depending only on the number and degrees of the critical points of $F$. 
\end{remark}

Let us recall a notion of $(\delta, N)$-extendibility defined in \cite[page 775]{KSS}. Let $\delta>0$ and $N \in \mathbb N$. Let $V' \supset V$ be a pair of nice open puzzle neighborhoods of some set $A \subset \Crit(F)$, and $B$ be any subset of $\Crit(F)$. For $a \in A$, denote $V_a = \Comp_a V$; similarly for $V'_a$. We say that the first landing map $L_V$ to $V$ is \emph{$N$-extendible to $V'$ w.r.t.\ $B$} if the following hold (see Figure~\ref{Fig:DeltaNExt}): if $F^s \colon U \to V_a$ is a branch of $L_V$, and if $U' = \Comp_U(F^{-s}(V'_a))$, then
\[
\# \left\{0 \le j \le s-1 \colon (F^j(U') \sm F^j(U))\cap B \neq \emptyset\right\} \le N.
\]
Moreover, $L_V$ is \emph{$(\delta,N)$-extendible w.r.t.\ $B$} if there exists a puzzle piece $V_a' \supset V_a$ for every $a \in A$ such that $\modulus(V_a' \sm \ovl V_a) \ge \delta$ and $L_V$ is $N$-extendible to $\bigcup_{a \in A}V_a'$ w.r.t.\ $B$.

\begin{remark} Note that this definition is slightly more demanding than the one from
\cite{KSS}  because here we require that the set $V_a'$ to be a puzzle piece
rather than a topological disk.  The reason this can be  done is because 
we are able to prove this sharper statement in Lemma~\ref{Lem:PR}
as now we can rely on the estimate provided in 
Proposition~\ref{prop:KvSnice} from \cite{KvS}.
In \cite{KSS} such an estimate is not available.  
\end{remark}

\begin{figure}[htbp]
\begin{center}
\includegraphics[scale=.9]{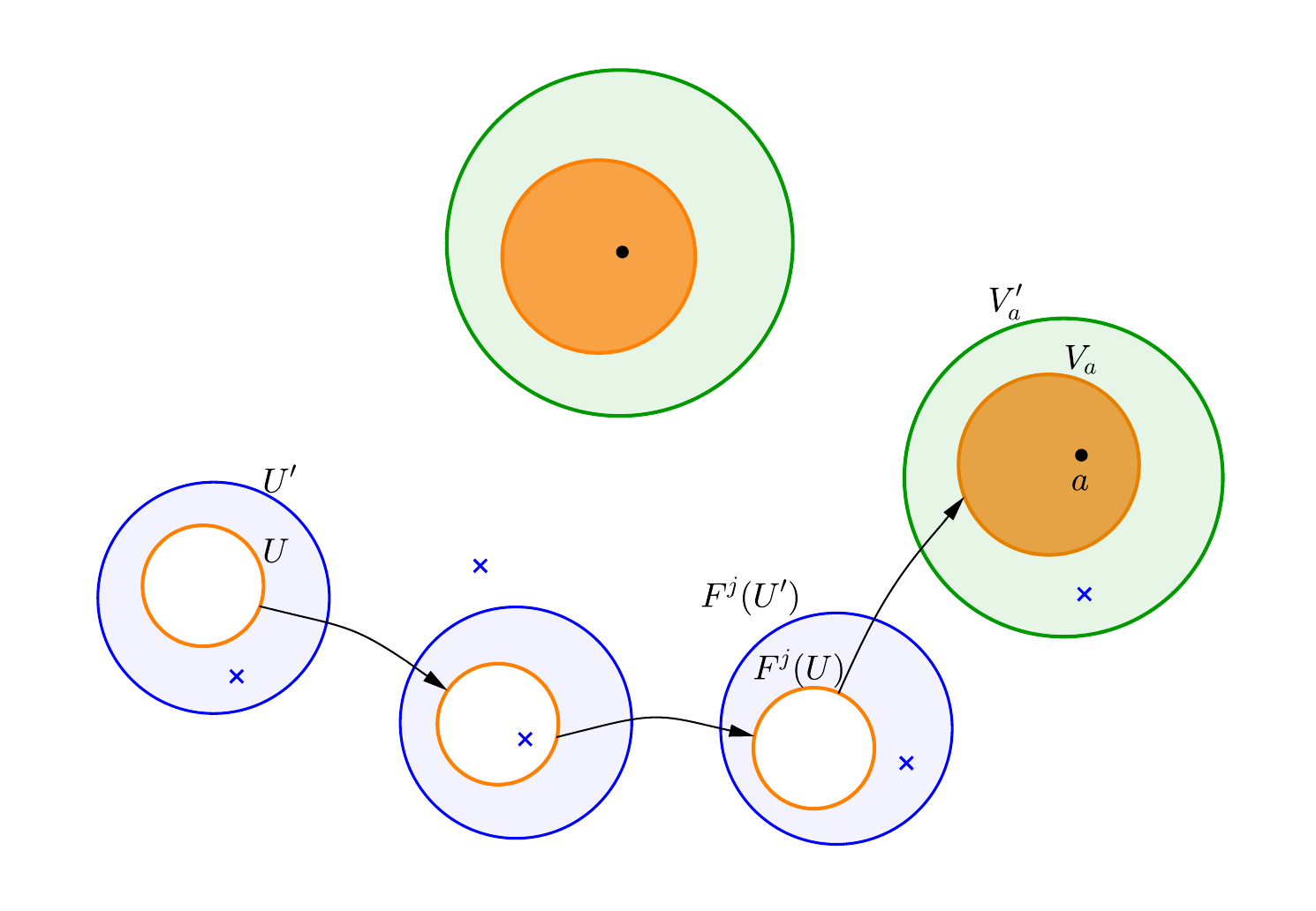}
\caption{A schematic drawing of relevant annuli in the definition of $(\delta,N)$-extendibility: $\delta$ controls the moduli of green annuli, $N$ controls the number of critical points in the blue annuli for each branch $F^s \colon U \to V_a$ of the first landing map $L_V$. The set $V$ is the union of solid orange disks; the set $A$ is indicated with black dots, the set $B$ is indicated with blue crosses.}
\label{Fig:DeltaNExt}
\end{center}
\end{figure}

We will sketch the proof of Theorem~\ref{Prop:UC} in Section~\ref{Sec:UC}. There are two main ingredients to this proof. The first one is the construction from \cite{KSS} of the \emph{Enhanced Nest} around persistently recurrent critical points. The second one is the \emph{Covering Lemma} of Kahn and Lyubich~\cite{KL09}, see \cite{KvS}. In \cite{KSS} other methods were used in the real case because \cite{KL09} was not 
available, and also to deal with the infinitely renormalizable case. Before going on to discuss these ingredients in greater detail, we will describe the geometry of the principal nest.

\subsection{The principal nest}

Let $c \in \Crit(F)$ be a critical point. The \emph{principal nest} (around $c$) is a nested collection of puzzle pieces $\bV^0 \supset \bV^1 \supset \bV^2 \supset \ldots$ such that 
\[
c \in \bV^0, \text{ and } \bV^{i+1} = \DomE_{c} \bV^i \text{ for every integer } i \ge 0.  
\]

Note that we can start the principal nest from an arbitrary strictly nice puzzle piece $\bV^0$ containing $c$. Clearly, if $c$ is combinatorially non-recurrent, then the principal nest is finite. Otherwise, the nest consists of infinitely many puzzle pieces. Moreover, each  $\bV^n$ contains $\fib(c)$, and hence all the critical points in $\fib(c)$. 

Associated to such the principal nest we have a sequence of return times $(p_n)$ defined as $\bV^{n+1} = \Comp_c F^{-p_n}(\bV^n)$; $p_n$ is the first time the orbit of $c$ returns back to $\bV^n$. Since the mapping from $\bV^{n+1}$ to $\bV^n$ is a first return mapping, and puzzle pieces are nice sets, the sets
$F^j(\bV^{n+1}), F^{j'}(\bV^{n+1})$ are pairwise disjoint for $0<j<j'\le p_n$. Thus all the maps $F^{p_n} \colon \bV^{n+1} \to \bV^n$ have uniformly bounded degrees.

\subsubsection{Geometry of the principal nest}

Let us describe some of the results concerning the geometry of the principal nest for unicritical mappings. 
Suppose that $F\colon\U\to\V$ is a dynamically natural complex box mapping with the properties that 
\begin{itemize}
\item $\V$ consists of exactly one component, 
\item there exists exactly one component $U_0$ of $\U$ that is mapped as a degree $d$ branched covering onto $\V$ which is ramified at exactly one point, and 
\item every component $U$ of $\U$, $U\neq U_0$ is mapped diffeomorphically onto $\V$.
\end{itemize}
Let $V^0=\V$ and $$V^0\supset V^1\supset V^2\supset\dots$$
be the principal nest about the critical point $c$ of $F$. 

There are two combinatorially distinct types of returns $F^{p_n}\colon V^{n+1}\to V^n$: {\em central} and {\em non-central}. We say that the return mapping from $V^{n+1}$ to $V^n$ is {\em central} if $F^{p_n}(c)\in V^{n+1}$. Otherwise the return is called {\em non-central}. 

For quadratic mappings, the following result is fundamental.
\begin{theorem}\cite[Decay of Geometry]{GS,Lyu} 
Let $n_k$ denote the subsequence of levels in the principal nest so that $F^{p_{n_k-1}}\colon V^{n_k}\to V^{n_k-1}$ is a non-central return. There exists a constant $C>0$ so that 
\[
\modulus \left(V^{n_k}\setminus \ovl V^{n_k+1}\right)>Ck.
\]\qed
\end{theorem}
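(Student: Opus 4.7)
My plan is to establish a recursive inequality for the moduli $\mu_n := \modulus(V^n \setminus \overline{V^{n+1}})$ showing that each non-central return contributes a definite additive gain of some uniform $\epsilon > 0$, while central cascades at worst preserve modulus. Iterating this across the subsequence $n_k$ then gives $\mu_{n_k} \geq Ck$.

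The key object is the degree-$2$ proper map $g_n := F^{p_n}\colon V^{n+1} \to V^n$, whose unique critical point is $c$. All other first-return components to $V^n$ are mapped univalently onto $V^n$. In the non-central case, $v := g_n(c) \notin V^{n+1}$, so $v$ lies in some non-critical return domain $W$ with $W \Subset V^n \setminus \overline{V^{n+1}}$. The annulus $A := V^n \setminus (\overline{V^{n+1}} \cup \overline{W})$ then splits $V^n$ into three pieces, and the niceness of the puzzle together with the complex bounds of Theorem~\ref{Prop:UC} give a uniform lower bound $\modulus(A) \geq \mu_n + \epsilon_0$ for some $\epsilon_0>0$ depending only on how well-inside the non-critical components sit. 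Pulling back by $g_n$: since the critical value $v \in W$, the preimage $g_n^{-1}(\overline W)$ is a single closed topological disk (absorbing the ramification), so $g_n^{-1}(A)$ is a genuine annulus in $V^{n+1}$ separating $V^{n+2}$ from $\partial V^{n+1}$. By the degree-$2$ modulus relation for proper branched covers, $\modulus(g_n^{-1}(A)) = \tfrac{1}{2}\modulus(A)$, and by Grötzsch's inequality (adding the free annulus between $g_n^{-1}(\overline W)$ and $V^{n+2}$) one extracts
$$\mu_{n+1} \;\geq\; \tfrac{1}{2}\mu_n + \tfrac{1}{2}\epsilon_0.$$

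For central returns ($g_n(c) \in V^{n+1}$), one shows that the modulus is essentially preserved along the whole central cascade. The mechanism is that such a cascade is a nested iteration of the \emph{same} quadratic-like restriction $g_n$ on successively smaller pullbacks, and so the nested moduli satisfy $\mu_{n+j} \geq \mu_n$ for all $j$ through the cascade, with a bounded correction. This uses that within a central cascade all the $V^{n+j+1} = g_n^{-1}(V^{n+j})$ are pullbacks by the same degree-$2$ map and that in the non-renormalizable setting the cascade must terminate in finitely many steps with a non-central return. Combining these, across two consecutive non-central levels $n_k$ and $n_{k+1}$, one derives
$$\mu_{n_{k+1}} \;\geq\; \tfrac{1}{2}\mu_{n_k} + \tfrac{1}{2}\epsilon_0 - o(1),$$
whose solution is $\mu_{n_k} \gtrsim k \epsilon_0$; this is the desired linear growth.

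The main obstacle I expect is the analysis across long central cascades: naively, a degree-$2$ pullback at each stage would halve the modulus, and one needs to argue that inside a cascade the \emph{same} annulus $V^n \setminus V^{n+1}$ is essentially copied (rather than halved) into $V^{n+1} \setminus V^{n+2}$, because the critical value lies deep inside $V^{n+1}$ rather than escaping. The cleanest route is to treat the cascade as a polynomial-like return $g_n^\ell$ of degree $2^\ell$, apply the Kahn--Lyubich Covering Lemma (Lemma~\ref{Lem:CoveringLemma}) with the non-central escape at the end of the cascade providing the required ``free space'', and thereby transfer the full modulus gain to $\mu_{n_{k+1}}$ at the price of a controlled distortion; this bypasses the iteration-by-iteration halving and delivers the uniform additive jump at every non-central return.
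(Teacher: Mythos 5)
The recursion you derive,
\[
\mu_{n_{k+1}} \;\ge\; \tfrac12\,\mu_{n_k} + \tfrac12\,\epsilon_0,
\]
does \emph{not} give linear growth. Iterating gives $\mu_{n_k} \ge 2^{-k}\mu_{n_0} + (1-2^{-k})\epsilon_0$, which converges to $\epsilon_0$; the moduli stay \emph{bounded}, approaching a fixed point. You need an additive gain \emph{without} the halving, i.e.\ something like $\mu_{n_{k+1}}\ge\mu_{n_k}+\delta$. Your argument as written at best re-derives the higher-degree ``bounded moduli'' statement (the Kahn--Lyubich/AKLS/ALS result stated just below in the paper), not the genuinely quadratic decay of geometry, which is precisely the phenomenon that fails for $d\ge 3$. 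So the core recursion step is wrong, and the claim ``whose solution is $\mu_{n_k}\gtrsim k\epsilon_0$'' is a computational error.

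Two further remarks. First, the paper does not prove this theorem: it is stated with an immediate \texttt{\textbackslash qed} and attributed to Graczyk--\'{S}wi\c{a}tek and Lyubich, so there is no internal proof to match. The actual proofs in those references are substantially more intricate than a single pullback-plus-Gr\"otzsch recursion: Lyubich's argument exploits the full structure of the generalized renormalization and a combinatorial/random-walk scheme, and Graczyk--\'{S}wi\c{a}tek use real cross-ratio methods; the key quadratic-specific mechanism is that the degree-two critical pullback of the fundamental \emph{pair of pants} (not just the fundamental annulus) transfers modulus in a way that compounds, not a fixed $\epsilon_0$ gain per step. Second, invoking Theorem~\ref{Prop:UC} to supply the uniform ``well-inside'' constant $\epsilon_0$ is circular here: that theorem's complex bounds are \emph{consequences} of results of the type you are trying to prove (it is built on the Enhanced Nest and, for persistently recurrent points, the Covering Lemma), and in any case applies to a different nest, not the principal nest. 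Likewise, the Covering Lemma postdates both [GS] and [Lyu] and is used in the paper precisely to replace decay of geometry in the higher-degree case; it is not part of a quadratic decay-of-geometry proof and, being a ``recover a definite portion of the modulus'' estimate, it does not by itself give the linear accumulation you need.
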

For example, by the Koebe Distortion Theorem this result implies that the landing maps to $V^{n_k+1}$ become almost linear as $k\to\infty$. This result played a crucial role in the proofs of quasiconformal rigidity and density of hyperbolicity in the quadratic family.

For higher degree maps, even cubic mappings with two non-degenerate critical points and unicritical cubic mappings, Decay of Geometry does not hold; however, the geometry is bounded from below. The following result required new analytic tools --- the Quasiadditivity Law and Covering Lemma --- of \cite{KL09}.

\begin{theorem}[\cite{AKLS, ALS}]
Let $n_k$ denote the subsequence of levels in the principal nest so that $F^{p_{n_k-1}}\colon V^{n_k}\to V^{n_k-1}$ is a non-central return. There exists a constant $\delta>0$ so that 
\[
\modulus \left(V^{n_k}\setminus \ovl V^{n_k+1}\right)>\delta.
\]
\qed
\end{theorem}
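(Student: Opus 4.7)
The plan is to apply the Kahn--Lyubich Covering Lemma (Lemma~\ref{Lem:CoveringLemma}) in a pullback step that transfers moduli bounds between non-central levels, and to proceed by induction on $k$. The basic geometric object is the branched covering $F^{p_{n_k}} \colon V^{n_k+1} \to V^{n_k}$, which has degree $d$ because $V^{n_k}$ is nice and the orbit $F(c),\dots,F^{p_{n_k}-1}(c)$ avoids $V^{n_k}$. To feed this into the Covering Lemma one must first extend the map to a larger pullback; the natural choice is $W := \Comp_c F^{-p_{n_k}}(V^{n_k-1}) \supset V^{n_k+1}$, on which $F^{p_{n_k}} \colon W \to V^{n_k-1}$ is a branched cover of some degree $D_k$. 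Since the depth of $W$ exceeds that of $V^{n_k}$, one has $V^{n_k+1} \subseteq W \subseteq V^{n_k}$, so any lower bound on $\modulus(W \setminus \ovl V^{n_k+1})$ immediately yields the same lower bound on $\modulus(V^{n_k} \setminus \ovl V^{n_k+1})$.

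The first key step is to show that $D_k$ is bounded by a constant depending only on $d$. This is where \emph{non-centrality} of $F^{p_{n_k-1}} \colon V^{n_k} \to V^{n_k-1}$ enters: non-centrality forces the critical value $F^{p_{n_k-1}}(c)$ to lie in $V^{n_k-1} \setminus V^{n_k}$, so the orbit of $c$ returns to $V^{n_k-1}$ only through non-critical return domains (giving univalent pullbacks) until it first meets the critical return domain $V^{n_k}$ precisely at time $p_{n_k}$. Unicriticality rules out contributions from other critical points along the way, and a straightforward counting argument shows $D_k$ depends only on $d$ (essentially, $D_k = d$ in the cleanest case).

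With $D_k$ controlled, the Covering Lemma supplies a function $\nu \mapsto \mu(\nu,d)$, with $\mu(\nu,d) \to \infty$ as $\nu \to \infty$, such that $\modulus(V^{n_k-1} \setminus \ovl V^{n_k}) \ge \nu$ implies $\modulus(W \setminus \ovl V^{n_k+1}) \ge \mu(\nu,d)$, and hence the desired lower bound on $\modulus(V^{n_k} \setminus \ovl V^{n_k+1})$. The induction is then set up to propagate a definite modulus: at a non-central level $n_{k-1}$ we already have a bound $\delta_{k-1}$ by hypothesis, and one uses this together with the geometry of the intermediate (central) levels $n_{k-1} < n < n_k$, and the Quasiadditivity Law of \cite{KL09}, to ensure the outer modulus $\modulus(V^{n_k-1} \setminus \ovl V^{n_k})$ is not too small --- or, in the alternative case where it is small, the compression itself forces the pullback annulus to absorb enough mass to stay above $\delta$.

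The principal obstacle is precisely this dichotomy: unlike the quadratic setting, where the modulus monotonically decays (and hence grows along non-central levels by a linear amount in $k$), for higher degree critical points the moduli can oscillate, and the naive pullback estimate via Gr\"otzsch's inequality loses a factor comparable to $D_k$ each step, which would be catastrophic over many iterates. The Covering Lemma circumvents this by providing a quasi-invariant bound rather than a multiplicative loss, and the Quasiadditivity Law supplies the complementary case when the outer annulus is thin. Balancing these two regimes, uniformly in $k$, yields the desired universal $\delta > 0$.
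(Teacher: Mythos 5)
Note first that the paper does not prove this theorem; it is stated as a cited result from [AKLS, ALS] and used as a black box, so there is no internal proof to compare against. Assessing your proposal on its own merits, the preparatory observations are correct: $W=\Comp_cF^{-p_{n_k}}(V^{n_k-1})$ sits between $V^{n_k+1}$ and $V^{n_k}$ since non-centrality gives $p_{n_k}>p_{n_k-1}$, and the degree of $F^{p_{n_k}}|_W$ is exactly $d$ --- any $c\in F^j(W)$ with $0<j<p_{n_k}$ would give either $F^j(c)\in V^{n_k}$ (if $F^j(W)\subseteq V^{n_k}$), contradicting the first return time to $V^{n_k}$, or $F^{p_{n_k}-j}(c)\in V^{n_k-1}$ with $0<p_{n_k}-j<p_{n_k-1}$ (if $F^j(W)\supsetneq V^{n_k}$), contradicting the first return time to $V^{n_k-1}$.

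The genuine gap is the step you lean on hardest: a uniform lower bound on $\modulus(V^{n_k-1}\setminus\ovl{V^{n_k}})$, which is simply false. For $n_{k-1}<m<n_k$ the returns are central, so $g:=F^{p_{n_{k-1}}}$ sends $V^{m+1}\setminus\ovl{V^{m+2}}$ onto $V^{m}\setminus\ovl{V^{m+1}}$ as an \emph{unbranched} degree-$d$ covering; hence $\modulus(V^{n_k-1}\setminus\ovl{V^{n_k}})$ shrinks like $d^{-(n_k-n_{k-1})}$ times the modulus at the start of the cascade, and long cascades make it arbitrarily small. No amount of Quasiadditivity can rescue a quantity that actually tends to zero. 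Moreover, since $D_k=d$ is bounded, your Covering Lemma application at this scale is redundant --- Gr\"otzsch distortion already gives $\modulus(W\setminus\ovl{V^{n_k+1}})\ge d^{-1}\modulus(V^{n_k-1}\setminus\ovl{V^{n_k}})$, and the resulting relation $\mu_{n_k}\gtrsim\mu_{n_k-1}$ just inherits the geometric decay. (Also, the Covering Lemma's conclusion is capped by $\min(\epsilon,\dots)$; it \emph{preserves} a definite modulus, it does not produce a quantity growing to infinity as you assert.) The actual argument must bridge the entire central cascade in a single pullback, back to a non-central level such as $n_{k-1}$, where the total degree $d^{n_k-n_{k-1}}$ is unbounded --- precisely the regime the Covering Lemma was built for. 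Non-centrality at level $n_k$ then ensures the critical orbit escapes $V^{n_k}$ after the cascade, so its subsequent returns to $V^{n_{k-1}}$ pass through univalent branches, keeping the degree on the collar $A'$ bounded. The correct inductive estimate has the shape $\mu_{n_k}\gtrsim\min(\epsilon,\mu_{n_{k-1}})$, comparing non-central level to non-central level, not the adjacent-level comparison your proposal sets up.
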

Even for unicritical mappings, this difference makes the study of higher degree mappings quite a bit different from the study of unicritical mappings with a critical point of degree 2.

In general, there is no control of the moduli of the annuli $V^n\setminus \ovl V^{n+1}$ when the  returns are central.
Observe that when the return mapping from $V^{n+1}$ to $V^n$ is central, we have that the return time of $c$ to $V^{n+1}$ agrees with the return time of $c$ to $V^n$, and hence the annulus $V^{n+1}\setminus V^{n+2}$ is mapped by a degree $d$ covering map onto the annulus $V^{n}\setminus V^{n+1}$. Hence 
\[
\modulus \left(V^{n+1}\setminus \ovl V^{n+2}\right) = \frac{1}{d}\modulus\left(V^n\setminus \ovl V^{n+1}\right).
\]
Consequently, when there is a long cascade of central returns; that is, when $F^{p_n}(c)\in V^{n+k}$ for $k$ large, we have that $\modulus(V^{n+k}\setminus \ovl V^{n+1+k}) = \frac{1}{d^k}\modulus(V^n\setminus \ovl V^{n+1})$ may be arbitrarily small. In general, it is impossible to bound the length of long central cascades, so these moduli can degenerate. 

For unicritical mappings, the complex {\em a priori} bounds imply that if $\{n_{k}\}_{k=0}^\infty$ is the sequence of levels in the principal nest so that  $F^{p_{n_k-1}}\colon V^{n_k}\to V^{n_k-1}$ is non-central, then the mapping $ F^{p_{n_k}}\colon V^{n_k+1}\to V^{n_k}$ is $(\delta(k), 1)$-extendible, where $\delta(k)\to\infty$ at least linearly as $k\to\infty$ when the critical point has degree $2$, and $\delta(k)$ is bounded from below for higher degree mappings.

\subsubsection*{Example: Puzzle pieces in the principal nest do not have bounded geometry in general.}
Despite having moduli bounds, one cannot expect to have bounded geometry for the principal nest, even at levels following non-central returns. For example, suppose that $F\colon \U\to \V$ is real-symmetric 
box mapping, and that  $F^{p_{n}}\colon V^{n+1}\to V^n$ has a long central cascade, so that $F^{p_n}(0)\in V^{n+k}$, and that $F^{p_n}(V^{n+1})\cap\R\owns 0$.\footnote{Such a return is called high.} Then for any $\epsilon>0$, as the length of the cascade tends to infinity the puzzle piece $V^{n+k}$ is contained in an $\epsilon \cdot\mathrm{diam}(V^{n+k})$-neighborhood of its real trace. The reason for this is that as $k\to \infty,$ the  puzzle pieces in the principal nest converge to the Julia set of the return mapping, which in this case, is close to the real trace of $V^{n+k}$ as the return mapping, up to rescaling, is close to $z\mapsto z^2-2$. Hence in general one cannot recover $\delta$-bounded geometry for subsequent puzzle pieces, even if the return mappings are non-central.

\subsubsection*{Example: the Fibonacci map.}
 Let us assume that $F\colon\U\to\V$ is a real-symmetric, unicritical complex box mapping with quadratic critical point at 0 and {\em Fibonacci combinatorics} (see Figure~\ref{Fig:Fib}, left); that is, for each $V^n$ we have that there are exactly two components $V^{n+1}$ and $V^n_1$ of the domain of the first return map to $V^n$ that intersect the postcritical set of $F$. Moreover, we assume that for all $n$, $F^{p_n}(0)\in V^n_1,$ so that all of the first return mappings to the principal nest are non-central, that the return mapping from $V^n_1$ to $V^n$ is given by $F^{p_{n-1}}|_{V^n_1},$ and that restricted to the real trace $F^{p_{n}}(V^{n+1}\cap\R)\owns 0$; that is, these return mappings are all {\em high}.

\begin{figure}[h]
\begin{center}
\includegraphics[width=\textwidth, trim=60 20 50 20, clip]{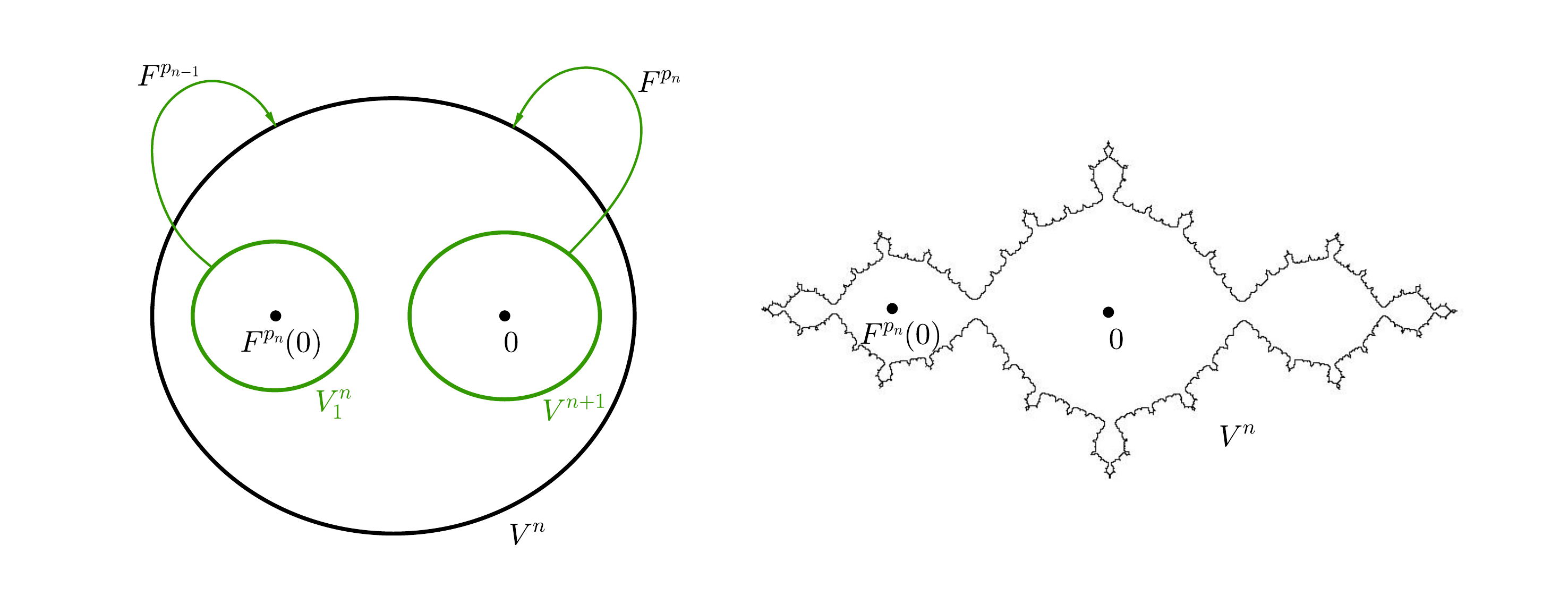}
\caption{(Left) The Fibonacci combinatorics. (Right) The actual shape of the $n$-th puzzle piece $V^n$ in the principal nest of the Fibonacci mapping of degree two.}
\label{Fig:Fib}
\end{center}
\end{figure}

For (quadratic) Fibonacci mappings, Decay of Geometry together with the precise description of the combinatorics implies that the puzzle pieces $V^n$ converge to the shape of the filled Julia set of $z\mapsto z^2-1$, {\em the basilica} \cite{Lyu-Fib}. In particular, these puzzle pieces become badly pinched and the hyperbolic distance between 0 and $F^{p_n}(0)$ in $V^{n}$ tends to infinity, see Figure~\ref{Fig:Fib}, right. Consequently, these puzzle pieces are not $\delta$-free (see Section~\ref{SSec:Defns} to recall the definition). However, the return mappings are $(K,1)$-extendible for $K$-arbitrarily large at sufficiently deep levels, and there exists $\delta>0$, independent of $n$, so that the puzzle pieces have $\delta$-bounded geometry.

\subsection{Inducing persistently recurrent box mappings}

Recall that a box mapping $F' \colon \U' \to \V'$ is \emph{induced} from $F \colon \U \to V$ if $\U'$ and $\V'$ consist of puzzle pieces of $F$ and the branches of $F'$ are the iterates of some branches of $F$. 

\begin{definition}[Persistently recurrent box mapping]
\label{Def:BMPerRec}
We say that a non-renormalizable dynamically natural box mapping $F$ is \emph{persistently recurrent} if every critical point of $F$ is persistently recurrent and its orbit accumulates at every critical fiber of $F$. The latter is equivalent to the property that $\Crit(F) = [c]$ for some, and hence for every $c \in \Crit(F)$ (see notation in Section~\ref{SSec:Rec}). 
\end{definition}

The next lemma shows that starting from an arbitrary box mapping $F$ (non-renormalizable and dynamically natural) and a persistently recurrent critical point $c \in \Crit(F)$ we can extract an induced box mapping $F'$ that is persistently recurrent and contains $c$, and hence $\fib(c)$, in its non-escaping set. More precisely:

\begin{lemma}[Inducing persistently recurrent box mappings]
\label{Lem:PRBM}
Let $F$ be a dynamically natural box mapping that is non-renormalizable, and $c \in \Crit(F)$ be a persistently recurrent critical point of $F$. Then there exists an induced persistently recurrent box mapping $F'$ such that $c \in \Crit(F') \subset \Crit(F)$.
\end{lemma}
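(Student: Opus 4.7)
The plan is to construct $F'$ as the first return map of $F$ to a suitably chosen puzzle neighborhood of $[c]_F$. I would fix, for each distinct critical fiber $\fib_F(c')$ with $c' \in [c]_F$, a puzzle piece $P(c')$ of $F$ of sufficiently large depth that the pieces $P(c')$ are pairwise disjoint with pairwise disjoint closures, each is compactly contained in its parent, and the only critical points of $F$ inside $\V^* := \bigsqcup P(c')$ are those lying in $[c]_F$. This is possible because $\Crit(F)$ is finite, distinct critical fibers are disjoint compact sets (by Lemma~\ref{Lem:NE} applied at large depths), and any critical point $c_0$ lying in the fiber of some $c' \in [c]_F$ automatically satisfies $[c_0] = [c'] = [c]_F$. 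Setting $F' := R_{\V^*}$, the standard theory of first return maps to nice sets (Appendix~\ref{A1}) yields that $F'$ is an induced complex box mapping with $\Crit(F') = \Crit(F) \cap \V^* = [c]_F$; in particular $c \in \Crit(F') \subset \Crit(F)$ as required.

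Next, I would verify that $F'$ is non-renormalizable and dynamically natural. Non-renormalizability is inherited from $F$: any box renormalization of $F'$ would give a box renormalization of $F$ since iterates of $F'$ are iterates of $F$. Dynamical naturality is verified by essentially rerunning the proof of Proposition~\ref{Prop:Inducing}: the no permutation condition follows from that of $F$ because $\V^*$ is compactly contained in $\V$; $\Koc(F')$ is contained in the first-return lift of $\Koc(F)$ and hence has zero Lebesgue measure; and $K(F') = \Kwi(F')$ is inherited from $K(F) = \Kwi(F)$ together with the uniform lower bound on the modulus between each return component and its parent in $\V^*$.

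For persistent recurrence, I would first observe that $\fib_{F'}(c') = \fib_F(c')$ for every $c' \in \Crit(F')$: the puzzle pieces of $F'$ around $c'$ form a cofinal subfamily of the puzzle pieces of $F$ around $c'$ contained in $\V^*$, so the two nested intersections coincide. Combined with the fact that the $F'$-orbit of any point in $\V^*$ is precisely the subsequence of its $F$-orbit landing in $\V^*$, this gives $[c']_{F'} = \Crit(F') = [c]_F$ for every $c' \in \Crit(F')$. Thus every critical orbit of $F'$ accumulates at every critical fiber of $F'$, settling half of Definition~\ref{Def:BMPerRec}.

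The hard part will be to verify that each $c' \in \Crit(F')$ is persistently recurrent as a critical point of $F'$, i.e., that every puzzle piece $Q$ of $F'$ around any $c'' \in [c]_F$ has only finitely many $F'$-children meeting $[c]_F$. The subtlety is that $F'$-univalence along an orbit is strictly weaker than $F$-univalence, because intermediate $F$-iterates lying outside $\V^*$ may pass through critical points in $\Crit(F) \setminus [c]_F$; consequently an $F'$-child need not be an $F$-child. My plan is to show that to each $F'$-child $P$ of $Q$ meeting $[c]_F$ one can canonically associate a smallest enclosing pullback of $Q$ along the orbit of $c_P \in P \cap [c]_F$ that is univalent modulo critical points in $\Crit(F) \setminus [c]_F$, and that this correspondence is finite-to-one with fibers bounded by a combinatorial factor depending only on $\#(\Crit(F) \setminus [c]_F)$. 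Combined with the persistent recurrence of $c$ under $F$, which bounds the number of genuine $F$-children of $Q$ meeting $[c]_F$, this will give the required finiteness for $F'$. The chief obstacle is making this combinatorial bookkeeping precise and deriving a bound uniform in the depth of $Q$.
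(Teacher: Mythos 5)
The proposal has a genuine gap right at the start, and the planned "hard part" stems from missing the key idea of the paper's argument.

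You claim, citing the first-return machinery in Appendix~\ref{A1}, that $\Crit(F') = \Crit(F)\cap\V^* = [c]_F$. This is unjustified: Lemma~\ref{Lem:FirstReturnConstruction}~\eqref{It:FRM3} only gives $\Crit(R)\subset\Crit(F)$ under the hypothesis $\Crit(F)\subset W$, which fails here precisely because $\V^*$ was built to contain \emph{only} the critical points of $[c]_F$. If the orbit of some return component passes through a critical point $c'''\in\Crit(F)\setminus[c]_F$ before re-entering $\V^*$, then the corresponding branch of $R_{\V^*}$ acquires a critical point in $\V^*$ that need not lie in $\Crit(F)$ at all (it is an $F$-preimage of $c'''$); so both equalities in your claim can fail. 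You notice this phenomenon only later, in the persistent-recurrence paragraph, without realizing it already undermines the earlier claim and hence the whole structure of the argument.

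The key idea of the paper's proof, which you miss, is that one should \emph{not} try to live with such extraneous intermediate critical points, but instead (i) cut the domain down to only those return components that actually intersect $[c]$, and then (ii) prove that if the depth of the starting neighborhood $\V'_n$ is large enough, then the $F$-orbits of these restricted components, between consecutive visits to $\V'_n$, never pass through a critical point outside $[c]$. Point (ii) is proved by contradiction: if at every depth $n$ some such component $U_n$ (containing $c'\in[c]$, with image $V_n$ containing $c''\in[c]$) passes through $c'''\notin[c]$ on the way, then — because all these pieces shrink with $n$ — the orbits of $c'$, $c''$ and $c'''$ visit arbitrarily small puzzle pieces around one another, forcing $c'''$ to be combinatorially recurrent with $[c''']\cap[c]\neq\emptyset$; moreover $c'''$ cannot be reluctantly recurrent since that would give $c'$ infinitely many children, contradicting persistent recurrence of $c$. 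Once (ii) is established, the branches of the induced map are genuine compositions of $F$-branches whose intermediate iterates carry no critical points outside $[c]$, so $\Crit(F')=[c]$ on the nose, $F'$-puzzle pieces are $F$-puzzle pieces, $F'$-children are $F$-children, and persistent recurrence of $F'$ drops out immediately. This is precisely what eliminates your "chief obstacle": the delicate finite-to-one bookkeeping you propose is unnecessary, and its absence is not a minor technicality — it is the crux of the lemma.
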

 
\begin{proof}
By definition of persistent recurrence, the set $[c]$ consists only of persistently recurrent critical points. Choose an integer $m$ large enough so that each critical puzzle piece of depth at least $m$ contains a single critical fiber. Let $\V'_n$ be the union of all puzzle pieces of $F$ of depth $n \ge m$ intersecting $[c]$; this is a finite union of critical puzzle pieces containing only persistently recurrent critical fibers. Let $\U'_n$ be the union of the components of $\Dom(\V'_n)$ intersecting $[c]$, and define a box mapping $F'_n \colon \U'_n \to \V'_n$ to be the restriction to $\U_n'$ of the first return map to $\V_n'$ under $F$. 

Let us show that there exists an $n (\ge m)$ such that the only critical points of $F'_n$ are the ones in $[c]$. This will be our desired box mapping $F'$.

Assume the contrary, and let $U_n \subset \U'_n$ and $V_n \subset \V'_n$ be a pair of components containing, respectively, $c' \in [c]$ and $c'' \in [c]$, and such that $\Crit(F') \cap U_n \not\subset [c]$. This means that if $F^s \colon U_n \to V_n$, $s = s(n)$ is the corresponding branch of the first return map, then one of the puzzle pieces in the sequence $F(U_n), F^2(U_n), \ld, F^{s-1}(U_n)$ contains $c''' \in \Crit(F)$ which is either a non-persistently recurrent critical point or a persistently recurrent critical point with $[c]\cap[c''']=\emptyset$. If this is true for every $n$, then the orbits of $c'$, $c''$ and $c'''$ must all visit arbitrarily deep puzzle pieces around each other. In particular, $c'''$ must be combinatorially recurrent and $[c''']$ must intersect $[c']$. On the other hand, $c'''$ cannot be reluctantly recurrent: the critical point $c'$ will then have only finitely many children, contrary to the definition of reluctant recurrence. We arrived at a contradiction. 
\end{proof}

\subsection{The Enhanced Nest}
\label{SSec:Enhanced}

In this subsection, we review the construction and properties of the Enhanced Nest  of puzzle pieces defined around a chosen persistently recurrent critical point.

Throughout Section~\ref{SSec:Enhanced}, unless otherwise stated we will assume that \emph{all complex box mappings are dynamically natural, non-renormalizable, and persistently recurrent}. By Lemma~\ref{Lem:PRBM} we know that we can always induce such a mapping starting from a given (dynamically natural non-renormalizable) box mapping.   

The construction of the Enhanced Nest  is based on the following lemma \cite[Lemma 8.2]{KSS} (see Figure~\ref{Fig:AB}, left):

\begin{lemma}
\label{Lem:ENnu}
Let $F$ be a persistently recurrent box mapping, $c \in \Crit(F)$ be a (persistently recurrent) critical point, and $\bI \ni c$ be a puzzle piece. Then there exists a combinatorially defined positive integer $\nu$ with $F^\nu(c) \in \bI$ such that the following holds. If $\bU_0 := \Comp_c \left(F^{-\nu}(\bI)\right)$, and $\bU_j := F^j(\bU_0)$ for $0 < j \le \nu$, then
\begin{enumerate}
\item
\label{It:EN1}
$\# \left\{0 \le j \le \nu-1 \colon \bU_j \cap \Crit(F) \neq \emptyset\right\} \le b^2$, where $b$ is the number of critical fibers of $F$ (which all intersect $\ovl{\orb(c)}$);
\item
\label{It:EN2}
$\bU_0 \cap \PC(F) \subset \Comp_c\left(F^{-\nu}\left(\DomE_{F^\nu(c)}(\bI)\right)\right)$. \qed
\end{enumerate}
\end{lemma}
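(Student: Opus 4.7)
The proof is a careful iterated pullback construction that exploits persistent recurrence through the finite-children property. Here is my plan.

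First, I will set up the combinatorial machinery. Because $F$ is persistently recurrent with $\Crit(F)=[c]$, every critical puzzle piece has only finitely many children. Recall that if $P$ is a child of $Q$, realized by $F^k\colon P\to Q$ with $F^{k-1}\colon F(P)\to Q$ univalent, then the only critical puzzle piece appearing in the orbit $P,F(P),\dots,F^{k-1}(P)$ is $P$ itself (at time $0$); each critical fiber downstream can be entered at most once by a pullback enjoying this univalence-off-$P$ property. I will use the $b$ critical fibers of $F$ (all accumulated by $\orb(c)$) as the natural counter for the bound $b^2$.

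Second, I will construct $\nu$ by iterating such ``child'' pullbacks $b$ times. Set $\bI^{(0)}:=\bI$. Since $c$ is recurrent, there is a first return time $\tau_1$ of $c$ to $\bI$; choose a child $\bI^{(1)}\ni c$ of $\bI^{(0)}$ encoded by this first return, i.e.\ $\bI^{(1)}=\Comp_c F^{-\tau_1}(\bI^{(0)})$, for which the intermediate pullbacks $F^j(\bI^{(1)})$, $0\le j<\tau_1$, touch at most one critical fiber (namely at $j=0$ or through the first entry of the critical orbit into $\bI$). Proceed inductively: $\bI^{(i+1)}$ is the analogous child of $\bI^{(i)}$ inside $\bI^{(i)}$, obtained by pulling back under a first return of $c$ to $\bI^{(i)}$. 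After $b$ steps define $\nu$ to be the total combined return time and $\bU_0:=\bI^{(b)}$. Each of the $b$ stages contributes a pullback whose intermediate iterates meet at most $b$ distinct critical fibers (since there are only $b$ such fibers and the child structure forbids repetitions within a single stage). Summing over the $b$ stages yields property \eqref{It:EN1}, with the bound $b\cdot b=b^2$.

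Third, for property \eqref{It:EN2}, I will exploit that at each stage the pullback is taken along a \emph{first return} of $c$. Suppose $y\in\bU_0\cap\PC(F)$. Then $F^\nu(y)\in\bI$ and, by forward invariance of the postcritical set, $F^\nu(y)\in\PC(F)\cap \bI$. The final-stage pullback in the construction of $\nu$ can be arranged so that $F^\nu$ maps $\Comp_c F^{-\nu}(\DomE_{F^\nu(c)}\bI)$ onto $\DomE_{F^\nu(c)}\bI$; since any postcritical point in $\bI$ which eventually re-enters $\bI$ (and the orbit of $y$ does, as $F^\nu(y)\in\bI$ and $y=F^j(c')$ for some critical $c'$ that is persistently recurrent to $\bI$) lies in the landing domain $\DomE_{F^\nu(c)}\bI$, pulling this inclusion back by $F^\nu$ places $y$ inside $\Comp_cF^{-\nu}(\DomE_{F^\nu(c)}\bI)$, which is the desired statement.

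The main obstacle, and what forces the intricate KSS construction, is arranging the iterated pullback so that properties \eqref{It:EN1} and \eqref{It:EN2} hold \emph{simultaneously}: a naive first-return pullback could give \eqref{It:EN2} at the cost of allowing the orbit to traverse many critical fibers (violating \eqref{It:EN1}), while a naive counting argument might sacrifice the landing-domain containment. The resolution, as above, is to iterate the ``child-style'' pullback exactly $b$ times, which is short enough to keep the critical-piece count in $O(b^2)$ but long enough that the last stage produces the required landing-domain inclusion via persistent recurrence of every critical point in $[c]$ to $\bI$.
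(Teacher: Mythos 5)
Your derivation of the $b^2$ bound in part~\eqref{It:EN1} is essentially sound: a first-return pullback of $c$ to a nice critical piece has pairwise disjoint intermediate pieces, so at most $b$ of them can meet $\Crit(F)$, and composing $b$ such pullbacks gives $b^2$. (A small caveat: in the multicritical setting a first-return pullback need not be a child, since the intermediate pieces may contain critical points other than $c$; it is the disjointness of the pieces, not univalence off $\bU_0$, that produces the count.) The serious gap is in part~\eqref{It:EN2}. You assert that ``any postcritical point in $\bI$ which eventually re-enters $\bI$ lies in the landing domain $\DomE_{F^\nu(c)}(\bI)$.'' This is false in general: a critical puzzle piece typically has several return domains meeting $\PC(F)$, and $\DomE_{F^\nu(c)}(\bI)$ is merely the one containing $F^\nu(c)$.

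Indeed, the construction you propose does not satisfy~\eqref{It:EN2}. Take $b=1$ with Fibonacci combinatorics and let $\bI=V^0$ be a piece of the principal nest. Then $\tau_1=p_0$, $\bU_0=V^1$, and $F^{p_0}(c)\in V^0_1$, the non-central return domain, so $\DomE_{F^\nu(c)}(\bI)=V^0_1$. But $F^{p_1}(c)\in V^1\cap\PC(F)$, and $F^{p_0}\bigl(F^{p_1}(c)\bigr)=F^{p_2}(c)\in V^2\subset V^1$, which is disjoint from $V^0_1$; hence $F^{p_1}(c)\notin\Comp_cF^{-p_0}\bigl(\DomE_{F^\nu(c)}(\bI)\bigr)$ and~\eqref{It:EN2} fails. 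Iterating the first return once more (to $\nu=p_0+p_1=p_2$) produces the same failure with $F^{p_2}(c)$ in place of $F^{p_1}(c)$, and a central return of cascade length exactly one breaks~\eqref{It:EN2} just as easily: the postcritical point $F^{\tau_1}(c)\in\bU_0$ is sent by $F^{\tau_1}$ into $\bI$ but outside $\bU_0=\DomE_{F^\nu(c)}(\bI)$. The proof in \cite[Lemma~8.2]{KSS} does not iterate first returns; the pullback chain there is chosen (via first entries into critical landing domains and children, rather than first returns) precisely so that the postcritical set of $\bU_0$ is forced into the single landing domain $\DomE_{F^\nu(c)}(\bI)$. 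Arranging~\eqref{It:EN1} and~\eqref{It:EN2} to hold simultaneously is exactly the content of that choice of $\nu$, and it is not automatic from first-return pullbacks.
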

 
This lemma allows us to define the following pullback operators $\mathcal A$ and $\mathcal B$. Let $c \in \Crit(F)$ be a persistently recurrent critical point, and for a critical puzzle piece $\bI \ni c$, let $\nu = \nu(\bI)$ be the smallest possible integer with the properties specified by Lemma~\ref{Lem:ENnu}. Define
\begin{equation*}
\begin{aligned}
\mathcal A(\bI) &:= \Comp_c\left( F^{-\nu} \left(\DomE_{F^\nu(c)}(\bI)\right) \right),\\
\mathcal B(\bI) &:= \Comp_c \left(F^{-\nu}(\bI)\right).
\end{aligned}
\end{equation*}

Let us comment on the meaning of these operators. First of all, these operators depend on persistently recurrent critical point (or to be more precise, on the corresponding fiber, see Section~\ref{SSec:Fibers}). Once the critical point is fixed, say $c$, the operators $\mathcal A \colon \bI \mapsto \mathcal A (\bI)$ and $\mathcal B \colon \bI \mapsto \mathcal B (\bI)$ produce for any given puzzle piece $\bI \ni c$ its pullbacks $\mathcal A(\bI) \ni c$, respectively $\mathcal B(\bI) \ni c$ so that:
\begin{itemize}
\item
the degrees of the corresponding maps $F^\nu \colon \mathcal A(\bI) \to \bI$ and $F^\nu \colon \mathcal B(\bI) \to \bI$ are bounded only in terms of the local degrees of the critical points of $F$, and hence are independent of $\bI$ (we can thus speak of the bounds on degrees assigned to the operators); this property follows by Lemma~\ref{Lem:ENnu} \eqref{It:EN1} (note that $\bU_0 = \mathcal B (\bI)$ in that lemma);
\item
the puzzle piece $\mathcal A(\bI)$ has some ``external space'' free of the critical and postcritical set of $F$ (denoted earlier as $\PC(F)$), while the puzzle piece $\mathcal B(\bI)$ has some ``internal space'' disjoint from $\PC(F)$\footnote{One might interpret the notation as follows: the operator $\mathcal A$ gives puzzle pieces with some space free of $\PC(F)$ \textbf{A}bove the boundary of the piece, while the operator $\mathcal B$ gives pieces with some space free of $\PC(F)$ \textbf{B}elow the boundary of the piece.}, see Figure~\ref{Fig:AB}, left. This property is Lemma~\ref{Lem:ENnu} \eqref{It:EN2}.
\end{itemize}

It is immediate from the last property that if we apply $\mathcal B$ after $\mathcal A$, then for every $c \in \bI$ the puzzle piece $\mathcal B \mathcal A (\bI)$ will be the pullback of $\bI$ of uniformly bounded degree and will have an annular ``buffer'' around its boundary free of $\PC(F)$, see Figure~\ref{Fig:AB} right. This is a key step in the construction of the Enhanced Nest  below. In fact, these buffers can be written explicitly in terms of $\mathcal A$ and $\mathcal B$. If we set 
\[
P = \mathcal{B}\mathcal{A}(\bI),\quad
P^+=\mathcal{B}\mathcal{B}(\bI)\; \mbox{ and }\;
P^-=\mathcal{A}\mathcal{A}(\bI),
\]
then the annuli $P^+\sm \cl(P)$ and $P\setminus \cl(P^-)$ are disjoint from $\PC(F)$. In Lemma~\ref{Lem:KeyLemma}, we will show that there is a uniform bound on the moduli of such ``buffers'' for the elements in the Enhanced Nest.

\begin{figure}[h]
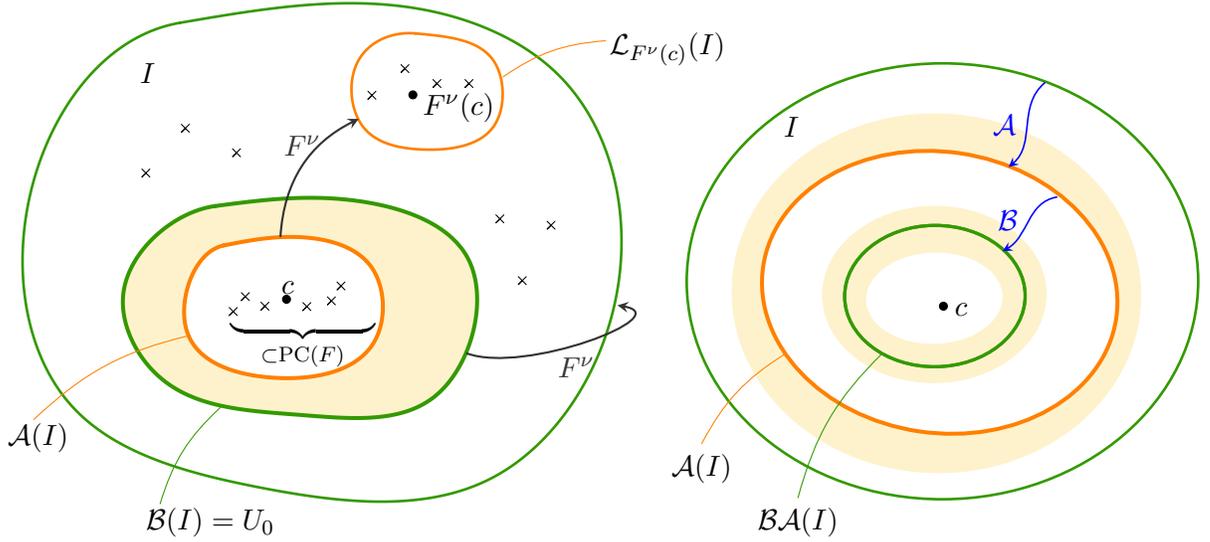

\begin{center}

\end{center}
\caption{The role of the pullback operators $\mathcal A$ and $\mathcal B$: the operator $\mathcal A$ creates some ``external'' space free of $\PC(F)$, while $\mathcal B$ creates some ``internal'' space free of $\PC(F)$. The annuli disjoint from $\PC(F)$ are shaded in yellow. The figure on the left illustrates the construction in Lemma~\ref{Lem:ENnu}; here, elements of the set $\PC(F)$ are marked with crosses and dots. The figure on the right illustrates the result of successive applications of $\mathcal A$ and $\mathcal B$: the puzzle piece $\mathcal B \mathcal A (\bI)$ has an annular ``buffer'' free of $\PC(F)$ around its boundary.}
\label{Fig:AB}
\end{figure}

For technical reasons that are apparent in the proof of Lemma~\ref{Lem:ENProps}, it is not enough to produce a nest only by applying $\mathcal A$ and $\mathcal B$ starting with some puzzle piece $\bI$. In order to construct an effective nest that goes fast enough in depth, we have to use several times the following \emph{smallest successor construction}.

Given a puzzle piece $P \ni c$, by a \emph{successor} of $P$ we mean a puzzle piece containing $c$ of the form $\Comp_c \DomL(Q)$, where $Q$ is a child of $\Comp_{c'} \DomL (P)$ for some $c' \in \Crit(F)$. If $c$ is persistently recurrent, then by definition each critical puzzle piece $\bI \ni c$ has the smallest (i.e.\ the deepest) successor, which we denote by $\Gamma(\bI)$. By construction, the corresponding return map $F^s \colon \Gamma(\bI) \to \bI$ has degree bounded only in terms of the local degrees of the critical points of $F$, and hence independent of $\bI$. 

\begin{definition}[The Enhanced Nest]
The \emph{Enhanced Nest} (around a persistently recurrent critical point $c$, or, equivalently, around a persistently recurrent critical fiber $\fib(c)$ of a persistently recurrent box mapping $F$) is a nested sequence $\bI_0 \supset \bI_1 \supset \bI_2 \ld$ of puzzle pieces such that 
\[
c \in \bI_0, \text{ and }\bI_{i+1} = \Gamma^{5b} \mathcal B \mathcal A (\bI_i) \text{ for every integer }i \ge 0, 
\]
where $b$ is the number of critical fibers intersecting $\ovl{\orb(c)}$.
\end{definition}

By the discussion above, $\bI_{n+1}$ is a pullback of $\bI_n$ and the return map 
\[
F^{p_n} \colon \bI_{n+1} \to \bI_n
\]
has degree bounded independently of $n$. Moreover, each element $\bI_n$ in the Enhanced Nest  comes with a pair of puzzle pieces $\bI^+_n, \bI^-_n$ nested as $\bI_n^- \subset \bI_n \subset \bI_n^+$ so that the annuli $\bI^+_n \sm \cl(\bI_n)$ and $\bI_n \sm \cl(\bI^-_n)$ are disjoint from $\PC(F)$.

This particular construction of the Enhanced Nest  is chosen because of the following lemma (\cite[Lemma~{8.3}]{KSS}):

\begin{lemma}[Transition and return times for the Enhanced Nest]
\label{Lem:ENProps}
Let $c \in \Crit(F)$ be a persistently recurrent critical point, and $(\bI_n)_{n \ge 0}$ is the Enhanced Nest  around $c$. If $p_n$ is the transition time from $\bI_{n+1}$ to $\bI_n$, and $r(\bI_{n+1})$ is the \emph{return time} of $\bI_{n+1}$ to itself, i.e.\ $r(\bI_{n+1})$ is the minimal positive integer $r$ such that there exists a point $x \in \bI_{n+1}$ with $F^r(x) \in \bI_{n+1}$, then
\begin{enumerate}
\item
\label{It:Return}
$3 r(\bI_{n+1}) \ge p_n$;
\item
\label{It:Tran}
$p_{n+1} \ge 2 p_n$. \qed
\end{enumerate}
\end{lemma}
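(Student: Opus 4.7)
I would prove the lemma by decomposing the transition time along the structural definition of the Enhanced Nest. Recall that $\bI_{n+1}=\Gamma^{5b}\mathcal{B}\mathcal{A}(\bI_n)$, so one obtains a natural chain of intermediate critical puzzle pieces
$$\bI_{n+1}=Q_{5b}\subset Q_{5b-1}\subset\cdots\subset Q_0=\mathcal{B}\mathcal{A}(\bI_n)\subset \mathcal{A}(\bI_n)\subset \bI_n,$$
with $Q_k=\Gamma(Q_{k-1})$ for $1\le k\le 5b$. Each link of this chain carries its own transition time, and the total transition time $p_n$ decomposes as a sum of these contributions.

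For part (\ref{It:Return}), the key observation is that the smallest-successor operator $\Gamma$ is built so that the transition time from $\Gamma(Q)$ to $Q$ is bounded above by the return time $r(Q)$ of $Q$ to itself plus a bounded-degree landing time; the whole point of taking the \emph{smallest} successor is to make this transition as short as possible. Since every $Q_k$ in the chain contains $\bI_{n+1}$, niceness of puzzle pieces implies $r(Q_k)\le r(\bI_{n+1})$. Adding the contributions over the $5b$ successor steps, together with the bounded-degree pullbacks $\mathcal{A}$ and $\mathcal{B}$ (each contributing at most a single return time to $\bI_n$, itself bounded by $r(\bI_{n+1})$ up to a controlled factor, because the orbit must pass through $\bI_{n+1}$ by persistent recurrence), I would obtain a telescoping bound of the form $p_n\le 3\, r(\bI_{n+1})$. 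The constant $3$ comes out cleanly because the exponent $5b$ is calibrated to absorb the pigeonholing among the $b$ critical fibers visited by the orbit of $c$, leaving only $\mathcal{A}$, $\mathcal{B}$, and the final transition into $\bI_n$ as residual contributions.

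For part (\ref{It:Tran}), I would combine (\ref{It:Return}) applied at level $n+1$ with a distinct-returns counting argument. Writing $p_{n+1}$ as the sum of transition times along the analogous chain $\bI_{n+2}=Q'_{5b}\subset\cdots\subset Q'_0\subset\bI_{n+1}$ at level $n+1$, each of the $5b$ successor steps contributes a time at least $r(\bI_{n+1})$, since the orbit of $c$ starting inside $Q'_k\subset\bI_{n+1}$ must exit and re-enter $\bI_{n+1}$ before being permitted to land in $Q'_{k-1}$ (this uses that puzzle pieces are nice and that the successors are defined via first landings). Summing yields $p_{n+1}\ge 5b\cdot r(\bI_{n+1})$, and substituting the estimate $r(\bI_{n+1})\ge p_n/3$ from (\ref{It:Return}) gives $p_{n+1}\ge (5b/3)\,p_n$, which is $\ge 2p_n$ for $b\ge 2$. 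The borderline case $b=1$ is handled by observing that the $\mathcal{B}\mathcal{A}$ step at level $n+1$ itself already contributes a full transition of magnitude at least $p_n$, which combined with even a single $\Gamma$ application suffices.

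The principal obstacle will be the careful bookkeeping of \emph{distinct} versus repeated visits of the critical orbit to the intermediate pieces $Q_k$: without such care one could only estimate $p_{n+1}\ge r(\bI_{n+1})$, which is far too weak. The persistent recurrence assumption $\Crit(F)=[c]$ (which we may assume by Lemma~\ref{Lem:PRBM}) ensures that at each step the critical orbits genuinely traverse the successor chain and do not short-circuit it, while the factor $5b$ in the definition is precisely what makes the distinct-visit count large enough to beat any worst-case redistribution among the $b$ critical fibers.
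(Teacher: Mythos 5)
Your proof has a genuine gap in part \eqref{It:Return}. You assert that the transition time from $\Gamma(Q)$ to $Q$ is bounded above by $r(Q)$ plus a bounded landing time, with the stated motivation that ``the whole point of taking the smallest successor is to make this transition as short as possible.'' This reverses the construction: the paper defines the \emph{smallest} successor as the \emph{deepest} one, so $\Gamma$ is chosen to make the transition as \emph{long} as possible among all successors, and there is no a priori upper bound of this kind on the $\Gamma$-transition time in terms of $r(Q)$. Indeed, even the shallowest successor involves a child $W$ of $\Comp_{c'}\DomL(Q)$ with transition time $\ge r(Q)$, and the deepest child typically takes far longer. Moreover, even granting the asserted per-step bound, summing over the $5b$ applications of $\Gamma$ together with $\mathcal A$ and $\mathcal B$ gives a bound of the form $p_n\le(5b+2)\,r(\bI_{n+1})$ (plus landing terms), which is much weaker than $3r(\bI_{n+1})$; the appeal to ``pigeonholing among $b$ critical fibers'' and the exponent being ``calibrated to absorb'' the discrepancy is not an argument that reduces $5b+2$ to $3$. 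An upper bound on $p_n$ simply cannot come from per-step upper bounds of size $r(\bI_{n+1})$ multiplied over $5b+2$ steps.

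For part \eqref{It:Tran}, the core observation is correct and is indeed the right idea: since $\bI_{n+2}\subset\Gamma^k\mathcal B\mathcal A(\bI_{n+1})\subset\bI_{n+1}$ and $c$ lies in all of these, each $\Gamma$-step at level $n+1$ returns $c$ to $\bI_{n+1}$ in strictly positive time, hence contributes at least $r(\bI_{n+1})$. But this argument depends on part \eqref{It:Return} to convert $r(\bI_{n+1})$ into $p_n/3$, and part \eqref{It:Return} is not established. The $b=1$ case treatment is also unjustified: the claim that the $\mathcal B\mathcal A$ step at level $n+1$ ``already contributes a full transition of magnitude at least $p_n$'' is asserted without proof, and I see no reason it should hold. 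The correct way to close the case $b=1$ (given part \eqref{It:Return}) is to note that the $\mathcal A$ and $\mathcal B$ steps \emph{each} move $c$ from a subpiece of $\bI_{n+1}$ back into $\bI_{n+1}$ in positive time, so each contributes at least $r(\bI_{n+1})$; then $p_{n+1}\ge(5b+2)\,r(\bI_{n+1})\ge\tfrac{5b+2}{3}p_n\ge\tfrac{7}{3}p_n>2p_n$ uniformly for all $b\ge1$, with no special-casing required. You should consult the proof of Lemma~8.3 in \cite{KSS} directly, as the present paper cites that result without reproducing the argument, and part \eqref{It:Return} in particular requires a different idea from the one you propose.
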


This lemma is one of the main ingredients in the proof of the Key Lemma (Lemma~\ref{Lem:KeyLemma}). Let us comment on the meaning of conditions \eqref{It:Return} and \eqref{It:Tran} in Lemma~\ref{Lem:ENProps}.

Condition \eqref{It:Return} implies the following: if $F^s \colon A \to \bI_{n+1}$ is an arbitrary branch of the first return map to $\bI_{n+1}$ under $F$, then $s \ge r(\bI_{n+1}) \ge p_n /3$.

As for condition \eqref{It:Tran}, first observe that $(p_n)_{n \ge 0}$ is a monotonically increasing sequence of integers. Hence, by \eqref{It:Tran}, $p_{n+1} \ge 2p_n \ge p_n + p_{n-1}$. This means that this sequence grows faster than the sequence of Fibonacci numbers (with the same starting pair of values). For the classical Fibonacci numbers $f_{n}$, that is when $f_{n+1} = f_n + f_{n-1}$, $f_0=f_1=1$, the following property is well-known:
\[
f_{n+2} = f_1 + \sum_{i=0}^n f_i = 1 + \sum_{i=0}^n f_i.
\]
Along the same lines one can show that for our sequence $(p_i)_{i \ge 0}$ for every $n \ge 0$ we have
\[
p_{n+2} \ge p_1 + \sum_{i=0}^n p_i,
\]
or more generally, for every $n \ge m \ge 0$,
\begin{equation}
\label{Eq:Tran}
p_{n+2} \ge p_{m+1} + \sum_{i=m}^n p_i > \sum_{i=m}^n p_i.
\end{equation}

If we write $t := \sum_{i=m}^n p_i$, then $F^t(\bI_{n+1}) = \bI_m$ and the degree of the map $F^t \colon \bI_{n+1} \to \bI_m$ can be arbitrarily big. However, restricted to $\bI_{n+3}$ this map has degree bounded independently of $n$ and $m$. This follows from \eqref{Eq:Tran}: it takes additional $p_{n+2} - t > 0$ iterates of $F$ to map $F^t(\bI_{n+3})$ over $I_{n+2}$, and hence the degree of $F^t|_{\bI_{n+3}}$ is bounded above by the degree of $F^{p_{n+2}} \colon \bI_{n+3} \to \bI_{n+2}$, which is uniformly bounded (see Figure~\ref{Fig:Explanation}). This mechanism is in the core in the proof of Proposition~\ref{prop:KvSnice}, as it allows to apply the Covering Lemma, which we discuss next.

\begin{center}
\begin{figure}[h]
\includegraphics[width=1.\textwidth, trim=50 20 50 20, clip]{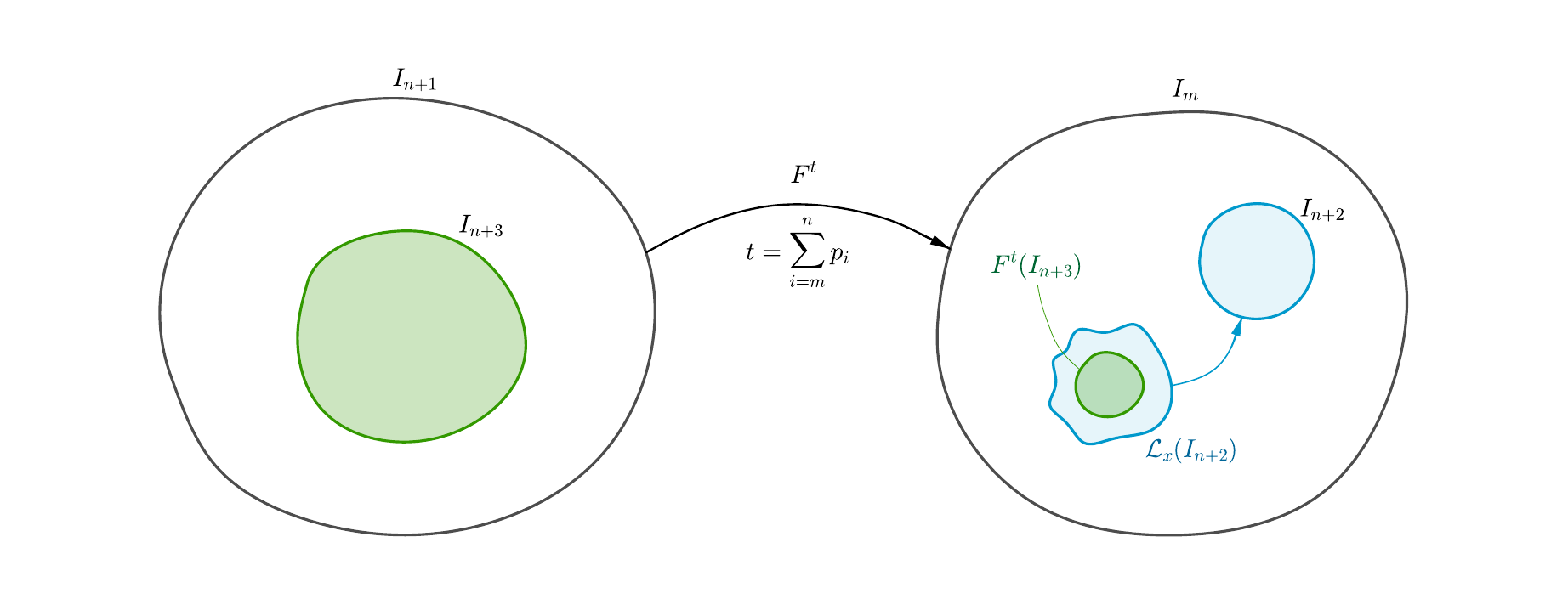}
\caption{The inequality for the transition times in the Enhanced Nest  implies that for every $n \ge m \ge 0$, the puzzle piece $F^t(\bI_{n+3})$, $t = \sum_{i=m}^n p_i$ is contained in some entry domain $\DomE_x(\bI_{n+2})$ to $\bI_{n+2}$ under $F$, and the degree of the restriction $F^t|_{\bI_{n+3}}$ is bounded independently of $n$ and $m$, while the degree of $F^t \colon \bI_{n+1} \to \bI_{m}$ does not.}
\label{Fig:Explanation}
\end{figure}
\end{center}


\subsection{The Covering Lemma} 

The second main ingredient that go into the proof of the Key Lemma (Lemma~\ref{Lem:KeyLemma}) is the Kahn--Lyubich Covering Lemma, which we discuss is this subsection.

Let $A \subset A' \subset U$, $B \subset B' \subset V$ be open topological disks in $\C$. Denote by $f \colon (A, A', U) \to (B, B', V)$ a holomorphic branched covering map $f \colon U \to V$ that maps $A'$ onto $B'$ and $A$ onto $B$.

One of the standard results on distortion of annuli under branched coverings is the following lemma; its proof is based on the Koebe Distortion Theorem.

\begin{lemma}[Standard distortion of annuli]
\label{Lem:StdControl}
Let $A \subset U$ and $B \subset V$ be open topological disks, and $f \colon (A, U) \to (B, V)$ be a holomorphic branched covering map. Suppose that the degree of $f$ is bounded by $D>0$. Then
\[
\modulus (U \sm \ovl A) \ge D^{-1} \cdot \modulus(V \sm \ovl B). \qed
\]
\end{lemma}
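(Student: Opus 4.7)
The plan is to transfer the modulus of $V\setminus \ovl{B}$ back to $U\setminus \ovl{A}$ through $f$ via a pullback-of-metric argument in extremal length, using the bounded covering degree $D$ to control the area of any pulled-back metric and the restricted mapping degree $d_A:=\deg(f|_A)\ge 1$ to control lengths.

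First I would set up the extremal-length formulation: $\modulus(V\setminus \ovl{B})$ equals the extremal length of the family $\Gamma_V$ of simple closed curves in $V\setminus \ovl{B}$ separating $\partial B$ from $\partial V$, and similarly $\modulus(U\setminus \ovl{A})$ equals the extremal length of the analogous family $\Gamma_U$ in $U\setminus \ovl{A}$. Given any conformal metric $\rho$ on $V\setminus \ovl{B}$, extended by zero to $\ovl{B}$, set $\rho^{*}(z):=\rho(f(z))\,|f'(z)|$ on $U$. Since $f\colon U\to V$ is a proper branched cover of degree at most $D$, every point of $V\setminus\ovl{B}$ has at most $D$ preimages in $U$ (counted with multiplicity), so the holomorphic change-of-variables formula yields the area bound
\[
\int_{U\setminus \ovl{A}}(\rho^{*})^{2}\,dx\,dy\;\le\; D\int_{V\setminus\ovl{B}}\rho^{2}\,dx\,dy.
\]

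Next I would obtain the length estimate. Any $\sigma\in\Gamma_U$ bounds a topological disk $D_\sigma\subset U$ containing $\ovl{A}$; by degree theory applied to the proper holomorphic map $f|_{\ovl{D_\sigma}}$, the winding number of $f\circ\sigma$ about any point $p\in B$ equals $\sum_{q\in f^{-1}(p)\cap D_\sigma}e_q\ge d_A$, where $e_q$ denotes the local degree of $f$ at $q$. Writing the chain $f\circ\sigma$ as a sum of loops in $V\setminus \ovl{B}$ representing a class which is a non-negative integer multiple at least $d_A$ of $[\partial B]$ in $H_1(V\setminus \ovl{B})$, I would then get
\[
\int_{\sigma}\rho^{*}\,|dz|\;=\;\int_{f\circ\sigma}\rho\,|dw|\;\ge\; d_A\cdot\ell_{\rho}(\Gamma_V),
\]
where $\ell_{\rho}(\Gamma_V):=\inf_{\gamma\in\Gamma_V}\int_\gamma\rho\,|dw|$.

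Finally, I would combine the two estimates. Taking the infimum over $\sigma\in\Gamma_U$, squaring the resulting length bound, dividing by the area bound, and taking the supremum over $\rho$, I obtain
\[
\modulus(U\setminus \ovl{A})\;\ge\;\frac{d_A^{2}}{D}\,\modulus(V\setminus \ovl{B})\;\ge\;\frac{1}{D}\,\modulus(V\setminus \ovl{B}),
\]
which is the claim. The one technical subtlety, and hence the main obstacle, is the winding-number bookkeeping in the length step: although $f^{-1}(B)\cap D_\sigma$ may contain components other than $A$, these only contribute additional positive winding of $f\circ\sigma$ about $B$, and one must verify that the contribution from $A$ alone already gives the factor $d_A\ge 1$ that the conclusion requires, independently of whether $A$ is an entire connected component of $f^{-1}(B)$.
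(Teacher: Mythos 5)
Your extremal-length plan (pull back a metric, bound area by the degree, bound lengths topologically) is the right kind of argument, and the area estimate is correct, but there is a direction error in identifying the modulus with an extremal length that invalidates the proof. For $\{1<|z|<R\}$ the modulus is $\tfrac{1}{2\pi}\log R$; this equals the extremal length of the family of \emph{arcs joining} the two boundary circles, whereas the extremal length of the family of \emph{separating closed curves} is the reciprocal $\tfrac{2\pi}{\log R}$. You work with the separating family $\Gamma_V$, so the correct relation is $\modulus(V\sm\ovl B)=1/\bigl(\text{extremal length of }\Gamma_V\bigr)$, not the extremal length itself. Once this is corrected, your chain of estimates produces $\bigl(\text{extr.\ length of }\Gamma_U\bigr)\ge\tfrac{d_A^2}{D}\bigl(\text{extr.\ length of }\Gamma_V\bigr)$, i.e.\ the \emph{upper} bound $\modulus(U\sm\ovl A)\le\tfrac{D}{d_A^2}\modulus(V\sm\ovl B)$, not the lower bound in the lemma. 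Indeed your claimed intermediate inequality $\modulus(U\sm\ovl A)\ge\tfrac{d_A^2}{D}\modulus(V\sm\ovl B)$ is false: for $f(z)=z^2$ on $U=V=\disk$ with $B=\disk_{1/4}$ and $A=\disk_{1/2}$ one has $d_A=D=2$, so $d_A^2/D=2$, yet $\modulus(U\sm\ovl A)=\tfrac{\log 2}{2\pi}=\tfrac12\modulus(V\sm\ovl B)$.

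The fix is to run the same pullback with the joining family, for which modulus equals extremal length directly. The length estimate then becomes a simple overflow with no winding-number bookkeeping: if $\alpha$ is an arc in $U\sm\ovl A$ from $\partial A$ to $\partial U$, then $f\circ\alpha$ runs from $\partial B$ to $\partial V$, and its terminal subarc after the last time it meets $\ovl B$ is an admissible joining arc of $V\sm\ovl B$, so $\int_\alpha\rho^*=\int_{f\circ\alpha}\rho\ge\ell_\rho$. Together with $\int(\rho^*)^2\le D\int\rho^2$ this gives exactly $\modulus(U\sm\ovl A)\ge D^{-1}\modulus(V\sm\ovl B)$. Note the factor $d_A^2$ disappears entirely (the lemma really gives only $1/D$, not $d_A^2/D$), and the subtlety you flagged about $f\circ\sigma$ re-entering $\ovl B$ and about $A$ being a full component of $f^{-1}(B)$ simply does not arise in the joining-family version.
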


In this lemma, once the degree $D$ starts to grow, we lose control on the modulus of the pullback annulus. However, in the situation when the annulus $U \sm \ovl A$ is \emph{nearly degenerate} some portion of the modulus under pullback can still be recovered, provided that the restriction of $f$ to some smaller disk inside $U$ is of much smaller degree compared to $D$. This situation is often the case in complex dynamics, for example, when one considers a pair $P \subset Q$ of critical puzzle pieces such that $F^s(P) = Q$ for some large $s$: the degree of $F^s|_{P}$ can be arbitrary high, but restricted to some small neighborhood of the critical point in $P$ the degree is much smaller. This was made precise by Kahn and Lyubich~\cite{KL09} in the following lemma:

\begin{lemma}[Covering Lemma]
\label{Lem:CoveringLemma}
For any $\eta \ge 0$ and $D > 0$ there is $\epsilon = \epsilon (\eta, D) > 0$ such that the following holds: 

Let $A \subset A' \subset U$ and $B \subset B' \subset V$ be a triple of nested and open topological disks, and let $f \colon (A, A', U) \to (B, B', V)$ be a holomorphic branched covering map. Suppose the degree of $f$ is bounded by $D$, and the degree of $f|_{A'}$ is bounded by $d$. Then
\[
\modulus(U \sm \ovl A) \ge \min(\epsilon, \eta^{-1}\cdot \modulus(B'\sm \ovl B), C \eta d^{-2} \cdot \modulus(V \sm \ovl B)),
\]
where $C > 0$ is some universal constant.\qed
\end{lemma}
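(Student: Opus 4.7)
The plan is to follow the approach of Kahn and Lyubich~\cite{KL09}, whose proof combines two extremal-length tools: a refined \emph{Quasi-Invariance Law} (QIL) for moduli of annuli under branched coverings, and a geometric dichotomy on the shape of the annulus $V\setminus\ovl B$. Normalize so that the curve family under consideration is the family $\Gamma$ of curves in $V\setminus\ovl B$ separating $\partial B$ from $\partial V$; one wishes to bound from below the pullback modulus $\modulus(f^{-1}(\Gamma))\le \modulus(U\setminus\ovl A)$. The baseline Lemma~\ref{Lem:StdControl} loses a factor $D$, which is much too weak; the whole point of the Covering Lemma is that the loss is in fact controlled by the smaller degree $d$ of $f|_{A'}$, provided one pays for it with the inner buffer $B'\setminus\ovl B$.

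First I would set up the extremal-length calculus. Put a conformal metric $\rho$ on $V\setminus\ovl B$ of $\rho$-area $\le 1$ realizing (or nearly realizing) the extremal problem for $\Gamma$, and consider the pulled-back metric $f^{*}\rho$ on $U\setminus\ovl A$. The critical ingredient is to write the $\rho$-area of $V\setminus\ovl B$ as a sum of contributions from the \emph{inner} sub-annulus $B'\setminus\ovl B$ and the \emph{outer} sub-annulus $V\setminus\ovl{B'}$, and to track separately how each pulls back. The contribution from the outer part is pulled back by a branched cover of degree $\le D$, but the crucial point of QIL is that its critical values in the target separate into those that lie inside $B'$ (which matter only locally, i.e.\ only for curves that already approach $B$) and those outside $B'$ (which contribute a total branching $\le D$ but whose effect on the extremal length is controlled by $d$ once combined with the inner buffer).

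Then I would carry out the dichotomy. Fix $\epsilon>0$ small and split the analysis:
\begin{itemize}
\item[(i)] If $\modulus(V\setminus\ovl B)\le \epsilon$, we are done, as the desired bound is trivial.
\item[(ii)] If $\modulus(B'\setminus\ovl B)\ge \eta\cdot\modulus(V\setminus\ovl B)$, then applying the standard pullback inequality (Lemma~\ref{Lem:StdControl}) to $f|_{A'}\colon A'\to B'$, which has degree at most $d$, gives
\[
\modulus(U\setminus\ovl A)\;\ge\;\modulus(A'\setminus\ovl A)\;\ge\;\tfrac{1}{d}\,\modulus(B'\setminus\ovl B),
\]
and in fact the sharper $d^{-2}$ dependence is recovered by a symmetrization / Teichm\"uller-type argument since the round-annulus extremal metric is being pulled back through a map whose ``effective'' squared degree on $A'$ is $d^{2}$. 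This already contains the $\eta^{-1}\modulus(B'\setminus\ovl B)$ term.
\item[(iii)] Otherwise $\modulus(V\setminus\ovl{B'})$ is a definite fraction (depending on $\eta$) of $\modulus(V\setminus\ovl B)$. Here one invokes QIL in its full strength: choose $\rho$ adapted to the outer annulus and estimate the $f^{*}\rho$-area of $U\setminus\ovl A$ by a co-area argument, using that the branching of $f$ occurring \emph{outside} $A'$ does not create new short curves but only duplicates existing ones, while the branching inside $A'$ is controlled by $d$. The resulting estimate is precisely of the form $C\eta\, d^{-2}\,\modulus(V\setminus\ovl B)$.
\end{itemize}

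The main obstacle is the QIL estimate in step (iii): implementing it rigorously requires constructing the right test metric, carefully splitting the target into ``near'' and ``far'' zones relative to $B'$, applying a Cauchy--Schwarz (or Gr\"otzsch-type) inequality to the corresponding integrals, and keeping explicit track of how the constants depend on $\eta$ (to preserve the linear factor $\eta$ in the final bound) and on $d$ (to preserve the $d^{-2}$ exponent rather than the weaker $d^{-1}$). The constant $\epsilon$ and the universal $C$ are then extracted by optimizing the threshold in the dichotomy; the existence of \emph{some} $\epsilon(\eta,D)>0$ follows because all estimates are uniform once $\eta$ and $D$ are fixed.
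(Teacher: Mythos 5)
The paper does not prove this lemma; it is stated with no argument and attributed entirely to Kahn--Lyubich~\cite{KL09}, where it is the Covering Lemma and rests on their Quasi-Additivity Law. So you are not reconstructing ``the paper's proof'' but sketching an external one, and the sketch has concrete errors. Case~(i) is wrong as stated: you claim that $\modulus(V\setminus\ovl B)\le\epsilon$ makes the bound ``trivial,'' but in that regime the minimum on the right-hand side is $C\eta d^{-2}\modulus(V\setminus\ovl B)$, and the only unconditional pullback bound available, Lemma~\ref{Lem:StdControl}, gives $\modulus(U\setminus\ovl A)\ge D^{-1}\modulus(V\setminus\ovl B)$; for this to dominate $C\eta d^{-2}\modulus(V\setminus\ovl B)$ one would need $d^{2}\ge C\eta D$, which fails precisely when $d\ll D$, the regime the lemma is designed for. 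The trivial branch of the dichotomy should be on the left-hand side: if $\modulus(U\setminus\ovl A)\ge\epsilon$ there is nothing to prove, and otherwise one works under the standing assumption $\modulus(U\setminus\ovl A)<\epsilon$, which is how $\epsilon$ enters in~\cite{KL09}.

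Your case~(ii) estimate $\modulus(U\setminus\ovl A)\ge\modulus(A'\setminus\ovl A)\ge d^{-1}\modulus(B'\setminus\ovl B)$ is fine, but the remark that it ``already contains the $\eta^{-1}\modulus(B'\setminus\ovl B)$ term'' is not: $d^{-1}$ is typically much smaller than $\eta^{-1}$. What actually closes that case is $d^{-1}\modulus(B'\setminus\ovl B)\ge \eta d^{-1}\modulus(V\setminus\ovl B)\ge C\eta d^{-2}\modulus(V\setminus\ovl B)$, i.e.\ the \emph{third} term of the minimum. Case~(iii) then opens with a non sequitur: from $\modulus(B'\setminus\ovl B)<\eta\,\modulus(V\setminus\ovl B)$ you cannot conclude that $\modulus(V\setminus\ovl{B'})$ is a definite fraction of $\modulus(V\setminus\ovl B)$, because the Gr\"otzsch inequality $\modulus(V\setminus\ovl B)\ge\modulus(V\setminus\ovl{B'})+\modulus(B'\setminus\ovl B)$ gives an upper bound on $\modulus(V\setminus\ovl{B'})$, not a lower one---the failure of additivity of moduli is exactly why quasi-additivity theory is needed at all. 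Finally, the tool you call ``QIL (Quasi-Invariance Law)'' is not the relevant theorem of~\cite{KL09}; what is used is the Quasi-Additivity Law, and the genuinely hard step---controlling the preimage components of $B$ other than $A$ and converting the degree bound on $f|_{A'}$ into a modulus gain with the $d^{-2}$ exponent---is precisely your case~(iii), which here is left as a paragraph of intentions rather than an argument.
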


Let us make some comments on the statement of the Covering Lemma. The parameter $\epsilon$ in this lemma controls the degeneration of the annulus $U \sm \ovl A$: either $U \sm \ovl A$ has modulus at least $\epsilon$, or $\modulus(U \sm \ovl A) < \epsilon$ and we are in a \emph{nearly degenerate regime}. Note that the extend to which we have to degenerate to ``enter'' this regime depends on $\eta$ and the large degree $D$. Once we are in the nearly degenerate regime, then the Covering Lemma says that under the pullback we either recover at least $\eta^{-1}$-portion of the modulus of $B'\sm \ovl B$ (called the \emph{collar} in the terminology of \cite{KL09}), or $\modulus(U \sm \ovl A)$ is even smaller than $\eta^{-1} \cdot \modulus(B'\sm \ovl B)$ and in this even more degenerate regime we recover at least $C \eta d^{-2}$-portion of $\modulus(V \sm \ovl B)$.

\begin{figure}[h]
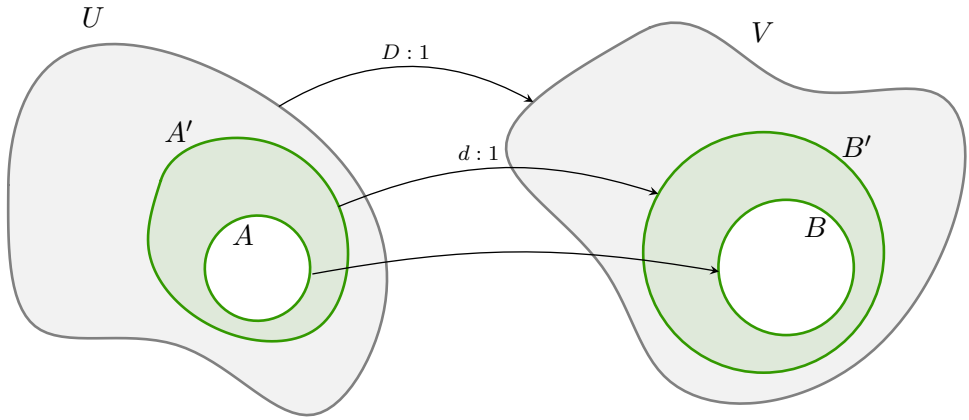

\begin{center}
\definecolor{grey1}{rgb}{0.5,0.5,0.5}
\definecolor{grey2}{rgb}{0.25,0.25,0.25}

\end{center}
\caption{The annuli in the Covering Lemma.}
\end{figure}

In the real-symmetric case, the Covering Lemma was proven in a sharper form in \cite[Lemma~9.1]{KvS}.

\section{The Spreading Principle and the QC-Criterion}
\label{Sec:Ing2}

In this section we discuss two quite general tools that are used to prove qc rigidity: the Spreading Principle and the QC-Criterion. We start with some auxiliary results on quasiconformal mappings and their gluing properties.

\subsection{Quasiconformal gluing}
\label{SSec-QC-gluing}

The following lemma is well-known (see for example \cite[Lemma 2]{PolyLike}), and is due to Rickman \cite{Ri} (sometimes it is also called Bers' Sewing Lemma and attributed to Bers).

\begin{lemma}[Sewing Lemma]
\label{Lem:Sewing}
Let $U \subset \C$ be an open topological disk and $E \subset U$ be a relatively compact set. Let $\phi_i \colon U \to \phi_i(U)$, $i \in \{1,2\}$  be a pair of homeomorphisms such that 
\begin{enumerate}
\item
$\phi_1$ is $K_1$-quasiconformal,
\item
$\phi_2|_{U \sm E}$ is $K_2$-quasiconformal,
\item
$\phi_1|_{\partial E} = \phi_2|_{\partial E}$.
\end{enumerate}
Then the map $\Phi \colon U \to \Phi(U)$ defined as
\begin{equation*}
\Phi(z) = \left\{
\begin{aligned}
&\phi_1(z) \quad\text{ if }z \in E\\
&\phi_2(z) \quad\text{ if }z \in U \sm E\\ 
\end{aligned}
\right.
\end{equation*}
is a $K$-quasiconformal homeomorphism with $K = \max(K_1, K_2)$. Moreover, $\ovl \partial \Phi = \ovl \partial \phi_1$ almost everywhere on $E$.\qed
\end{lemma}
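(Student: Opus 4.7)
The plan is to verify that $\Phi$ is a homeomorphism and then to appeal to the analytic characterization of $K$-quasiconformality: a planar homeomorphism $\Psi$ is $K$-qc iff $\Psi \in W^{1,2}_{\mathrm{loc}}$ and $\bar\partial \Psi = \mu\,\partial \Psi$ a.e., with $\|\mu\|_\infty \le (K-1)/(K+1)$.

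First I would show that $\Phi$ is a homeomorphism of $U$ onto $\Phi(U)$. Continuity is immediate from the compatibility $\phi_1|_{\partial E} = \phi_2|_{\partial E}$, and $\phi_1$, $\phi_2$ are individually injective on $E$ and on $U\sm \ovl E$. To rule out collisions across the seam, one argues locally near a point $p\in \partial E$: the germs of $\phi_1$ and $\phi_2$ at $p$ agree on $\partial E$, and invariance of domain applied to each germ shows that $\phi_1(E\cap \mathcal N)$ and $\phi_2((U\sm E)\cap \mathcal N)$ lie on opposite sides of the common Jordan-type curve $\phi_1(\partial E\cap \mathcal N)$ inside a small neighbourhood $\mathcal N$ of $p$; hence $\Phi$ is a local homeomorphism at $p$, and together with global injectivity on $E$ and $U\sm \ovl E$ this gives that $\Phi$ is a homeomorphism.

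For the quasiconformality, let $\mu_i$ denote the Beltrami coefficient of $\phi_i$ (defined a.e.\ on its domain) and set
\[
\mu := \mu_1\,\mathbf{1}_E + \mu_2\,\mathbf{1}_{U\sm E}, \qquad k := \max\!\bigl(\tfrac{K_1-1}{K_1+1},\tfrac{K_2-1}{K_2+1}\bigr)=\tfrac{K-1}{K+1}.
\]
By construction $\|\mu\|_\infty \le k$. On the open set $\inter E$ we have $\Phi = \phi_1$, hence $\bar\partial \Phi = \mu_1\,\partial\Phi$ a.e.\ there; on $U\sm \ovl E$ we have $\Phi = \phi_2$, hence $\bar\partial \Phi = \mu_2\,\partial \Phi$ a.e.\ there. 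Thus, away from $\partial E$, the Beltrami relation holds with coefficient $\mu$, and the remaining task is to establish the global Sobolev regularity.

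The heart of the argument, and the step that will require the most care, is to show $\Phi\in W^{1,2}_{\mathrm{loc}}(U)$ \emph{globally} --- i.e.\ that the gluing does not create a singular distributional derivative along $\partial E$, which a priori can be a wild set of positive Lebesgue measure. I would do this by considering the auxiliary homeomorphism $\Psi := \Phi\circ \phi_1^{-1}$ of $\phi_1(U)$. Using $\phi_1=\phi_2$ on $\partial E$, one checks that $\Psi$ equals the identity on the relatively closed set $\phi_1(\ovl E)\subset \phi_1(U)$ and equals the qc map $\phi_2\circ \phi_1^{-1}$ on the open complement $\phi_1(U\sm \ovl E)$. A homeomorphism that is qc on an open subdomain and coincides with the identity on the complementary closed set lies in $W^{1,2}_{\mathrm{loc}}$, with distributional derivatives concentrated on the qc part; this is a standard removability-type statement whose proof reduces, via Fubini together with a cutoff argument on horizontal and vertical lines, to the one-dimensional fact that a continuous function that is absolutely continuous off a closed set and equals the identity on that set is globally absolutely continuous with no singular derivative across the set. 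Composing $\Psi$ back with the $K_1$-qc map $\phi_1$ and invoking the chain rule for Sobolev functions under qc changes of variable, I conclude $\Phi\in W^{1,2}_{\mathrm{loc}}(U)$; combined with the a.e.\ piecewise Beltrami computation, this yields $\bar\partial \Phi = \mu\,\partial \Phi$ a.e., so $\Phi$ is $K$-qc. The ``moreover'' statement is then immediate, since $\Phi=\phi_1$ on $E$ forces $\bar\partial \Phi = \bar\partial \phi_1$ a.e.\ on $E$. The main obstacle, as indicated, is the removability step across a potentially wild $\partial E$; this is where the compatibility $\phi_1|_{\partial E}=\phi_2|_{\partial E}$ is essentially used.
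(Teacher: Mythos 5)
The paper does not actually prove this lemma; it is quoted as a known result (Rickman; Bers' sewing lemma; Douady--Hubbard, Lemma~2 in \cite{PolyLike}) and closed with \qed. So the only question is whether your blind proof is sound.

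Your overall strategy is the standard one and is essentially correct: reduce to the analytic characterization, note that $\Phi$ is already qc on $\mathrm{int}\,E$ and on $U\sm\ovl E$ with Beltrami bounded by $k$, and then establish global $W^{1,2}_{\mathrm{loc}}$ regularity (ACL plus $L^2$ gradient) across $\partial E$. The trick $\Psi=\Phi\circ\phi_1^{-1}$ (or equivalently, working with $g:=\Phi-\phi_1$, which vanishes on $\ovl E$) is exactly the right reduction.

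There is, however, one genuine gap in the removability step. Your ``one-dimensional fact'' --- that a continuous function which is absolutely continuous off a closed set $F$ and equals the identity on $F$ is globally absolutely continuous --- is false as stated. Take $F=\{0\}\cup\{1/n : n\ge 1\}$ and a continuous $g$ which is the identity on $F$ and, on each gap $(1/(n+1),1/n)$, is piecewise linear with a spike of height $1$; then $g$ is AC on every component of the complement of $F$ and equals the identity on $F$, but $g$ has infinite variation and is not AC. What makes the argument work in the qc setting is the extra ingredient you did not name: for $V\Subset U$ one has $\int_{V\sm\ovl E}|\nabla\phi_2|^2 \le K_2\int_{V\sm\ovl E} J_{\phi_2}\le K_2\,\area(\phi_2(V))<\infty$ (and similarly for $\phi_1$), so the pointwise gradient of $\Phi-\phi_1$, extended by zero to $\ovl E$, is in $L^2_{\mathrm{loc}}(U)$; by Fubini this forces the restriction of $(\Phi-\phi_1)'$ to a.e.\ line to be locally $L^1$, and only \emph{with} this BV/$L^1$ hypothesis does the 1-D lemma hold (and it then does hold, by decomposing $Dg$ and noting that $Dg\restriction_F=0$). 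Alternatively one can avoid lines entirely by the cutoff $g_\varepsilon:=\mathrm{sign}(g)\max(|g|-\varepsilon,0)$ applied to $g=\Phi-\phi_1$, which is supported away from $\ovl E$, converges to $g$ in $W^{1,2}_{\mathrm{loc}}$, and gives the extension-by-zero directly. Either way, the $L^2$ bound coming from quasiconformal distortion is essential and should appear explicitly.

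A smaller point: your argument that $\Phi$ is a homeomorphism appeals to $\phi_1(\partial E\cap\mathcal N)$ being a ``Jordan-type curve,'' but $E$ is an arbitrary relatively compact set and $\partial E$ need not have any such local structure. In the classical formulation of this lemma (e.g.\ Douady--Hubbard) the fact that the glued map $\Phi$ is a homeomorphism onto its image is part of the hypothesis, not the conclusion; the statement here implicitly relies on that, and one should not try to derive it from invariance of domain alone.
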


We will need the following corollary to the Sewing Lemma, which is immediate if in the statement below the number of disks is finite; if this number is not finite, then the argument is slightly more delicate (see also the remark below).

\begin{lemma}[Countable gluing lemma]
\label{Lem:Gluing}
Let $V \subset \C$ be an open topological disk and $\psi \colon V \to \psi(V)$ be a $K$-quasiconformal homeomorphism. Let $B_i \subset V$, $i \in I$ be an at most countable collection of closed Jordan disks with pairwise disjoint interiors. Suppose there exist homeomorphisms $\phi_i \colon B_i \to \phi_i(B_i)$ that are uniformly $K'$-quasiconformal in the interior of $B_i$ and match on the boundary with $\psi$, i.e.\ for each $i \in I$,
\[
\psi|_{\partial B_i} = \phi_i|_{\partial B_i}.
\]
Then the map $\Phi \colon V \to \Phi(V)$ defined as
\begin{equation*}
\Phi(z) = \left\{
\begin{aligned}
&\phi_i(z) &\text{if } &z \in B_i\\
&\psi(z) &\text{if } &z \in V \sm \cup_{i \in I} B_i\\ 
\end{aligned}
\right.
\end{equation*}
is a $\max(K, K')$-quasiconformal homeomorphism.   
\end{lemma}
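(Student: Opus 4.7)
My plan is to reduce the countable situation to the finite one by induction from Lemma~\ref{Lem:Sewing}, and then pass to a limit using compactness of quasiconformal families. The first step is to check that $\Phi$ is already a homeomorphism of $V$ onto $\psi(V)$. Each $\phi_i$ is continuous on $B_i$ and agrees with $\psi$ on $\partial B_i$; since the disks $B_i$ have pairwise disjoint interiors, pointwise continuity of $\Phi$ is immediate. For injectivity one can argue that because $\phi_i$ is a homeomorphism of the closed Jordan disk $B_i$ with $\phi_i|_{\partial B_i}=\psi|_{\partial B_i}$ and both are orientation-preserving (being quasiconformal on the interior), the images satisfy $\phi_i(B_i)=\psi(B_i)$; in particular the sets $\Phi(B_i)$ inherit pairwise disjoint interiors from the $\psi(B_i)$, and $\Phi$ surjects onto $\psi(V)$ with $\Phi^{-1}$ continuous by invariance of domain.

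For the finite case $|I|=n<\infty$, enumerate the disks as $B_1,\ld,B_n$ and construct $\Phi$ by repeated application of Lemma~\ref{Lem:Sewing}. Set $\Phi_0:=\psi$. Assuming $\Phi_{k-1}$ is $\max(K,K')$-quasiconformal on $V$ and agrees with $\phi_j$ on $B_j$ for $j<k$, apply Lemma~\ref{Lem:Sewing} with $U=V$, $E=B_k$, taking $\phi_1=\phi_k$ inside $B_k$ and $\phi_2=\Phi_{k-1}$ outside: the hypotheses hold because $\phi_k=\psi=\Phi_{k-1}$ on $\partial B_k$ (the previous replacements happened on disks disjoint from the interior of $B_k$, so $\Phi_{k-1}=\psi$ on a neighborhood of $\partial B_k$ relative to $V\sm\bigcup_{j<k}\inter B_j$). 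The resulting $\Phi_k$ is $\max(K,K')$-quasiconformal, and $\Phi_n=\Phi$.

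For countable $I$, enumerate $B_1,B_2,\ld$ and let $\Phi_n$ denote the map produced by the finite procedure above applied to the first $n$ disks (so $\Phi_n=\phi_i$ on $B_i$ for $i\le n$ and $\Phi_n=\psi$ on $V\sm\bigcup_{i\le n}B_i$). Each $\Phi_n$ is $K''$-quasiconformal for $K'':=\max(K,K')$. Precomposing and postcomposing with Riemann maps, we may normalize so that $V=\psi(V)=\disk$ and $\psi$ fixes, say, three prescribed boundary points; then the sequence $\{\Phi_n\}$ has the same normalization and is therefore a normal family of $K''$-quasiconformal self-homeomorphisms of $\disk$. By the compactness theorem for quasiconformal mappings, any subsequential limit in the topology of uniform convergence on compacts is either $K''$-quasiconformal or a constant. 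But $\Phi_n(z)\to\Phi(z)$ pointwise: for each fixed $z$, either $z\in B_i$ for some $i$, in which case $\Phi_n(z)=\phi_i(z)=\Phi(z)$ for all $n\ge i$, or $z\notin\bigcup_i B_i$, in which case $\Phi_n(z)=\psi(z)=\Phi(z)$ for all $n$. Since $\Phi$ is a homeomorphism (and in particular non-constant), the limit is $\Phi$ and $\Phi$ is $K''$-quasiconformal.

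The main obstacle is the passage to the limit: the Sewing Lemma by itself has no convergence rate that one could iterate $\infty$ times, and a naive attempt to check the analytic definition of quasiconformality directly fails because $\bigcup_i\partial B_i$ need not have Lebesgue measure zero (Jordan curves may have positive area). This forces the use of the compactness/normal family argument for $K''$-quasiconformal maps to obtain the Beltrami estimate for $\Phi$ globally, rather than patching Beltrami coefficients on the pieces.
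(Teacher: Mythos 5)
Your proposal follows essentially the same strategy as the paper's proof: build finite approximants $\Phi_n$, establish a uniform qc bound on them via the Sewing Lemma, and then pass to the limit using compactness of uniformly qc maps together with the observation that $\Phi_n \to \Phi$ pointwise and $\Phi$ is non-constant. One small technical point in your finite induction: the Sewing Lemma requires $\phi_1$ to be a $K_1$-qc homeomorphism defined on \emph{all} of $U$, whereas you take ``$\phi_1=\phi_k$ inside $B_k$'' with $\phi_k$ only defined on $B_k$. The fix is to instead apply the Sewing Lemma with $E=\partial B_k$ (a compact Jordan curve with empty interior), $\phi_1=\psi$, and $\phi_2=\Phi_k$, since $\Phi_k$ is already a homeomorphism that is locally (hence globally) $\max(K,K')$-qc on $V\sm\partial B_k$; this is in fact exactly how the paper's proof is phrased, except it performs a single sewing per $n$ with $E=\bigcup_{i\in I_n}\partial B_i$ rather than an inductive chain. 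Your extra remarks — that $\Phi$ is a homeomorphism (so the limit is identified and non-constant), and that a naive Beltrami-coefficient argument fails because $\bigcup_i\partial B_i$ may have positive Lebesgue measure — are correct and genuinely clarifying additions that the paper leaves implicit.
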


\begin{remark}
In general, if $I$ is infinite and one assumes that $\psi$ and all $\phi_i$ are just homeomorphisms, then the map $\Phi$ need not be even a homeomorphism. Lemma~\ref{Lem:Gluing} allows us in the statement of qc rigidity for box mappings to avoid the assumption  that the domains of $\U$ shrink to points, which was imposed in \cite{Lyu} and \cite{ALdM}. Under that assumption in those papers \cite[Lemma 11.1]{Lyu} was used instead of the Gluing Lemma above.
\end{remark}

\begin{proof}[Proof of Lemma~\ref{Lem:Gluing}]
Exhaust $I$ by finite sets $I_n$. For each $n$, define
\begin{equation*}
\Phi_n(z) := \left\{
\begin{aligned}
&\phi_i(z) &\text{if } &z \in B_i \text{ for some }i \in I_n\\
&\psi(z) &\text{if } &z \in V \sm \cup_{i \in I_n} B_i\\ 
\end{aligned}
\right.
\end{equation*}

Since $I_n$ is finite and each $\phi_i$, $i \in I_n$ matches with $\psi$ on the boundary of $B_i$, the map $\Phi_n$ is a homeomorphism of $V$ onto its image. By the Sewing Lemma (Lemma~\ref{Lem:Sewing}) applied to $E = \cup_{i \in I_n} \partial B_i$, $\phi_1 =\psi$, and $\phi_2 = \Phi_n$, it follows that 
\begin{equation*}
z \mapsto \left\{
\begin{aligned}
&\psi(z) &\text{if } &z \in \cup_{i \in I_n} \partial B_i\\
&\Phi_n(z) &\text{if } &z \in V \sm \cup_{i \in I_n} \partial B_i\\ 
\end{aligned}
\right.
\end{equation*}
is a $\max(K,K')$-quasiconformal homeomorphism. The latter map is clearly equal to $\Phi_n$.

In this way, we obtained a sequence $(\Phi_n)_{n \ge 0}$ of $\max(K,K')$-quasiconformal maps on $V$. By compactness of quasiconformal maps of uniformly bounded dilatation, this sequence has a sub-sequential limit. Moreover, since the maps in the sequence agree on more and more disks $B_i$, all sub-sequential limits are the same and equal to the map $\Phi$. The conclusion of the lemma follows.    
\end{proof}

\subsection{Lifts of combinatorial equivalence and the boundary marking}

Let $F\colon\U\to\V$ and $\tilde F\colon\tilde\U\to\tilde\V$ be a pair of complex box mappings that are combinatorially equivalent in the sense of Definition~\ref{DefA:CombEquivBoxMappings} w.r.t.\ to some \emph{$K$-quasiconformal} homeomorphism $H \colon \V \to \tilde \V$ such that $H(\U) = \tilde{\U}$ and $\tilde F \circ H = H \circ F$ on $\partial U$ for each component $U$ of $\U$. By Remark~\ref{Rem:ext} after Theorem~\ref{Thm:BoxMappingsMain2}, the latter condition means that if $\hat H \colon \cl \V \to \cl \tilde \V$ is a continuous homeomorphic extension of $H$ to the closures, then 
\[
\hat{\tilde F} \circ \hat H|_{\partial U} = \hat H \circ \hat F|_{\partial U}
\]
for each component $U$ of $\U$; here $\hat F$ and $\hat{\tilde F}$ are the corresponding homeomorphic extensions of $F$ and $\tilde F$ to the closures. Let $P$ and $\tilde P$ be a pair of corresponding puzzle pieces for $F$ and $\tilde F$ in the sense of Section~\ref{SSec:CombEquiv}.

\begin{definition}[Boundary marking]
\label{Def:BdMark}
We say that a homeomorphism $h \colon P \to \tilde P$ \emph{respects the boundary marking (induced by $H$)} if $h$ extends to a homeomorphism $\hat h$ between $\cl P$ and $\cl \tilde P$ and $\hat h|_{\partial P}$ agrees with the corresponding lift of $\hat H$; that is, if $k \ge 0$ is the depth of $P$, then 
\[
\hat{\tilde{F}}{}^k \circ \hat h|_{\partial P} = \hat H \circ \hat F^k|_{\partial P}.
\]
\end{definition}

Let $K$ be the quasiconformal dilatation of $H$. The definition of combinatorial equivalence then gives us a $K$-qc homeomorphism that respects the boundary marking between any two corresponding puzzle pieces of depths $0$ and $1$. The next lemma is a simple observation that shows that such $H$ gives rise to a $K'$-qc homeomorphism that respects the boundary marking for a pair of corresponding arbitrary deep puzzle pieces; of course, $K' \ge K$ can be arbitrarily large.

\begin{lemma}[Starting qc maps respecting boundary]
\label{Lem:StartingQC}
Let $P$, $\tilde P$ be a pair of corresponding puzzle pieces of depth $k \ge 0$ for $F$, $\tilde F$. Then there exists a $K'$-quasiconformal homeomorphism $h \colon P \to \tilde P$ that respects the boundary marking induced by $H$. 
\end{lemma}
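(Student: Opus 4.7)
My plan is induction on the depth $k$ of $P$. The base case $k=0$ is immediate: a depth-$0$ piece is a component of $\V$, its partner is $\tilde P = \hat H(P)$ by assumption, so I take $h := H|_P$, which is $K$-qc and trivially respects the boundary marking since $\hat{\tilde F}^0$ is the identity. The case $k=1$ works the same way, using the hypothesis $\hat{\tilde F}\circ \hat H = \hat H\circ \hat F$ on $\partial \U$.

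For the inductive step, given corresponding puzzle pieces $P, \tilde P$ at depth $k+1$, I set $Q := F(P)$ and $\tilde Q := \tilde F(\tilde P)$; these form a corresponding pair at depth $k$, so the inductive hypothesis provides a $K''$-qc homeomorphism $g\colon Q \to \tilde Q$ respecting the boundary marking. The restrictions $F|_P \colon P \to Q$ and $\tilde F|_{\tilde P} \colon \tilde P \to \tilde Q$ are proper holomorphic branched covers of the same degree $d$: Definition~\ref{DefA:CombEquivBoxMappings} provides a bijection between $\Crit(F)\cap P$ and $\Crit(\tilde F)\cap \tilde P$ preserving local degrees, and the total degree of either branched cover is determined by these local data. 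My plan is then to lift $g$ (after a small modification) through these branched covers to produce $h$.

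The obstruction to lifting $g$ directly is that $g$ need not send critical values of $F|_P$ to critical values of $\tilde F|_{\tilde P}$. To fix this I would perform a standard surgery: enclose each pair of corresponding critical values in a small topological disk compactly contained in $Q$, and within each disk replace $g$ by a qc map that sends $F(c)$ to $\tilde F(\tilde c)$ while being the identity on the disk boundary, constructed by Riemann uniformization together with an Alexander-type radial adjustment. The resulting $g'$ is $K'''$-qc for some finite $K''' \ge K''$ and agrees with $g$ on $\partial Q$. Since the lemma allows $K'$ to depend on $(P,\tilde P)$, the extra dilatation is harmless. With the critical-value data now matched, the map $g'\circ F|_P \colon P \to \tilde Q$ admits $d$ homeomorphic lifts through $\tilde F|_{\tilde P}$; I then select the unique lift $h\colon P\to \tilde P$ whose continuous boundary values realize the combinatorially prescribed correspondence (matching $\Gamma$-itineraries of preimage components). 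Lifting a qc map through a holomorphic branched cover preserves the dilatation, so $h$ is $K'''$-qc, and the boundary marking condition follows from
\[
\hat{\tilde F}^{k+1}\circ \hat h|_{\partial P} \;=\; \hat{\tilde F}^{k}\circ \hat{g}\circ \hat F|_{\partial P} \;=\; \hat H\circ \hat F^{k+1}|_{\partial P},
\]
invoking $\tilde F\circ h = g'\circ F$, $g'|_{\partial Q} = g|_{\partial Q}$, and the inductive hypothesis on $g$. Setting $K' := K'''$ closes the induction. The one genuinely delicate point is the selection of the correct lift among the $d$ possibilities: it is precisely here that the combinatorial equivalence hypothesis is used in its full strength, since the matching of $\Gamma$-itineraries of preimage components is what distinguishes $\tilde P$ from the other components of $\tilde F|_{\tilde P}^{-1}(\tilde Q)$ that also cover $\tilde Q$.
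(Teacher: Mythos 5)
Your argument is correct and rests on the same mechanism as the paper's proof: compose with a quasiconformal adjustment that aligns critical-value data, then lift through the holomorphic branched covers, observing that such a lift preserves dilatation. The only difference is organizational — you build $h$ inductively, lifting and performing a surgery at each of the $k$ intermediate levels, whereas the paper adjusts $H$ \emph{once} by a diffeomorphism $L$ of $V=F^k(P)$ moving the critical values of $F^k|_P$ to those required, and then lifts $H\circ L$ through $F^k$ and $\tilde F^k$ in a single step; since $K'$ is allowed to depend on the pair $(P,\tilde P)$, both routes reach the same conclusion, the paper's being somewhat leaner in bookkeeping.
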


\begin{proof}
Let $V$ be the component of $\V$ so that $F^k(P) = V$. If $F^k \colon P \to V$ is a conformal isomorphism, then, by combinatorial equivalence (Definition~\ref{DefA:CombEquivBoxMappings}), $\tilde F^k \colon \tilde P \to \tilde V$ is also a conformal isomorphism. In this situation, we can define $h := \tilde F^{-k}\circ H \circ F^k$ for the branch $\tilde F^{-k}$ that maps $\tilde V$ over $\tilde P$. Clearly, $h$ is a $K$-qc map that respects the boundary marking.

Suppose $v_1, \ldots, v_{\ell} \in V$ are the critical values of the map $F^k \colon P \to V$. By combinatorial equivalence, the map $\tilde F^k \colon \tilde P \to \tilde V$ must have the same number of critical values with the same branching properties (i.e.\ if $c \in P$ is a critical point of local degree $s \ge 2$ and so that $F^k(c) = v_i$, then the corresponding critical point $\tilde c$ must have local degree $s$ and is mapped by $\tilde F^k$ to the corresponding critical value $\tilde v_i$).

Choose an arbitrary smooth diffeomorphism $L \colon V \to V$ so that $L(v_i) = H^{-1}(\tilde v_i)$ and so that $L = \text{id}$ in some small neighborhood of $\partial V$. Then $H \circ L \colon V \to \tilde V$ is a $K'$-qc homeomorphism that maps the critical values of $F^k|_P$ to the critical values of $\tilde F^k|_{\tilde P}$ and respects the boundary marking. Such homeomorphism can be now lifted as above: define $h = \tilde F^{-k} \circ H \circ L \circ F^k$ (for appropriate choices of inverse branches). By construction, $h$ is $K'$-qc homeomorphism between $P$ and $\tilde P$ that respects the boundary marking, as required.
\end{proof}

\subsection{The Spreading Principle}
The Spreading Principle is a dynamical tool which makes it possible to construct quasiconformal conjugacies outside of some puzzle neighborhood $\Y$ of $\Crit(F)$ between two mappings $F\colon\U\to\V$ and $\tilde F\colon\tilde\U\to\tilde\V$ provided that one has an initial quasiconformal mapping $H\colon \V\to \tilde\V$ which is a combinatorial equivalence and a conjugacy on $\partial \U$ between $F$ and $\tilde F$, and a quasiconformal mapping on each component $P$ of $\Y$ to the corresponding component $\tilde P$ of $\tilde \Y$ that respects the boundary marking (in the sense of Definition~\ref{Def:BdMark}).

In the context of polynomials, the Spreading Principle was proven in \cite[Section 5.3, page 769]{KSS} and \cite{KvS}. Below we state and prove the corresponding version for dynamically natural box mappings. Our statement is slightly more general than the one in \cite{KSS, KvS} as it allows to take into account some pre-existing conjugacy on some part of the postcritical set. 

\begin{theorem}[Spreading Principle]
\label{Thm:Spreading}
\label{Thm:Spreading_bis}
Let $F \colon \U \to \V$ and $\tilde{F} \colon \tilde{\U}\to\tilde{\V}$ be a pair of dynamically natural complex box mappings and $H \colon \V \to \tilde \V$ be a $K$-quasiconformal homeomorphism that
\begin{itemize}
\item
provides a combinatorial equivalence between $F$ and $\tilde F$, is a conjugacy on the boundary of $\U$, and moreover, it
\item
conjugates $F$ to $\tilde F$ on $X_S := \{F^i(\fib(c)) \colon c \in S, i \ge 0\}$ for some subset $S \subset \Crit(F)$ (possibly empty).
\end{itemize}

Let $\Y$ be a nice puzzle neighborhood of $\Crit(F) \sm S$ such that $X_S \cap \Y = \emptyset$, and let $\tilde{\Y}$ be the corresponding neighborhood of $\Crit(\tilde F) \sm \tilde S$ for $\tilde{F}$. Further suppose that there exists a $K'$-quasiconformal homeomorphism $\phi \colon \Y \to \tilde \Y$ that respects the boundary marking induced by $H$. Then $\phi$ extends to a $\max(K,K')$-quasiconformal homeomorphism $\Phi \colon \V \to \tilde{\V}$ such that:
\begin{enumerate}
\item 
\label{It:SP1}
$\Phi=\phi$ on $\Y$;
\item 
\label{It:SP2}
for each $z\in \U \sm \Y$, $$\tilde{F}\circ\Phi(z)=\Phi\circ F(z);$$
\item 
\label{It:SP3}
$\Phi|_{\V \sm \DomL(\Y)}$ is $K$-quasiconformal;
\item 
\label{It:SP4}
$\Phi(P)=\tilde{P}$ for every puzzle piece $P$ that is not contained in $\DomL(\mathcal Y)$,
					and $\Phi \colon P\rightarrow\tilde{P}$ respects the boundary marking induced by $H$.
\end{enumerate}
\end{theorem}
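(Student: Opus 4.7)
The plan is to define $\Phi$ piecewise on three regions of $\V$ and then apply the Countable Gluing Lemma (Lemma~\ref{Lem:Gluing}) to conclude global quasiconformality. Let $L := L_{\mathcal Y}$ be the first landing map to $\Y$, which is defined precisely on $\DomL(\Y)$. The three regions are: (i) the puzzle neighborhood $\Y$ itself, (ii) each component $E$ of the entry set $\DomE(\Y) \setminus \Y$, and (iii) the complement $\V \setminus \DomL(\Y)$. On region (i), simply set $\Phi := \phi$. On region (iii), set $\Phi := H$. The core of the argument is the definition on region (ii) by means of dynamical pullback.

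For each entry component $E$, let $k = k(E) \ge 1$ be the entry time and let $W \subset \Y$ be the component of $\Y$ with $F^k(E) = W$, so that $F^k \colon E \to W$ is a proper holomorphic branched covering. By combinatorial equivalence, there is a unique corresponding entry component $\tilde E$ and component $\tilde W = \phi(W) \subset \tilde \Y$ with $\tilde F^k \colon \tilde E \to \tilde W$ proper of the same degree and matching branch data at every intermediate step. Let $\tilde F^{-k}$ denote the unique inverse branch of $\tilde F^k$ sending $\tilde W$ to $\tilde E$ with boundary marking dictated by the pullback of $H$. Then define
\[
\Phi|_E \;:=\; \tilde F^{-k} \circ \phi|_W \circ F^k|_E.
\]
This is a homeomorphism of $E$ onto $\tilde E$, and since $F^k, \tilde F^k$ are holomorphic and $\phi$ is $K'$-quasiconformal, $\Phi|_E$ is $K'$-quasiconformal. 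Checking properties \eqref{It:SP1}, \eqref{It:SP2}, \eqref{It:SP4} reduces to an unwinding of definitions once continuity on boundaries is established. Property \eqref{It:SP3} is immediate because $\Phi = H$ there. Crucially, because $\Y$ is disjoint from $X_S$ and the combinatorial equivalence $H$ conjugates $F$ to $\tilde F$ on $X_S$, critical orbits of points in $S$ that may appear inside intermediate pullbacks $F^j(E)$ are correctly tracked, so the required inverse branch $\tilde F^{-k}$ always exists and lands on the puzzle piece $\tilde E$ corresponding to $E$.

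The next step is to verify that the piecewise-defined $\Phi$ glues continuously. On the common boundary $\partial E$ (for $E$ an entry component) we need to check that $\Phi|_E$ matches with: (a) $\phi$ along the part of $\partial E$ which is shared with a boundary of a component of $\Y$ at a deeper level, and (b) with $H$ along the part of $\partial E$ facing the complement $\V \setminus \DomL(\Y)$, and similarly for boundaries internal to $\DomL(\Y)$ between different entry components. Part (a) follows because $\phi$ respects the boundary marking induced by $H$, so after pulling back by $F^k$ the construction is forced to agree with the corresponding pullback of $\phi$ on deeper puzzle pieces. Part (b) follows from the hypothesis that $H$ is a conjugacy on $\partial \U$ (and hence on all iterated preimages of $\partial \U$ under $F$), together with the fact that the pullback of $\phi$ and the pullback of $H$ agree on $\partial E$ since they both reduce to lifts of $H|_{\partial V}$ under matching inverse branches.

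With these boundary matchings in hand, the Countable Gluing Lemma applied to $V$ (with $\psi = H$ and the Jordan pieces $B_i$ being the closures of the components of $\DomL(\Y)$) delivers a $\max(K, K')$-quasiconformal global homeomorphism $\Phi \colon \V \to \tilde \V$. The only remaining subtlety, and the main obstacle, is the potential existence of points in $\V \setminus \DomL(\Y)$ on which we have not explicitly defined $\Phi$ via pullback but instead by $H$; we must confirm that the resulting map is actually a homeomorphism (not just defined a.e.). Here we invoke dynamical naturality: since $\area(\Koc(F)) = 0$, the complement of $\DomL(\Y) \cup S\text{-orbits}$ in $K(F)$ has measure zero, so the countable family of Jordan pieces $\{\cl(\Comp E) : E \subset \DomL(\Y)\}$ covers $\V$ up to the closed set where $\Phi$ is unambiguously specified as $H$. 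Continuity follows since $H$ and the pullbacks agree on all boundaries $\partial E$ by the boundary marking hypothesis, and the uniform dilatation bound $\max(K, K')$ together with compactness of normalized families of quasiconformal maps guarantees that the limit map across the accumulation of the entry components is a genuine quasiconformal homeomorphism. Properties \eqref{It:SP1}--\eqref{It:SP4} then read off directly from the construction.
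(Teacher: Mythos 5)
The central difficulty your proposal does not overcome is property~\eqref{It:SP2}, and the way you set up region (iii) actually guarantees that it fails. You define $\Phi := H$ on $\V\setminus\DomL(\Y)$; but for $z\in\U\setminus\DomL(\Y)$ (which is nonempty: $\U\setminus\DomL(\Y)$ contains, for instance, every first-return domain whose full forward orbit avoids $\Y$), property~\eqref{It:SP2} would require $\tilde F(H(z))=H(F(z))$. The hypotheses give you that $H$ is a conjugacy only on $\partial\U$ and on the combinatorial set $X_S$, not in the interior of puzzle pieces disjoint from $\DomL(\Y)$. So on region (iii) your $\Phi$ is not equivariant, and the asserted unwinding of definitions for~\eqref{It:SP2} does not go through. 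This is precisely why the paper \emph{never} sets $\Phi=H$ on the non-landing part: it defines $\Phi$ there by pulling $H$ (or auxiliary maps $h_C$ around the critical pieces outside $\Y$) back through the dynamics on a full exhaustion by puzzle pieces $\Pp_n$, so that equivariance is built in piece by piece, with the non-stabilizing set being $E(\Y')=\bigcap Y_n$.

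A second, related gap is your measure-zero claim for $\V\setminus\DomL(\Y)$. You invoke $\area(\Koc(F))=0$, but $\Koc(F)$ is the set of orbits avoiding puzzle neighborhoods of \emph{all} of $\Crit(F)$, whereas $\Y$ is only a neighborhood of $\Crit(F)\setminus S$. Orbits that accumulate exclusively on fibers of critical points in $S$ need never enter $\Y$, and these can carry positive Lebesgue measure (this is exactly the scenario where $S$ contains a persistently recurrent critical point whose $\omega$-limit set attracts positive measure). The paper resolves this by introducing a nice auxiliary puzzle neighborhood $\X$ of $S$, working with $\Y'=\Y\cup\X$ (a neighborhood of the \emph{full} critical set, so that now $E(\Y')\subset\Koc(F)$ really has measure zero), and then taking the depth of $\X$ to infinity. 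The last step is also what brings the dilatation of the final map down from $\max(K,K',K_0)$ (where $K_0$ is the dilatation of the ad-hoc maps $h_C$ and the lifted map $\psi$ on $\X$) to the claimed $\max(K,K')$ — the locus where the extra dilatation $K_0$ is present shrinks to zero measure as the depth of $\X$ grows. Your proposal has no analogue of $\X$, so it neither controls the non-landing set nor obtains the sharp dilatation constant. Your entry-domain pullback $\Phi|_E=\tilde F^{-k}\circ\phi|_W\circ F^k|_E$ is fine — and the branching discussion is in fact moot, since the hypotheses $\Crit(F)\setminus S\subset\Y$ and $X_S\cap\Y=\emptyset$ force every first-entry branch to $\Y$ to be univalent — but the rest of the argument needs the machinery described above.
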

\begin{proof}  
Denote by $\Pp_n$ the collection of puzzle pieces of depth $n \ge 0$ for $F$.

Since $X_S \cap \Y = \emptyset$ and $X_S$ is forward invariant under $F$, it follows that $X_S \cap \DomL(\Y) = \emptyset$; the same is true for the corresponding sets $\tilde X_{\tilde S}$ and $\tilde \Y$. Moreover, there exists a puzzle neighborhood $\mathcal X$ of $S$ consisting of puzzle pieces of the same depth, say $m$, with the property that $m$ is larger than the depths of the components of $\Y$ and $\mathcal X \cap \Y = \emptyset$. Note that $\Y \cup \X$ is a nice set by construction.

Let $\mathcal C$ be a collection of all critical puzzle pieces that are not contained in $\Y$ but do intersect $\Crit(F) \sm S$. This is a finite set. Using Lemma~\ref{Lem:StartingQC}, for each $C \in \mathcal C$ let us pick a quasiconformal map $h_C \colon C \to \tilde C$ that respects the boundary marking induced by $H$; suppose $K_0$ is the maximum over the quasiconformal dilatations of $h_C$ for $C \in \mathcal C$.

We now want to construct a quasiconformal homeomorphism $\psi \colon \X \to \tilde \X$ that respects the boundary marking. Let $Q$ be a component of $\X$, and $k \ge 0$ be minimal so that $F^k(Q)$ either is a component of $\V$, or is in $\mathcal C$. Now define a map $\psi_Q \colon Q \to \tilde Q$ by the formula $\tilde F^k \circ \psi_Q = f \circ F^k$, where 
\begin{equation}
\label{Eq:f}
f = \left\{
\begin{aligned}
&H &\text{ if } &F^k(Q) \text{ is a component of } \V,\\
&h_{F^k(Q)} &\text{ if } &F^k(Q) \in \mathcal C.\\
\end{aligned}
\right.
\end{equation}

Since none of the puzzle pieces in the sequence $Q, F(Q), \ldots, F^k(Q)$ intersects $\Y$ (as $X_S \cap \Y = \emptyset$), and $F$ is conjugate to $\tilde F$ on $X_S$, we conclude that the map $\psi_Q$ is a well-defined quasiconformal homeomorphism with dilatation $\max(K,K_0)$. By construction, $\psi_Q$ respects the boundary marking induced by $H$. We define $\psi \colon \X \to \tilde \X$ component-wise using the maps $\psi_Q$. Note that the dilatation of $\psi$ does not depend on the choice of the puzzle neighborhood $\X$, and as the depth of the neighborhood $\X$ tends to $\infty$, the set where the dilatation of $\psi$ is equal to $K_0$ shrinks to zero (in measure).    

Finally, let us adjust $\phi$, $\Y$ and $\Y'$ as follows. Set $\Y' := \Y \cup \X$ and $\tilde \Y' := \tilde \Y \cup \tilde \X$, and define
\begin{equation*}
\phi'(z) = \left\{
\begin{aligned}
&\phi(z) &\text{ if } &z \in \Y,\\
&\psi(z) &\text{ if } &z \in \X.\\
\end{aligned}
\right.
\end{equation*}
Defined this way, $\phi' \colon \Y' \to \tilde \Y'$ is a $K''$-quasiconformal homeomorphism between nice puzzle neighborhoods of $\Crit(F)$ and $\Crit(\tilde F)$, with $K'' := \max(K,K', K_0)$, and this homeomorphism respects the boundary marking.

Let $Q$ be a component of $\DomL(\Y')$, and $k \ge 0$ be the landing time of the orbit of $Q$ to $\Y'$. Since $\Crit(F) \subset \Y'$, the map $F^k \colon Q \to F^k(Q)$ is a conformal isomorphism. Therefore, we can pull back $\phi'$ and define a $K''$-quasiconformal homeomorphism $\phi'_Q \colon Q \to \tilde Q$ by the formula $\tilde F^k \circ \phi'_Q = \phi' \circ F^k$. Since $\phi'$ respects the boundary marking induced by $H$, so does $\phi'_Q$.

Let us now inductively define a nested sequence of sets $Y_0 \supset Y_1 \supset Y_2 \ldots$ so that:
\begin{itemize}
\item
$Y_0$ is the union of all puzzle pieces in $\Pp_0$;
\item
$Y_{n+1}$ is the subset of $Y_n$ consisting of puzzle pieces of depth $n+1$ that are not components of $\DomL(\Y')$.  
\end{itemize}

For each puzzle piece $Q \in \Pp_n$ that is contained in $Y_n$, it follows that none of the pieces in the orbit $Q, F(Q), \ldots$ can lie in $\Y'$ (because otherwise $Q$ would lie in $Y_{\ell-1} \sm Y_\ell$ for some $\ell \le n$). Therefore, there exists the minimal $k = k(Q) \ge 0$ so that $F^k(Q)$ either is in $\mathcal C$, or is a component of $\V$. By a similar reasoning as above, in both cases the map $F^k \colon Q \to F^k(Q)$ allows to pull back homeomorphisms consistently and to define a homeomorphism $H_Q \colon Q \to \tilde Q$ by the formula $\tilde F^k \circ H_Q = f \circ F^k$, where $f$ is as in \eqref{Eq:f}. Defined this way, the map $H_Q$ is $\max(K, K_0)$-quasiconformal map that respects the boundary marking induced by $H$.

We proceed by inductively defining a sequence of homeomorphisms $(\Phi^{\X}_n)_{n \ge 0}$, $\Phi^{\X}_n \colon \V \to \tilde \V$ as follows. Set $\Phi^{\X}_0 = H$, and for each $n \ge 0$ define
\begin{equation*}
\Phi^{\X}_{n+1}(z) = \left\{
\begin{aligned}
& \Phi^{\X}_n(z) \quad\text{ if }z \in \V \sm \Pp_{n+1},\\
&H_Q(z) \quad \text{ if }z \in Q \in \Pp_{n+1} \text{ and }Q \subset Y_{n+1},\\
&\phi'_Q(z) \quad \text{ if }z \in Q \in \Pp_{n+1}\text{ and }Q \not\subset Y_{n+1}.
\end{aligned}
\right.
\end{equation*}

By the Gluing Lemma (Lemma~\ref{Lem:Gluing}), the map $\Phi^{\X}_n \colon \V \to \tilde \V$ is a $K''$-quasiconformal homeomorphism for each $n \ge 0$. Note that the sequence $(\Phi^{\X}_n)$ eventually stabilizes on $\V \sm \bigcap_n Y_n$. The set $E(\Y') := \bigcap_n Y_n$ consists of points in $\V$ which are never mapped into $\Y'$, and hence $E(\Y') \subset \Koc(F)$. Since $F$ is dynamically natural, $E(\Y')$ has zero Lebesgue measure. Hence $\V \sm E(\Y')$ is a dense set. We conclude that the sequence $(\Phi^{\X}_n)$ converges the limiting $K''$-quasiconformal homeomorphism $\Phi^{\X} \colon \V \to \tilde \V$. This map depends on the depth of $\X$. Letting this depth go to $\infty$, in the limit we obtain the required quasiconformal map $\Phi \colon \V \to \tilde \V$ with quasiconformal dilatation equal to $\max(K,K')$. For this homeomorphism the properties \eqref{It:SP1}, \eqref{It:SP2} and \eqref{It:SP4} follow directly from the construction, while property \eqref{It:SP3} follows from the facts that $\area(E(\Y')) = 0$ and that $H$ conjugates $F$ and $\tilde F$ on $\bigcup_{c \in S} \fib(c)$. 
\end{proof}

\subsection{The QC-Criterion} 

The following criterion for the existence of quasiconformal extensions was proven in~\cite[Appendix~1]{KSS}. This result is inspired by the works of Heinonen--Koskela~\cite{HK} and Smania~\cite{Sm}.

Recall that a topological disk $P \subset \C$ has \emph{$\eta$-bounded geometry} if there is a point $x \in  P$ such that $P$ contains an open round disk of radius $\eta \cdot \diam(P)$ centered at $x$.

\begin{theorem}[QC-Criterion] 
\label{Thm:QCCriterion}
For any constants $0 \le  k < 1$ and $\eta  > 0$ there exists a constant K with the following property. Let $\phi \colon \Omega \to  \tilde \Omega$ be 
a quasiconformal homeomorphism between two Jordan domains. Let $X$ be a subset of $\Omega$ consisting of pairwise disjoint open topological disks. Assume that the following hold:
\begin{enumerate}
\item  If P is a component of $X$, then both of $P$ and $\phi(P)$ have $\eta$-bounded geometry, and moreover
\[
\modulus (\Omega \sm \cl(P))\ge \eta, \quad \modulus(\tilde \Omega \sm \cl(\phi(P)))\ge \eta;
\]
\item $|\bar \partial \phi | \le  k|\partial \phi|$ holds almost everywhere on $\Omega \sm X$. 
\end{enumerate}
Then there exists a $K$-quasiconformal map $\psi \colon \Omega\to \tilde \Omega$ such that 
$\psi=\phi$ on $\partial \Omega$.  \qed
\end{theorem}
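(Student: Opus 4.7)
\medskip

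\noindent\textbf{Proof plan.} The plan is to replace $\phi$ on the interior of each component $P$ of $X$ by a new qc map whose dilatation is controlled purely in terms of $\eta$ and $k$, and then sew the pieces back together using the Sewing Lemma (Lemma~\ref{Lem:Sewing}). The boundary behaviour of $\phi$ is the only data we are allowed to use inside $P$, so the key point is to show that for every component $P$ of $X$ the boundary correspondence $\phi|_{\partial P}\colon \partial P\to \partial \phi(P)$ is quasisymmetric with constants depending only on $\eta$ and $k$, after uniformization of $P$ and $\phi(P)$ by the unit disk.

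\smallskip

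First I would handle trivialities of accounting. The hypothesis $|\bar\partial\phi|\le k|\partial\phi|$ with $k<1$ on $\Omega\sm X$ means that $\phi|_{\Omega\sm X}$ is $K_0$-qc with $K_0=(1+k)/(1-k)$. Also, since $X$ is at most countable, by exhausting $X$ by finite subfamilies and taking normal limits of $K$-qc maps it suffices to build $\psi$ when $X$ has finitely many components; the general case then follows from compactness of families of $K$-qc maps with fixed boundary values.

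\smallskip

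Next, and this will be the heart of the argument, I would fix a component $P$ of $X$ and argue that $\phi$ induces a quasisymmetric (in the Ahlfors sense) boundary correspondence $\partial P\to \partial\phi(P)$. Normalize by Riemann maps $\pi\colon \disk\to P$ and $\tilde\pi\colon\disk\to\phi(P)$; both extend to homeomorphisms of the closures by Carath\'eodory, since $P$ and $\phi(P)$ are Jordan by the modulus bound $\modulus(\Omega\sm \cl P), \modulus(\tilde\Omega\sm \cl\phi(P))\ge \eta$. The composition $g:=\tilde\pi^{-1}\circ\phi\circ\pi\colon \partial\disk\to\partial\disk$ is a circle homeomorphism. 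To bound its quasisymmetry constant, I would work with \emph{extremal length}: for any pair of arcs $I_1,I_2\subset \partial\disk$ of comparable length, the extremal length in $\disk$ of curves joining them corresponds, via $\pi$, to the extremal length in $P$ of curves joining $\pi(I_1),\pi(I_2)$. The $\eta$-bounded geometry of $P$ together with the collar $\modulus(\Omega\sm\cl P)\ge \eta$ shows that this extremal length in $P$ is comparable, with constants depending only on $\eta$, to the corresponding extremal length computed in the annulus $\Omega\sm\cl P$ (where the Riemann map to a round annulus is quantitatively controlled). On the annular collar, $\phi$ is $K_0$-qc, hence distorts extremal lengths by a factor at most $K_0$, and the $\eta$-bounded geometry of $\phi(P)$ transfers the resulting estimate back to extremal lengths in $\disk$ via $\tilde\pi$. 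This chain of comparisons yields a quasisymmetry constant for $g$ depending only on $\eta$ and $k$.

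\smallskip

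Once $g$ is quasisymmetric with uniform constants, apply the Beurling--Ahlfors (or Douady--Earle) extension to obtain a $K_1$-qc homeomorphism $G\colon\disk\to\disk$ with $K_1=K_1(\eta,k)$ and $G|_{\partial\disk}=g$. Push forward by $\pi$ and $\tilde\pi$ to obtain $\Phi_P:=\tilde\pi\circ G\circ \pi^{-1}\colon P\to \phi(P)$; this is a $K_1$-qc homeomorphism whose continuous extension to $\partial P$ agrees with $\phi|_{\partial P}$. Finally, define
\[
\psi(z):=\begin{cases} \Phi_P(z) & z\in P\in X,\\ \phi(z) & z\in \Omega\sm X,\end{cases}
\]
and apply the Sewing Lemma with $E=\overline{X}$, $\phi_1=\phi$ and $\phi_2=\psi$: the hypotheses match on $\partial P$ by construction, $\phi$ is qc on $\Omega$, and $\psi|_{\Omega\sm X}=\phi|_{\Omega\sm X}$ is $K_0$-qc while $\psi|_P$ is $K_1$-qc. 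The lemma produces a $K$-qc map with $K=\max(K_0,K_1)=K(\eta,k)$, and by construction $\psi=\phi$ on $\partial \Omega$.

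\smallskip

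The main obstacle is the quasisymmetry estimate for the boundary map $g$: one must transfer distortion estimates through two Riemann maps whose boundary behaviour is a priori wild, and the only substitute for control of those Riemann maps is the pair of geometric hypotheses (bounded geometry of $P,\phi(P)$ and definite modulus of the complementary annulus). Formulating everything in terms of extremal length and exploiting that modulus is a conformal invariant is what allows the two unknown Riemann maps to be bypassed, reducing the problem to the $K_0$-qc distortion estimate on the annular collar.
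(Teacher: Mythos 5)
There is a genuine gap, and it is fatal: your strategy aims for a \emph{stronger} conclusion than the theorem asserts, and that stronger conclusion is false with uniform constants. The theorem only requires $\psi=\phi$ on $\partial\Omega$. You instead try to build $\psi$ agreeing with $\phi$ on $\partial P$ for \emph{every} component $P$ of $X$, and for that you need the circle map $g=\tilde\pi^{-1}\circ\phi\circ\pi$ to be quasisymmetric with a constant depending only on $\eta$ and $k$. This is not true. Take $\Omega=\tilde\Omega=\disk_2$, $P=\disk\sm[0,1)$ (a slit disk, which has $\eta$-bounded geometry and an $\eta$-collar for a fixed $\eta$), $X=P$, and $\phi(z)=z|z|^{\alpha-1}$ on $\disk$ extended by the identity on $\disk_2\sm\disk$. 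Then $\phi$ is quasiconformal on $\Omega$, $\phi(P)=P$, and condition (2) holds with $k=0$ because $\Omega\sm X$ differs from $\disk_2\sm\ovl{\disk}$ only by a set of Lebesgue measure zero. But near the preimage $\theta_0\in\partial\disk$ of the slit tip, the Riemann map behaves like $s\mapsto s^2$, so $g(\theta_0+s)-g(\theta_0)\sim s^\alpha$, giving quasisymmetry constant at least $2^\alpha-1$, which is unbounded as $\alpha\to\infty$ while $\eta$ and $k$ stay fixed. Consequently no $K(\eta,k)$-qc extension of $\phi|_{\partial P}$ to $P$ can exist, and your sewing step collapses.

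The underlying reason is that condition (2) controls $\phi$ only on $\Omega\sm X$, which is a one-sided neighborhood of $\partial P$; and for a slit disk the entire slit (a part of $\partial P$ of positive one-dimensional measure) has \emph{no} two-sided neighborhood in $\Omega\sm X$ at all, since $\Omega\sm\ovl{P}=\Omega\sm\ovl{\disk}$ does not see the slit. Your extremal length comparison between families of curves in $P$ and in the collar $\Omega\sm\cl(P)$ is therefore not available: the two domains are glued only along $\partial\ovl{P}$, not along $\partial P$, and curves in $P$ joining two sub-arcs of $\partial P$ are simply a different family from curves in the collar, with no modulus comparison forced by $\eta$-bounded geometry and the collar modulus bound (these hypotheses do not make $P$ a quasidisk, and the relevant conformal welding of $\partial P$ can be arbitrarily non-quasisymmetric). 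The correct proof (as in the KSS appendix) therefore does \emph{not} fix $\phi|_{\partial P}$. Instead it exploits that the conclusion is boundary-matching only on $\partial\Omega$: inside each $P$ one is free to use a fresh map adapted to the inscribed round disk guaranteed by $\eta$-bounded geometry, and the modulus collar gives room to interpolate in $\Omega\sm X$ without touching $\partial\Omega$; one then verifies the $\liminf$ circular-dilatation criterion of Heinonen--Koskela pointwise rather than attempting a global qc-extension of the $\partial P$ correspondences. The reduction to finitely many components and the final application of the sewing/gluing lemmas are fine, but the quasisymmetry step in the middle is the heart of your argument and it cannot be repaired within your chosen framework.
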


The QC-Criterion will be applied in situations where $\Omega$ is a puzzle piece
and $X$ is a union of first return domains to $\Omega$. Note that \emph{a priori} the set $X$ can be pretty ``wild'', see Figure~\ref{Fig:X}.


\begin{figure}[h]
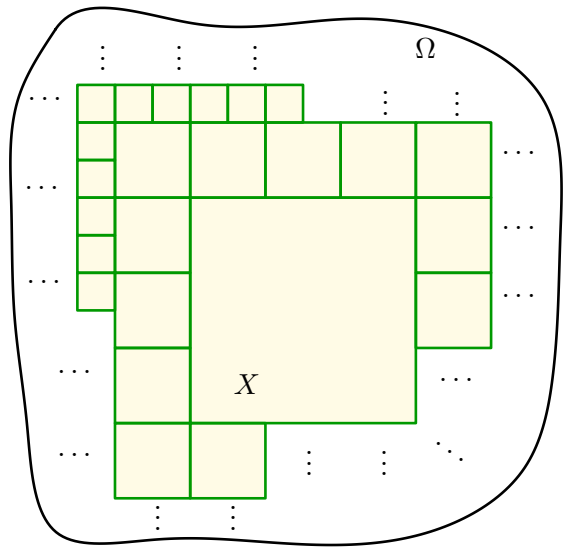

\begin{center}
\definecolor{qqzzqq}{rgb}{0.,0.6,0.}
\definecolor{ffdxqq}{rgb}{1.,0.8431372549019608,0.}

\end{center}
\caption{The set $X$ in the statement of the QC-Criterion can have a complicated shape. In particular, it is possible that $X$ tiles $\Omega$, as shown on the picture (components of $X$ are shown in yellow). In this example, the second condition in the QC-Criterion is automatically satisfied: the set $\Omega \sm X$, shown in green, has zero area.}
\label{Fig:X}
\end{figure}


\section{Sketch of the proof of the QC-Rigidity Theorem} 
\label{Sec:Sketch}
In this section we will explain how to combine shrinking of puzzle pieces (Theorem~\ref{Thm:BoxMappingsMain2} \eqref{It:MainIt1}), complex bounds (Theorem~\ref{Prop:UC}), the QC-Criterion (Theorem~\ref{Thm:QCCriterion}) and the Spreading Principle (Theorem~\ref{Thm:Spreading}) to prove qc rigidity for non-renormalizable dynamically natural complex box mappings. The argument follows that of \cite[Section 6]{KSS}.

Complex bounds from Theorem~\ref{Prop:UC} imply the following lemma, which gives us that for any point $z$, the orbit of which accumulates on a critical fiber, there are arbitrarily deep puzzle pieces containing $z$ that satisfy the geometric hypotheses of the QC-Criterion. The details are verbatim as in~\cite[Corollary 6.3]{KSS} (simplified in several places as we do not have to deal with (eventually) renormalizable critical points).

\begin{lemma}\cite[Corollary 6.3 (Geometric control for landing domains)]{KSS}
\label{Cor:UCcor}
Let $F$ be a non-renormalizable dynamically natural complex box mapping. Then for every integer $n \ge 0$ there exists a nice puzzle neighborhood $W$ of $\CritP(F)$ such that for every component $U$ of $\DomE(W)$ the following hold:
\begin{itemize}
\item
$U$ has $\eta$-bounded geometry;
\item
the depth of $U$ is larger than $n$, and if $P$ is the puzzle piece of depth $n$ containing $U$, then $\modulus(P \sm \ovl U) \ge \eta$, 
\end{itemize}
where $\eta>0$ is a constant independent of $n$.

Furthermore, if $\tilde F$ is another non-renormalizable dynamically natural box mapping combinatorially equivalent to $F$, then taking $\eta$ smaller if necessary, the same claims as above hold for $\tilde F$ and the corresponding objects with tilde.\qed 
\end{lemma}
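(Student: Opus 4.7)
The plan is to deduce the statement directly from Theorem~\ref{Prop:UC}, using the uniform $(\delta,N)$-extendibility to transfer geometric control from the critical pieces to their entry domains through bounded-degree pullbacks, and then use that critical puzzle pieces shrink in order to place these entry domains at arbitrary depth.

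First I would apply Theorem~\ref{Prop:UC} with a yet-to-be-chosen small $\epsilon>0$ to produce a nice puzzle neighborhood $W_0$ of $\Crit(F)$ whose components have diameter less than $\epsilon$, whose components meeting $\CritP(F)$ have $\delta$-bounded geometry, and for which the first landing map to $W_0$ is $(\delta,N)$-extendible with respect to $\Crit(F)$. Setting $W$ to be the union of those components of $W_0$ that meet $\CritP(F)$ gives a nice puzzle neighborhood of $\CritP(F)$, and (since entries to $W$ from points escaping $W_0$ only through non-$\CritP(F)$ components can be handled by the same argument via the landing map to $W_0$) every component $U$ of $\DomE(W)$ carries a branch $F^s\colon U\to W_c$, where $W_c\ni c\in\CritP(F)$ is a component of $W$.

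Next comes the geometric step. The $(\delta,N)$-extendibility furnishes a puzzle piece $W_c'\supset W_c$ with $\modulus(W_c'\sm\ovl{W_c})\ge\delta$ such that $F^s$ extends to a proper branched covering $F^s\colon U'\to W_c'$ of degree bounded by some $D=D(N,\Crit(F))$. Pulling the annulus $W_c'\sm\ovl{W_c}$ back under $F^s$ gives $\modulus(U'\sm\ovl{U})\ge\delta/D$ by Lemma~\ref{Lem:StdControl}, and combining the $\delta$-bounded geometry of $W_c$ in $W_c'$ with a standard Koebe-type estimate for proper maps of bounded degree yields that $U$ itself has $\eta'$-bounded geometry for some $\eta'=\eta'(\delta,D)>0$.

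Finally, to secure the depth condition and identify the containing puzzle piece $P$, observe that $\mathrm{depth}(U')=\mathrm{depth}(U)-(\mathrm{depth}(W_c)-\mathrm{depth}(W_c'))$, and the ``extension defect'' on the right is bounded by a constant $M$ depending only on the finitely many possible choices of $W_c'$ coming from the $(\delta,N)$-extendibility. Since there are only finitely many puzzle pieces of depth at most $n+M$, their minimum diameter is strictly positive, and I would fix $\epsilon$ below this minimum, forcing every component of $W_0$ to have depth at least $n+M+1$. Then $\mathrm{depth}(U)>n+M$, so $\mathrm{depth}(U')>n$; because $U\subset P\cap U'$ and both $P,U'$ are puzzle pieces of depth $\ge n$, nesting forces $U'\subseteq P$, whence $\modulus(P\sm\ovl{U})\ge\modulus(U'\sm\ovl{U})\ge\delta/D$. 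Taking $\eta:=\min(\eta',\delta/D)$ settles the statement for $F$; the combinatorially equivalent case follows verbatim by invoking the ``moreover'' clause of Theorem~\ref{Prop:UC} applied simultaneously to $F$ and $\tilde F$, possibly shrinking $\eta$ at the end. The main obstacle I anticipate is a clean bookkeeping of the constant $M$, so as to confirm that it is genuinely independent of $n$ (which it is, since the extended pieces $W_c'$ are attached once and for all at the combinatorially preferred depths produced by Theorem~\ref{Prop:UC}, independently of how small $\epsilon$ is later taken to be).
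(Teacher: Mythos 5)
Your overall plan---derive the statement from Theorem~\ref{Prop:UC} by restricting to the $\CritP(F)$-components and transferring geometric control through the $(\delta,N)$-extendibility---is the right one, and it matches the route taken in~\cite[Corollary 6.3]{KSS}. However, the central step ``the $(\delta,N)$-extendibility furnishes \ldots\ $F^s$ extends to a proper branched covering $F^s\colon U'\to W_c'$ of degree bounded by some $D$'' is not justified as written. The extendibility in Theorem~\ref{Prop:UC}~\eqref{It:Ext} is a statement about the first \emph{landing} map to the full neighbourhood $W_0$ of $\Crit(F)$, whereas the branch $F^s\colon U\to W_c$ you are manipulating is a branch of the first \emph{entry} map to the strictly smaller union $W$ of $\CritP(F)$-components. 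Such a branch need not be a landing branch to $W_0$: the orbit of $U$ may hit $W_0$ in a non-$\CritP$ component, leave, re-land, and so on, an a priori uncontrolled number $m$ of times before finally reaching $W$. The $(\delta,N)$-bound does not survive composition of $m$ landing branches when $m$ is unbounded, and the parenthetical remark ``can be handled by the same argument via the landing map to $W_0$'' is precisely the assertion that needs proof. The missing ingredient is the observation that for $W_0$ deep enough the forward orbits of the non-$\CritP$ critical points are disjoint from $W_0$, so that the intermediate entry domains to $W_0$ along the orbit of $U$ contain no critical points and the intermediate landing branches are univalent on them; this makes $\deg(F^s|_U)\le\max_c d_c$, but one still has to argue separately that the degree of the extension is bounded, which is not an immediate consequence of the extendibility of a single landing branch.

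Secondly, the bookkeeping of the ``extension defect'' $M=\mathrm{depth}(W_c)-\mathrm{depth}(W_c')$ is circular. You pick $\epsilon$ below the minimum diameter of depth-$(n+M)$ puzzle pieces, but $M$ is determined by the extension pieces $W_c'$, which are produced together with $W_0$ by Theorem~\ref{Prop:UC}, and hence depend on $\epsilon$. The claim that the $W_c'$ are ``attached once and for all \ldots\ independently of how small $\epsilon$ is later taken to be'' does not follow from the statement of Theorem~\ref{Prop:UC}, and in the underlying construction (via the Enhanced Nest; compare $\bI_n$ and $\bI_n^+$ in Lemma~\ref{Lem:KeyLemma}) this depth gap grows. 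What does hold, and what you should argue instead, is that $\mathrm{depth}(W_c')\to\infty$ together with $\mathrm{depth}(W_c)$ as $\epsilon\to 0$---for instance because the annulus $W_c'\sm\ovl{W_c}$ is disjoint from $\PC(F)$, which accumulates on each recurrent critical fiber---so that for $\epsilon$ small enough one has $\mathrm{depth}(W_c')\ge n$ and therefore $U'\subset P$ by nesting of puzzle pieces, without any reference to a bounded $M$.
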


The following lemma implies that we can modify a combinatorial equivalence $H$ from Theorem~\ref{Thm:BoxMappingsMain2}~\eqref{It:MainIt3} so that it becomes a conjugacy on the orbits of critical points in $\Crit(F) \sm \CritP(F)$ (if this set is not empty).

\begin{lemma}[Making $H$ partial conjugacy] 
\label{Lem:ModH}
Let $F \colon \U \to \V$ and $\tilde F \colon \tilde \U \to \tilde \V$ be non-renormalizable dynamically natural box mappings that are combinatorially equivalent with respect to some $K$-quasiconformal homeomorphism $H \colon \V \to \tilde \V$ that satisfies the assumptions of Theorem~\ref{Thm:BoxMappingsMain2}~\eqref{It:MainIt3}. Suppose $\Crit(F) \sm \CritP(F) \neq \emptyset$. Then there exists a $K'$-quasiconformal homeomorphism $H' \colon \V \to \tilde \V$ so that
\begin{itemize}
\item
$H'|_{\partial \U} = H|_{\partial \U}$;
\item
$H'$ is a combinatorial equivalence between $F$ and $\tilde F$;
\item
$\tilde F \circ H' = H' \circ F$ on $\partial \U$;
\item
$H'$ conjugates $F$ to $\tilde F$ on $\left\{F^i(c) \colon c \in \Crit(F) \sm \CritP(F), i \ge0\right\}$.
\end{itemize}
\end{lemma}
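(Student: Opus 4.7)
The plan is to apply the Spreading Principle (Theorem~\ref{Thm:Spreading}) with $S=\emptyset$ to a carefully chosen pair $(\Y, \phi)$. Let $\mathcal{M} := \Crit(F) \sm \CritP(F)$ and $X_{\mathcal{M}} := \{F^i(c) : c \in \mathcal{M},\, i \ge 0\}$. By the very definition of $\CritP(F)$, for each $c \in \mathcal{M}$ the closure $\overline{\orb(c)}$ is disjoint from every critical fiber $\fib(c')$, $c' \in \Crit(F)$. Since critical fibers are compact intersections of nested puzzle pieces, by compactness there exists a sufficiently deep nice puzzle neighborhood $\Y$ of $\Crit(F)$ (the union of critical puzzle pieces at some depth $d$) such that $X_{\mathcal{M}} \cap \Y$ is a \emph{finite} set; let $\tilde{\Y}$ be the corresponding neighborhood for $\tilde{F}$.

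I would then build a quasiconformal homeomorphism $\phi \colon \Y \to \tilde{\Y}$ that respects the boundary marking induced by $H$ (in the sense of Definition~\ref{Def:BdMark}) \emph{and} maps each point of $X_{\mathcal{M}} \cap \Y$ to its combinatorial counterpart in $\tilde{\Y}$. The starting point is the component-wise qc map provided by Lemma~\ref{Lem:StartingQC}, which already respects the boundary marking. On each target piece $\tilde{P} \subset \tilde{\Y}$, I would then post-compose with a qc self-homeomorphism of $\tilde{P}$ supported in a compact subset of its interior that moves the (finitely many) interior image points to their prescribed target positions; this standard qc surgery for moving finitely many interior points of a Jordan disk can be carried out with uniformly bounded dilatation.

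Applying the Spreading Principle with this $\Y$, this $\phi$, and $S = \emptyset$ produces a quasiconformal homeomorphism $H' := \Phi \colon \V \to \tilde{\V}$ that extends $\phi$, agrees with $H$ on $\partial \U$ (since all modifications occur strictly inside components of $\U$), respects the boundary marking on every puzzle piece (hence is a combinatorial equivalence between $F$ and $\tilde{F}$), and satisfies $\tilde{F} \circ H' = H' \circ F$ on $\U \sm \Y$. In particular, the conjugacy holds on $\partial \U$ since $H' = H$ there.

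The main obstacle, and key point of the argument, is to upgrade this conjugacy from $\U \sm \Y$ to all of $X_{\mathcal{M}}$, which in particular includes the finitely many orbit points inside $\Y$ and the transitions between $\Y$ and $\U\sm\Y$ along each orbit. Here I would invoke the shrinking of puzzle pieces provided by Theorem~\ref{Thm:BoxMappingsMain2}~\eqref{It:MainIt1}, which is established earlier through complex bounds and is logically independent of the present lemma: since the Spreading Principle's $\Phi$ sends every puzzle piece at every depth to its combinatorial counterpart, triviality of fibers forces $\Phi(x)$ to equal the combinatorial counterpart $\tilde{x} \in K(\tilde{F})$ for every $x \in K(F)$. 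Applied to $x = F^n(c)$ with $c \in \mathcal{M}$, whose counterpart is $\tilde{F}^n(\tilde{c})$, this yields $H'(F^n(c)) = \tilde{F}^n(\tilde{c})$ for all $n \ge 0$, and the required relation $\tilde{F} \circ H' = H' \circ F$ on $X_{\mathcal{M}}$ follows at once.
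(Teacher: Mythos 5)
Your plan is close to the paper's (both pass through the Spreading Principle applied to a suitable critical puzzle neighborhood), but there are two problems, one genuine.

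\textbf{Minor inaccuracy.} The claim that for each $c \in \mathcal{M}$ the closure $\overline{\orb(c)}$ is disjoint from \emph{every} critical fiber $\fib(c')$ is false as stated, because $c \in \orb(c) \cap \fib(c)$. What the definition of $\CritP(F)$ gives is that $\omega(c)$ (or equivalently $\orb(F(c))$ after finitely many terms) does not accumulate on any critical fiber. From this one obtains not just that $X_{\mathcal{M}} \cap \Y$ is finite for deep $\Y$, but in fact the stronger statement that $\Y$ can be chosen deep enough that $\orb(F(c))$ is \emph{entirely disjoint} from $\Y$, hence from $\DomL(\Y)$, for every $c \in \mathcal{M}$ (modulo the easily handled case where some iterate of $c$ lands exactly on another critical point). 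This stronger choice is what makes the paper's argument clean.

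\textbf{The gap.} Your final step does not go through. You assert that ``the Spreading Principle's $\Phi$ sends every puzzle piece at every depth to its combinatorial counterpart'' and hence, by triviality of fibers, $\Phi(x) = \tilde{x}$ for every $x \in K(F)$. But property \eqref{It:SP4} of the Spreading Principle guarantees $\Phi(P) = \tilde{P}$ only for puzzle pieces $P$ \emph{not contained in} $\DomL(\Y)$. For $P \subset \DomL(\Y)$, the construction of $\Phi$ on the first-landing domain is a pullback of $\phi$, and you have deliberately modified $\phi$ inside $\Y$ to move points around; such a modification will in general move boundaries of deep puzzle pieces inside $\Y$, destroying the property $\phi(P) = \tilde P$ at small scales there, and this defect propagates to all of $\DomL(\Y)$. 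Moreover, the conclusion ``$\Phi(x) = \tilde{x}$ for every $x \in K(F)$'' would say $\Phi$ is a conjugacy on the entire filled Julia set --- that is precisely Theorem~\ref{Thm:BoxMappingsMain2}~\eqref{It:MainIt3}, which is what you are still in the process of proving; it certainly cannot be obtained as a corollary of the Spreading Principle alone.

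\textbf{How to repair it.} Either (a) argue only for $x \in X_{\mathcal{M}}$ by cases --- orbit points in $\Y$ are handled by the modified $\phi$; orbit points in $\V \sm \DomL(\Y)$ by property \eqref{It:SP4} plus shrinking of fibers; orbit points in $\DomL(\Y)\sm\Y$ by unwinding the explicit pullback formula $\tilde F^{k}\circ \phi'_Q = \phi \circ F^{k}$ defining $\Phi$ on a first-landing component $Q$, using that $\phi$ sends the landing point $F^{k}(x)\in\Y$ to its combinatorial counterpart --- or (b) do as the paper does: pick $\Y$ deep enough that $\orb(F(\mathcal M)) \cap \Y = \emptyset$, so that property \eqref{It:SP4} applies directly to every $F^{k}(c)$ with $k\ge 1$, and then adjust the value at $c$ itself by replacing $H_0$ on $\Comp_c\Y$ with the pullback $H' := \tilde F^{-1}\circ H_0 \circ F$. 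This pullback is well-defined and continuous precisely because $H_0(F(c))=\tilde F(\tilde c)$ and $\tilde F\circ H_0 = H_0\circ F$ on $\partial\Comp_c\Y$, and it automatically gives $H'(c)=\tilde c$, avoiding qc surgery altogether.
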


\begin{proof} 
Recall that $S:= \Crit(F) \sm \CritP(F)$ consists of non-recurrent critical points whose orbits do not accumulate on other critical points (we can talk about points rather than fibers since by Theorem~\ref{Prop:UC}~\eqref{It:Shrink} all critical fibers are trivial). Therefore, we can find a sufficiently deep puzzle neighborhood $\U_0$ of $\Crit(F)$ consisting of puzzle pieces of the same depth, say $n_0 \ge 0$, so that the orbit of $F(S)$ is disjoint from $\U_0$, and hence from $\DomL(\U_0)$. For simplicity we assume that $F(S)$ contains no critical points; otherwise, the modification of the argument below is straightforward. By increasing $n_0$ we can also assume that each component of $\U_0$ contains only one critical point.

Let $\phi \colon \U_0 \to \tilde \U_0$ be a $K'$-quasiconformal map that respects the boundary marking induced by $H$ constructed by Lemma~\ref{Lem:StartingQC}. Clearly, $K' \ge K$ and can be arbitrarily large. Let $H_0 \colon \V \to \tilde \V$ be the $K'$-qc extension of $\phi$ by the Spreading Principle (Theorem~\ref{Thm:Spreading}). If $P$ is a sufficiently small puzzle piece around a point $x \in \orb(F(S))$, then $P$ is not contained in $\DomL(\U_0)$ by our choices, and thus by property \eqref{It:SP4} of the Spreading Principle, $H_0(P) = \tilde P$. Since puzzle pieces shrink to points (Theorem~\ref{Thm:BoxMappingsMain2} \eqref{It:MainIt1}), by letting the depth of such $P$'s go to zero we conclude that $H_0(F^k(c))=\tilde F^k(\tilde c)$ for every $c \in S$ and $k\ge 1$.  

Now define $H' \colon \V \to \tilde \V$ by $H'=\tilde F^{-1} \circ H_0 \circ F$ on the components of $\U_0$ containing the points of $S$ and $H'=H_0$ elsewhere. As $H_0(F^k(c))=\tilde F^k(\tilde c)$  for all $k\ge 1$ and $c \in S$, the map
 $H'$ is well-defined and it is continuous because $\tilde F \circ H_0 = H_0 \circ F$ on $\partial \U_0$. It follows that $H'$ is the required $K'$-qc map.
\end{proof}

Lemmas~\ref{Lem:ModH}, \ref{Cor:UCcor} and the QC-Criterion (Theorem~\ref{Thm:QCCriterion}) imply the following proposition (which is similar to the claim in \cite[Section~6.4]{KSS}). The conclusion of this proposition will then be an input into the Spreading Principle later in the proof of qc rigidity of complex box mappings.

\begin{proposition}[Uniform control of dilatation]
\label{Prop:KeyProp}
Let $F$ and $\tilde F$ be non-renormalizable dynamically natural box mappings that are combinatorially equivalent with respect to some homeomorphism $H$ that satisfies the assumptions of Theorem~\ref{Thm:BoxMappingsMain2}~\eqref{It:MainIt3}. 

Then there exists $K > 0$ such that for every critical point $c \in \Crit(F)$ and the corresponding critical point $\tilde c \in \Crit(\tilde F)$, and every $n \ge 0$ there exists a $K$-quasiconformal homeomorphism respecting the boundary marking between the corresponding critical puzzle pieces of depth $n$ containing $c$ and $\tilde c$.
\end{proposition}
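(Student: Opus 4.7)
The plan is to combine the Spreading Principle (Theorem~\ref{Thm:Spreading}), the complex bounds with uniform geometric control (Theorem~\ref{Prop:UC} and Lemma~\ref{Cor:UCcor}), and the QC-Criterion (Theorem~\ref{Thm:QCCriterion}). First I would invoke Lemma~\ref{Lem:ModH} to replace $H$ by a $K_0$-qc combinatorial equivalence $H'$ that additionally conjugates $F$ to $\tilde F$ on the orbit of every critical point in $S := \Crit(F)\setminus \CritP(F)$; this lets us later invoke the Spreading Principle with conjugacy set $S$, reducing the problem to controlling only persistently recurrent critical points via complex bounds.

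Fix $n \ge 0$, let $c \in \Crit(F)$, and let $P$, respectively $\tilde P$, be the critical puzzle pieces of depth $n$ containing $c$, $\tilde c$. Apply Theorem~\ref{Prop:UC} and Lemma~\ref{Cor:UCcor} to produce a nice puzzle neighborhood $\mathcal W$ of $\CritP(F)$, all of whose components have depth strictly greater than $n$ and such that every component $U$ of $\DomL(\mathcal W)$ has $\eta$-bounded geometry and satisfies $\modulus(P'\setminus\overline{U})\ge \eta$, where $P'$ is the depth-$n$ puzzle piece containing $U$; analogous bounds hold for $\tilde F$ with the same constants. On each component of $\mathcal W$ use Lemma~\ref{Lem:StartingQC} to build some qc map (of possibly huge dilatation) respecting the boundary marking, and feed the union as the input $\phi$ to the Spreading Principle with $\Y = \mathcal W$. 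This yields a global qc extension $\Phi \colon \V \to \tilde \V$.

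The decisive property is Spreading Principle~\eqref{It:SP3}: the restriction $\Phi|_{\V \setminus \DomL(\mathcal W)}$ is $K_0$-qc, independently of the bad dilatation used to start the construction inside $\mathcal W$. Let $\phi := \Phi|_P$ and let $X$ be the union of all components of $\DomL(\mathcal W)$ contained in $P$. Then $\phi$ is $K_0$-qc on $P \setminus X$, while each component of $X$ (respectively of $\phi(X) = \tilde X$) has $\eta$-bounded geometry and modulus at least $\eta$ relative to $P$ (respectively $\tilde P$). Applying the QC-Criterion to $\phi$ with this $X$ and with $k = (K_0-1)/(K_0+1)$ produces a $K$-qc homeomorphism $\psi \colon P \to \tilde P$ agreeing with $\phi$ on $\partial P$, where $K$ depends only on $K_0$ and $\eta$. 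The map $\Phi$ satisfies $\Phi(P)=\tilde P$ and respects the boundary marking on $\partial P$ by the very construction in the Spreading Principle (boundary markings are inherited through pullbacks of $H'$ along the puzzle-piece hierarchy), and hence so does $\psi$.

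The main obstacle is verifying the two geometric hypotheses of the QC-Criterion with constants uniform in $n$ and $c$. Both hypotheses ultimately reduce to the $(\eta,N)$-extendibility of the first landing map to $\mathcal W$ supplied by Theorem~\ref{Prop:UC}: the $\eta$-bounded geometry of landing domains follows by combining the extendibility with the Koebe distortion theorem applied to the extended univalent branches, while the modulus inequality $\modulus(P'\setminus\overline{U})\ge \eta$ follows by pulling back the collar annulus of the $(\eta,N)$-extension back to $P'$. Since both $\eta$ and $N$ depend only on $F$ and its combinatorial equivalence class (and not on the depth $n$), the output $K$ of the QC-Criterion is uniform across all depths and all critical points, which is exactly the conclusion of the proposition.
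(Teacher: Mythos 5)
Your proposal is correct and follows essentially the same route as the paper's own proof: reduce to a partial conjugacy on the orbits of $\Crit(F)\setminus\CritP(F)$ via Lemma~\ref{Lem:ModH}, build a crude boundary-respecting qc map on a deep nice neighborhood $\mathcal W$ of $\CritP(F)$ via Lemma~\ref{Lem:StartingQC}, spread it to get $\Phi$ with uniformly controlled dilatation off $\DomL(\mathcal W)$ by Theorem~\ref{Thm:Spreading}\eqref{It:SP3}, and then invoke the QC-Criterion on the depth-$n$ piece with $X$ the union of landing domains contained in it, using the uniform bounded geometry and moduli bounds of Lemma~\ref{Cor:UCcor}. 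One small terminological slip: the set $\CritP(F)$ is strictly larger than the set of persistently recurrent critical points (it also contains reluctantly recurrent and certain non-recurrent critical points), so Theorem~\ref{Prop:UC} is being used for all of $\CritP(F)$, not only the persistently recurrent ones; this does not affect the argument.
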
 

\begin{proof}
Suppose $F$ and $\tilde F$ are combinatorially equivalent with respect to a $k$-quasiconformal homeomorphism $H$. By Lemma~\ref{Lem:ModH}, we can assume that $H$ is a conjugacy between $F$ and $\tilde F$ on the orbits of points in $\Crit(F) \sm \CritP(F)$.

Let $Q$, $\tilde Q$ be the pair of corresponding puzzle pieces of depth $n$ containing $c$, respectively $\tilde c$. If $c \in \Crit(F) \sm \CritP(F)$, then the required homeomorphism between $Q$ and $\tilde Q$ is obtained just by pulling back $H$. So assume that $c \in \CritP(F)$.
 
For the chosen $n$, let $W$ and $\tilde W$ be a pair of neighborhoods of $\CritP(F)$ and $\CritP(\tilde F)$ given by Lemma~\ref{Cor:UCcor}, and let $\phi_0 \colon W \to \tilde W$ be a $K_0$-quasiconformal map that respects the boundary marking given by Lemma~\ref{Lem:StartingQC}. Note that $K_0$ can be arbitrarily large. 
We can spread this map around using the Spreading Principle (Theorem~\ref{Thm:Spreading_bis}). Restricting the resulting map to the depth $n$ puzzle piece $Q$ we obtain a $\max(k, K_0)$-quasiconformal map $\phi \colon Q \to \tilde Q$ that respects the boundary marking. 

Let $X$ be the union of the landing domains to $W$ that are contained in $Q$. By Theorem~\ref{Thm:Spreading_bis}, the map $\phi$ has the property that its dilatation is bounded by $k$ on $Q \sm X$. On the other hand, each component $P$ of $X$ satisfies the conclusion of Lemma~\ref{Cor:UCcor}, that is, $P$ has $\eta$-bounded geometry and $\modulus(Q \sm \ovl P) \ge \eta$ for some constant $\eta$ independent of $n$. The same is true for $\tilde Q$ and $\tilde P$ (with the same constants). Therefore, we can apply the QC-Criterion to $\Omega:=P \supset X$, $\tilde \Omega := \tilde P \supset \tilde X$ and the map $\phi$. This criterion will give us a desired $K$-quasiconformal homeomorphism between $Q$ and $\tilde Q$ that agrees with $\hat \phi$ on $\partial Q$, and hence respects the boundary marking. The proof is complete once we note that by the QC-Criterion, the dilatation $K$ depends only on $k$ and $\eta$, but not on $n$, or on $K_0$. 

Further details of the proof can be found in~\cite[Page 787]{KSS}.
\end{proof}

Finally, we are ready to present the proof of qc rigidity for non-renormalizable box mappings.

\begin{proof}[Proof of Theorem~\ref{Thm:BoxMappingsMain2}~\eqref{It:MainIt3}]
Suppose that the homeomorphism $H$ that provides the combinatorial equivalence between $F$ and $\tilde F$ is $k$-quasiconformal. 
For every $n \ge 0$, define $W_n := \bigcup_{c \in \Crit(F)} P_n(c)$, where $P_n(c)$ is the puzzle piece of depth $n$ containing $c$, and similarly define $\tilde W_n$ for $\tilde F$. By Proposition~\ref{Prop:KeyProp}, we can construct a sequence of $K$-quasiconformal maps $h_n \colon W_n \to \tilde W_n$ so that each of them respects the boundary marking induced by $H$. Since $\Crit(F) \subset W_n$ and $h_n$ respects the boundary marking, we can spread $h_n$ around using Theorem~\ref{Thm:Spreading}. In this way we obtain a sequence of $\max(k, K)$-quasiconformal maps $H_n \colon \V \to \tilde \V$ each conjugating $F$ and $\tilde F$ outside of $W_n$. Finally, by Theorem~\ref{Prop:UC}~\eqref{It:Shrink}, the sets $W_n$ shrink to $\Crit(F)$ as $n \to \infty$. Therefore, by passing to a subsequence, $H_n$ converges to a quasiconformal map $H_\infty \colon \V \to \tilde \V$ that conjugates $F$ and $\tilde F$. Using the properties of $H_n$ from Theorem~\ref{Thm:Spreading}, it is not hard to see that $H_{\infty}$ does not depend on the subsequence. This concludes the proof. 
\end{proof}

\section{Sketch of the proof of Complex Bounds}
\label{Sec:UC}

As was discussed in Section~\ref{SSec:Rec}, there are three combinatorially distinct types of critical points: non-combinatorially recurrent, reluctantly recurrent and persistently recurrent. The proofs of complex bounds around each of these types of critical points are different from one another.
For reluctantly recurrent and non-recurrent critical points the claim of Theorem~\ref{Prop:UC} follows from the fact that around each of these points one can find arbitrarily deep puzzle pieces which map with uniformly bounded degree to some piece with bounded depth. For persistently recurrent critical points the proof will employ the Enhanced Nest construction of Section~\ref{SSec:Enhanced}. These arguments will be made more precise in Lemmas~\ref{Lem:RR}, \ref{Lem:NonR}, and \ref{Lem:PR} below, dealing with reluctantly, non-, and persistently recurrent cases respectively.

To obtain Theorem~\ref{Prop:UC} from these lemmas, which give, in particular, complex bounds for induced complex box mappings around recurrent critical points, a little care is needed, since critical fibers can accumulate on other critical fibers. However, this step is straightforward and is done verbatim as in the proof of \cite[Proposition 6.2]{KSS}, and hence will be omitted in our exposition. Our focus will be only on those lemmas.

\subsection{Puzzle control around non- and reluctantly recurrent critical points}

First we discuss puzzle pieces around a reluctantly recurrent point $c$.
 
 \begin{lemma}[Reluctantly recurrent case]
 \label{Lem:RR}
  Let $c$ be a reluctantly recurrent critical point. Then there exists a positive constant $\eta>0$
 so that for any $\epsilon>0$ there are nice puzzle neighborhoods $W'\supset W$ of $\Back(c)$ such that:
 \begin{enumerate}
 \item Each component of $W$ has $\eta$-bounded geometry.  
 \item For each $c'\in \Back(c)$, we have $\diam(W'_{c'})\le \epsilon$ and $\modulus(W'_{c'}\setminus \ovl W_{c'})\ge \eta$;
 here $W_{c'} = \Comp_{c'}W$ and $W'_{c'} = \Comp_{c'}W'$. 
 \item $F^k(\partial W_{c'})\cap W'_{c}=\emptyset$ for each $c'\in \Back(c)$ and each $k\ge 1$. 
 \end{enumerate}
 \end{lemma}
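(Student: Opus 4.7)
The plan is first to use reluctant recurrence to extract a shrinking nest of critical puzzle pieces around some $c^{*}\in[c]$ whose pullbacks to a fixed puzzle piece have uniformly bounded degree, and then to spread the resulting geometric control to every point of $\Back(c)$ by first-entry constructions.

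To start, I would unpack reluctant recurrence: it provides a critical point $c''\in[c]$ and a puzzle piece $Q\ni c''$ with infinitely many children containing critical points of the finite set $[c]$, so by pigeonhole some $c^{*}\in[c]$ lies in infinitely many children $\{P_i\}$ of $Q$. Since each such child map $F^{n_i}\colon P_i\to Q$ factors as $F^{n_i-1}\circ F$ with $F^{n_i-1}|_{F(P_i)}$ univalent, its degree equals $\deg(F,c^{*})$ and is thus bounded by the maximal local degree $D$ among critical points of $F$. All $P_i$ contain $c^{*}$, forcing them to be nested, so after extracting a subsequence one may assume $P_1\supsetneq P_2\supsetneq\cdots$. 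Choosing $Q^{+}\supsetneq Q$ slightly larger and setting $P_i^{+}:=\Comp_{c^{*}}F^{-n_i}(Q^{+})$, Lemma~\ref{Lem:StdControl} will yield
\[
\modulus(P_i^{+}\setminus\ovl{P_i})\ge D^{-1}\modulus(Q^{+}\setminus\ovl{Q})=:\eta_1,
\]
and univalence of $F^{n_i-1}$ on $F(P_i^{+})$ together with the Koebe Distortion Theorem will transfer bounded geometry from $Q$ to each $P_i$ with uniform constants $\eta_2$. In particular $\diam(P_i)\to 0$, and for the given $\epsilon$ I would fix $i$ with $\diam(P_i^{+})<\epsilon$ and set $W_{c^{*}}:=P_i$, $W'_{c^{*}}:=P_i^{+}$.

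To extend the construction to $\Back(c)$, I would first observe that $\Back(c)=\Back(c^{*})$, since $[c]=[c^{*}]$ makes the two fibers mutually accumulating for all critical orbits. Thus for every $c'\in\Back(c)\setminus\{c^{*}\}$ the orbit of $c'$ meets $W_{c^{*}}$; let $s(c')\ge 0$ be minimal with $F^{s(c')}(c')\in W_{c^{*}}$ and set
\[
W_{c'}:=\Comp_{c'}F^{-s(c')}(W_{c^{*}}),\qquad W'_{c'}:=\Comp_{c'}F^{-s(c')}(W'_{c^{*}}),
\]
with $W_{c}$, $W'_{c}$ defined analogously (by applying the same construction at $c'=c$, which lies in $\Back(c)$ since $c\in\orb(c)\cap\fib(c)$). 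The hard part will be to bound the first-entry degrees $d(c'):=\deg(F^{s(c')}|_{W'_{c'}})$ uniformly in both $c'$ and $i$: this is the unique step where reluctant recurrence must be used in full, combining the pigeonhole that selected $c^{*}$ with a careful combinatorial accounting of how often first-entry paths can traverse critical puzzle pieces of points in $[c]$ (critical points outside $[c]$ contribute only finitely many degree factors). Once a uniform bound $d(c')\le N$ is in hand, Lemma~\ref{Lem:StdControl} and Koebe distortion transport the bounded geometry and modulus of $W_{c^{*}}$, $W'_{c^{*}}$ to every $W_{c'}$, $W'_{c'}$, establishing items~(1) and~(2).

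Finally, condition~(3) will follow from the first-entry construction together with the niceness of puzzle pieces: for $0<k<s(c')$, $F^k(\partial W_{c'})$ avoids $W_{c^{*}}$ by minimality of $s(c')$, and taking $W'_{c^{*}}$ only mildly larger than $W_{c^{*}}$ (with the buffer quantitatively controlled by $\eta_1$) will keep $F^k(\partial W_{c'})$ disjoint from $W'_{c^{*}}$ as well; for $k\ge s(c')$, $F^k(\partial W_{c'})\subset F^{k-s(c')}(\partial W_{c^{*}})$, and after possibly deepening the initial choice of $W_{c^{*}}$ this is disjoint from $W'_{c^{*}}$ by the nesting of puzzle pieces. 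A small technical caveat is that the lemma requires avoidance of $W'_{c}$ rather than $W'_{c^{*}}$, which is handled by a final pullback of the above statement through the first-entry map at $c$.
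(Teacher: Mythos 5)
Your overall strategy — pigeonhole the children of a fixed piece to obtain a shrinking nest around some $c^{*}\in[c]$ with bounded pullback degree, then spread by first-entry to all of $\Back(c)$ — is the same route the paper takes by invoking \cite[Lemmas~6.5, 6.6]{KSS}. However, there is a genuine gap at the very step you treat as routine.

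You assert that ``univalence of $F^{n_i-1}$ on $F(P_i^{+})$'' lets you apply Lemma~\ref{Lem:StdControl} with degree $D$ to $F^{n_i}\colon P_i^{+}\to Q^{+}$. But the child property only gives univalence of $F^{n_i-1}$ on $F(P_i)$, not on the enlarged domain $F(P_i^{+})$. The critical points of $F^{n_i-1}|_{F(P_i^{+})}$ lie in $F(P_i^{+})\setminus F(P_i)$, and a priori the number of indices $j$ with $\big(F^{j}(P_i^{+})\setminus F^{j}(P_i)\big)\cap\Crit(F)\neq\emptyset$ could grow with $i$. Bounding $\deg\bigl(F^{n_i}\colon P_i^{+}\to Q^{+}\bigr)$ uniformly in $i$ is exactly the nontrivial content of \cite[Lemma~6.5]{KSS}, and without it both the modulus estimate $\modulus(P_i^{+}\setminus\ovl{P_i})\ge D^{-1}\modulus(Q^{+}\setminus\ovl{Q})$ and the Koebe transfer of bounded geometry are not justified. (A small secondary point: the inner degree $\deg(F^{n_i}|_{P_i})$ is the product of local degrees of \emph{all} critical points in $P_i$, not just $\deg(F,c^{*})$; still bounded, just stated imprecisely.)

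You do notice the analogous issue for the spread-out pieces, flagging the bound on $\deg(F^{s(c')}|_{W'_{c'}})$ as ``the hard part,'' but you leave it as a promissory note. In fact, the degree on the inner pieces $W_{c'}\to W_{c^{*}}$ is automatic (first-entry maps to a nice puzzle piece have bounded degree, Lemma~\ref{Lem:FirstReturnConstruction}); the problem is again only the extension to $W'_{c'}\to W'_{c^{*}}$, i.e.\ control of how many annuli $F^{j}(W'_{c'})\setminus F^{j}(W_{c'})$ meet $\Crit(F)$. This is the same phenomenon as the unaddressed gap above, and it is what the $(\delta,N)$-extendibility mechanism and \cite[Lemma~6.6]{KSS} are designed to handle. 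Until these two degree bounds are actually proven, the argument is incomplete.
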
 
\begin{proof}
Lemma 6.5 in \cite{KSS} uses the definition of 
  reluctant recurrence to show that one can map deep critical pieces
 around a reluctantly recurrent critical point to one of arbitrary fixed depth with  uniformly bounded degree. Then Lemma 6.6 in that paper shows that 
one obtains puzzle pieces with bounded geometry and moduli bounds around $c' \in \Back(c)$. The proofs use only condition~\eqref{It:Nat1} of Definition~\ref{Def:NaturalBoxMapping} (in the form of Lemma~\ref{Lem:NE}) and goes verbatim as in the proof of \cite[Lemma 6.6]{KSS}.
\end{proof} 

Moving on to non-recurrent critical points, we have the following statement.

 \begin{lemma}[Non-recurrent case]
 \label{Lem:NonR}
Let $c$ be a non-recurrent critical point. Then there is a constant $\eta>0$ and for every $\epsilon > 0$ there exists a puzzle piece $W \ni c$ such that $\diam(W) \le \epsilon$ and, if $c \in \CritP(F)$, such that $W$ has $\eta$-bounded geometry. 
 \end{lemma}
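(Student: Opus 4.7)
The plan is to treat separately the cases $c \in \CritP(F)$ and $c \notin \CritP(F)$. In both cases, non-recurrence of $c$ supplies a puzzle piece $P_0 \ni c$, which we may take deep enough to contain $c$ as its only critical point, such that $F^j(c) \notin P_0$ for every $j \ge 1$. This is the structural fact that will drive the whole argument: pullbacks along the orbit $F(c), F^2(c), \ldots$ never again encounter $c$ as an interior critical point, so any ramification in such a pullback must come either from the initial step at $c$ itself or from visits of the orbit to critical pieces of points in $\Crit(F) \setminus \{c\}$.

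If $c \notin \CritP(F)$, no bounded geometry is required and only diameter shrinking must be established. In this case $\orb(F(c))$ avoids a whole puzzle neighborhood $\mathcal N$ of $\Crit(F)$, and the naturality assumption $K(F) = \Kwi(F)$ produces a constant $\delta = \delta(c) > 0$ together with times $n_k \to \infty$ for which $F^{n_k}(c) \in U_{n_k}$ with $\modulus(V_{n_k} \setminus \ovl{U_{n_k}}) \ge \delta$, where $V_{n_k}$ is the component of $\V$ containing $U_{n_k}$. Because the entire intermediate orbit avoids $\mathcal N$, the map $F^{n_k} \colon \Comp_c F^{-n_k}(V_{n_k}) \to V_{n_k}$ is ramified only at $c$ and has degree $\deg_c F$; Lemma~\ref{Lem:StdControl} transfers the modulus back, producing nested puzzle pieces $W_k := \Comp_c F^{-n_k}(U_{n_k}) \supset W_{k+1}' := \Comp_c F^{-n_{k+1}}(V_{n_{k+1}}) \supset W_{k+1}$ with $\modulus(W_k' \setminus \ovl{W_k}) \ge \delta / \deg_c F$. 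The resulting chain of nested annuli forces $\modulus(W_1 \setminus \ovl{W_k}) \to \infty$ and hence $\diam(W_k) \to 0$.

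If $c \in \CritP(F)$, the orbit of $c$ accumulates on some critical fiber $\fib(c_1)$, and non-recurrence forces $c_1 \neq c$. I would argue by induction along the finite acyclic relation on $\Crit(F) \setminus \{c\}$ in which $c'' \prec c'$ means $c''$ accumulates on $\fib(c')$, whose terminal layer consists of the recurrent critical points already handled by Lemmas~\ref{Lem:RR} and~\ref{Lem:PR}. At the inductive step one obtains arbitrarily small puzzle pieces $Q \ni c_1$ with $\eta_1$-bounded geometry and a collar $Q^+ \setminus \ovl{Q}$ of definite modulus disjoint from $\PC(F) \setminus \{c_1\}$. Choose $n$ minimal with $F^n(c) \in Q$; by minimality and the fact that $\orb(F(c))$ avoids $P_0$, the intermediate orbit can traverse only critical pieces associated to points $c' \in \Forw(c) \setminus \{c\}$, each of which has already been controlled by a previous step of the induction. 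The inductively established collars at such $c'$ bound the degree of $F^n$ on an appropriate pullback of $Q^+$ by a constant $D$ depending only on $F$, and the Covering Lemma~\ref{Lem:CoveringLemma} applied to a triple $Q^- \Subset Q \Subset Q^+$ then transfers the $\eta_1$-bounded geometry of $Q$ to an $\eta$-bounded geometry of $W := \Comp_c F^{-n}(Q)$, with $\eta$ depending on $D$ and $\eta_1$ but not on the depth of $Q$.

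The principal obstacle is the degree bound in Case B: a priori the intermediate orbit $F(c), \ldots, F^{n-1}(c)$ can traverse critical pieces of other critical points arbitrarily many times, so the naive estimate on the degree of $F^n$ is unbounded. The inductive structure resolves this by supplying, at each previously treated $c' \in \Forw(c) \setminus \{c\}$, a definite annular buffer; each passage of the pullback chain through a critical piece of $c'$ consumes a definite amount of modulus, so only boundedly many such passages can occur in the pullback of a sufficiently deep $Q$. The Covering Lemma then converts this degree control, combined with the preexisting bounded geometry at $c_1$, into the bounded geometry at $c$.
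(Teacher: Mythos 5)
Your Case $c \notin \CritP(F)$ is essentially the paper's Case 1: both deduce from $K(F) = \Kwi(F)$ a sequence of $\delta$-well-inside return domains visited by $\orb(c)$, pull back the definite modulus along diffeomorphic (or boundedly-ramified) branches, and conclude shrinking via Gr\"otzsch. The one cosmetic difference is that the paper pulls back from $F(c)$ rather than from $c$, so the branch is conformal and Koebe applies directly; this is immaterial.

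For $c \in \CritP(F)$ your route diverges from the paper's and I think it has a genuine gap. The paper splits this case in two: Case 2, in which $\Forw(c)$ contains only non-recurrent critical points, and Case 3, in which some member of $\Forw(c)$ is persistently recurrent. In Case 2 the paper obtains the crucial degree bound on $F^{n_i} \colon P_{n_i+n}(c) \to P_n(c')$ by a purely combinatorial observation: after fixing $n_0$ so that each $c' \in \Forw(c)$ has $\orb(c') \cap P_{n_0}(c') = \emptyset$ (possible because every such $c'$ is non-recurrent) and so that the orbit of $c$ only enters critical pieces of depth $n_0$ around points of $\Forw(c)$, any critical point can occur \emph{at most once} along the pullback chain --- once the chain enters $P_{n_0}(c')$ the subsequent iterates are in $\orb(c')$ and never re-enter $P_{n_0}(c')$. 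This gives a uniform degree bound without any moduli or collar considerations. Only in Case 3, where $\Forw(c)$ contains a persistently recurrent point, does the paper resort to the Covering Lemma, via Lemma~\ref{Lem:PR} and the Enhanced Nest.

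Your proposal instead attempts a single induction along the accumulation relation on $\Crit(F)\setminus\{c\}$ and asserts that ``the inductively established collars at such $c'$ bound the degree of $F^n$ on an appropriate pullback of $Q^+$ by a constant $D$.'' This is the step that does not go through as stated: a collar at $c'$ does not by itself bound the number of times a given pullback chain traverses a critical piece of $c'$. Each traversal divides the modulus of the annulus by the local degree; it does not ``consume'' a fixed quantum of modulus in a way that caps the number of passages --- unless one already has control on the total modulus of the pullback annulus, which is circular. The mechanism that actually caps the passages in the non-recurrent subcase is the combinatorial fact above (one visit per critical piece of depth $n_0$), not a modulus budget. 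In addition, your inductive hypothesis must produce collars (i.e.\ a definite-modulus annulus disjoint from $\PC(F)$), which is strictly more than the conclusion of the lemma you are proving; the induction invariant would need to be strengthened along the lines of Lemmas~\ref{Lem:RR} and~\ref{Lem:PR} for it to close. As written, the inductive step is not self-sustaining and the degree bound it relies on is unjustified.
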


 \begin{proof}
 As in \cite[page 782]{KSS}, we only need to consider  the case when the orbit of $c$ does not intersect or accumulate on a reluctantly recurrent critical point. We then distinguish three cases, depending whether $\Forw(c)$ is 1) empty; 2) not empty, and contains only combinatorially non-recurrent critical points; and 3) not empty and contains a persistently recurrent critical point. (Case 4 in \cite{KSS} concerns renormalizable critical points which we do not have in our setting by hypothesis.) Note that case 1) corresponds to the situation when $c \not \in \CritP(F)$.
 
 In Case 1, choose $n_0$ to be the depth of puzzle pieces such that $P_{n_0}(F^k(c))$ contains no critical points for every $k \ge 1$ (here $P_k(x)$ stands for the puzzle piece of depth $k$ containing the point $x$). Since $F$ is dynamically natural, condition~\eqref{It:Nat3} of Definition~\ref{Def:NaturalBoxMapping} guarantees $K(F) = \Kwi(F)$. Therefore, there exists $\delta>0$ such that the orbit of $c$ visits infinitely many times the components of $\U$ that are $\delta$-well inside the respective components of $\V$. Hence there exist infinitely many $n \ge 1$ such that $F^{n-1} \colon P_{n_0+n-1}(F(c)) \to P_{n_0}(F^n(c))$ is a conformal map of uniformly bounded distortion; this distortion depends on $\delta$. The conclusion in case 1) follows. (Note that in this case we do not claim that $W$ has bounded geometry.)

Case 2: Since $\Forw(c)$ does not contain combinatorially recurrent critical points,
we can fix $n_0$ so that for each $c'\in \Forw(c)$ the forward
orbit of $c'$ is disjoint from $P_{n_0}(c')$
and so that if $F^k(c)\in P_{n_0}(c'')$ for some $c''\in \Crit(F)$ and $k\ge 0$, then 
$c''\in \Forw(c)$.  Choose $c'\in \Forw(c)$. 
Fix $n'>n\ge n_0$ so that $P_{n'}(c')\Subset P_n(c')$.
Let $n_i$ be a sequence of times so that $F^{n_i}(c)\in P_{n'}(c')$
and let $P_{n_i+n}(c)$ be the pullback of $P_{n}(c')$ under the map 
$F^{n_i}$. We claim that $F^{n_i}\colon P_{n_i+n}(c)\to P_{n}(c')$
has bounded degree. Indeed, if $c_1\in F^{k}(P_{n_i+n}(c))$ for some $0\le k<n_i$
and for some $c_1\in \Crit(F)$, then $c_1\in \Forw(c)$ and, moreover,  
since $F^{k}(P_{n_i+n}(c))\subset P_{n_0}(c_1)$ no further iterate
of $P_{n_i+n}(c)$ can contain $c_1$. It follows that
each critical point in $\Crit(F)$ can be visited at most once by 
$P_{n_i+n}(c),F(P_{n_i+n}(c)),\dots,F^{n_i}(P_{n_i+n}(c))=P_n(c')$. 
Note that  $P_{n'}(c')$ has some $\eta'$-bounded geometry (where $\eta'$ depends
of course on the integer $n'$ which we fixed once and for all). 
Since $P_{n'}(c')\Subset P_n(c')$ and the degree of $F^{n_i}\colon P_{n_i+n}(c)\to P_{n}(c')$
is bounded in terms of the number and degrees of the critical points of $F$, it follows that $P_{n_i+n}(c)$ has $\eta$-bounded geometry, where
$\eta$ does not depend on $i$. By Montel (or Gr\"otzsch) it then also follows that 
the diameters of $P_{n_i+n}(c)$  must shrink to zero. 

 The proof in Case 3 goes verbatim as in the proof of \cite[Lemma 6.8]{KSS}, where in fact the 
 set $\Omega_i$ in that paper can be replaced by a puzzle piece in our setting.  (In fact, this would allow
 us to simplify the argument in \cite{KSS} for this case, because
  there the set $\Omega_i$ is obtained via the upper and lower bounds in \cite{KSS} 
which   are proved by hand, whereas here we can use the Enhanced Nest construction combined 
with the Covering Lemma.)
 \end{proof}

\subsection{Puzzle control around persistently recurrent critical points}

 \begin{lemma}[Persistently recurrent case]
 \label{Lem:PR}
 Let $c$ be a persistently recurrent critical point. 
 Then there exists $\delta>0$ so that for each $\epsilon>0$ there exists a nice puzzle neighborhood $W$ of $[c]$ with the following properties:
 \begin{enumerate}
  \item Each component of $W$ has $\delta$-bounded geometry.  
 \item For each $c'\in \Back(c)$, we have $\diam(W_{c'})\le \epsilon$, where $W_{c'} = \DomL_{c'}(W)$.
  \item The first landing map to $W$ is $(\delta,N)$-extendible w.r.t.\ $[c]$. 
 \end{enumerate} 
 \end{lemma}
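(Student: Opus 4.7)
The plan is to reduce the statement to a persistently recurrent box mapping whose critical points are exactly the fibers in $[c]$, then feed the Enhanced Nest construction together with complex bounds on Enhanced Nest elements (the Key Lemma, provided by the Covering Lemma machinery) into a pullback argument. First, using Lemma~\ref{Lem:PRBM}, I would replace $F$ by an induced persistently recurrent box mapping $F' \colon \U' \to \V'$ whose critical set is precisely the set of critical fibers in $[c]$. Since orbits under $F'$ are just accelerated orbits under $F$, and since fibers of $F'$ are fibers of $F$, nice puzzle neighborhoods of $[c]$ built for $F'$ transfer to nice puzzle neighborhoods for $F$; and for the extendibility statement it will then only remain to verify that the passage from $F$ to $F'$ does not increase the number $N$ of critical encounters by more than a bounded amount (this is what condition \eqref{It:Nat1} and the control of degrees of return branches guarantee).

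Around each $c' \in [c]$, construct the Enhanced Nest $\bI_0(c') \supset \bI_1(c') \supset \cdots$ of Section~\ref{SSec:Enhanced}, together with its sandwiching pair $\bI_n^{-}(c') \subset \bI_n(c') \subset \bI_n^{+}(c')$. Recall that the annuli $\bI_n^{+}(c') \sm \cl(\bI_n(c'))$ and $\bI_n(c') \sm \cl(\bI_n^{-}(c'))$ are disjoint from $\PC(F)$, and that the return map $F^{p_n} \colon \bI_{n+1}(c') \to \bI_n(c')$ has degree bounded only in terms of the number and degrees of the critical points of $F$. The Key Lemma (Lemma~\ref{Lem:KeyLemma}) then yields a constant $\delta_0 > 0$, depending only on the combinatorial data of $F$, such that for all $n$ large enough and all $c'\in[c]$,
\[
\modulus\bigl(\bI_n^{+}(c') \sm \cl(\bI_n(c'))\bigr) \ge \delta_0
\quad\text{and}\quad
\modulus\bigl(\bI_n(c') \sm \cl(\bI_n^{-}(c'))\bigr) \ge \delta_0.
\]
Choose $n$ large enough that $\diam(\bI_n(c')) < \epsilon$ for every $c' \in [c]$ (diameters shrink to zero because the Enhanced Nest has uniformly positive moduli and uniformly bounded return degrees, so the argument from Lemma~\ref{Lem:NE} combined with the Koebe/Gr\"otzsch inequality applies), and define $W := \bigcup_{c'\in[c]} \bI_n(c')$. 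Niceness of $W$ is automatic (components are puzzle pieces and $\bI_n(c')$ contains no other critical fiber than $\fib(c')$), and bounded geometry of each component follows from $\delta_0$-freeness combined with the fact that $\bI_n^{+}(c')$ is a puzzle piece separated from $\bI_n(c')$ by a definite modulus.

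The main obstacle is the $(\delta, N)$-extendibility of the first landing map to $W$ with respect to $[c]$. Given a branch $F^s \colon U \to \bI_n(c')$ of this landing map, let $U^{+}$ be the component of $F^{-s}(\bI_n^{+}(c'))$ containing $U$. The required moduli bound $\modulus(U^{+} \sm \cl U) \ge \delta$ will follow by pulling back the annulus $\bI_n^{+}(c') \sm \cl(\bI_n(c'))$ through the orbit, invoking the Covering Lemma (Lemma~\ref{Lem:CoveringLemma}) at each step where a critical point of $[c]$ appears (using the inner annulus $\bI_k(c'') \sm \cl(\bI_k^{-}(c''))$ as the collar) and the standard distortion estimate (Lemma~\ref{Lem:StdControl}) otherwise. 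The delicate point is showing that the number $N$ of indices $j \in \{0, \ldots, s-1\}$ with $(F^{j}(U^{+}) \sm F^{j}(U)) \cap [c] \neq \emptyset$ is bounded independently of $U$. Here I would use the nice structure of the Enhanced Nest pieces and the \emph{first landing} nature of the branch: any such intermediate visit to a critical point $c'' \in [c]$ means $F^j(U^{+})$ strictly contains a deep piece around $\fib(c'')$ that $F^j(U)$ does not reach; the combinatorial growth of transition times in Lemma~\ref{Lem:ENProps}\eqref{It:Tran} together with the proof technique indicated in Figure~\ref{Fig:Explanation} forces each $c'' \in [c]$ to appear in this annular pullback sequence at most a number of times bounded in terms of $b = \#[c]$ alone. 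This yields $N = N(b)$ independent of $\epsilon$, completing the verification of the three properties.
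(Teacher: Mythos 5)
Your proposal follows the same route as the paper: induce a persistently recurrent box mapping via Lemma~\ref{Lem:PRBM}, run the Enhanced Nest, and feed the Key Lemma (Lemma~\ref{Lem:KeyLemma}) into a landing-map pullback. The paper's own proof is essentially this statement plus a citation to \cite[Lemma 6.7]{KSS}, so structurally you are aligned.

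Two details deserve a closer look. First, property (2) requires $\diam(W_{c'}) \le \epsilon$ for every $c' \in \Back(c)$, not just $c' \in [c]$; you only address the components of $W$ themselves. For $c' \in \Back(c) \sm [c]$, the set $W_{c'} = \DomL_{c'}(W)$ is a pullback of some $\bI_n(c'')$ under a landing branch of uniformly bounded degree (Lemma~\ref{Lem:FirstReturnConstruction}), and since the range is $\delta_0$-free the Koebe/Gr\"otzsch argument transfers both the bounded geometry and the shrinking diameter; this step should be stated. Second, in the extendibility argument your appeal to Lemma~\ref{Lem:ENProps}\eqref{It:Tran} is not the right mechanism for bounding $N$: that inequality controls transition times of the critical orbit along the Enhanced Nest, not the number of critical encounters in the buffer orbit $F^j(U^+)\sm F^j(U)$. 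The bound on $N$ instead comes from the fact that $\bI_n^+\sm\ovl{\bI_n}$ is disjoint from $\PC(F')$ for the induced map $F'$ (so orbits of $[c]$ never enter it) combined with the $\delta'$-niceness of the Enhanced Nest pieces from Proposition~\ref{prop:KvSnice}; this is exactly what the remark after the definition of $(\delta,N)$-extendibility points to as the improvement over \cite{KSS}. Your statement that ``$F^j(U^+)$ strictly contains a deep piece around $\fib(c'')$'' is not what forces the bound. With these two points tightened, the proof matches the paper's intended argument.
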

\begin{proof}
The proof of this result the same as the proof of  \cite[Lemma 6.7]{KSS}. It follows by inducing a persistently recurrent complex box mapping from $F$, see Lemma~\ref{Lem:PRBM}, and the 
Key Lemma (Lemma~\ref{Lem:KeyLemma}) below.
\end{proof}

Finally, let us state and outline the proof of the Key Lemma. Because of Lemma~\ref{Lem:PRBM}, we can assume that our box mapping is persistently recurrent by considering an induced map.

\begin{lemma}[Key Lemma]
\label{Lem:KeyLemma}
Let $F \colon \U \to \V$ be a persistently recurrent non-renormalizable dynamically natural box mapping. 
Then there exists $\eta > 0$ so that for every $c \in \Crit(F)$ and every $\epsilon > 0$ there is a pair of puzzle pieces $Y' \supset Y \ni c$ such that
\begin{enumerate}
\item
\label{It:KL1}
$\diam Y < \epsilon$;
\item
\label{It:KL4}
$Y$ has $\eta$-bounded geometry at $c$;
\item
\label{It:KL2}
$(Y' \sm \ovl Y) \cap \PC(F) = \emptyset$, and $\modulus(Y' \sm \ovl Y) \ge \eta$.
\end{enumerate}  
\end{lemma}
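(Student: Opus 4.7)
The plan is to reduce to the case where $F$ is itself persistently recurrent (with $\Crit(F)=[c]$) by invoking Lemma~\ref{Lem:PRBM} and then to realize the required pair $(Y,Y')$ as $(\bI_n,\bI_n^+)$ from the Enhanced Nest  around $c$, for $n$ sufficiently large. Two of the three desired properties are essentially built into the construction: the pair $\bI_n^-\subset\bI_n\subset\bI_n^+$ that accompanies every element of the Enhanced Nest  is produced by $\mathcal A\mathcal A$ and $\mathcal B\mathcal B$ so that the annuli $\bI_n^+\setminus\cl\bI_n$ and $\bI_n\setminus\cl\bI_n^-$ are disjoint from $\PC(F)$, giving the first half of~\eqref{It:KL2}. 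Once the uniform lower bound $\modulus(\bI_n^+\setminus\cl\bI_n)\ge\eta$ is established, the bounded geometry at $c$ in~\eqref{It:KL4} follows from the standard observation that a topological disk carrying a definite annulus around a point has definite bounded geometry there, and~\eqref{It:KL1} follows from Gr\"otzsch applied along the infinite nested chain of disjoint annuli to force $\diam\bI_n\to 0$.

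The crux is thus to produce the uniform modulus bound $\mu_n:=\modulus(\bI_n^+\setminus\cl\bI_n)\ge\eta$. My strategy is to argue by contradiction via the Kahn--Lyubich Covering Lemma (Lemma~\ref{Lem:CoveringLemma}). The key dynamical mechanism is the transition-time inequality of Lemma~\ref{Lem:ENProps}\eqref{It:Tran}, rewritten as $p_{n+2}\ge p_{m+1}+\sum_{i=m}^{n}p_i$, which implies that for $t=\sum_{i=m}^n p_i$ the restricted map $F^{t}|_{\bI_{n+3}}$ factors through a single entry branch to $\bI_{n+2}$ before reaching $\bI_m$, and hence has degree bounded by the universal constant $D$ that controls the degree of one Enhanced Nest  transition $F^{p_{n+2}}\colon\bI_{n+3}\to\bI_{n+2}$. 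By contrast, $F^{t}\colon\bI_{n+1}\to\bI_m$ may have arbitrarily large degree. This is precisely the geometric configuration required to feed into the Covering Lemma: the large covering $F^t\colon(A,A',U)\to(B,B',V)$ with $A=\bI_{n+3}$, $A'=\bI_{n+1}$, $U=F^{-t}_c(\bI_m^+)$ and $B,B',V$ its images, has overall degree bounded by some possibly huge number but inner degree bounded by $D$; the collar $B'\setminus\cl B$ is a fixed fraction of $\mu_m$.

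If one assumes $\mu_n\to 0$, the Covering Lemma then transfers a definite fraction of $\mu_m$ (or of the collar modulus) back to some $\mu_{n+3}$; iterating this and combining with the disjointness of $\bI_{n'}^+\setminus\cl\bI_{n'}$ from $\PC(F)$ yields a self-improving estimate that forces $\mu_n\ge\eta$ for a uniform $\eta>0$, contradicting the assumption. The main obstacle I expect is the bookkeeping: one must show that the correct collar ever makes its way into the ``nearly degenerate regime'' threshold of Lemma~\ref{Lem:CoveringLemma}, which requires identifying which index $m<n$ to pull back from and controlling what happens when both alternatives in the Covering Lemma conclusion are in the second (most degenerate) case for many consecutive $n$. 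This is handled by exploiting Lemma~\ref{Lem:ENProps}\eqref{It:Return}, which ensures that between consecutive Enhanced Nest  levels the first-return time to $\bI_{n+1}$ is at least $p_n/3$, so pulling back along returns inside the nest cannot collapse moduli faster than a geometric rate, matching against the gain rate provided by the Covering Lemma.

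Finally, once $\eta>0$ is secured, the pieces $\bI_n^+\setminus\cl\bI_n$ form an infinite sequence of pairwise disjoint annuli of definite modulus nested around $c$, so by Gr\"otzsch $\sum_n\modulus(\bI_n\setminus\cl\bI_{n+1})$ diverges, which forces $\diam\bI_n\to 0$ and gives~\eqref{It:KL1}; combining this with the $\eta$-annulus around $c$ inside each $\bI_n^+$ then gives $\eta$-bounded geometry for $\bI_n$ at $c$, proving~\eqref{It:KL4}. Taking $Y:=\bI_n$ and $Y':=\bI_n^+$ for any $n$ with $\diam\bI_n<\epsilon$ completes the construction.
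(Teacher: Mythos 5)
Your proposal matches the paper's strategy: reduce to the persistently recurrent case via Lemma~\ref{Lem:PRBM}, build the Enhanced Nest around $c$, use the transition-time inequality of Lemma~\ref{Lem:ENProps}\eqref{It:Tran} to get a bounded inner degree for the Covering Lemma (Lemma~\ref{Lem:CoveringLemma}), and conclude $\diam \bI_n\to0$ via Gr\"otzsch and the PC-free buffers $\bI_n^\pm$ around each nest element. The paper packages the Covering Lemma step as Proposition~\ref{prop:KvSnice} (the nest elements are $\delta'$-nice) and then derives the buffer-modulus bound and the $\eta$-bounded geometry from that plus the uniformly bounded degree of $F^\nu|_{\mathcal B(\bI_n)}$ and a Koebe argument, whereas you aim the Covering Lemma directly at $\modulus(\bI_n^+\setminus\cl\bI_n)$ — a mild reorganization, not a different argument. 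The one detail worth being careful about: the annuli you are transferring in the Covering Lemma step need to be the nest annuli $\bI_k^+\setminus\cl\bI_k$ around a landing point of the orbit of a return domain (as in the paper's "chain of $M-6$ annuli" construction), and you should check which index $m$ you pull back from; but the mechanism you describe — transition times for the inner-degree bound, return times as the rate control against degeneration — is exactly the paper's.
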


\begin{proof}
This lemma was proven in Sections 10 and 11 of \cite{KvS} using the Enhanced Nest construction presented in Section~\ref{SSec:Enhanced} and the Covering Lemma of Kahn and Lyubich (Lemma~\ref{Lem:CoveringLemma}). Let us sketch the argument.

Let $c_0 \in \Crit(F)$ be a critical point, necessarily persistently recurrent, and let $(\bI_n)_{n \ge 0}$ be the Enhanced Nest around $c_0$. Recall that a puzzle piece $P$ around $c_0$ is called $\delta$-nice if $\modulus(P \sm \ovl{\Comp_{x} \Dom(P)}) \ge \delta$ for every $x \in P \cap \PC(F)$. Note that since $F$ is persistently recurrent, there are only finitely many components in $\Dom(P)$ intersecting $\PC(F)$. 

The key step in \cite{KvS} was to prove:
\begin{proposition}\cite[Proposition 10.1]{KvS}
\label{prop:KvSnice}
There exists a {\em beau}\footnote{That is, a constant depending only on the number and degrees of the critical points of $F$.} $\delta'>0$ and $N\in \mathbb N,$ (depending on $F$), so that for all $n\ge N$, $\bI_n$ is $\delta'$-nice. 

In particular,
for every $\epsilon >0$, there exists $\delta>0$, so that if $\bI_0$ is $\epsilon$-nice, then $\bI_n$ is $\delta$-nice for every $n \ge 1$.
\end{proposition}
We will prove this proposition after we show how it implies Lemma~\ref{Lem:KeyLemma}.
This proposition uses for its proof, in a non-trivial way, the results of Lemma~\ref{Lem:ENProps} on transition and return times for the Enhanced Nest and the Covering Lemma (Lemma~\ref{Lem:CoveringLemma}). We will sketch its proof below; for now, let us use this proposition and complete the proof of the Key Lemma. 

Since each $\bI_{n+1}$ is a pullback of $\bI_n$ containing $c_0$, Proposition~\ref{prop:KvSnice} implies that $\modulus(\bI_{n} \sm \ovl{\bI_{n+1}}) \ge \delta$. Hence, by the Gr\"otzsch inequality, the puzzle pieces $\bI_n$, and thus all puzzle pieces around $c_0$ shrink to zero in diameter.

By the discussion after Lemma~\ref{Lem:ENnu} , the degree of the map $F^\nu|_{\mathcal{B}(\bI_n)}$ does not depend on $n$, and so Proposition~\ref{prop:KvSnice} gives us that the moduli $\modulus(\mathcal{B}(\bI_n)\setminus\ovl{\mathcal{A}(\bI_n)})$ are uniformly bounded from below (see also Figure~\ref{Fig:AB}, left). 
Since the mapping from $\bI_{n+1}$ to $\bI_n$ has uniformly bounded degree,
one can construct puzzle pieces $\bI_{n+1}^- \subset \bI_{n+1} \subset \bI_{n+1}^+$ so that
$\modulus(\bI_{n+1}^+\sm\ovl{\bI_{n+1}})$ and
$\modulus(\bI_{n+1} \sm \ovl{\bI_{n+1}^-})$ are both uniformly bounded from below, and disjoint from $\PC(F)$.
In other words, the puzzle pieces $\bI_n$ are all $\delta$-free (see Section~\ref{SSSec:NiceFree} for the definition). This property together with the Koebe Distortion Theorem implies that if $\bI_n$ has $\delta$-bounded geometry at the critical point, then $F(\bI_{n+1})$ has $\delta'=\delta'(\delta)$ bounded geometry at the critical value, and pulling back $F(\bI_{n+1})$ by one iterate of $F$ to $\bI_{n+1}$ improves the bounded geometry constant. See \cite[Proposition 11.1]{KvS} for details.

Now the Key Lemma for arbitrary $c \in \Crit(F)$ follows by picking $Y$ to be $\hat{\mathcal{L}}_c(\bI_n)$ and $Y'$ to be $\hat{\mathcal{L}}_c(\bI_n^+)$ for $n$ sufficiently large.
\end{proof}

Let us finally sketch the proof of Proposition~\ref{prop:KvSnice}.

\begin{proof}[Proof of Proposition~\ref{prop:KvSnice}]

We will use the results and notation from Section~\ref{SSec:Enhanced}. Pick some big integer $M$ and assume that $n > M$ is sufficiently large. Let $A$ be a component of $\Dom(\bI_n)$ intersecting $\PC(F)$. Our goal is to estimate $\modulus(\bI_n \sm \ovl A)$ and show that for all sufficiently large $n$ such moduli are uniformly bounded from below. 

Define $t_{n, M} := p_{n-1}+p_{n-2}+\dots + p_{n-M}$ and notice that $F^{t_{n,M}}(\bI_n) = \bI_{n-M}$. It follows from Lemma~\ref{Lem:ENProps} (see also the discussion at the end of Section~\ref{SSec:Enhanced}) that $F^{t_{n,M}}(A)$ is contained in some landing domain to $\bI_{n-4}$.
 
Choose any $z\in F^{t_{n,M}}(A)\subset\mathcal{L}_z(\bI_{n-4})$, and any integer $k$ satisfying $n-4\le k \le n-M$. Let $\nu_k\ge 0$ be the entry time of $z$ into $I_k$, and set $A_k$ be the component of $F^{-\nu_k}(I^+_{k}\sm \ovl I_k)$ that surrounds $z$ (where the outer puzzle piece $\bI_n^+$ was defined before Lemma~\ref{Lem:ENProps}). Thus we obtain $M-6$ annuli $A_k$ that are contained in $\bI_{n-M}$, and all of them surround $F^{t_{n,M}}(A)$ (see Figure~\ref{Fig:ENProof}). The components of $F^{-t_{n,M}}(A_k)$ which surround $A$ give us $M-6$ annuli contained in $\bI_n$ which surround $A$. Using the Covering Lemma, we can show that by taking $M$ sufficiently large, the sum of their moduli is bounded. This yields a lower bound on $\modulus(\bI_n\sm \ovl A)$. Let us explain how this is done.

\begin{center}
\begin{figure}[h]
\includegraphics[width=1.\textwidth, trim=2 2 2 2, clip]{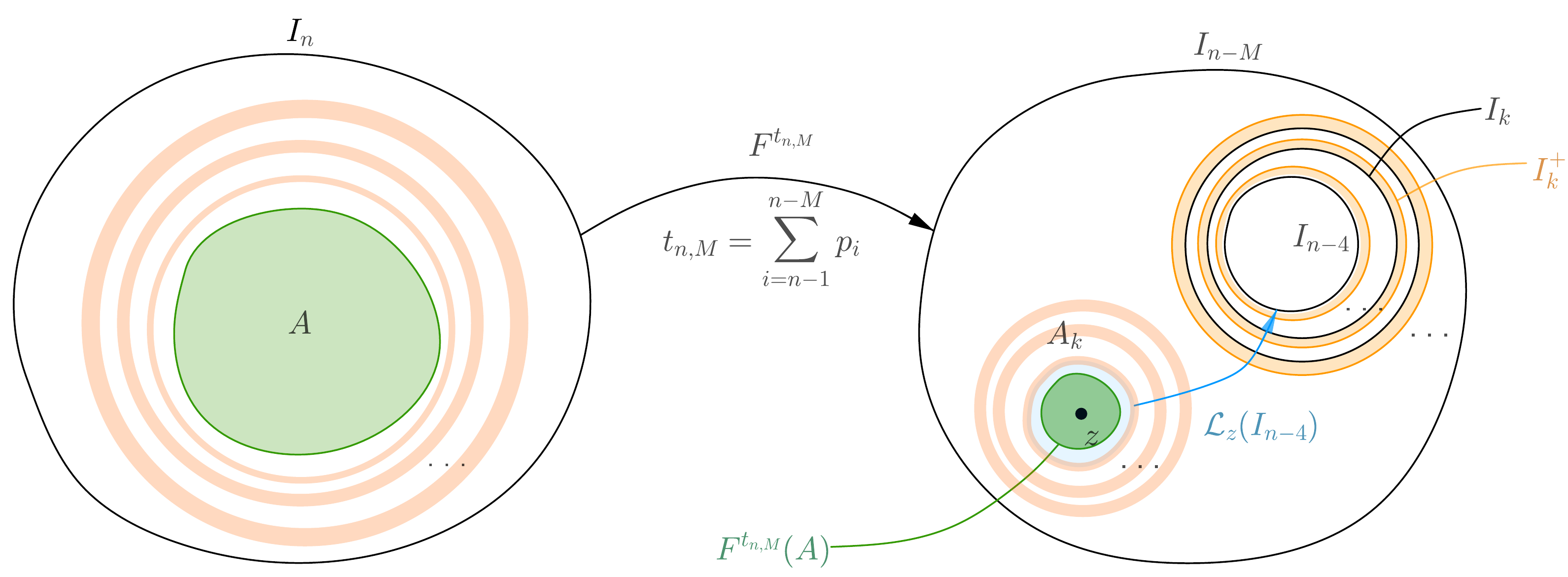}
\caption{The construction of pullback annuli in the proof of Proposition~\ref{prop:KvSnice}.}
\label{Fig:ENProof}
\end{figure}
\end{center}    

Let $\mu_n>0$ be so that $\bI_n$ is $\mu_n$-nice. When $F$ is persistently recurrent, the set $\PC(F)$ is compact, so $\mu_n$ is not zero. The construction of the Enhanced Nest gives us annuli $\bI_k^+\sm \ovl \bI_k$ that are disjoint from $\PC(F)$ and with moduli bounded from below by $K_1\mu_{k-1}$, where $K_1$ depends only on the degrees of the critical points of $F$. Since $F^{\nu_k}|_{\DomE_{z}(\bI_k^+)}$ is a landing map, its degree is bounded in terms of the degrees of the critical points of $F$, and so there is a constant $K_2$ with the property that $\mathrm{mod}(A_k)\ge\frac{K_1}{K_2}\mu_{k-1}$. This gives us a lower bound on the moduli of the annuli $A_k$ surrounding $F^{t_{n,M}}(A)$. Now we are going to use the  Covering Lemma (Lemma~\ref{Lem:CoveringLemma}) to transfer this bound to the annuli surrounding $A$. To do this, we need to bound the degree of $F^{t_{n,M}}|_A$ independently of $n$ and $M$, but this is not too hard, and is done in Step 5 of the proof in \cite[Proposition 10.1]{KvS}, with the key idea essentially presented at the end of Section~\ref{SSec:Enhanced}. Let $d$ denote the bound on the degree of this mapping. Finally we will explain how the Covering Lemma is applied. 

Let $\mu_{n-5,M-5} = \min(\mu_{n-5},\mu_{n-6}, \dots, \mu_{n-M})$. Taking $K_1$ smaller if necessary, we may assume that $K_1/K_2 < 1$. We let $C>0$ be the universal constant from the Covering Lemma. Fix $\eta = K_1 / 2K_2$ and $D$ to be degree of $F^{t_{n,M}}|_{\bI_n}$. Let $\epsilon>0$ be the constant associated to $\eta$ and $D$ by the Covering Lemma. For each annulus $A_k$, let $B$, respectively $B'$, be the regions bounded by the inner, respectively outer, boundaries of $A_k$. Fix $M=\left \lceil\frac{4d^2 K_2}{C K_1\eta}+6 \right\rceil$, and let $V=\bI_{n-M}$.  Then we have that
$$\modulus(V\sm \ovl B) > \frac{K_1}{K_2}(\mu_{n-M-1}+\dots + \mu_{n-5})\ge \frac{4d^2}{C\eta}\mu_{n-5,M-5}.$$ Thus the Covering Lemma implies that
\begin{equation} \modulus(I_n\sm \ovl A) > \min(\epsilon,\eta^{-1}\modulus(A_k), C\eta d^{-2}\modulus(V\sm \ovl B))\ge \min(\epsilon, 2\mu_{n-5, M-5}).
\label{eqn:clcb}
\end{equation} We always have that there exists a constant $K$ so that $\mu_n > K\mu_{n-1}$. Combining these estimates it is easy to argue that the $\mu_n$'s stay away from $0$.

Finally, to see that there exists $\delta' > 0$, beau, as in the statement of the proposition, observe that by \eqref{eqn:clcb} (because of the multiplication by 2 of $\mu_{n-5,M-5}$) this bound is beau provided that $\epsilon$ does not depend on $F$. The constant $\epsilon = \epsilon(\eta, D)$ is given by the Covering Lemma, and we  have that $\eta = \eta(K_1, K_2)$ and $D = D(M) = D(d, K_1, K_2, C)$. Notice that the constants $d, K_1, K_2$ are universal constants depending only on the number and degrees of the critical points of $F$ and $C$ is a universal constant given by the Covering Lemma.
\end{proof}

\section{All puzzle pieces shrink in diameter}
\label{Sec:LC}

In this section, we show how to conclude Theorem~\ref{Thm:BoxMappingsMain2}~\eqref{It:MainIt1} using Theorem~\ref{Prop:UC}. For critical points the claim is established in Theorem~\ref{Prop:UC}~\eqref{It:Shrink}. Our goal here is to show shrinking of puzzle pieces around all other points in the non-escaping set $K(F)$ of $F$.

Let $x \in K(F) \sm \Crit(F)$ be a point. We consider two cases: 
\begin{itemize}
\item[(1)] the orbit of $x$ accumulates at some critical point $c$; 
\item[(2)] the orbit of $x$ is disjoint from some puzzle neighborhood of $\Crit(F)$.
\end{itemize}

We start with the first case. Let $\epsilon>0$, and let $\mathcal W$ be the nice puzzle neighborhood of $\Crit(F)$ given by Theorem~\ref{Prop:UC} with the property that each of its components has  Euclidean diameter at most $\epsilon$. Since $\orb(x)$ accumulates at $c$,  there exists $s\in\mathbb N$ minimal so that $F^s(x)\in\mathcal W$. We let $W$ denote the component of $\mathcal W$ that contains $F^s(x)$, and we let $U=\mathcal{L}_x(W) = \Comp_x F^{-s}(W)$. By Theorem~\ref{Prop:UC} \eqref{It:Ext}, the corresponding branch of the first landing map is $(\delta, N)$-extendible with respect to $\Crit(F)$, where neither $\delta$, nor $N$ depends on $\epsilon$. So there exists an arbitrarily small puzzle piece $\tilde W\Supset W,$ such that $\modulus(\tilde W\sm \ovl W)>\delta$, and if $\tilde U = \Comp_x F^{-s}(\tilde W),$ we have that
$\modulus(\tilde U\sm \ovl U)>\delta'$, where $\delta' = \delta'(\delta, N).$ Thus by the Gr\"otzsch inequality we have that the puzzle pieces around $x$ shrink to points.

Let us treat the second case. Since $F$ is dynamically natural, $K(F) = \Kwi(F)$ (condition~\eqref{It:Nat3} of Definition~\ref{Def:NaturalBoxMapping}). Therefore there exist an infinite sequence of components $U^j$ of $\U$ and $V^j$ of $\V$ and a $\delta = \delta(x) > 0$ such that $\ovl U^j \subset V^j$, $\modulus(V^j \sm \ovl U^j) \ge \delta$, and the orbit of $x$ visits $X := \bigcup_j U^j$ infinitely often. Choose $m$ sufficiently large so that $\orb(x)$ does not intersect the puzzle neighborhood of $\Crit(F)$ of depth $m$, and suppose $m<m_1<m_2<\ldots$ is a sequence of iterates such that $F^{m_i}(x) \in X$, and let $U^{m_i} = \Comp_{F^{m_i}(x)} X$.  Let $U_i$, $V_i$ be the pair of puzzle piece, both containing $x$, obtained by pulling back $U^{m_i}$ and $V^{m_i}$ under $F^{m_i}$. By construction, each $V_i \sm \ovl U_i$ is a non-degenerate annulus. Moreover, the degree of the map $F^{m_i} \colon V_i \to V^{m_i}$ is bounded above by some constant $C \ge 1$ that depends only on $m$ and the local degrees of $F$ at the critical points, but not on $m_i$. Hence, by the Koebe Distortion Theorem, there exists $\delta' = \delta'(\delta,C) > 0$ such that $\modulus(V_i \sm \ovl U_i) \ge \delta'$ for every $i \ge 0$. Now the conclusion follows by the Gr\"otzsch inequality once we arrange the sequence $(m_i)$ so that $V_{i+1} \subset U_i$ for every $i \ge 0$.
\qed

\section{Ergodicity properties and invariant line fields}
\label{Sec:ILF}

In this section, we present the proof of Theorem~\ref{Thm:BoxMappingsMain2} (\ref{parta}). Since this theorem is automatically true if $\Crit(F) = \emptyset$ (in this case $\area(K(F)) = 0$ by definition of dynamical naturality), in this section we assume that $F$ has at least one critical point.

By Theorem~\ref{Thm:BoxMappingsMain2}~\eqref{It:MainIt1}, the puzzle pieces of a non-renormalizable dynamically natural box mapping $F$ shrink to points. Therefore, the orbit of a point $z \in K(F)$ accumulates on the fiber of a point $z' \in K(F)$ (i.e.\ $\orb(z)$ enters every small puzzle neighborhood of $z'$) if and only if $z' \in \omega(z)$. Furthermore, for a pair of distinct points $z, z' \in K(F)$, every two nests of puzzle pieces around them are eventually disjoint (starting from some large depth). We will frequently use these two observations below without further notice.

\subsection{Ergodicity properties}
\label{SSec:ErgodicityDynNat}

In this subsection, we study ergodic properties of non-renormalizable dynamically natural box mappings. In particular, in the next theorem we draw consequences for the map in presence of a forward invariant set of positive Lebesgue measure. The real analogue of this result was proved
in \cite{vSV}, see also \cite[Theorem 3.9]{McM} and \cite{Man}. Even though this result is interesting in its own right, our purpose is to use it in Section~\ref{SSec:Fields} in the situation when this forward invariant set contains the support of a measurable invariant line field.

\begin{theorem}[Ergodic properties of non-renormalizable box mappings] 
\label{Thm:ErgodicNatural}
Let $F\colon \U\to \V$ be a  non-renormalizable dynamically natural box mapping with $\Crit(F) \neq \emptyset$, and let $X \subseteq K(F)$ be a forward invariant set of positive Lebesgue measure. Let $z$ be a Lebesgue density point of $X$. Then the orbit of $z$ accumulates on a critical point $c$, and any such critical point $c$ is a point of Lebesgue density of $X$. 

Moreover, if $c$ is not persistently recurrent, then there exist puzzle pieces $\V \supset \K \Supset \J \ni c$ of $F$ with $\area(\J \cap X) = \area(\J)$ and an infinite sequence $(k_n)$ such that $F^{k_n}(z) \in \J$ and the maps $F^{k_n} \colon \Comp_z F^{-k_n}(\K) \to \K$ have uniformly bounded degrees (independent of $n$).  
\end{theorem}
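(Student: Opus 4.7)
The plan is to combine three ingredients: dynamical naturality (to rule out pathological orbit behavior of a density point), shrinking of puzzle pieces (Theorem~\ref{Thm:BoxMappingsMain2}~\eqref{It:MainIt1}), and complex bounds with bounded extendibility (Theorem~\ref{Prop:UC}). First I would establish that $\orb(z)$ accumulates on $\Crit(F)$. Since $F$ is dynamically natural, $\area(\Koc(F))=0$ and $\Koc(F)$ is forward invariant, so $X\setminus\Koc(F)$ has full measure in $X$, and we may assume $z\notin\Koc(F)$. Hence $\orb(z)$ enters every puzzle neighborhood of $\Crit(F)$ infinitely often, and triviality of critical fibers combined with shrinking of puzzle pieces forces $\orb(z)$ to accumulate on some $c\in\Crit(F)$. (Equivalently, Lemma~\ref{Lem:AlmostAll} applied to $X$ eliminates, modulo measure zero, every alternative to accumulation on the critical set.)

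To show every such $c$ is a Lebesgue density point of $X$, pick $k_n\to\infty$ with $F^{k_n}(z)\to c$. For each $\varepsilon>0$, Theorem~\ref{Prop:UC} supplies a puzzle piece $W\ni c$ with $\diam W<\varepsilon$ and bounded geometry at $c$, together with an extension $W^+\supset W$ satisfying $\modulus(W^+\setminus\overline W)\ge\delta$, so that the landing map to $W$ is $(\delta,N)$-extendible w.r.t.\ $\Crit(F)$ with $\delta,N$ independent of $\varepsilon$. For $n$ large, $F^{k_n}(z)\in W$; let $W_n\subset W_n^+\ni z$ be the components of $F^{-k_n}(W)\subset F^{-k_n}(W^+)$. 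Extendibility bounds the degree of $F^{k_n}\colon W_n^+\to W^+$ by a constant $D=D(N,F)$, whence $\modulus(W_n^+\setminus\overline{W_n})\ge\delta/D$. A standard area-distortion estimate for bounded-degree branched covers, combined with forward invariance $F^{k_n}(W_n\cap X)\subset W\cap X$, yields
\[
\frac{\area(W_n\cap X)}{\area(W_n)}\ \le\ C\,\frac{\area(W\cap X)}{\area(W)},
\]
where $C=C(D,\delta)$. Pulling back a Koebe disk around $F^{k_n}(z)$ inside $W$ (exploiting bounded geometry of $W$ at $c$ and $F^{k_n}(z)\to c$) gives $W_n$ definite shape at $z$, so the left side tends to $1$ as $n\to\infty$. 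Thus $X$ has positive lower density at $c$; a density-upgrade argument—applying the estimate to arbitrarily small $W$ and using forward invariance together with the Lebesgue density dichotomy—promotes this to full density.

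For the moreover part, suppose $c$ is not persistently recurrent. Lemmas~\ref{Lem:NonR} and~\ref{Lem:RR} produce puzzle pieces $\K\Supset\J\ni c$ such that every pullback branch $F^{k}\colon\Comp_z F^{-k}(\K)\to\K$ with $F^k(z)\in\J$ has degree bounded by a \emph{single} constant $D'$ independent of $n$: non-persistent recurrence precludes postcritical orbits from repeatedly visiting $\J$ along any single branch. Re-running the area-distortion argument with $(\J,\K)$ in place of $(W,W^+)$ gives
\[
\frac{\area(\J_n\cap X)}{\area(\J_n)}\ \le\ C'\,\frac{\area(\J\cap X)}{\area(\J)},
\]
and the left side tends to $1$ while the right side is independent of $n$, so $\area(\J\cap X)/\area(\J)\ge 1/C'$. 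Combined with the full density of $X$ at $c$ from Step~2 and forward invariance, this forces $\area(\J\cap X)=\area(\J)$.

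The principal technical obstacle will be Step~2, which has two subtle parts. First, the modulus bound $\modulus(W_n^+\setminus\overline{W_n})\ge\delta/D$ must be converted into bounded shape of $W_n$ \emph{at the specific point} $z$, so that a Lebesgue density statement (a round-ball condition) actually transfers to the puzzle piece $W_n$; this is handled via Koebe-type estimates for bounded-degree branched covers together with the bounded geometry of $W$ at $c$. Second, the positive lower density at $c$ must be upgraded to full Lebesgue density $1$, which uses that the estimate applies to arbitrarily small puzzle pieces around $c$ along with forward invariance of $X$ in the spirit of Lemma~\ref{Lem:AlmostAll}.
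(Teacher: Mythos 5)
There is a genuine gap in Step 2, and it propagates into the ``moreover'' step. You fix a puzzle piece $W\ni c$ (depending only on $\varepsilon$) and then pull $W$ back along an \emph{arbitrary} sequence of times $k_n\to\infty$ with $F^{k_n}(z)\in W$, claiming that $(\delta,N)$-extendibility bounds $\deg\bigl(F^{k_n}\colon W_n^+\to W^+\bigr)$. But the extendibility in Theorem~\ref{Prop:UC} is a property of the \emph{first landing map} $L_W$, and for a fixed $W$ there is only one first landing time. For later visits, $F^{k_n}|_{W_n}$ factors through several returns to $W$, each contributing to the critical degree, so $\deg F^{k_n}$ need not be bounded in $n$. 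The fix is to take a shrinking sequence $W^{(m)}\ni c$ and let $k_m$ be the \emph{first entry time} of $z$ to $W^{(m)}$ (then the first-entry structure, Lemma~\ref{Lem:A1}, bounds the degree). You never make this restriction, so the chain from ``$z$ is a density point'' to ``$c$ is a density point'' is broken.

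A second missing idea is the role of the partial order on $\Crit(F)$. You claim to prove full density directly at \emph{every} critical accumulation point $c$. The paper instead first picks a critical point $c_0\in\omega(z)$ that is \emph{maximal} under the order $c\ge c'\iff F^k(c)=c'$ or $c'\in\omega(c)$, proves density there, and then transports density down the chain using forward invariance of $X$. Maximality is what guarantees (in the non-recurrent case, for instance) that no other critical point in $\omega(z)$ accumulates on $c_0$, so that pullbacks of a small piece $\K\ni c_0$ along $\orb(z)$ do not hit extra critical points and the degree stays bounded. In the persistently recurrent case the analogous control is the condition~\eqref{eq:no-other}, and the relevant tool is the Enhanced Nest $(\bI_n)$ with the buffer annuli $\bI_n\setminus\overline{\bI_n^-}$ disjoint from $\PC(F)$, not the $(\delta,N)$-extendibility statement you invoke. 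For a \emph{non-maximal} $c$, a direct bounded-degree pullback argument of the kind you propose can genuinely fail, because pullbacks along the orbit of $z$ may encounter some $c'>c$ arbitrarily many times; the paper sidesteps this by pushing forward: if $\area(\J\cap X)=\area(\J)$ at $c_0$ and $F^s(c_0)\in\J'\ni c$, then $\area(\J'\cap X)=\area(\J')$, since $F^s$ preserves null sets and $X$ is forward invariant.

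Finally, for the ``moreover'' statement your reading of Lemmas~\ref{Lem:RR} and~\ref{Lem:NonR} is too strong: these lemmas give a shrinking nest $(Q_i)\ni c$ with bounded-degree transit maps $F^{s_i}\colon Q_i\to Q_0$ (reluctant case) or a single piece $\K$ that later orbits of critical points avoid (non-recurrent case); they do \emph{not} say that every branch $F^k$ with $F^k(z)\in\J$ has uniformly bounded degree. The paper obtains the degree bound by composing the first-entry map of $z$ into $Q_i$ (bounded degree because first-entry pullbacks are pairwise disjoint) with $F^{s_i}$, and then the ``standard argument'' is that as $i\to\infty$ the pullback domains $U_i=\Comp_z F^{-(\ell_i+s_i)}(Q_0)$ shrink to $z$ with bounded shape, while the target $Q_0=\K$ is fixed; the Koebe area estimate then forces $\area(\K\setminus X)=0$ rather than merely a positive lower density, so no separate ``density upgrade'' is needed. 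Your inequality gives only $\area(\J\cap X)/\area(\J)\ge 1/C'$ and the promised upgrade is not supplied.

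Summing up: the proposal has the right broad outline (naturality to force accumulation on $\Crit(F)$, puzzle pullbacks with bounded degree, transfer of density), but (i) it conflates ``$F^{k_n}(z)\in W$'' with ``first landing'', which defeats the degree control; (ii) it omits the maximality/chain argument that the paper uses to get density at \emph{all} accumulation critical points; and (iii) the persistently recurrent case requires the Enhanced Nest and the condition~\eqref{eq:no-other}, which the proposal does not invoke at all.
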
 

\begin{proof}
Since $F$ is dynamically natural, $\area(\Koc(F)) = 0$, and hence the orbit of Lebesgue almost every point in $X$ accumulates on a critical point. Pick $z \in X$ to be a Lebesgue density point of $X$. We can assume that $z$ is not a backward iterate of a critical point, and hence focus only on critical points in $\omega(z)$.

Define a partial ordering on the set of critical points of $F$ as follows. Given two critical points  $c,c' \in \Crit(F)$, 
we say  that $c\ge c'$ if $F^k(c)=c'$ for some $k \ge 0$, or $\omega(c)\ni c'$. Note that $c \ge c'$ implies $\omega(c)\supseteq \omega(c')$. Among all critical points in $\omega(z)$, let $c_0 \in \omega(z)$ be a maximal critical point with respect to this partial ordering. Note that there might be several such points, we pick one of them.

Recall from Section~\ref{SSec:Rec}, $[c_0]$ stands for the equivalence class of critical points $c$ such that $c \in \omega(c_0)$ and $c_0 \in \omega(c)$ (as usual, the discussion from Section~\ref{SSec:Rec} simplifies since we know that the fibers of $F$ are all trivial). Let $c_0 \ge c_1 \ge c_2 \ge \ldots \ge c_k$ be a chain in the partial ordering starting from $c_0$ and consisting of pairwise non-equivalent critical points (we pick a representative in each equivalence class\footnote{A more precise way would be to write $[c_0] \ge [c_1] \ge \ldots \ge [c_k]$.}, and the proof below repeats for each such choice).

It follows from the definition of persistent recurrence that the omega-limit set of a persistently recurrent critical point $c$ is minimal (see Section~\ref{SSec:Defns}), provided that puzzle pieces shrink to points. In particular, if $z \in \omega(c)$ and $c$ is persistently recurrent, then $\omega(z) = \omega(c)$. Therefore, $c_k$ is the only critical point that can be persistently recurrent in the chain above\footnote{This justifies the term ``minimality'' for the omega-limit set of persistently recurrent critical points: such points can be only at the end of any chain, i.e.\ persistently recurrent points are always minimal with respect to our order.}. 

\textbf{Case 1:} $c_0$ is persistently recurrent. In this case, $k=0$ and the chain consists of one element (the equivalence class $[c_0]$). It follows that there exists an integer $n_0$ so that 
if some iterate of $z$, say $F^{k}(z)$, is contained in a puzzle piece $P \ni c_0$ of depth at least $n_0$, then 
\begin{equation}\begin{array}{ll}  &\mbox{ any critical point $c \in \Crit(F)$
which is contained in  a pullback of $P$ along} \\ 
&\mbox{$\{z,\dots,F^{k}(z)\}$ satisfies $c \in [c_0]$. } \end{array} 
\label{eq:no-other}
\end{equation} 

Since $c_0$ is persistently recurrent, one can construct
an Enhanced Nest  $(\bI_n)$ around $c_0$, see Section~\ref{SSec:Enhanced} 
This nest has the property that $\diam \bI_n \to 0$ as $n \to \infty$ and that there exists $\delta>0$ so that for every $n\ge 0$ there exists a puzzle piece $\bI_n^-\subset I_n$ such that $\bI_n\cap \omega(c_0)\subset \bI_n^-$ and $\modulus(\bI_n\setminus \ovl \bI_n^-)\ge \delta$. The latter bound is Proposition~\ref{prop:KvSnice}. Let us further assume that this nest is built sufficiently deep so that (\ref{eq:no-other}) holds for each of its elements. Note here that a priori some critical orbits for critical points not in $[c_0]$ might intersect the annulus $\bI_n\setminus \bI_n^-$. However, due to \eqref{eq:no-other}, the inclusion $\bI_n\cap \omega(c_0)\subset \bI_n^-$ implies that if $F^k(z) \in \bI_n^-$ for some $k \ge 1$, then the map $\rho := F^k \colon \Comp_z F^{-k}(\bI^-_n) \to \bI_n^-$ has an extension to a branched covering onto $\bI_n$ of the same degree as $\rho$. Taking $k$ minimal and using this extension, by a standard argument we conclude that $c_0$ is a Lebesgue density point of $X$. 

\textbf{Case 2:} $c_0$ is reluctantly recurrent. From the definition of reluctant recurrence it follows that there exists a shrinking nest of puzzle pieces $(Q_i)_{i\ge0}$ around $c_0$ and an increasing sequence of integers $s_i \to \infty$ so that $F^{s_i}(Q_i) = Q_0$ and so that all the maps $F^{s_i} \colon Q_i \to Q_0$ have uniformly bounded degrees (see \cite[Lemma 6.5]{KSS}\footnote{The proof of this lemma goes through verbatim in our setting.}). Let $\K = Q_0$ and $\J = Q_{i'}$ with the smallest $i' > 0$ so that $Q_{i'} \Subset Q_0$ (which exists as the nest is shrinking). 

Finally, for $i > i'$, let $\ell_i$ be the first entry time of the orbit of $z$ to $Q_i$. Then $F^{\ell_i}(z) \in Q_i \subset \J$ and the degree of the map $F^{\ell_i} \colon \Comp_z F^{-\ell_i}(Q_i) \to Q_i$ is bounded independently of $i$. Hence, the maps $F^{\ell_i + s_i} \colon \Comp_z F^{-(\ell_i+s_i)}(Q_0) \to Q_0 = \K$ have uniformly bounded degrees for all $i > i'$. Using a standard argument we conclude that $c_0$ is a Lebesgue density point of $X$ and moreover $\area(\J \cap X) = \area(\J)$.

Now, for $j \in \{1, \ldots, k\}$, let $\J'$ be an arbitrary small puzzle piece around $c_j$ of depth larger than the depth of $\J$. Suppose $s \ge 0$ is the smallest iterate so that $F^s(c_0) \in \J'$. Then $\area(X \cap \Comp_{c_0} F^{-s}(\J')) = \area(\Comp_{c_0} F^{-s}(\J'))$. Since $F$ is a non-singular map and $X$ is forward invariant, we can iterate $X$ forward to obtain $\area(\J' \cap X) = \area(\J')$. As this is true for arbitrary small puzzle pieces $\J' \ni c_j$, we conclude that $c_j$ is a point of Lebesgue density for $X$.

\textbf{Case 3:} $c_0$ is non-recurrent. The proof in this case is similar to the proof in Case 2. First observe that $[c_0]=c_0$ and no other critical point in $\omega(z)$ can accumulate on $c_0$. Therefore, there exists a puzzle piece $\K \ni c_0$ of sufficiently large depth so that the orbits of points in $\omega(z) \cap \Crit(F)$ are disjoint from $\K$. Hence, if $\J \ni c_0$ is another puzzle piece so that $\J \Subset \K$ (again, such exists since $\fib(c_0) = \{c_0\}$) and $k$ is such that $F^k(z) \in \J$, then the map $F^k \colon \Comp F^{-k}(\J) \to \J$ is univalent and extends to a univalent map over $\K$. The proof now goes exactly as in Case 2.
\end{proof}

As usual, an \emph{ergodic component} of $F$ is a set $E \subset K(F)$ of positive Lebesgue measure so that $F^{-1}(E) = E$ up to a Lebesgue measure zero set. The corollary below shows that there are only finitely many of such sets that are disjoint up to a set of zero Lebesgue measure.

\begin{corollary}[Number of ergodic components]
\label{cor:num erg comps}
If $F \colon \U \to \V$ be a non-renormalizable dynamically natural complex box mapping, then for each ergodic component $E$ of $F$ there exist one or more critical points $c$ of $F$ so that $c$ is a Lebesgue density point of $E$.
\end{corollary}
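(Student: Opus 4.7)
The strategy is to deduce this corollary directly from Theorem~\ref{Thm:ErgodicNatural}, which has already carried out the essential ergodic-theoretic work.

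First I would dispose of the vacuous case $\Crit(F)=\emptyset$: by clause~(2) of dynamical naturality (Definition~\ref{Def:NaturalBoxMapping}), together with the convention $\Koc(F)\equiv K(F)$ in this case, one has $\area(\Koc(F))=\area(K(F))=0$, so there are no ergodic components of positive Lebesgue measure and the conclusion is vacuous.

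Assume then that $\Crit(F)\neq\emptyset$, and let $E$ be an ergodic component of $F$, i.e.\ $E\subseteq K(F)$ with $\area(E)>0$ and $F^{-1}(E)=E$ modulo a Lebesgue-null set. Since Theorem~\ref{Thm:ErgodicNatural} is stated for \emph{exactly} forward invariant sets, I would first replace $E$ by
\[
\widehat E:=\bigcap_{n\ge 0}\bigcup_{k\ge n}F^{-k}(E),
\]
the set of points whose $F$-orbit visits $E$ infinitely often. A short check shows that $\widehat E$ is genuinely forward invariant, and, because $F$ is holomorphic and hence non-singular, iterating the a.e.\ identity $F^{-1}(E)=E$ gives $F^{-k}(E)=E$ modulo a null set for every $k\ge 0$, whence $\widehat E=E$ up to a Lebesgue-null set. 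In particular, $E$ and $\widehat E$ have exactly the same Lebesgue density points. I would then pick any Lebesgue density point $z$ of $\widehat E$ (which exists since $\area(\widehat E)=\area(E)>0$) and apply Theorem~\ref{Thm:ErgodicNatural} with $X:=\widehat E$; this immediately yields $\omega(z)\cap\Crit(F)\neq\emptyset$ and asserts that any critical point in $\omega(z)$ is itself a Lebesgue density point of $\widehat E$, and therefore of $E$.

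The only (minor) obstacle in the whole argument is this bookkeeping step needed to promote the a.e.\ forward invariance of an ergodic component to genuine forward invariance; once that is in hand, the corollary is an immediate consequence of Theorem~\ref{Thm:ErgodicNatural}.
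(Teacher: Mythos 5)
Your proof is correct. The paper leaves the corollary without a separate argument, treating it as an immediate consequence of Theorem~\ref{Thm:ErgodicNatural}; your replacement of $E$ by the genuinely forward invariant set $\widehat E$ (agreeing with $E$ up to a null set, using the non-singularity of $F$ to iterate the a.e.\ identity $F^{-1}(E)=E$) is exactly the routine bookkeeping needed to pass from an ergodic component, which is only a.e.\ invariant, to a set satisfying the theorem's hypothesis, and you carry it out correctly.
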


\subsection{Line fields}
\label{SSec:Fields}

Recall that for a holomorphic map $f$ and a set $B$ in the domain of $f$, a \emph{line field} on $B$ is the assignment  $\mu(z)$ of a real line through each point $z$ in a  subset $E \subset B$ of positive Lebesgue measure 
so that the slope of $\mu(z)$ is a measurable function of $z$. A line field is \emph{invariant} if $f^{-1}(E) = E$ and the differential $\text{D}_z f$ transforms the line $\mu(z)$ at $z$ to the line $\mu(f(z))$ at $f(z)$.
The assumption that the line field is measurable, implies that Lebesgue almost every $x\in B$ is a point 
of almost continuity: for any $\epsilon>0$ 
$$\frac{\area(\{z\in D(x,r) : |\mu(z)-\mu(x)|\ge \epsilon\})}{\area(D(x,r))} \to 0$$
as $r\to 0$ (here $D(x,r)$ stands for the disk of radius $r$ centered at $x$). Alternatively one can define $\mu$ as a measurable 
Beltrami coefficient $\mu(z)d\bar z /dz$ with $|\mu|=1$. We say that  the line field $\mu$ is {\em holomorphic} on an open set $U$
 if $\mu=\bar \phi / |\phi|$ a.e. on $U$, where  $\phi=\phi(z)dz^2$ is a holomorphic quadratic differential on $U$.  We say that the line field is {\em univalent} if $\phi$ has no zeroes. In that case $\mu$
 induces a foliation on $U$ given by trajectories of $\phi$. 

The absence of measurable line fields was proved  by McMullen  for infinitely renormalizable quadratic polynomials with complex bounds 
\cite[Theorem 10.2]{McM}, in  \cite{LvS2} for covering maps of the circle 
and for real rational maps with all critical points real in  \cite{Shen}. Since then, for a number of other cases, including
for non-renormalizable polynomials \cite{KvS} and for certain real transcendental maps \cite{RvS1}.

In our exposition here we will use the following result due to Shen, \cite[Corollary 3.3]{Shen} the proof of which in our setting goes through without any changes.

\begin{proposition}[\cite{Shen}, Corollary 3.3]
\label{prop:shen}      
Let $F$ be a  dynamically natural complex box mapping.  
Let $z$ be a point in the filled Julia set $K(F)$.  Assume there exist a positive constant $\delta>0$, a positive integer $N \ge 2$, sequences $\{s_n\},\{p_n\},\{q_n\}$ of positive integers tending to infinity, sequences $\{A_n\},\{B_n\}$ of topological disks with the following properties (see Figure~\ref{Fig:ILF}):
\begin{enumerate}
\item[(i)] $F^{s_n} \colon A_n \to B_n$ is a proper map whose degree is between $2$ and $N$;
\item[(ii)] $\diam(B_n) \to 0$ as $n \to \infty$;
\item[(iii)] $F^{p_n}(z) \in A_n$, $F^{q_n}(z)\in B_n$ and there exists a topological disk
$B_n^-\subset B_n$ so that\footnote{A small typo in \cite{Shen} is corrected in here.}
$$
\left\{F^{s_n}(u) : u \in A_n,(F^{s_n})'(u) = 0\right\}\cup \left\{F^{q_n}(z), F^{p_n+s_n}(z)\right\} \subset B_n^-$$ 
and 
$$\modulus (B_n\setminus \cl B_n^-)\ge \delta.$$
\item[(iv)]  for any $n$ the following maps are univalent:  
$$F^{p_n}\colon \Comp_z F^{-p_n}(A_n)\to A_n \mbox{ and } 
F^{q_n} \colon \Comp_z F^{-q_n}(B_n)\to B_n.$$ 
\end{enumerate}
Then for any $F$-invariant line field $\mu$, either $z$ is not in the support of $\mu$,
or $\mu$ is not almost continuous at $z$.  \qed
\end{proposition}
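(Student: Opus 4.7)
The plan is to argue by contradiction. Assume that $z$ lies in the support of $\mu$ and that $\mu$ is almost continuous at $z$; denote its value by $\ell = \mu(z)$, a fixed real line through the origin. The overall strategy (following McMullen's well--known rescaling argument, as implemented by Shen) is to use the two families of univalent pullbacks in (iv) to promote almost continuity at $z$ into convergence in measure of rescaled line fields on $A_n$ and $B_n$, then combine this with the degree--$\geq 2$ proper maps $F^{s_n} \colon A_n \to B_n$ to produce a limiting non-affine proper holomorphic map between disks that preserves a constant line field --- a contradiction.

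More precisely, set $\alpha_n := \Comp_z F^{-p_n}(A_n)$ and $\beta_n := \Comp_z F^{-q_n}(B_n)$, so that $F^{p_n} \colon \alpha_n \to A_n$ and $F^{q_n} \colon \beta_n \to B_n$ are univalent. The invariance of $\mu$ gives $(F^{p_n})^*\mu|_{A_n} = \mu|_{\alpha_n}$ and similarly for $q_n$. The next step is to pick affine normalizations $T_n^A$ and $T_n^B$ so that $T_n^A(A_n)$ and $T_n^B(B_n)$ are nondegenerate topological disks of uniformly bounded geometry, with $T_n^A(A_n)$ containing the rescaled image of $F^{p_n}(z)$ near the origin. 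The existence of such normalizations uses the bounded geometry of $B_n$ supplied by the modulus bound $\modulus(B_n \setminus \cl B_n^-)\ge \delta$ in (iii) and the fact that $F^{s_n}\colon A_n\to B_n$ has degree at most $N$ (which transfers bounded geometry from $B_n$ back to $A_n$ via the Riemann--Hurwitz formula and standard compactness of proper maps). Since the univalent maps $F^{p_n}, F^{q_n}$ satisfy Koebe distortion estimates on compact subsets, almost continuity of $\mu$ at $z$ pushes forward to convergence in measure of $(T_n^A)_* \mu|_{A_n}$ to a constant line field $\ell_A$ on the limiting disk $\Omega^A_\infty$, and similarly $(T_n^B)_* \mu|_{B_n}$ converges to a constant line field $\ell_B$ on $\Omega^B_\infty$.

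Consider now the rescaled proper maps $g_n := T_n^B \circ F^{s_n} \circ (T_n^A)^{-1} \colon T_n^A(A_n) \to T_n^B(B_n)$. By condition (iii), the critical values of $g_n$, together with the marked points $T_n^B(F^{q_n}(z))$ and $T_n^B(F^{p_n+s_n}(z))$, are trapped in the compactly contained subdomain $T_n^B(B_n^-)$, which has a definite annular buffer of modulus at least $\delta$. Combined with the uniform degree bound $2 \le \deg g_n \le N$, this gives precompactness of the family $\{g_n\}$ via Montel's theorem. Passing to a subsequence, $g_n$ converges, locally uniformly, to a proper holomorphic branched covering $g_\infty \colon \Omega^A_\infty \to \Omega^B_\infty$ of degree between $2$ and $N$, whose critical values lie in the interior. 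By Hurwitz's theorem $g_\infty$ is nonconstant, and the degree lower bound forces $g_\infty$ to have at least one critical point. Moreover, the convergence in measure of the line fields and the $C^1$ convergence of $g_n$ away from critical points yield $g_\infty^* \ell_B = \ell_A$ almost everywhere on $\Omega^A_\infty$.

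To conclude, view $\ell_B$ as the constant quadratic differential $dz^2$ (after rotating coordinates). Then $g_\infty^*(dz^2) = g_\infty'(w)^2\, dw^2$ must be a positive real multiple of the constant differential representing $\ell_A$ almost everywhere, so $g_\infty'$ has constant argument on $\Omega^A_\infty$, forcing $g_\infty$ to be affine and thus univalent. This contradicts $\deg g_\infty \geq 2$. The main obstacle I expect is the simultaneous control needed for the normalizations $T_n^A, T_n^B$: one has to choose them so that three things happen together --- the rescaled domains subconverge to nondegenerate disks, the rescaled line fields converge in measure to \emph{constant} (not merely holomorphic) line fields, and the rescaled maps $g_n$ stay precompact. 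Keeping these three compatible is exactly where the bounded--degree hypothesis and the modulus condition in (iii) are used in tandem, and is the technical heart of Shen's argument; the fact that $F$ is dynamically natural plays no role beyond ensuring that $\mu$ and the puzzle geometry behave well, so the argument transfers verbatim.
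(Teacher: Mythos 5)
Your proposal faithfully reconstructs McMullen's rescaling argument as implemented by Shen (to whom the paper explicitly attributes this result, citing Corollary 3.3 of \cite{Shen} and stating that the proof transfers without change), and it matches the paper's own one-paragraph sketch following the statement: pull $\mu$ back through the univalent branches in (iv), use almost continuity at $z$ together with Koebe control to produce asymptotically constant line fields on rescaled copies of $A_n$ and $B_n$, and derive a contradiction from the fact that $F^{s_n}\colon A_n \to B_n$ has degree at least $2$ whose critical points persist in the limit thanks to condition (iii). This is essentially the same approach as the paper's, and the technical points you flag at the end (compatible affine normalizations, preservation of degree and properness under rescaled limits) are exactly the ones Shen resolves.
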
 

\begin{figure}[htbp]
\begin{center}
\includegraphics[scale=1., trim=30 20 30 20, clip]{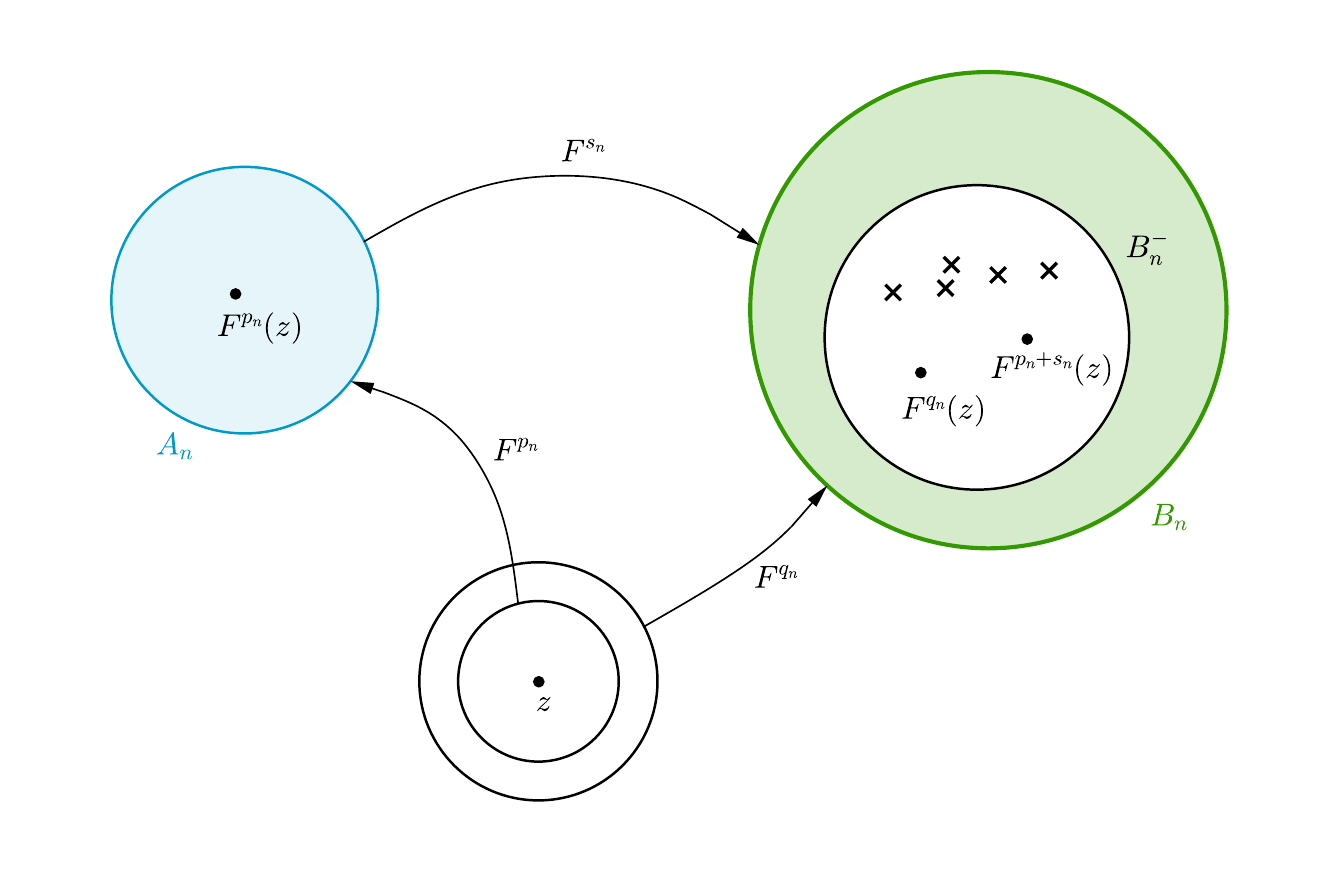}
\caption{An illustration to the statement of Proposition~\ref{prop:shen}. The critical values of the map $F^{s_n}$ are marked with crosses. The green annulus has modulus at least $\delta >0$, with $\delta$ independent of $n$. The branches $F^{p_n}$ and $F^{q_n}$ are univalent.
If $z$ is in the support of $\mu$ and $\mu$ is almost continuous at $z$, then by going to limits, one can extract univalent line fields on $A_n$ and also $B_n$, which is impossible because the map $F^{s_n}\colon A_n\to B_n$ has critical points. 
}
\label{Fig:ILF}
\end{center}
\end{figure}

The proof of the above proposition follows the same idea as the original idea by McMullen: the maps $F^{p_n}$, $F^{q_n}$ send a
neighborhood of $z$ diffeomorphically to $A_n$, $B_n$ and so since $\mu$ is increasingly constant near a point of almost continuity $z$, by property (iv)  we obtain in $A_n,B_n$ what increasingly looks like continuous foliations of trajectories of the corresponding quadratic differentials. However, the map 
$F^{s_n}\colon A_n\to B_n$ has critical points, so this impossible. To make this argument precise one needs in particular
assumption (iii). 

We are now in position to prove Theorem~\ref{Thm:BoxMappingsMain2} \eqref{parta} in the form of the following theorem.

\begin{theorem}[Invariant line fields] 
\label{Thm:ILF}
Let $F\colon \U\to \V$ be a  non-renomalizable dynamically natural box mapping with $\Crit(F) \neq \emptyset$. Assume that $F$ supports a measurable invariant line field on a forward 
invariant subset $X\subseteq K(F)$ of positive Lebesgue measure.
Then there exists a puzzle piece $\J$ of $F$ so that 
$\area(X\cap \J) = \area(\mathcal \J)$
and so that the invariant line field extends to an invariant univalent line field on $\J$. 
\end{theorem}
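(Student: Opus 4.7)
The plan is to combine the ergodic structure given by Theorem~\ref{Thm:ErgodicNatural} with Shen's Proposition~\ref{prop:shen} in order first to exclude persistently recurrent critical points in the accumulation set of a generic density point, and then to use the bounded--degree holomorphic pullbacks supplied by the ``moreover'' clause of Theorem~\ref{Thm:ErgodicNatural} to show that $\mu$ is, almost everywhere, the line field of a nowhere-vanishing holomorphic quadratic differential on a puzzle piece.

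First I would fix a point $z_0 \in X$ that is simultaneously a Lebesgue density point of $X$ and a point of almost continuity of $\mu$; a.e.\ point of $X$ has both properties by the Lebesgue density and differentiation theorems. By Theorem~\ref{Thm:ErgodicNatural} the orbit of $z_0$ accumulates on some $c\in\Crit(F)$, and every such $c$ is itself a Lebesgue density point of $X$. I claim no such $c$ can be persistently recurrent. If it were, the Enhanced Nest $(\bI_n)$ around $c$ from Section~\ref{SSec:Enhanced} would provide first return maps $F^{s_n}\colon\bI_{n+1}\to\bI_n$ of uniformly bounded degree $d\ge 2$; Proposition~\ref{prop:KvSnice}, combined with the free puzzle pieces $\bI_n^{-}\subset \bI_{n+1}\subset \bI_n\subset \bI_n^{+}$ of the Enhanced Nest, supplies a constant $\delta>0$ with $\modulus(\bI_n\setminus\ovl{\bI_n^{-}})\ge\delta$ such that all critical values of $F^{s_n}|_{\bI_{n+1}}$ lie inside $\bI_n^{-}$. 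After discarding a measure-zero subset of $X$ (backward orbits of $\Crit(F)$ together with points whose landings to $\bI_n$ fail to be univalent cofinally often), and using triviality of fibers (Theorem~\ref{Thm:BoxMappingsMain2}~\eqref{It:MainIt1}), I may choose $z_0$ so that for infinitely many $n$ there exist times $p_n,q_n\to\infty$ with $F^{p_n}(z_0)\in \bI_{n+1}$, $F^{q_n}(z_0)\in \bI_n^{-}$, and univalent pullbacks of $\bI_n^{+}$ along both. All hypotheses of Proposition~\ref{prop:shen} are then verified with $A_n=\bI_{n+1}$, $B_n=\bI_n$, $B_n^{-}=\bI_n^{-}$, contradicting that $z_0$ is in the support of $\mu$ and a point of almost continuity of $\mu$.

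Hence every $c\in\omega(z_0)\cap\Crit(F)$ is non-persistently recurrent, so the ``moreover'' clause of Theorem~\ref{Thm:ErgodicNatural} supplies puzzle pieces $\K\Supset \J\ni c$ with $\area(\J\cap X)=\area(\J)$, together with a sequence $k_n\to\infty$ such that $F^{k_n}(z_0)\in \J$ and the branched coverings $\psi_n:=F^{k_n}\colon A_n\to \K$, where $A_n:=\Comp_{z_0}F^{-k_n}(\K)$, all have uniformly bounded degree $D$; triviality of fibers moreover gives $\diam A_n \to 0$. To extend $\mu$ to a univalent invariant line field on $\J$, let $\tau=\mu(z_0)\in\C$ be the unit-modulus slope furnished by almost continuity, and consider the constant holomorphic quadratic differential $\Phi_0=\bar\tau\,d\zeta^2$. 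Since $\mu|_{A_n}$ converges in measure to the line field of $\Phi_0$ and $\mu$ is $F$-invariant, the restriction $\mu|_{\K}$ is asymptotically the line field of the pushforward $(\psi_n)_{*}\Phi_0$, which is a meromorphic quadratic differential on $\K$ of degree bounded in terms of $D$. A compactness argument for meromorphic quadratic differentials of bounded degree produces a subsequential limit $\Phi$ on $\K$, nonzero, whose associated line field agrees with $\mu$ almost everywhere on $\K\cap X$. The zeros and poles of $\Phi$ form a finite set concentrated at the limiting critical values of $\psi_n$; shrinking $\J$ around $c$ (using $\fib(c)=\{c\}$) I may assume $\Phi$ is holomorphic and nowhere vanishing on $\J$, giving the desired invariant univalent extension.

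The main obstacle, and the step I expect to require the most care, is the extraction of the univalent pullbacks along $F^{p_n}$ and $F^{q_n}$ in the Shen step: the first landing maps to Enhanced Nest pieces a priori give only bounded-degree (not unit) pullbacks, so one must combine the $(\delta,N)$-extendibility of Theorem~\ref{Prop:UC}~\eqref{It:Ext} with triviality of fibers and a Fubini-type measure argument on the backward tree of $\bI_n$ in order to show that almost every orbit visits $\bI_{n+1}$ along a univalent pullback cofinally often, thereby making Proposition~\ref{prop:shen} actually applicable at a full-measure set of density points of $X$.
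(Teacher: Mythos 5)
Your overall plan mirrors the paper's: use Theorem~\ref{Thm:ErgodicNatural} to locate a critical point in $\omega(z_0)$ that is a density point of $X$, exclude the persistently recurrent scenario via Proposition~\ref{prop:shen}, and then exploit the uniformly-bounded-degree pullbacks from the ``moreover'' clause to produce a holomorphic (and, after shrinking $\J$, univalent) invariant line field. However there are two genuine gaps in the way you execute this.

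First, the case split is wrong. You claim to prove the strong statement that \emph{no} $c\in\omega(z_0)\cap\Crit(F)$ is persistently recurrent, and you try to derive a Shen-type contradiction at $z_0$ for each persistently recurrent $c$. This is not what the paper does, and it is not what your argument can deliver. The paper's dichotomy is: either $\omega(z)$ contains a reluctantly recurrent or non-recurrent critical point (in which case the conclusion is proved directly via the ``moreover'' clause applied to \emph{that} critical point), or $\omega(z)$ consists \emph{only} of persistently recurrent critical points, and \emph{then} one gets a contradiction. The restriction to the second case is essential: the verification of condition (iv) of Proposition~\ref{prop:shen} (univalence of the pullbacks along $F^{p_n}$ and $F^{q_n}$) hinges on the combinatorial fact that the pullback of a deep puzzle piece around $c_0$ along the orbit of $z_0$ can only meet critical points lying in $[c_0]$; this fact (the condition called \eqref{eq:no-other} in the paper, inherited from Case 1 of Theorem~\ref{Thm:ErgodicNatural}) uses that $c_0$ is maximal in the partial order on $\omega(z_0)\cap\Crit(F)$, and that the entire chain collapses to $[c_0]$ — which requires precisely that there are no non-persistently-recurrent critical points in $\omega(z_0)$. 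If $\omega(z_0)$ also contains, say, a reluctantly recurrent critical point, orbits of $z_0$ will pass through its neighbourhood with high degree, your pullbacks will not be univalent, and your Shen step simply does not close. Your stronger claim is therefore not proved and is not needed.

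Second, the mechanism you propose for securing univalent pullbacks — combining the $(\delta,N)$-extendibility of Theorem~\ref{Prop:UC}~\eqref{It:Ext} with a Fubini-type measure argument on the backward tree — is not the right tool, and you correctly flag this step as the weak point. The paper's argument is much more direct and purely combinatorial: one takes for $B_n^-$, $B_n$ the pullbacks $W_{n,c}^-=\DomL_c(\bI_n^-)$ and $W_{n,c}=\Comp_c F^{-t_{n,c}}(\bI_n)$ of the buffered Enhanced Nest pieces along the first-landing map at each $c\in[c_0]$, and defines $q_n$ as the \emph{first entry} time of $\orb(z_0)$ to $W_n^-=\bigcup_{c\in[c_0]}W_{n,c}^-$. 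Since all of $[c_0]$ is inside $W_n^-$, the orbit of $z_0$ up to time $q_n-1$ stays out of $W_n^-$, and by \eqref{eq:no-other} the pullback meets no critical point — univalence is automatic, with no appeal to measure or extendibility. An analogous first-entry construction at the next level produces $A_n$, $p_n$, and $s_n$. Note also that your choice $A_n=\bI_{n+1}$, $B_n=\bI_n$ does not serve: you need $F^{q_n}(z_0)$ and $F^{p_n+s_n}(z_0)$ to sit inside $B_n^-$, and the Enhanced Nest pieces themselves do not track the orbit of $z_0$ — the landing-domain pullbacks $W_{n,c}$ do. Your concluding ``compactness of meromorphic quadratic differentials of bounded degree'' argument in the non-persistently-recurrent case is a reasonable variant of the paper's push-forward argument (which observes directly that $F^{k_n}|_{U_n}$ is a composition of bounded-distortion univalent maps and finitely many local power maps $w\mapsto w^{d_i}$), but since the preceding structure is broken, it does not rescue the proof.
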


\begin{proof}
The proof uses the result of Theorem~\ref{Thm:ErgodicNatural}. Let $z \in X$ be a Lebesgue density point which is a point of almost continuity for $\mu$. Assume that $\omega(z)$ contains a reluctantly recurrent or non-recurrent critical point, say, $c'$. By Theorem~\ref{Thm:ErgodicNatural}, there exist puzzle pieces $\K \Supset \J \ni c'$ of $F$ with $\area(\J \cap X) = \area(\J)$ and an infinite sequence $(k_n)$ such that $F^{k_n}(z) \in \J$ and the maps $F^{k_n} \colon \Comp_z F^{-k_n}(\K) \to \K$ have uniformly bounded degrees, say, bounded by some $N$ independent of $n$. Write $U_n:=\Comp_z F^{-k_n}(\J)$. Since $z$ is a Lebesgue density point of $X$, it follows that $\area(U_n\cap X)/\area(U_n)\to 1$ as $n \to \infty$ (note that $(U_n)$ form a shrinking nest of puzzle pieces with uniformly bounded geometry). As $F^{k_n}|_{U_n}$ is a composition of univalent maps with bounded distortion 
and at most $N$ unicritical covering maps with a unique critical point of  the form $w\mapsto w^{d_i}$,
we obtain that $\mu$ is a holomorphic line field on $\J$. Since $\mu$ can have only finitely many singularities on $\J$, by, if necessary, shrinking $\J$ slightly we can obtain that the line field $\mu$ is univalent on $\J$. 

So assume now that $\omega(z)$ contains only persistently recurrent critical points, and let $c_0 \in \omega(z)$ be one of them. For $c_0$, we start by repeating the same argument as in the proof of Case 1 of Theorem~\ref{Thm:ErgodicNatural}. 
Based on that argument, let show how to construct the topological disks $A_n, B_n$ and the sequences $\{s_n\}, \{p_n\}, \{q_n\}$ with the properties required by Proposition~\ref{prop:shen}.

For each $c \in [c_0]$, define $W_{n,c}^- := \DomL_{c} (\bI_n^-)$ and let $F^{t_{n,c}} \colon W_{n,c}^- \to \bI_n^-$ be the corresponding branch of the first landing map to $\bI_n^-$. If now $W_{n,c} := \Comp_{c} F^{-t_{n,c}}(\bI_n)$, then $W_{n,c} \cap \omega(c_0) \subset W_{n,c}^-$. Define $W_n^- := \bigcup_{c \in [c_0]} W_{n,c}^-$.

Let $q_n$ be the first entry time of $\orb(z)$ to $W_n^-$, and let $B_n^-:=W_{n,c_1}^-$ be the component of $W_n^-$ containing $F^{q_n}(z)$. Put $B_n := W_{n,c_1}$. The map $F^{q_n} \colon \Comp_z F^{-q_n}(B_n^-) \to B_n^-$ is univalent (because of \eqref{eq:no-other} and since $W_n^- \supset [c_0]$). Moreover, the inclusion $W_{n,c_1} \cap \omega(c_0) \subset W_{n,c_1}^-$ implies that this map has a univalent extension $F^{q_n} \colon \Comp_z F^{-q_n}(B_n) \to B_n$ with $\modulus(B_n \sm \cl B_n^-) \ge \delta' >0$, where $\delta'$ depends only on $\delta$ and the number and local degrees of the critical points in $[c_0]$, in particular, $\delta'$  is independent of $n$. The latter moduli bound holds because the branch $F^{t_{n,c_1}} \colon B_n^- \to \bI_n^-$ has degree bounded independently of $n$, and same is true for its extension $F^{t_{n,c_1}} \colon B_n \to \bI_n$, again due to \eqref{eq:no-other}.

Now, let $V_n^- := \bigcup_{c \in [c_0]} \DomL_{c} (B_n^-)$, and let $p_n$ be the entry time of $\orb(z)$ to $V_n^-$. Let $A_n^-$ be the component of $V_n^-$ containing $F^{p_n}(z)$ and $F^{s_n} \colon A_n^- \to B_n^-$ be the corresponding branch of the first landing map to $B_n^-$; as such, the degree of this branch is bounded independently of $n$. Let $A_n := \Comp_{A_n^-} F^{-s_n} (B_n)$. By construction, $A_n \cap \omega(c_0) \subset A_n^-$, and hence by \eqref{eq:no-other} the map $F^{s_n} \colon A_n \to B_n$ has the same degree as $F^{s_n}|_{A_n^-}$, and thus it is bounded independently of $n$, while the map $F^{p_n} \colon \Comp_z F^{-p_n}(A_n) \to A_n$ is univalent. 

Using the disks and sequences of iterates constructed above, we are in position to apply Proposition~\ref{prop:shen}; this gives us the desired contradiction to our choice of $z$ to be a point of almost continuity for $\mu$.
\end{proof}

\section{Latt\`es box mappings}\label{sec:lattes} 

In this section, we construct and describe non-renormalizable dynamically natural box mappings $F\colon \U\to \V$ with non-escaping set of positive area and which support a measurable invariant line field. We will call these {\em Latt\`es box mappings}. 
Such a map is quite special, and  should be thought 
of as an analogue of a Latt\`es mapping on the Riemann sphere, see \cite{milnor-lattes}.

\subsection{An example of a dynamically natural Latt\`es box mapping}
\label{SSec:LattesExample}

\begin{proposition}[Latt\`es box mappings exist]
\label{prop:lattes} 
There exists a non-renormalizable dynamically natural box mapping $F\colon \U\to \V$ 
with $\area(K(F)) > 0$ and a measurable invariant line field supported on $K(F)$. For the example we give here
\begin{enumerate}
\item
$\Crit(F) = \{c\}$ and $F^2(c) = F(c)$, i.e.\ the unique critical point of $F$ is strictly pre-fixed;
\item $\V$ consists of two components $V$ and $V'$  such that $U:=\U\cap V=V$
contains $c$ and such that  $F(c)$ is contained in a 
component $U'$ of $\U\cap V'$;
\item $\U \cap V'$ contains infinitely many components and they tile $V'$;
\item $F(U') = V'$ and for each component $U''$ of $\U\cap V'$ distinct from $U'$
one has $F(U'')=V$. 
\end{enumerate} 
\end{proposition}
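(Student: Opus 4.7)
The plan is to construct $F$ explicitly, adapting the local structure of a classical Latt\`es map to the box-mapping combinatorics demanded by (1)--(4). Fix Jordan disks $V,V'\subset\C$ with disjoint closures and a point $c \in V$, and choose Riemann maps $\psi\colon V\to\disk$, $\phi\colon V'\to\disk$ sending $c$ and the distinguished point $c':=\phi^{-1}(0)\in V'$ to $0$. Up to rotation there is a unique degree-two holomorphic branched cover $V\to V'$ ramified only at $c$ with $c\mapsto c'$: in these normalising coordinates it is $z\mapsto z^2$. Take this as $F|_V$. Declare the measurable line field
\[
\mu_V(z):=\R \text{ in }\psi\text{-coordinates}, \qquad \mu_{V'}(w):=\sqrt{w}\cdot\R \text{ in }\phi\text{-coordinates},
\]
with $\sqrt{w}\,\R$ well defined as an unoriented line at every $w\neq 0$. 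The computation $DF|_V(z)\cdot\R=2z\,\R$ and $\sqrt{z^2}\,\R=\pm z\,\R=z\,\R$ shows that $F|_V$ preserves $\mu$. This is exactly the local model for the pushdown of the horizontal line field on a torus by the Weierstrass $\wp$-function.

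To extend $\mu$ to an $F$-invariant line field on all of $\mathcal V$ I will choose $F|_{U'}$ and each $F|_{U_i}$ so as to respect $\mu$. In $\phi$-coordinates take $F|_{U'}$ to be the real dilation $w\mapsto\lambda w$ for some $\lambda>1$, with $U':=\phi^{-1}(\disk_{1/\lambda})$: then $c'=0$ is a repelling fixed point of $F|_{U'}$ and $\sqrt{w}\,\R$ is pointwise preserved. For each $U_i$, the invariance equation $F|_{U_i}^{*}\mu_V=\mu_{V'}|_{U_i}$ translated to normalised coordinates becomes $\bigl(D\widetilde F_{U_i}(w)\bigr)^{-1}\cdot\R=\sqrt{w}\,\R$, which forces
\[
\widetilde F_{U_i}(w)=2C_i\sqrt{w}+D_i,\qquad C_i\in\R,\; D_i\in\C,
\]
on a single-valued branch of $\sqrt{\cdot}$ over the simply connected domain $\phi(U_i)\not\ni 0$. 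Solving $\widetilde F_{U_i}(\phi(U_i))=\disk$ gives $\phi(U_i)=\{u^2:u\in B_i\}$ for a round disk $B_i\subset\C$ avoiding the origin; thus each $\phi(U_i)$ is a Jordan ``lens'' region, and the full conformal data of $F|_{U_i}$ are determined by the position and radius of $B_i$. The main geometric step is to tile $\disk\setminus\overline{\disk_{1/\lambda}}$ by countably many such lenses with pairwise disjoint interiors, leaving only a set of zero Lebesgue measure uncovered. This is done by a Sierpinski-carpet-style recursive construction analogous to Theorem~\ref{Thm:Path}(\ref{It:PathAs2}): at each stage one inserts finitely many lens-shaped tiles of comparable size in the region still to be filled, and the residual pieces are themselves conformally lens-like, making the recursion consistent.

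The verification of the proposition then proceeds as follows. Non-renormalisability is immediate because $c$ is strictly preperiodic, so its orbit escapes any small puzzle neighbourhood of $c$ after one iterate (see Section~\ref{subsec:renorm}). The no-permutation condition holds because $F(V)=F(U')=V'$, $F^2(U_i)=V'$, and $V'\setminus\mathcal U$ is the nonempty boundary set of the tiling. Positivity of $\area(K(F))$ is inherited from the tiling: by choosing the scaling ratios of the lenses as in Theorem~\ref{Thm:Path}(\ref{It:PathAs2}) the residual attractor of the induced iterated function system has positive Lebesgue measure. The two remaining conditions for dynamical naturality, namely $\area(\Koc(F))=0$ and $K(F)=\Kwi(F)$, follow from the fact that almost every orbit visits $V$ (and hence accumulates on $c$) infinitely often and, by the Koebe distortion theorem applied to the expanding bounded-degree first return map to $V'$, spends a positive density of time in tiles $U_i$ with $\modulus(V'\setminus\overline{U_i})$ bounded below. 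The central obstacle throughout is the tiling step: the lenses have \emph{rigidly} prescribed conformal shapes dictated by the line field, so one must verify that such lenses can be arranged simultaneously to tile $V'\setminus\overline{U'}$ up to a null set, to give $K(F)$ positive measure, and to yield the mixing properties required for $\area(\Koc(F))=0$. This is a careful but elementary parameter calculation modelled on the construction of Section~\ref{Sec:Pathologies}.
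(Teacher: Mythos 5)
Your construction is a genuinely different route from the paper's, and the conceptual set-up is sound: rather than starting from an explicit tiling and watching a line field emerge, you declare the line field first, observe that it rigidly constrains the conformal type of the tiles $U_i$ (they must be $\phi^{-1}\bigl(\{u^2:u\in B_i\}\bigr)$ for round disks $B_i$), and then try to pack such tiles. The derivation of the forced form $\widetilde F_{U_i}(w)=2C_i\sqrt w+D_i$ from the invariance $\bigl(D\widetilde F_{U_i}\bigr)^{-1}\R=\sqrt w\,\R$ is correct: $\sqrt w\,D\widetilde F_{U_i}(w)$ is holomorphic and real-valued, hence constant. The verification that $Q$ and $w\mapsto\lambda w$ preserve the declared field is also fine.

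The paper avoids this rigidity altogether by choosing a \emph{different} holomorphic line field: the push-forward under the Riemann map of the \emph{square} $\V_2$ of the constant horizontal field, rather than the push-forward under a Riemann map of $V$ of the constant field. In that ``square'' coordinate, invariance only forces the branches to be real-affine, so the tiles can be Sierpi\'nski sub-squares, and both the tiling and the moduli bounds are immediate. Your choice of field is the more ``canonical''-looking one, but it pushes all the work into the packing step, which is exactly where your argument has gaps.

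Concretely, there are three issues in the tiling paragraph. First, the assertion that ``the residual pieces are themselves conformally lens-like, making the recursion consistent'' is not true: removing a finite union of lens-shaped regions from an annulus or a lens leaves a residual domain that is not a lens in any usable sense, so a verbatim Sierpi\'nski recursion does not close. What you actually need is a Vitali-type covering argument (pack the half-annulus $\{0<\arg u<\pi\}$ of $\disk\setminus\overline{\disk_{1/\sqrt\lambda}}$ by closed round disks up to a null set, reflect by $u\mapsto -u$, pick one representative from each pair, and push forward by $u\mapsto u^2$). Second, Definition~\ref{Def:BM}(\ref{It:DefBM3}) requires the tiles to have \emph{pairwise disjoint closures}, not merely disjoint interiors; the Vitali selection gives you pairwise disjoint closed disks, but you must say so. Third, and most seriously, for $K(F)=\Kwi(F)$ (Definition~\ref{Def:NaturalBoxMapping}(\ref{It:Nat3})) you need the moduli $\modulus(V'\setminus\overline{U_i})$ to be bounded below uniformly in $i$: since the first return to $V'$ is a full shift on the tiles, one can otherwise realise an itinerary visiting only tiles with moduli tending to zero, producing a non-escaping point with $\limsup m_F=0$. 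In the paper this bound is automatic, because every Sierpi\'nski tile sits inside its parent square with a fixed $\log 3$ annulus, and the parent sits inside $\V_2$. In your lens packing it is \emph{not} automatic from a generic Vitali cover (disks almost touching $\partial\disk$ relative to their size have arbitrarily small modulus); you must restrict to a Vitali cover by disks of radius at most, say, half their distance to $\partial\disk$, which still has a measure-zero residual but now yields the required uniform moduli bound. Your invocation of ``Koebe applied to the expanding bounded-degree first return map'' does not by itself address this, since Koebe controls distortion given moduli bounds but does not manufacture them.

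These are all repairable, so your route is viable, but as written the packing is asserted rather than proved, and the proposition's dynamical-naturality claim is exactly where it would fail if one took the packing na\"\i vely. A small side remark: saying ``the residual attractor of the induced IFS has positive Lebesgue measure'' is confusing; what you mean is that the residual of the tiling is a Lebesgue null set, so the escaping set (a countable union of preimages of a null set under a holomorphic map) is null and $K(F)$ has full measure in $\V$.
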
 
\begin{proof}
This example starts with the box mapping  $F_2\colon \U_2\to \V_2:=(-1,1)\times (-1,1)$  from Theorem~\ref{Thm:Path} \eqref{It:PathAs2} which was based on the Sierpi\'nski carpet construction.
This box mapping leaves invariant the horizontal line field on $(-1,1)\times (-1,1)$. 
Denote the component of $\U_2$ containing $0$ by $U^0_2$. We can make sure that 
$\U_2$ is invariant under $z\mapsto -z$, i.e.\ if $U$ is a component of $\U_2$,
then $-U$ is also a component of $\U_2$. Moreover, it is straightforward to modify this map so that  
$F_2(-z)=-F_2(z)$ for $z\in U^0_2$ and $F_2(-z)= F_2(z)$ for $z\in \U_2\setminus U_2^0$, and so that $F_2$ still preserves horizontal  lines. 

By Theorem~\ref{Thm:Path} \eqref{It:PathAs2}, the 
set $K(F_2)$ has full Lebesgue measure in $\V_2$.  Moreover, 
we have that each puzzle piece of $F_2$ maps linearly over $\V_2$. 
It follows that the orbits of Lebesgue almost all points $z\in K(F_2)$ will enter an arbitrarily deep puzzle piece
around $0$. Indeed, assume by contradiction that there exists a puzzle piece $P \ni 0$
so that the set of points the orbits of which avoid $P$ has positive Lebesgue measure; call this set $Y$. 
Take a Lebesgue density point $y$ of $Y$ and a puzzle $P_n\ni y$ of depth $n \ge 0$; note that $\diam P_n \to 0$ as $n \to \infty$.
Since $F^n_2 \colon P_n\to \V_2$ is a linear map, we have $\area(P_n\cap Y) / \area(P_n) \le (\area(\V_2) - \area(P))/\area(\V_2) <1$, 
which gives a contradiction.

Let $\phi\colon (-1,1)\times (-1,1)
\to \disk$ be the Riemann mapping.   For simplicity choose $\phi$ so that it preserves the real line 
and so that $\phi(0)=0$. By symmetry it follows that $\phi(-z)=-\phi(z)$ for all $z\in \V_2$. 
Set $\hat \V_2 := \disk$, $\hat \U_2 :=\phi(\U_2)$ and define $\hat F_2\colon\hat \U_2 \to \hat \V_2$
by $\hat F_2=\phi\circ F_2\circ \phi^{-1}$. 
Then $\hat F_2$ also has a holomorphic invariant line field on $\disk$. Moreover, $\hat F_2$ preserves $0$, maps real points to real points, and is univalent on each component of $\hat \U_2$. Let $0\in \hat U^0_2 :=\phi(U^0_2)$ be the {\lq}central{\rq} component of $\hat F_2$. It follows that $\hat F_2(-z)=- \hat F_2(z)$ for $z\in \hat U^0_2$ and $\hat F_2(-z)= \hat F_2(z)$ for $z\in \hat \U_2\setminus \hat U_2^0$

Finally, using $\hat F_2$ we will now define a box mapping $F \colon \U \to \V$, as described in the statement of Proposition~\ref{prop:lattes}, by taking $\V$ to be a formal union of two copies of $\disk$, and $\U$ to be a formal union of $\disk$ and countably many open topological disks that tile some other copy of $\disk$. The details are as follows (note that this construction is taken for simplicity, and can be easily modified so that $\V$ would embed into $\C$). 

Let $Q\colon \disk\to \disk$ be the quadratic map $Q(z)=z^2$. Let 
 $U=V=V'=\disk$ and define
$F\colon U\to V'$ by $F(z)=Q(z)$ for $z\in U$.  Moreover, define $\U \cap V' := Q(\hat \U_2)$,  $U':=Q(\hat U_2^0)$ 
and   $F\colon  U'\to V'$ 
by $$F(z)=  Q\circ \hat F_2\circ Q^{-1}(z) .$$ 
This map is well defined because $\hat F_2(-z)=-\hat F_2(z)$ for all $z \in \hat U^0_2$ and so different choices of $Q^{-1}(z)$ give the same result. Since $\hat F_2(0)=0$, this map is also univalent. 

Finally, define 
$F\colon (\U\cap V')\setminus U' \to V$ by 
$$F(z)=  \hat F_2\circ Q^{-1}(z).$$
Again this map is univalent and well-defined because $\hat F_2(-z)=\hat F_2(z)$ for each $z$
in a non-central component of $\hat U_2$. 

The fact that $F$ preserves an invariant line field follows from the commuting diagram in Figure~\ref{Fig:Lattes}\footnote{The figure was drawn using T.A.\,Driscoll's Schwarz--Christoffel Toolbox for MATLAB, see \url{https://tobydriscoll.net/project/sc-toolbox/.}}. Indeed, there is a holomorphic line field  $\mu$ on $V=\phi(\V_2)$, 
which is the image under $\phi$ of the horizontal line field 
on $\V_2=(-1,1)\times (-1,1)$ and which is invariant under $\hat F_2$. 
The pushforward  $Q_*\mu$ of this line field $\mu$ on $V'=Q(V)$  by $Q$ is again a holomorphic line field
which, by definition, is invariant under $F\colon U'\to V'$.  The map $F$
sends each component of $(\mathcal U\cap V' )\setminus U'$ univalently onto $V$
and again the line field of $Q_*\mu$ is mapped by $F$ to the line field  $\mu$ 
by definition. 

By construction, $K(F) = \Kwi(F),$ and by the 2nd paragraph in the proof, almost every point in $K(F)$ accumulates onto the critical point, and hence $F$ is dynamically natural. Finally, $F$ is non-renormalizable since $c$ and $F(c)$ lie in different components of $\U$ and the only critical orbit is non-recurrent. 
\end{proof} 

\begin{figure}[hbt!]
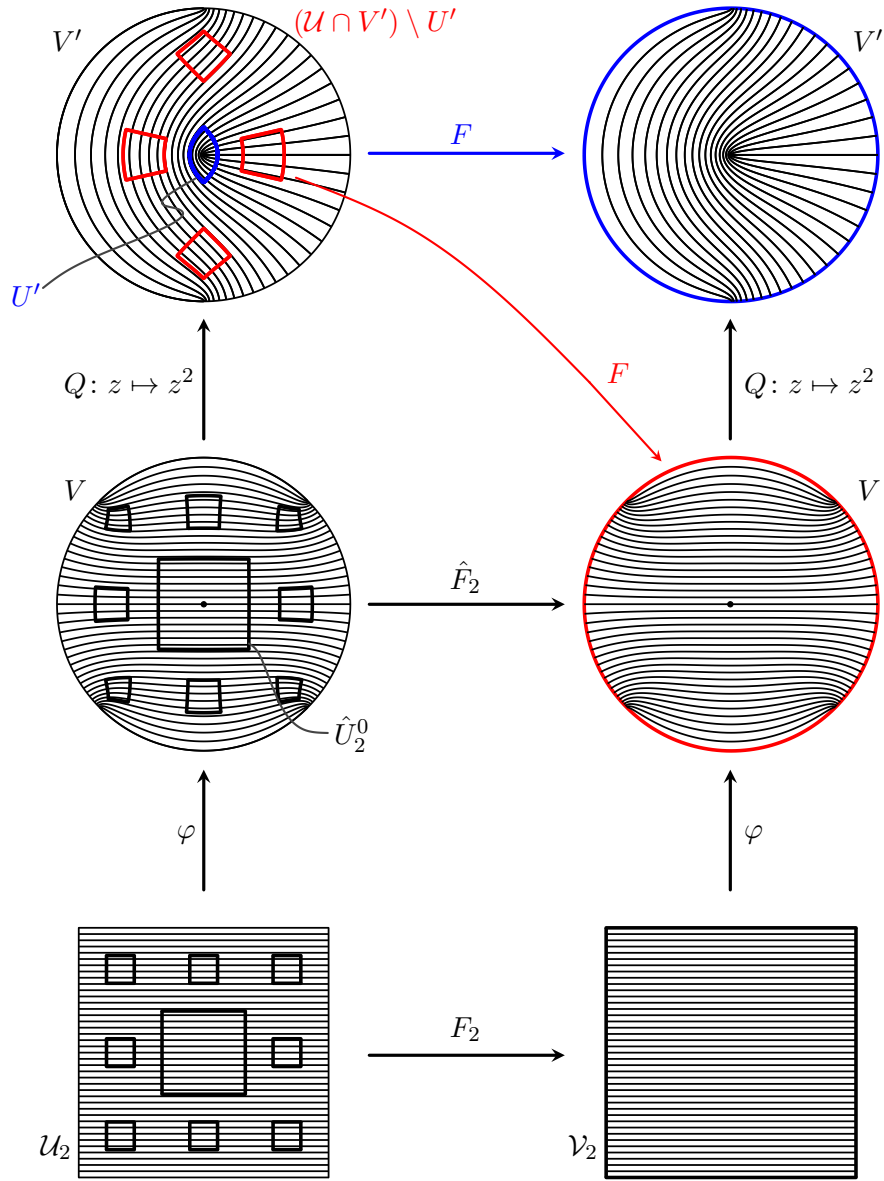

\definecolor{uuuuuu}{rgb}{0.26666666666666666,0.26666666666666666,0.26666666666666666}
\definecolor{ffffff}{rgb}{1.,1.,1.}
\definecolor{ffqqqq}{rgb}{1.,0.,0.}
\definecolor{qqqqff}{rgb}{0.,0.,1.}

\caption{The construction of a Latt\`es box mapping $F$. The red, respectively blue, domains are mapped by the correspondingly colored branches of the map $F$ over $V$, resp.\ $V'$. Only the first two steps in the Sierpi\'nski carpet construction for $F_2$ are shown. Some leaves of invariant foliations are shown with thin black lines.} 
\label{Fig:Lattes}
\end{figure}

\subsection{Properties of Latt\`es box mappings} 

Latt\`es box mappings are rather special. Some of their necessary features 
are described in the following result, which, in particular, provides a proof of Theorem~\ref{Thm:BoxMappingsMain2} \eqref{partb}.

\begin{proposition}[Properties of Latt\`es box mappings]  \label{prop:lattes-description} 
Assume $F \colon \U \to \V$ is a Latt\`es box mapping, i.e.\ $F$ is a non-renormalizable dynamically natural box mapping that has an invariant line field supported on a forward invariant measurable set $X \subseteq K(F)$ of positive Lebesgue measure. Let $\J \subset \U$ be the puzzle piece from Theorem~\ref{Thm:ILF}. Define $Y := \bigcup_{i\ge 0} F^i(\J)$ to be the union of forward orbits for all points in $\J$ and $\Crit'=\{c_0,\dots,c_k\}$ to be the set of critical points of $F$ which are contained in $Y$. Moreover, let  $\PC' := \left\{F^i(c_j) \colon i>0, j=0,\dots,k\right\}$. 
Then:
\begin{enumerate}
\item  
$\PC'$ is a finite set, i.e.\ each critical point in $\Crit'$ is eventually periodic;
\item no forward iterate of a critical point in $\Crit'$ is eventually mapped into another point 
in $\Crit'$;
\item each critical point in $\Crit'$ is quadratic; 
\item each critical point in $\Crit'$ has real multiplier; 
\item 
\label{It:PCF}
$F^{-1}\left(\PC'\right)\cap Y = \PC'\cup \Crit'$.
\end{enumerate} 
\end{proposition}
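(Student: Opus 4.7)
The plan is to exploit the invariant holomorphic quadratic differential (QD) underlying the univalent line field on $\J$ and extend it meromorphically to $Y$ via the dynamics. On $\J$ the line field is represented by a holomorphic QD $\phi$ with no zeros, and the invariance of the (unoriented) line field gives $F^*\phi = \lambda \phi$ on $\J \cap F^{-1}(\J)$ for some $\lambda \in \R_{>0}$: indeed, the ratio $F^*\phi/\phi$ is a holomorphic function taking positive real values almost everywhere, hence a positive real constant. Pushing forward along iterates of $F$ then extends $\phi$ to a meromorphic QD on the entire open set $Y$, and the five conclusions will follow from careful local analysis of this extended $\phi$ at points of $\Crit'$ and $\PC'$.

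The first step will be to establish (3). At a critical point $c \in \Crit'$ of local degree $d$, consistency (single-valuedness, to all orders) of the pushforward of a locally regular QD through $c$ requires $d \mid 2(d-1)$ together with the vanishing of higher-order corrections coming from the Taylor series of $F$, forcing $d = 2$. A local Taylor expansion of $F^*\phi = \lambda \phi$ then shows that $\phi$ is regular and nonzero at $c$ and has a simple pole at $F(c) \in \PC'$. For (2), I will suppose for contradiction that $F^i(c) = c_* \in \Crit'$ for some $c \in \Crit'$ and $i \ge 1$. The local degree of $F^i$ at $c$ is $\ell = 2^k$, where $k = \#\{j \in \{0,\ldots,i-1\} : F^j(c) \in \Crit'\} \ge 1$ (since $j=0$ contributes). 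Expanding the iterated invariance $\phi \circ F^i \cdot ((F^i)')^2 = \lambda^i \phi$ near $c$, the left-hand side has leading order $2(\ell - 1) \ge 2$ while the right-hand side has order $0$ (both $\phi(c)$ and $\phi(c_*)$ being nonzero by (3)), a contradiction. Hence (3) and (2) together force $\phi$ to have simple poles precisely at the points of $\PC'$ and to be holomorphic and nonvanishing elsewhere on $Y$.

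The remaining statements then follow quickly from the invariance equation combined with this structural description. For (5), given $v \in \PC'$ and $z \in F^{-1}(v) \cap Y$, the equation $\phi(v) F'(z)^2 = \lambda \phi(z)$ with $\phi(v) = \infty$ forces either $F'(z) = 0$ (so $z \in \Crit'$) or $\phi(z) = \infty$ (so $z \in \PC'$); the reverse inclusion is immediate from $F(\Crit' \cup \PC') \subset \PC'$. For (4), note that by (2) no $c \in \Crit'$ can be periodic, so $c$ strictly preperiodically lands on a cycle of some period $p$ with multiplier $M$ at a cycle point $v \in \PC'$; since $v$ is a simple pole of $\phi$, the local expansion of the $p$-fold iterated invariance at $v$ yields $M = \lambda^p \in \R_{>0}$, which is in particular real. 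The main obstacle is (1): meromorphy of $\phi$ on the open set $Y$ rules out accumulation of $\PC'$ at any interior point of $Y$, but one must still exclude accumulation on $\partial Y$. For this I intend to invoke the invariant half-translation surface structure induced by $\phi$: in flat charts $F$ acts as a $\pm\sqrt{\lambda}$-linear map, and non-renormalizability together with $M = \lambda^p$ forces $\lambda > 1$ (since periodic multipliers must be repelling). The expansion of $F$ in flat coordinates, combined with forward invariance of $Y \subset \V$, will then force each critical orbit to be trapped in a cycle rather than accumulate on $\partial Y$; executing this global/topological argument rigorously will be the hardest part of the proof.
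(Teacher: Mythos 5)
Your quadratic-differential framework is a reasonable repackaging of the paper's local analysis, and for parts (2)--(5) it works well: the relation $F^*\phi = \lambda\phi$ with $\lambda>0$, the local order-count at a degree-$d$ critical point forcing $d=2$ and a simple pole at the critical value, the leading-order comparison forbidding a critical point from mapping onto another, the derivation $M=\lambda^p$ at a simple-pole fixed point, and the pole-bookkeeping for (5) are all sound once one knows $\phi$ is regular and nonzero at the relevant critical point. The paper gets the same conclusions by its ``Observation'' (the symmetry $S$ with $\Df S(c)=e^{2\pi i/d}$), so you are not doing anything fundamentally new there, just phrasing it in the language of $\phi$ rather than of local symmetries and $k$-prong singularities.

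Two genuine issues, however. First, a logical ordering point you elide: to assert $\phi(c)\neq 0$ at some $c\in\Crit'$ you need to know that the orbit from $\J$ to $c$ can be traced along a univalent branch, i.e.\ that it does \emph{not} pass through another critical point on the way. The paper handles this by an induction on the order in which critical points are first reached by iterates of $\J$ ($c_0$ first, then $c_1$, etc.), proving Claims~1--2 for $c_0$ before touching $c_1$. Your proof should make this induction explicit, or the regularity claim at a general $c$ is unjustified.

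Second and more seriously, your plan for (1) has a real gap. Deducing $\lambda>1$ is fine (use a repelling periodic point in the regular locus of $\phi$, giving $M^2=\lambda^p$, and invoke Lemma~\ref{Lem:NonRepRen}; there is no circularity if you do it this way rather than via the yet-unproven eventual periodicity). But ``expansion in flat coordinates plus forward invariance of $Y$ forces the critical orbit to be trapped in a cycle'' does not follow: expanding maps with bounded range typically have \emph{non}-eventually-periodic orbits, and an infinite set of simple poles of $\phi$ accumulating only on $\partial Y$ is perfectly consistent with a meromorphic $\phi$ of infinite flat area. The paper proves (1) by an essentially combinatorial argument that your scheme does not replace: since $\V$ has finitely many components and puzzle pieces shrink (Theorem~\ref{Thm:BoxMappingsMain2}\eqref{It:MainIt1}), an infinite critical orbit must re-enter a fixed tiled component $\hat V$ infinitely often; one can then find a small puzzle piece $Q\subset\hat V$ containing a late iterate of $c_0$ and disjoint from the (finitely many) critical values of the fixed transition map $\J\to\hat V$, so that one pullback of $Q$ to $\J$ is univalent while another passes through $c_0$, giving incompatible line fields on $Q$. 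That pigeonhole step, not the flat expansion, is what kills an infinite critical orbit, and without it your proof of (1) — and hence the whole proposition — does not close.
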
 

In the proof of this proposition we will use the following  several times:

\medskip 

\noindent 
{\bf Observation:} Suppose that $F^t$ has a critical point at $c$ of order $d\ge 2$. Then there exists a  neighborhood 
$U$ of $c$ and  a conformal function $S\colon U\to \C$ with $S(c)=c$, 
$\Df S(c)=e^{2\pi i /d}$ and so that $F^t\circ S=F^t$ on $U$. 
Hence if there exists an $F$-invariant line field which is holomorphic on $U$,  then the line field
(and the  foliation defined  by this   line field) is also invariant under
the symmetry $S$. This implies that the invariant line field can only be univalent on $U$ if $d=2$. 
Moreover, the line field cannot be univalent both in $U$ and $F^t(U)$.

\begin{proof}[Proof of Proposition~\ref{prop:lattes-description}] 
By Theorem~\ref{Thm:ILF}, $\area(X \cap \J) = \area(\J)$ and the invariant line field in $\J$ is univalent and so describes a smooth invariant foliation on $\J$. 

{\bf Claim 1:} Let $t_0 > 0$ be minimal so that $F^{t_0}(\J)$ contains a critical point $c_0$.
Then  $c_0$ is eventually periodic.  

{\bf Proof of Claim 1:}  Assume  by contradiction 
that $F^i(c_0)$, $i\ge 0$,  consists of infinitely many distinct points.  Let $t>0$ be minimal so that $F^t(\J)$ is a 
connected component $V$ of $\V$. By choosing $\J$ smaller if necessary, we can assume that $t \ge t_0$. Let $\hat \V$ be the union of components of $\V$  visited by  $F^i(c_0)$, with 
$i> t-t_0.$ Since there are only finitely many components of $\V$ and the forward orbit of $c_0$ is infinite, at least one of these components, say $\hat V$, is visited infinitely many times and we can assume that this component is  tiled by infinitely many components of $\U$.  Let $t'$ be so that $F^{t'}(\J)=\hat V$.
Let $v_1,\dots,v_n$ be the critical values of $F^{t'}|_\J$. Then there exists a puzzle piece
$Q\subset \hat V$ containing some iterate of $c_0$ which is disjoint from $\{v_1,\dots,v_n\}$
(here we use that puzzle pieces shrink in diameter to zero). 
Then there exists a puzzle piece $P \subset \J$ so that $F^{t'}(P)=Q\ni F^i(c_0)$ and $F^{t'}|_P$ is univalent. 
Hence the line field is univalent in a neighborhood of $F^i(c_0)$. 
However, by the  assumption in the claim and since $F^i(c_0)\in Q$, there exists a puzzle piece 
 $P' \subset \J$ so that $F^{t''}(P') = Q$ for some $t''>0$ and so that the orbit $F(P'), \ldots, F^{t''}(P')$ passes through $c_0$. It follows that  there exists a point $z\in P'$  so that $\Df F^{t''}(z)=0$.
 Since the  line field is univalent on $\J\supset P'$ and on $Q$ this gives a contradiction to 
 the above observation. 

In fact, this argument is easy to see: if $\phi$ is the quadratic differential on $\J$ corresponding to the univalent line field $\mu$, then the trajectories of the quadratic differentials $(F^{t'}|_P)_* \phi$ and $(F^{t''}|_{P'})_* \phi$ on $Q$ will not match because the former will have no singularity, while the latter will have at least one singularity corresponding to an iterate of a critical value; this is a contradiction to invariance of $\mu$. Thus we can conclude that $c_0$ is eventually periodic, proving the claim. 

{\bf Claim  2:} The degree of the critical point $c_0$ is two, and 
no iterate of $c_0$ meets another critical point. Moreover, it is impossible
that there exists a univalent line field in a neighborhood of any forward iterate of $c_0$.

{\bf Proof of Claim 2:}   This immediately follows from the above Observation. 
Alternatively, let $q$ be the quadratic differential whose horizontal trajectories give a foliation on $\J$ and its forward iterates. We say that $q$ has \emph{zero of order $k \ge 0$} at some point $z_0$ if in some local coordinate system with the origin at $z_0$ the differential $q$ has the form $w^k dw^2$. Therefore, if the foliation given by $q$ is invariant under $F$, then the local degree of $z_0$ w.r.t.\ $F$ must divide $k+2$ \cite[Section 5.3]{HubBook1}. 

From the discussion in the previous paragraph it follows that since there exists a univalent line field near $c_0$, i.e.\ the quadratic differential has zero of order $0$ at $c_0$, the degree of $c_0$ must be two. Therefore, $q$ has a simple pole at $F(c_0)$, and thus the foliation has a $1$-prong singularity at $F(c_0)$ (see Figure~\ref{Fig:Lattes}, top left, for an example of a 1-prong singularity). Assume by contradiction that $F^i(c_0)=c_1$ and that none of the points $F(c_0),\ldots,F^{i-1}(c_0)$ are critical. This way we have an invariant foliation near $c_1$ with $1$-prong singularity at $c_1$. But then it is impossible to map this foliation to a foliation near $F(c_1)$ so as to preserve the foliation: a $1$-prong singularity has no local symmetries that are required if one wants to preserve the trajectories. The claim follows.

{\bf Claim  3:} Each critical point in $Y$ is eventually periodic. 

{\bf Proof of Claim 3:} 
Assume that some iterate of $\J$ intersects a critical point $\ne c_0$, then  take
 $t_1>t_0$ minimal so that $F^{t_1}(\J)$ contains a critical point $c_1\ne c_0$. 
Then, since $c_0$ is eventually periodic and $F^i(c_0)\ne c_1$ (by Claim 2), 
there exists a puzzle piece $\J_1\subset \J$ 
so that $F^{t_1}|_{\J_1}$ is univalent, and so that $F^{t_1}(\J_1)\ni c_1$. Using exactly 
the same argument as in Claim 1 it then follows that $c_1$ is also eventually periodic.

{\bf Claim 4:}  The degree of each critical point in $\Crit' = \{c_0, \ldots, c_k\}$ is equal to two. Moreover, there exists no univalent line field on 
a neighborhood of any point in $\PC'$.  

{\bf Proof of Claim 4:} As we saw in Claims 1 and 2, there exists a univalent line field
on a neighborhood $U_i$  of $c_i$, $i=0,\dots,k$. The proof now follows similarly as in Claim 2.

{\bf Claim 5:}  Let $p_0,p_1,\dots,p_k$ be the periodic points on which the critical points 
$c_0,\dots,c_k$ eventually land (some of these periodic points might 
coincide). Then the multipliers at these periodic points are real. 
Moreover, the line field is holomorphic  on the components $V_i$ of $\V$ containing $p_i$. 

{\bf Proof of Claim 5:} There exists a holomorphic line field (with a unique $1$-prong singularity) in a 
neighborhood of $p_i$. This line field can only be invariant near the periodic point $p_i$ if the multiplier is real. Moreover, by iterating $F$ one obtains that the line field near the periodic point 
$p_i$ extends to a holomorphic line field on $V_i$.  Let  $U_i\ni p_i$ be the component of $\U$ containing  $p_i$. Then the only  singularities of this line field on $V_i$ are at iterates  of a critical point $c_j\in U_j$ 
(assuming such a critical point $c_j$  exists).  

{\bf Claim 6:} The only singularities of the holomorphic line field on $Y$
are in $\PC'$. In particular, if $z\in Y$ and $F(z)\in \PC'$, then either $z\in \Crit'$,
or $z\in \PC'$. 

{\bf Proof of Claim 6:} The first statement follows from the fact that the line field 
on $\J$ is assumed to be univalent, and so the only way singularities 
of the line field on $Y$ can be created is by passing through some critical point. The 2nd statement follows:
the line field has a singularity at $z$  if and only if  it has a singularity at $F(z)$. 
\end{proof} 

\begin{remark}
This proposition does \emph{not} imply that such a box mapping $F$ is postcritically finite. 
Indeed, it is possible to take two box mappings $F_i\colon \U_i\to \V_i$
where  $F_1$ has an invariant line field (as described in Section~\ref{SSec:LattesExample}) 
and $F_2$ is one without an invariant line field and with an infinite postcritical 
set. Then taking the disjoint union of $\U_i$  and $\V_i$ we obtain a new 
map $F\colon \U\to \V$ with an invariant line field. 

However, if $F\colon\U\to\V$ is a Latt\`es complex box mapping, then there exist $\U'\subset\U$ and $\V'\subset\V$ so that $F|_{\U'}\colon\U'\to\V'$ is a postcritically finite complex box mapping.
\end{remark}

\subsection{Further remarks on Latt\`es box mappings}

We end this section with some discussion regarding Latt\`es box mappings, e.g.\ as constructed in Proposition~\ref{prop:lattes}, and their properties established in Proposition~\ref{prop:lattes-description}.

\subsubsection{M\"obius vs.\ quasiconformal conjugacy in presence of Latt\`es parts.}
\label{sssec:lattesqc}
The box mapping $F$  we constructed in Proposition~\ref{prop:lattes} is \emph{flexible}:
there exists a family of maps $F_t$ depending holomorphically on $t\in \disk$
so that $F_0=F$ and so that $F_t$ is qc-conjugate but not conformally
conjugate to $F$. An explicit way to construct such maps for $t\in (-1,1)$ is
to consider a family of maps exactly as before, but replacing
the map $F_2$ by the map $F_{2,t}:=h_t^{-1}\circ F_2 \circ h_t$, where 
\[
h_t(x,y)=\left(\frac{t}{1-t}x,y \right)
\]
(so replacing squares by rectangles). As $|t|\to 1$, the modulus of the central rectangle inside $\mathcal U_2$ tends monotonically to $\infty$ and thus one
can see that $F_t$ is not conformally conjugate to $F_0$ for $t>0$.

From Theorem~\ref{Thm:BoxMappingsMain2}\eqref{It:MainIt3} it follows that if $F, G$ are combinatorially equivalent non-re\-nor\-malizable dynamically natural box mappings so that there exists
a qc homeomorphism $H$ which is a conjugacy on the boundary (condition \eqref{part3c} of that theorem), then $F$ and $G$ are qc-conjugate. 
Suppose that, in addition, $F \colon \U \to \V$ is \emph{not} a Latt\`es box mapping, 
and so does not have an invariant line field. Then $F, G$ are hybrid conjugate, 
i.e.\ the qc-dilatation of the  qc-conjugacy between $F$ and $G$ 
 vanishes in the filled Julia set of $F$.   (In particular, if the filled Julia set has full measure in the $\V$, then the qc-conjugacy 
 is conformal.)  This is in some sense an  analogue of the Thurston Rigidity 
Theorem which states that two postcritically finite topologically conjugate maps
are M\"obius conjugate.

\subsubsection{Latt\`es rational maps vs.\ Latt\`es box mappings.}

Recall, that a rational map on the sphere is called \emph{Latt\`es} if it is a degree two quotient of an affine torus endomorphism. In \cite{milnor-lattes}, Milnor gave a complete description of all Latt\`es rational maps. For Latt\`es box mappings, a similar description may be impossible, but nevertheless it seems feasible to give a complete list of all  Latt\`es box mappings. 

Since Latt\`es rational maps are examples of Collet--Eckmann maps, by the result in \cite{PRL07, PRL} it follows that one can induce a Latt\`es box mapping from a Latt\`es rational map (see also the discussion in Section \ref{SSSec:NiceCouples}).  

Finally, property \eqref{It:PCF} in Proposition~\ref{prop:lattes-description} is the analogue
of the property of Latt\`es rational maps $f\colon \Cc \to \Cc$ that $f^{-1}(\PC(f))
= \PC(f)\cup \Crit(f)$, where $\PC(f)$ is the set of forward iterates of $\Crit(f)$ (the \emph{postcritical set}). Latt\`es rational maps play a special role in the study of postcritically finite\footnote{A map $f$ is called \emph{postcritically finite} if each critical point of $f$ is eventually periodic. In the setting of branched coverings, a \emph{critical point} is a point at which the map is not locally injective. Postcritically finite topological branched coverings of the sphere are sometimes called \emph{Thurston maps}.} branched coverings of the sphere. A celebrated result by W.~Thurston gives a criterion when a postcritically finite branched covering of the sphere is equivalent (in some precise sense) to a rational map, see \cite{DHThurston}. For this criterion to work, it is necessary to assume that the branched covering has so-called \emph{hyperbolic orbifold}, which roughly translates to the fact that the corresponding Thurston $\sigma$-map is contracting on the Teichm\"uller space of possible conformal structures of the sphere marked with the postcritical set. The orbifold is non-hyperbolic if and only if $f^{-1}(\PC(f)) = \PC(f)\cup \Crit(f)$ and $\# \PC(f) \le 4$ (see \cite[Lemma 2]{DHThurston}, and also \cite[Appendix C8]{HubBook2}); such orbifolds are called \emph{parabolic}. The maps with parabolic orbifold and $\# \PC(f) \le 3$ are always equivalent to a rational map and can be completely classified. The case when $\# \PC(f) = 4$ is more interesting. The only rational maps with this property are exactly flexible Latt\`es rational maps. In general, a Thurston map $f$ with $4$ postcritical points (with hyperbolic or parabolic orbifold) admits a flat structure with $4$ cone singularities and can be studied using so-called \emph{subdivision rules}, see \cite{BM}.


\section{A Ma\~n\'e Theorem for complex box mappings}
\label{Sec:Mane}

A classical theorem of Ma\~n\'e \cite{Man, ST} states that for a rational map, a forward invariant compact set
in the Julia set is either expanding, contains parabolic points or critical points, or intersects the $\omega$-limit set of a recurrent critical point (see also \cite[Section III.5]{RealBook} for the real version of Ma\~n\'e's theorem). In this section we will show that a similar, and in some sense stronger statement also holds for box mappings for which 
{\em each} domain
is {\lq}well-inside{\rq} its range.

More precisely, let $F\colon \U\to \V$ be a complex box mapping for which there exists $\delta>0$ so that for {\em each}
$x\in \U$ we have  
\begin{equation} \modulus(\V_x \setminus \ovl\U_x)\ge \delta \mbox{ or } \U_x=\V_x. \label{eq:maneassump} 
\end{equation} 
 Here $\U_x$  denotes
the component of $\U$ that contains a point $x\in \U$. The theorem below gives a lower bound 
for $|DF^k(x)|$ provided the distance $d(F^k(x),\partial \V)$ of $F^k(x)$ to $\partial \V$ is at least
$\kappa>0$ and the iterates $x,\dots,F^{k-1}(x)$ do not enter {\lq}deep{\rq} critical puzzle pieces.

Let $F_0=F$, $\U^0=\U$ and $\V^0=\V$ and  let $\V^1$ be the union of the components of $\U^0$ containing critical points. Next,   for $i\ge 0$  define 
inductively $F_{i+1}\colon \U^{i+1}\to \V^{i+1}$ to be  the first return map to $\V^{i+1}$ and denote by 
$\V^{i+2}$  the union of the components of $\U^{i+1}$ containing critical points.
If $y$ is a forward iterate of $x$ under various compositions of $F_0,\dots,F_i$ then we define
$t(y,x)$ so that $y=F^t(x)$.

\begin{theorem}[Ma\~n\'e-type theorem for box mappings]
\label{thm:mane}\label{Thm:Mane}
Let $F\colon \U\to \V$ be a complex box mapping so (\ref{eq:maneassump}) holds. 
For each $\nu\ge 1$ and each $\kappa>0$ there exists $\lambda>1$ and $C>0$ so that for all $k\ge 0$
and each $x$ so that $x,\dots,F^{k-1}(x)\in \U\setminus \V^\nu$, $d(F^k(x),\partial \V)\ge \kappa$ one has 
$$|\Df F^k(x)|\ge C \lambda^k.$$
\end{theorem}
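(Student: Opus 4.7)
\emph{Plan.} The plan is to measure derivatives in the Poincar\'e metrics on the components of $\V$, produce uniform exponential hyperbolic expansion along every orbit of $F$ that stays in $\U \setminus \V^\nu$, and translate this back to a Euclidean estimate at the end using the hypothesis $d(F^k(x),\partial\V)\ge\kappa$ combined with Koebe distortion.

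First I would extract from (\ref{eq:maneassump}) a one-step dichotomy. For each $x\in\U$, either (a) $\U_x\Subset\V_x$ with $\modulus(\V_x\setminus\ovl{\U_x})\ge\delta$, in which case the inclusion $\U_x\hookrightarrow\V_x$ is a strict Schwarz contraction by a factor $\rho=\rho(\delta)<1$ in the respective Poincar\'e metrics, or (b) $\U_x=\V_x$, in which case $F\colon\V_x\to\V_{F(x)}$ is a branched covering between hyperbolic disks. Composing the (isometric) covering $F\colon\U_x\to\V_{F(x)}$ with the inverse of the inclusion yields $\|\Df F(x)\|_{\mathrm{hyp}}\ge 1/\rho>1$ in case (a), and in case (b) only the weak estimate $\|\Df F(x)\|_{\mathrm{hyp}}\ge 1$ from Schwarz--Pick; here and below $\|\Df F(x)\|_{\mathrm{hyp}}$ denotes the derivative taken with respect to the Poincar\'e densities of $\V_x$ and $\V_{F(x)}$.

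Next I would show that the hypothesis $F^i(x)\in\U\setminus\V^\nu$ for $0\le i<k$ forces case (a) at a positive density of indices. The nested structure $\V^\nu\subset\V^{\nu-1}\subset\cdots\subset\V^1$ and the definition of $F_j$ as the first return to $\V^j$ imply, by an induction on $\nu$, a uniform bound $N=N(\nu,F)$ on the length of any run of consecutive case-(b) iterates inside $\U\setminus\V^\nu$. Indeed, a longer run would consist of $\ge N$ consecutive components $V^{(0)},\dots,V^{(N)}$ of $\V$ each coinciding with a component of $\U$, forcing by pigeonhole a cycle among the finitely many such components; the hypothesis then forces the critical point(s) caught in this cycle to propagate down the first-return tower $F_1,\dots,F_{\nu-1}$ and realize an iterate inside $\V^\nu$, contradicting the assumption. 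From this one deduces $\|\Df F^k(x)\|_{\mathrm{hyp}}\ge\tilde\lambda^k$ with $\tilde\lambda=\rho^{-1/N}>1$.

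Finally I would convert the hyperbolic estimate to a Euclidean one. Since each component of $\V$ has bounded Euclidean diameter and $d(F^k(x),\partial\V)\ge\kappa$, the Poincar\'e density of $\V_{F^k(x)}$ at $F^k(x)$ is comparable to the Euclidean metric with a comparability constant $c(\kappa)$. No matching bound is available at the initial point $x$ (which can lie arbitrarily close to $\partial\V_x$), but this goes the right way for a \emph{lower} bound on $|\Df F^k(x)|$: the Poincar\'e density at $x$ is bounded below by a universal constant depending only on $\diam(\V)$, so passing from hyperbolic to Euclidean only worsens $C$ while preserving the exponential rate. This yields $|\Df F^k(x)|\ge C(\kappa)\lambda^k$ with $\lambda$ slightly smaller than $\tilde\lambda$.

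The main obstacle I anticipate is the inductive claim in the second paragraph, namely ruling out long uninterrupted runs of case-(b) steps inside $\U\setminus\V^\nu$. The delicate subcase is a permanent cycle through non-critical full-image components $\U_x=\V_x$ with bounded Euclidean distance to the boundary: there $F$ restricts to a cycle of conformal automorphisms and hyperbolic-metric expansion gives nothing. Its resolution must use that $F$ is a genuine complex box mapping with enough critical dynamics to preclude trivial permutation dynamics at every level of the first-return tower, and that orbits avoiding $\V^\nu$ cannot be trapped in such a cycle without eventually exiting into a case-(a) component after a number of steps bounded only in terms of $\nu$ and the combinatorics of which components of $\V$ coincide with components of $\U$.
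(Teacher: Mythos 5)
Your plan correctly identifies the right framework (Poincaré metrics, positive density of expansive steps, Euclidean conversion at the end), and the conversion in your last step is fine. But the central estimate in your first step is wrong, and the error propagates through the whole argument.

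The problem is your claim that in case (b) ``Schwarz--Pick'' yields $\|\Df F(x)\|_{\mathrm{hyp}}\ge 1$. Schwarz--Pick goes the other way: a holomorphic map between hyperbolic surfaces is a weak \emph{contraction}, so $\|\Df F(x)\|_{\mathrm{hyp}}\le 1$, with equality iff $F\colon\U_x=\V_x\to\V_{F(x)}$ is an unbranched covering. A proper holomorphic map of degree $\ge 2$ between open disks always has critical points, so in case (b) with degree $\ge 2$ the hyperbolic derivative is strictly less than $1$ and tends to $0$ as $x$ approaches the critical point. You flag the degree-one permutation cycle as the worry, but the genuinely dangerous subcase is the opposite one: degree $\ge 2$. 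The same issue already contaminates your case (a): if $\U_x\Subset\V_x$ but $\U_x$ contains a critical point (allowed once $\nu\ge 2$, since the hypothesis only forbids entering $\V^\nu$, not $\V^1$), then $F\colon\U_x\to\V_{F(x)}$ is not an isometric covering, and composing a Schwarz--Pick contraction with the $1/\rho$ inclusion gain gives no definite lower bound at all. So your claimed one-step dichotomy ``case (a) expansive, case (b) neutral'' is false; the honest dichotomy is ``non-critical component: expansion by $\tau>1$'' versus ``critical component: unbounded hyperbolic contraction near $\Crit(F)$''. With that corrected dichotomy, a single bad step can wipe out an arbitrary amount of accumulated expansion, and your positive-density count (which is essentially correct) does not by itself yield $\|\Df F^k(x)\|_{\mathrm{hyp}}\ge\tilde\lambda^k$.

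What is missing is the bookkeeping that makes the critical passages cost only a bounded constant. The paper's Lemma~\ref{lem:poinc-exp/cont} does this by measuring the tangent vector before a critical passage with the Poincaré metric of the \emph{deeper} puzzle piece $\V^{i+1}_x$ rather than $\V^i_x$ (parts (3)--(5)); the loss $\rho_i$ then depends only on the gap between consecutive tower levels, not on the proximity to the critical point, and a second inequality (part (4)) couples the loss to the expansion $\tau_{i-1}^{t(i-1)-1}$ already accumulated by the return map $F_{i-1}$. Proposition~\ref{prop:exp-realtime} then balances descents into deeper tower levels against accumulated expansion using the $\eta$-density argument (which is the rigorous version of your pigeonhole observation), and the proof of Theorem~\ref{Thm:Mane} decomposes the orbit into a bounded number of descending and ascending phases -- bounded by $\nu$, precisely because the orbit avoids $\V^\nu$. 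This is the content you would need to supply to fix the gap; your positive-density lemma is a piece of it, but not a substitute for the multi-level metric comparison.
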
 

\begin{remark} Notice that this statement is more similar to the statement of the Ma\~n\'e theorem
for real maps than the one for rational maps. Indeed, in the above theorem it 
is not required that the distance of the orbit of 
$x$ to the $\omega$-limit set of any recurrent critical point is bounded away from zero. 
For general rational maps (rather than box mappings as above) the 
inequality $|\Df F^k(x)|\ge C \lambda^k$ fails: for example if $x$ is in the boundary 
of a Siegel disk and this boundary does not containing critical points. 
\end{remark} 
\begin{remark}  
$\lambda>1$ and $C>0$ also depend on $F$. 
\end{remark} 


To prove Theorem~\ref{Thm:Mane} we first need to introduce some notation, and prove some 
preparatory lemmas.  
Denote by $||v||_O$ be the Poincar\'e norm of a vector $v\in T_x\C$ 
on a simply connected  open domain $O\ni x$ and by   $||v ||$ the Euclidean norm of $v$.

\begin{lemma}\label{lem:poinc-exp/cont}  Let $F\colon \U\to \V$ be a complex box mapping so that 
(\ref{eq:maneassump}) holds and define $F_i$ as above. Then for each $i\ge 0$
 there exist $\delta_i>0$ and $\tau_i>1$ and for each 
$\kappa_0>0$  there exists $\rho_i\in (0,1)$ (which also depend on $F$) 
so that for each $x\in \U^i$ and each $v\in T_x \U^i$
\begin{enumerate}
\item 
we have  either 
$$\modulus(\V^i_x \setminus \ovl\U^i_x)\ge \delta_i \,\,  \mbox{ or }\,\,  \U^i_x=\V^i_x.$$
If $\U^i_x=\V^i_x$ then $\U^i_x$ contains a critical point. 
\item If $\U_x^i$ does not contain a critical point of $F_i$ then
$\U^i_x$ is compactly contained in $\V^i_x$ and 
$$||\Df F_i(x)v||_{\V^i_{F_i(x)}}  \ge \tau_i ||v||_{\V^i_x}.$$ 
\item If $\U^i_x$ contains a critical point  of $F_i$ 
and $d(x,\Crit(F))\ge \kappa_0$, then $\U^i_x= \V^{i+1}_x$ and 
 $$||\Df F_i(x)v||_{\V^i_{F_i(x)}} \ge   \rho_i ||v||_{\V^{i+1}_x}.$$
\item  
In fact,  if $i\ge 1$, $x\in \U^i$, $\U^i_x$ contains a critical point  of $F_i$, 
$v\in T_x \U^i$ and $d(x,\Crit(F))\ge \kappa_0$, then
$$ ||\Df F_i(x)v||_{\V^{i-1}_{F_i(x)}} \ge \rho_{i-1}\cdot \tau_{i-1}^{t(i-1)-1} \cdot ||v||_{\V^i_x}$$
where  $t(i-1)\ge 1$ is so that 
$F_i(x)=F_{i-1}^{t(i-1)}(x)$.
\item If $i\ge 1$, $y\in \V^i$ and $w\in T_y \V^i$, then 
$$||w||_{\V^{i-1}_y} \le (1/\rho_i)  ||w||_{\V^0_y} $$
(we will apply this assertion for the case when $x\in \U^i$ and $y=F_i(x)\in \V^i$). 
\end{enumerate} 
 \end{lemma}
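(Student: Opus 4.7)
The plan is to prove the five assertions roughly in order, with (1) supplying the modulus bounds that fuel the subsequent Poincar\'e--metric estimates via Schwarz--Pick, and (4)--(5) assembled from (2)--(3) via the chain rule together with a standard comparison of hyperbolic densities on nested simply connected domains. First I would prove (1) by induction on $i$. The base case $i=0$ is the standing hypothesis~(\ref{eq:maneassump}). For the inductive step, given $x\in \U^{i+1}$ with $\U^{i+1}_x \neq \V^{i+1}_x$, the restriction $F_{i+1}|_{\U^{i+1}_x}$ is a proper holomorphic map onto a component of $\V^{i+1}$ and decomposes as an iterate of $F_i$ of degree bounded in terms of $F$ and $i$. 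Pulling back the annulus $\V^{i+1}_x\setminus\ovl{\U^{i+1}_x}$ along this chain, and using the inductive modulus bound $\delta_i$ at each step, yields a uniform $\delta_{i+1}>0$. The supplementary claim --- that $\U^i_x=\V^i_x$ forces a critical point in $\U^i_x$ --- follows because otherwise $F_i$ would restrict to an unbranched self-map $\V^i_x\to\V^i_x$, hence an automorphism, which is impossible for a nontrivial first return in this setup.

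Next I would establish (2) and (3). In (2), the map $F_i\colon\U^i_x\to \V^i_{F_i(x)}$ is a conformal isomorphism and hence a hyperbolic isometry by Schwarz--Pick; combining this with the strict inclusion $\U^i_x\Subset \V^i_x$ of modulus $\ge \delta_i$ from (1), the Schwarz inclusion lemma produces a universal expansion factor $\tau_i=\tau_i(\delta_i)>1$. In (3), where $\U^i_x=\V^{i+1}_x$ contains a critical point, the map $F_i\colon \V^{i+1}_x\to \V^i_{F_i(x)}$ is a proper branched cover of bounded degree; Schwarz--Pick for such covers gives a hyperbolic derivative $\le 1$ that vanishes only at critical points of $F_i$. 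Since $\V^i$ and $\V^{i+1}$ each have only finitely many components and $\V^{i+1}$ is compactly contained in $\V^i$ with uniform modulus, the parameter space of admissible maps (indexed by the location of $x$, subject to $d(x,\Crit(F))\ge\kappa_0$) is compact in hyperbolic coordinates, and a minimum argument delivers the uniform lower bound $\rho_i=\rho_i(\kappa_0,i,F)\in(0,1)$.

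Part (4) is a chain-rule assembly: writing $F_i(x)=F_{i-1}^{t(i-1)}(x)$, the point $x$ itself lies in $\V^i\subseteq \U^{i-1}$, so $\U^{i-1}_x=\V^i_x$ contains a critical point of $F_{i-1}$ and the first application of $F_{i-1}$ is handled by (3) at level $i-1$, contributing $\rho_{i-1}$. For $1\le j\le t(i-1)-1$ the iterate $F_{i-1}^j(x)$ lies in $\U^{i-1}\setminus\V^i$ by the first-return construction, so its $\U^{i-1}$-component carries no critical point of $F_{i-1}$ and each such step contributes $\tau_{i-1}$ by (2); multiplying the factors gives the stated product $\rho_{i-1}\cdot\tau_{i-1}^{t(i-1)-1}$. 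For (5), the nested chain $\V^{i-1}_y\subseteq \cdots \subseteq \V^0_y$ together with the modulus bounds from (1) at each level, and the fact that $y\in \V^i_y\Subset \V^{i-1}_y$ with definite modulus, allows a standard comparison of Poincar\'e densities to yield the upper bound on $\|w\|_{\V^{i-1}_y}/\|w\|_{\V^0_y}$, possibly after shrinking $\rho_i$. The hard part will be (3): even granted $d(x,\Crit(F))\ge\kappa_0$, the critical points of $F_i$ (which are pre-images of $\Crit(F)$ under finitely many iterates of the first-return map) can be Euclidean-close to $x$, so the lower bound must be extracted in hyperbolic coordinates, where the compact nesting $\V^{i+1}_x\Subset \V^i_x$ renders the configuration space of admissible branched covers compact and yields a uniform $\rho_i$.
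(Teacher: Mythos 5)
Your overall approach tracks the paper's proof closely: part (1) by induction on annulus moduli, parts (2)--(3) via Schwarz--Pick plus a compactness/continuity argument, part (4) by a chain-rule decomposition into one critical step and $t(i-1)-1$ non-critical steps, and part (5) by a Poincar\'e-density comparison using the modulus bounds from (1). Two points deserve correction.

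First, your justification of the supplementary claim in (1) --- that $\U^i_x=\V^i_x$ forces a critical point --- does not hold up as written. The map $F_i$ need not send $\V^i_x$ to \emph{itself}; it sends it properly onto \emph{some} component of $\V^i$, possibly a different one, in which case a degree-one (unbranched) restriction would be a perfectly admissible conformal isomorphism and no contradiction arises. The actual reason, at least for $i\ge 1$, is already built into the construction: by definition each component of $\V^i$ ($i\ge1$) is a critical component of $\U^{i-1}$, hence contains a critical point, and that point is also a critical point of the first-return branch. For $i=0$ the claim is part of the standing hypotheses on $F$ rather than something to be derived.

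Second, and more substantively, your treatment of part (3) misses the one observation that makes it work. You correctly flag the worry that $\Crit(F_i)$ consists of preimages of $\Crit(F)$ along the branch and could therefore be Euclidean-close to $x$ even when $d(x,\Crit(F))\ge\kappa_0$, but the proposed cure --- working ``in hyperbolic coordinates'' and invoking compactness of admissible branched covers --- is illusory: if $\Df F_i$ vanishes at a point near $x$ then the hyperbolic derivative is small there too, and the infimum over $\{d(x,\Crit(F))\ge \kappa_0\}$ could genuinely be zero, so no compactness argument can rescue it. The observation you need (and which is the entire content of the paper's one-line proof of (3)) is that on $\V^{i+1}$ one has $\Crit(F_i)=\Crit(F)$. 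Indeed, on the critical component $\V^{i+1}_x$ the branch $F_i|_{\V^{i+1}_x}=F^s$ is a \emph{first return} to $\V^i$, so the intermediate images $F^j(\V^{i+1}_x)$ for $1\le j\le s-1$ are disjoint from $\V^i\supset\Crit(F)$; hence $\Df F^s(z)=\prod_{j=0}^{s-1}\Df F(F^j(z))$ can vanish only when $z\in\Crit(F)$. With this in hand, the hypothesis $d(x,\Crit(F))\ge\kappa_0$ really does bound $x$ away from every zero of $\Df F_i$ in $\V^{i+1}_x$, and the compactness argument you wanted goes through directly. Parts (2), (4) and (5) of your write-up are correct and agree with the paper's reasoning (your version of (4) in effect subsumes both of the paper's two subcases, since $t(i-1)=1$ is just the degenerate case with no intermediate diffeomorphic steps).
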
 
\begin{proof}  Part (1) follows inductively from the corresponding statement for $F_{i-1}$: 
since conformal maps preserve moduli and since one pulls back at most $N=\# \Crit(F)$ times by a non-conformal map, see  \cite[Lemma 8.3]{KvS}. Part (2) follows since 
$F_i\colon \U^i_x\to \V^i_{F_i(x)}$ is an isometry w.r.t.\ to the Poincar\'e metric
and the embedding $\U_{x}^i \hookrightarrow \V_{x}^i$ is a contraction (by a factor which 
only depends on $\delta_i$).
Part (3) holds because $\Df F_i$ only vanishes at critical points of $F$. 

To see Part (4), we need to consider two subcases. 
(i) $F_{i-1}(x)\in \V^{i}$ and  (ii) $F_{i-1}(x)\in \V^{i-1} \setminus \V^{i}$.
If (i) holds then  $F_i(x)=F_{i-1}(x)$  and so by Part \textcolor{red}{(3)} we have
$$||\Df F_i(x)v||_{\V^{i-1}_{F_i(x)}} =
||\Df F_{i-1}(x)v||_{\V^{i-1}_{F_{i-1}(x)}} 
\ge \rho_{i-1}  ||v||_{\V^i_x}.$$
On the other hand, if (ii) holds, i.e. if $F_{i-1}(x)\in \V^{i-1} \setminus \V^{i}$, then 
$F_i$ is of the form $F_{i-1}^{t(i-1)-1}\circ F_{i-1}$ where $t(i-1)\ge 1$
and where the first iterate is through a critical branch of $F_{i-1}\colon \U^{i-1}\to \V^{i-1}$ 
and the other iterates pass through diffeomorphic  branches. 
Therefore, using Parts (2) and (3) we obtain Part (4).

Part (5) holds because the diameter of each component of $\V^i$ is bounded from below (with a bound which depends 
on $F$ and $i$) and because one has a lower bound for the modulus of the component 
of $\V^{i-1}\setminus \V^i$ containing $y$. 
\end{proof} 

\begin{proposition}\label{prop:exp-realtime} 
Let $F\colon \U\to \V$ be a complex box mapping so that 
(\ref{eq:maneassump}) holds and define the maps $F_i$ as above. Then for each $n\ge 0$ and $\kappa_0>0$
there exist $\tau>1$ and $C>0$ so that the following holds. 
Assume that $0\le i\le n$, $x\in \U^i$, $d(x,\Crit(F))\ge \kappa_0$ 
and $y=F_i(x)$ and define $t=t(y,x)$, i.e. 
so that $F_i(x)=F^t(x)$. 
Then for each $v\in T_x \U^i$ one has 
$$||\Df F_i(x)v||_{\V^i_y} \ge C\tau^t \cdot ||v||_{\V^i_x},$$
and  if $\U^i_x$ does not contain a critical point 
$$||\Df F_i(x)v||_{\V^i_y} \ge \tau^t \cdot ||v||_{\V^i_x}.$$ 
\end{proposition}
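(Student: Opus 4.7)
The plan is to prove Proposition \ref{prop:exp-realtime} by induction on $i \in \{0, 1, \ldots, n\}$, using Lemma \ref{lem:poinc-exp/cont} as the single-step toolkit and tracking Poincar\'e norms on the nested domains $\V^{i-1} \supset \V^i$. The base case $i = 0$ reduces to Lemma \ref{lem:poinc-exp/cont} Parts (2)--(3) because $t = 1$: Part (2) gives the non-critical conclusion with $\tau = \tau_0$ and $C = 1$, while in the critical case Part (3) gives expansion by $\rho_0$ measured in the $\V^1_x$-norm, and the inclusion $\V^1_x \subset \V^0_x$ (implying $\|v\|_{\V^1_x} \ge \|v\|_{\V^0_x}$) lets us absorb the shortfall into $C$ to obtain the general conclusion.

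For the inductive step, I would write $y = F_i(x) = F_{i-1}^s(x) = F^t(x)$ and let $x = x_0, x_1, \ldots, x_s = y$ be the intermediate $F_{i-1}$-orbit, with $t_j$ defined by $x_j = F^{t_j}(x)$. The first-return definition of $F_i$ places $x_1, \ldots, x_{s-1}$ in $\V^{i-1} \setminus \V^i$, so the components $\U^{i-1}_{x_{j-1}}$ for $j \ge 2$ contain no critical point of $F_{i-1}$ and the sharp (non-critical) form of the inductive hypothesis applies at these intermediate steps, contributing factors $(\tau^{*}_{i-1})^{t_j - t_{j-1}}$ with no prefactor. Only the first step $x_0 \to x_1$, where $x_0$ sits in the critical component $\V^i_{x_0} = \U^{i-1}_{x_0}$ (the latter equality holding since $\V^i$ is by definition the union of critical components of $\U^{i-1}$), requires the general form of the inductive hypothesis, and there the distance hypothesis $d(x, \Crit(F)) \ge \kappa_0$ is available by assumption. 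Telescoping along the orbit yields
\[
\|\Df F_i(x) v\|_{\V^{i-1}_y} \ge C_{i-1}^{*}\,(\tau_{i-1}^{*})^{t}\,\|v\|_{\V^{i-1}_{x_0}}.
\]

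It remains to convert these $\V^{i-1}$-Poincar\'e norms to $\V^i$-Poincar\'e norms. At the target $y$, the nesting $\V^i_y \subset \V^{i-1}_y$ gives $\|\Df F_i(x)v\|_{\V^i_y} \ge \|\Df F_i(x)v\|_{\V^{i-1}_y}$ for free. At the source $x_0$, Part (1) of Lemma \ref{lem:poinc-exp/cont} combined with the identification $\V^i_{x_0} = \U^{i-1}_{x_0}$ gives that $\V^i_{x_0}$ is compactly contained in $\V^{i-1}_{x_0}$ with modulus at least $\delta_{i-1}$; together with the distance assumption and the finite-component structure of $\V^i$, a standard Poincar\'e-metric comparison produces a uniform constant $\eta = \eta(\kappa_0, F, i) > 0$ with $\|v\|_{\V^{i-1}_{x_0}} \ge \eta \|v\|_{\V^i_{x_0}}$, completing the general-case bound. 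The sharper non-critical variant ($\U^i_x$ contains no critical point of $F_i$) follows from the same scheme, noting that $F_{i-1}|_{\U^i_{x_0}}$ is then automatically conformal so that the first step can also be handled on the smaller conformal domain, and a refined Poincar\'e-norm comparison using the three nested disks $\U^i_{x_0} \subset \V^i_{x_0} \subset \V^{i-1}_{x_0}$ removes the prefactor $C_{i-1}^{*}$ and leaves $C = 1$. The main obstacle is precisely this conversion at the source $x_0$: a priori $\V^i_{x_0}$ could be a geometrically thin puzzle piece whose boundary approaches $x_0$, which would make $\eta$ degenerate, so obtaining a uniform $\eta(\kappa_0, F, i) > 0$ requires carefully combining the distance hypothesis, the modulus bound $\delta_{i-1}$, and the finite-component structure of $\V^i$.
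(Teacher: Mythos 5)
Your proposal is a genuine departure from the paper's argument. The paper does not induct on $i$: instead it introduces the counting function $t(j) := \#\{0 \le k \le t-1 : F^k(x) \in \U^j\}$, fixes $\eta > 0$ with $\tau(\rho/\tau)^{\eta} > (1+\tau)/2$, and splits into two cases depending on whether there is a level $j < i$ at which $t(j) < \eta\, t(j-1)$. If not, it applies Lemma~\ref{lem:poinc-exp/cont}(4) once at the top level, getting a factor $\rho_{i-1}\tau_{i-1}^{t(i-1)-1}$, and uses $t(i-1) \ge \eta^{i-1} t$ to turn the exponent into a definite fraction of $t$. If such a $j$ exists, it decomposes the orbit into $F_{j-1}$-steps through critical versus diffeomorphic branches, with the time-ratio bound making the $\rho^{t(j)}$ contraction dominated by the $\tau^{t(j-1)-t(j)}$ expansion. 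No inductive hypothesis and no source-side Poincar\'e-norm conversion appear; the crucial point is that Lemma~\ref{lem:poinc-exp/cont}(4) already outputs the estimate with source norm $\|v\|_{\V^i_x}$.

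The reason the paper is careful here is exactly where your proof breaks. You telescope using the level-$(i-1)$ inductive hypothesis, which measures the source in the $\V^{i-1}$-norm, and at the end you need to pass from $\|v\|_{\V^{i-1}_{x_0}}$ to $\|v\|_{\V^i_{x_0}}$. Because $\V^i_{x_0} \subset \V^{i-1}_{x_0}$, the Schwarz lemma gives $\|v\|_{\V^{i-1}_{x_0}} \le \|v\|_{\V^i_{x_0}}$, and the inequality you actually need, $\|v\|_{\V^{i-1}_{x_0}} \ge \eta\,\|v\|_{\V^i_{x_0}}$, is a \emph{lower} bound on the ratio of Poincar\'e densities $\lambda_{\V^{i-1}_{x_0}}(x_0)/\lambda_{\V^i_{x_0}}(x_0)$. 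This ratio tends to $0$ as $x_0$ approaches $\partial \V^i_{x_0}$: the denominator blows up while the numerator stays bounded, precisely because $\V^i_{x_0}$ is compactly contained in $\V^{i-1}_{x_0}$. The modulus bound from Lemma~\ref{lem:poinc-exp/cont}(1) controls the geometry of the pair of disks, not the position of $x_0$ inside the inner one. And the hypothesis $d(x_0,\Crit(F)) \ge \kappa_0$ does not save you: $\Crit(F) \cap \V^i_{x_0}$ lies in the interior, so $x_0$ can approach $\partial\V^i_{x_0}$ while staying a definite distance from the critical set. Thus the constant $\eta$ you want does not exist uniformly. (Finitely many components of $\V^i$ does not help for the same reason --- the degeneration is in $x_0$, not in the choice of component.) You acknowledge this as ``the main obstacle'' and assert it can be handled by ``a standard Poincar\'e-metric comparison,'' but no such comparison is available from the stated hypotheses; this is a genuine gap, not a technicality.

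If you want to repair the induction, the way out is to never leave the $\V^i$-norm at the source: at the critical first $F_{i-1}$-step, apply Lemma~\ref{lem:poinc-exp/cont}(3) directly (this already produces the source norm $\|v\|_{\V^i_{x_0}}$) instead of the inductive hypothesis. But then you only gain a constant $\rho_{i-1}$ on that step, and if that single $F_{i-1}$-step consumes most of the $F$-time $t$, you no longer get the desired exponential rate in $t$. This is precisely the scenario the paper's Case~2 is built to handle, which is why the case analysis on the time ratios $t(j)/t(j-1)$ is not an optional bookkeeping device but the essential mechanism of the proof.
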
 
\begin{proof} Define $t(j) = \# \{x,\dots,F^{t-1}(x)\in \U^j\}$, let $\rho=\min_{i=0,\dots,n}\rho_i$
and $\tau=\min_{i=0,\dots,n}\tau_i$ where $\rho_i,\tau_i$ are as in the 
previous lemma, and choose $\eta>0$
so that $\tau (\rho/\tau)^{\eta}>\tau':=(1+\tau)/2$. 
Let $j$ be the smallest integer $<i$ so that  $t(j)< \eta t(j-1)$. \\
{\bf Case 1:} $j$ does not exist. Then $t(i-1)\ge \eta^{i-1} t$.
As before write $F_i(x)=F_{i-1}^{t(i-1)-1}\circ F_{i-1}(x)$. Then by Part (4)
of Lemma~\ref{lem:poinc-exp/cont}
$$ ||\Df F_i(x)v||_{\V^{i-1}_{F_i(x)}} \ge \rho_{i-1} \cdot \tau_{i-1}^{t(i-1)-1} \cdot ||v||_{\V^i_x}.$$
Since  $t(i-1)\ge \eta^{i-1} t$, by Schwarz inclusion 
and since $\V^i_y \subset \V^{i-1}_{F_i(x)}$,
this implies $$||\Df F_i(x)v||_{\V^i_y} \ge C\tau_*^t \cdot ||v||_{\V^i_x},$$
when taking $C=\rho/\tau$ and by defining $\tau_*= \tau^{\eta^{i-1}}$.
This gives the first inequality, when  renaming $\tau_*$ by $\tau$. 
Let $t_0$ be so large that $C\tau_*^t>1$ for $t\ge t_0$. 
Now take $\hat \tau >1$ so that $C\tau_*^t>\hat \tau^t$ for $t\ge t_0$.
This implies the 2nd inequality when $t\ge t_0$ (again renaming 
$\hat \tau$ by $\tau$). 
If $t<t_0$ then the 2nd inequality holds immediately from Part (2) of the previous lemma,
by choosing  $\tau>1$ sufficiently close to one. \\
%
%
{\bf Case 2:}  $j$ does exist. Then $t(j-1)\ge \eta^{j-1} t$ and 
$t(j)\le \eta \cdot  t(j-1)$. In that case iterate $x$ by $F_{j-1}$ through 
its critical branches until  
an iterate enters $\V^{j-1}\setminus \V^{j}$, and then iterate 
$F_{j-1}$ through diffeomorphic branches until we again hit $\V^{j}$. This may repeat several times, but 
the net expansion we obtain is 
$$ ||\Df F_i(x)v||_{\V^{i-1}_{F_i(x)}} \ge  \rho^{t(j)} \tau^{t(j-1)-t(j)} \cdot ||v||_{\V^i_x}.$$
Since $\tau (\rho/\tau)^{\eta}>\tau'$ the first inequality follows. The 2nd inequality then 
holds as in Case 1. 
\end{proof} 

The following lemma is straight-forward. 

\begin{lemma}\label{lem:comparPE}
 There exists $\rho_0>0$ so that if $x\in \V$, $v\in T_x \V$, 
then
$$ ||v||_\V\ge \rho_0 ||v|| .$$
If $d(x,\partial \V)\ge \kappa$, then there exists $\rho_1(\kappa)>0$ so that 
$$ \rho_1 ||v||_\V \le ||v||.$$ \qed
\end{lemma}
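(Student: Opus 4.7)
The plan is to reduce both inequalities to Koebe's classical two-sided estimate on the Poincar\'e density of a simply connected proper subdomain of $\C$. Concretely, for any simply connected $\Omega \subsetneq \C$ and any $z \in \Omega$ the Poincar\'e density $\rho_\Omega(z)$ (so that $\|v\|_\Omega = \rho_\Omega(z)\cdot\|v\|$ for every $v \in T_z\Omega$) satisfies the standard inequality
\[
\frac{1}{2\,d(z,\partial\Omega)} \;\le\; \rho_\Omega(z) \;\le\; \frac{2}{d(z,\partial\Omega)},
\]
which is a direct consequence of the Koebe one-quarter theorem applied to the Riemann map. I will apply this, componentwise, to $\V$, using that by Definition~\ref{Def:BM} the set $\V$ has only finitely many components and each of them is a Jordan (in particular bounded, simply connected) domain.

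For the first inequality, fix $x \in \V$ and let $V$ denote the component of $\V$ containing $x$. Since $V$ is bounded, $d(x,\partial V) \le \diam(V)$, so the lower Koebe estimate gives
\[
\rho_V(x) \;\ge\; \frac{1}{2\,d(x,\partial V)} \;\ge\; \frac{1}{2\,\diam(V)}.
\]
Because $\V$ has only finitely many components and $\|v\|_\V = \|v\|_V$, setting
\[
\rho_0 \;:=\; \frac{1}{2\,\max_{V \subset \V}\diam(V)}
\]
yields $\|v\|_\V \ge \rho_0\,\|v\|$ uniformly on $\V$. Note we also use here that $\partial \V \supset \partial V$, so that $d(x,\partial V) \ge d(x,\partial \V)$ is not even needed; what matters is the boundedness of each component.

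For the second inequality, assume $d(x,\partial\V)\ge \kappa$. Then in particular $d(x,\partial V)\ge \kappa$ for the component $V$ of $\V$ containing $x$, and the upper Koebe estimate gives
\[
\rho_V(x) \;\le\; \frac{2}{d(x,\partial V)} \;\le\; \frac{2}{\kappa}.
\]
Thus $\|v\|_\V = \rho_V(x)\|v\| \le (2/\kappa)\|v\|$, so we may take $\rho_1(\kappa):=\kappa/2$ and obtain $\rho_1\,\|v\|_\V \le \|v\|$, as claimed. There is no real obstacle here: the statement is a structural consequence of the components of $\V$ being bounded Jordan (hence simply connected) domains, together with the Koebe comparison between Poincar\'e density and reciprocal boundary distance. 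The only mild point to be careful about is to ensure the constants are taken as a minimum/maximum over the finitely many components of $\V$, which is the reason the finiteness hypothesis in Definition~\ref{Def:BM} is used.
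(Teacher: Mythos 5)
Your proof is correct, and since the paper explicitly labels this lemma as ``straight-forward'' and omits a proof, the Koebe distance estimate for the hyperbolic density on a simply connected proper subdomain of $\C$, applied componentwise to the finitely many bounded Jordan components of $\V$, is exactly the argument being left to the reader. The two directions you give (using $d(x,\partial V)\le\diam(V)$ for the lower bound and $d(x,\partial V)\ge\kappa$ for the upper bound) are the intended reductions, and you correctly observe that $\|v\|_{\V}$ means the Poincar\'e norm of the component $V\ni x$ and that finiteness of the set of components is what lets the constants be taken uniformly.
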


Let us now prove Theorem~\ref{thm:mane}. 

\begin{proof}[Proof of Theorem~\ref{thm:mane}] 
We will decompose $F^k$, by first mapping it via iterates of, successively, 
$F_{m_0}$ and then 
of $F_{m_1},\dots,F_{m_{i_0}}$
going down closer and closer to the critical points of $F$ (where 
$m_0<m_1<\dots<m_{i_0}$) 
and then by maps $F_{m_{i_0+1}}, \dots,F_{m_{i_1}}$  (where $m_{i_0}>m_{i_0+1}>\dots>
m_{i_1}$)  up to when the point $F^k(x)$ has been reached. 

As we will iterate $x$ not only by $F$ but also by the maps $F_m$,  we need to
be sure we stop before we reach the $k$-th iterate of $F$. For this reason we define 
$t=t(x,m,n)$ for $x\in \V^m$ so that  $F_m^n=F^t$ near $x$. This will allow us to 
make sure that if $x_i$ is 
an iterate of $F$ of time $<k$, then we will only iterate this point
with further iterates of $F$ up to time $<k$. 

So let first describe $m_0<\dots<m_{i_0}$.  
Let $m_0\ge 0$ be so that  $x_0:=x\in \V^{m_0}\setminus \V^{m_0+1}$. 
Then let $n_0\ge 0$ be maximal so that $F_{m_0}^i(x_0)\in \V^{m_0}\setminus \V^{m_0+1}$ for all $0\le i< n_0$ and so that $t(x_0,0):=t(x_0,m_0,n_0)\le k$.  If $n_0=0$, then define $i_0=0$.
If $n_0>0$, then  let $x_1:=F_{m_0}^{n_0}(x_0)=F^{t(x_0,0)}(x_0) \in \V^{m_0+1}$,  let $m_1$ be so that $x_1\in \V^{m_1}\setminus \V^{m_1+1}$ and 
let $n_1\ge 0$ be maximal so that $F_{m_1}^{n_1}(x_1)\in \V_{m_1}$ and so that $t(x_0,1):= t(x_1,m_1,n_1)+t(x_0,0)\le k$. Define $x_2:=F_{m_1}^{n_1}(x_1)=F^{t(x_0,1)}(x_0) \in \V_{m_1}$.
If $n_1>0$ we continue and define inductively in a similar way
$x_i,m_i,n_i,t(x_0,i)$ for $i=0,\dots,i_0-1$ and $x_{i_0}$ where $i_0$ is chosen maximal, 
i.e.  $x_{i_0}\in \V^{m_{i_0}}$, but there is no $F$-iterate of $x_0$  (up to $k$ of $F$)
 which  enters $\V^{m_{i_0}+1}$. In this case let $n_{i_0}\ge 0$ be 
 maximal so that $F^{n_{i_0}}(x_{i_0})\in \V^{m_{i_0}}$ and so that 
 $t(x_0,i_0):=t(x_{i_0},m_{i_0},n_{i_0}) + t(x_0,i_0-1) \le k$. Define 
  $x_{i_0+1}=F_{m_{i_0}}^{n_{i_0}}(x_{i_0})=F^{t(x_0,i_0)} \in \V^{m_{i_0}}$.
  Note that $x_i\in \V^{m_i}$ for $i=0,\dots,i_0$ but that also $x_{i_0+1}\in \V^{m_{i_0}}$. Moreover, 
  $0\le m_0<m_1<\dots<m_{i_0}$ and $n_0,\dots,n_{i_0-1}>0$.   
  Next we will define $m_{i_0+1},\dots,m_{i_1}$. 
Define $m_{i_0+1}$  maximal so that that $t(x_{i_0+1},m_{i_0+1},1) + t(x_0,i_0) \le k$, 
i.e. so that the iterate $F_{m_{i_0+1}}(x_{i_0+1})$ is before time $k$. Note that $m_{i_0+1}<m_{i_0}$ and so 
$x_{i_0+1}\in \V^{m_{i_0}}\subset  \V^{m_{i_0+1}}$. 
Then take $n_{i_0+1}$ to be maximal so that 
$t(x_0,i_0+1):=t(x_{i_0+1},m_{i_0+1},n_{i_0+1}) + t(x_0,i_0) \le k$  and define 
$x_{i_0+2}=F^{n_{i_0+1}}_{m_{i_0+1}}(x_{i_0})=F^{t(x_0,i_0+1)}(x_0)\in \V^{m_{i_0+1}}$.  
Next take $m_{i_0+2}$ maximal so that 
$t(x_{i_0+1},m_{i_0+1},1) + t(x_0,i_0+1) \le k$ 
and define $n_{i_0+2}$ to be maximal so that $t(x_0,i_0+2):=j(x_{i_0+2},m_{i_0+2},n_{i_0+2}) + t(x_0,i_0+1) \le k$.  Similarly, define $m_i,n_i,t(x_0,i)$ for $i=i_0+1,\dots,i_1$
where $i_1$ is maximal so that {\lq}time runs out{\rq} (so further iterates would consider
more than $k$ iterates under $F$ of $x$). 

Note that for $i=i_0+1,\dots,i_1$,  the maximality of $m_{i}$ implies that 
\begin{equation} n_{i}>1\implies  F_{m_{i}}^j(x_{i})\notin \V^{m_{i}+1}\mbox{ for }j=0,\dots,n_i-1\end{equation} 
because otherwise one could iterate with $F_{m_{i}+1}$ (before reaching time $k$).

Let us now discuss expansion in terms of some Poincar\'e metrics along this orbit. 
For simplicity choose $\tau>1$ and $\rho>0$
so that these bound from below the  constants $\tau_i,\rho_i$,  $i=0,\dots,\nu$ 
from Lemma~\ref{lem:poinc-exp/cont} and so that the conclusion of 
Proposition~\ref{prop:exp-realtime} holds.  
For $i=0,\dots,i_0$ and $0\le j < n_i$, by definition  $F^j_{m_i}(x_i)$ is not 
contained in a critical domain of $\V^{m_i}$ and so by 
Proposition~\ref{prop:exp-realtime}    we get  
$$||\Df F_{m_i}(F^j_{m_i}(x_i))v||_{\V_{F^{j+1}_{m_i}(x_i)}} \ge \tau^{t(F^{j}_{m_i}(x_i),x_i)} \cdot 
||v||_{\V_{F^{j+1}_{m_i}(x_i)}} 
$$
where $\tau>1$. 
This gives for $i=0,\dots,i_0$, 
$$ 
||\Df F_{m_i}^{n_i}(x_i)v||_{\V^{m_i}_{x_{i+1}}} \ge \tau^{t(x_{i+1},x_i)} \cdot ||v||_{\V^{m_i}_{x_i}}. $$
Note that $x_{i_0}=F_{m_0-1}^{n_{i_0-1}}\circ \dots \circ F_{m_0}^{n_0}(x_0)  \in \V^{m_{i_0}}$ and that  the  above statement gives
$$||\Df F_{m_{i_0-1}}^{n_{i_0-1}} \cdots \Df F_{m_0}^{n_0}(x_0)(v)||_{\V^{m_{i_0-1}}_{x_{i_0}}}\ge \tau^{t(x_{i_0},x_0)}\cdot ||v||_{\V^{m_0}_{x_0}}.$$
If $n_{i_0}=0$, then $x_{i_0+1}=x_{i_0}$ and we obtain from Part (5) of Lemma~\ref{lem:poinc-exp/cont}
that 
$$||\Df F_{m_{i_0}}^{n_{i_0}} \cdots \Df F_{m_0}^{n_0}(x_0)(v)||_{\V^{0}_{x_{i_0+1}}}\ge \tau^{t(x_{i_0},x_0)}\cdot \rho \cdot ||v||_{\V^{m_0}_{x_0}}.$$
If $n_{i_0}\ge 1$, then note  that $x_{i_0+1}\in \V^{m_{i_0}}$ and so for all $v\in T_{x_{i_0}}\U_{x_{i_0}}^{m_{i_0-1}}$
$$||\Df F_{m_{i_0}}^{n_{i_0}}(x_{i_0})(v)|| _{\V^{m_{i_0}-1}_{x_{i_0+1}}}
\ge  \tau^{t(x_{i_0},x_0)} \cdot \rho \cdot ||v||_{\V^{m_{i_0-1}}_{x_{i_0}}}
$$ 
where we use for the first $n_0-1$ iterates of $F_{m_{i_0}}$ Proposition~\ref{prop:exp-realtime}
and for the last iterate Part (4)  of Lemma~\ref{lem:poinc-exp/cont}. 
So in all cases we have
$$||\Df F_{m_{i_0}}^{n_{i_0}} \cdots \Df F_{m_0}^{n_0}(x_0)(v)||_{\V^{0}_{x_{i_0+1}}}\ge \tau^{t(x_{i_0+1},x_0)}\cdot \rho^2 \cdot ||v||_{\U^{m_0}_{x_0}}.$$
Using the same argument for $i=i_0+1,\dots,i_1$ we thus obtain
$$||\Df F^k(x_0) v||_{\V^0_{F^k(x_0)}} = 
||\Df F_{m_{i_1}}^{n_{i_1}} \cdots \Df F_{m_0}^{n_0}(x_0)(v)||_{\V^{0}_{x_{i_0+1}}}\ge \tau^{t(F^k(x_0),x_0)}\cdot (\rho^2/\tau)^\nu \cdot ||v||_{\U^{m_0}_{x_0}}.$$
Using $d(F^k(x),\partial \V)\ge \kappa$ and Lemma~\ref{lem:comparPE}
we can transfer this statement to one in terms of the Euclidean norm on $\C$ and obtain 
$$||\Df F^k(x_0)v||\ge C\cdot  \tau^{t(F^k(x_0),x_0)} \cdot ||v||$$
where $C= (\rho^2/\tau)^\nu (\rho_1\rho_0)$ and where  $\nu$ is fixed (as in the statement of the theorem). 
\end{proof}

\appendix

\section{Some basic facts}
\label{A1}

In this appendix, we present some facts that are standard and go without saying for the experts. However, for the readers who are just entering the field, not knowing these facts might be an undesirable obstacle towards understanding our paper. With this larger audience in mind, in this appendix we have collected some basic statements that are used throughout the paper and that would be very useful to keep in mind while reading. The proofs can be found in good textbook sources (e.g.~\cite{RealBook, LyuBook, McM}) or in some research papers (e.g.~\cite{DS}).  

\subsection{First entry and return constructions}

One of the most fundamental constructions to study recurrent orbits is that of \emph{first return} and \emph{first entry}. The next three lemmas explain in steps the construction and its properties. 

\begin{lemma}[Orbits of entry components are disjoint]
\label{Lem:A1}
Let $f \colon U \to V$, $U \subset V$ be a map between two sets. Pick a set $A \subset U$ and for each point $x \in U$ so that the orbit of $x$ under $f$ intersects $A$, define $k_x$ to be the \emph{entry time} in $A$, i.e.\ the smallest integer $k \ge 1$ such that $f^k(x) \in A$. For $n \in \mathbb N$, define $D_n := \{x \in U \colon k_x = n\}$. Then the sets
\[
D_n, f(D_n), \ldots, f^{n-1}(D_n)
\]  
are pairwise disjoint for every $n$.
\end{lemma}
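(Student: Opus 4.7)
The plan is to argue by contradiction: suppose some two sets $f^i(D_n)$ and $f^j(D_n)$ with $0 \le i < j \le n-1$ have a common point, and derive a contradiction with the minimality of the entry time $k_x$ for some $x \in D_n$.

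Concretely, I would pick $x, y \in D_n$ such that $f^i(x) = f^j(y)$, and then apply $f^{n-j}$ to both sides of this equality. On the right-hand side this produces $f^n(y)$, which lies in $A$ since $y \in D_n$ means $k_y = n$. On the left-hand side this produces $f^{n-j+i}(x)$, so the two being equal forces $f^{n-j+i}(x) \in A$.

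The key bookkeeping step is verifying that the integer $m := n-j+i$ satisfies $1 \le m < n$. Since $0 \le i < j \le n-1$, one has $j - i \ge 1$, giving $m = n - (j-i) \le n-1$, and also $j - i \le n-1$, giving $m \ge 1$. Thus $m$ is a valid entry time strictly smaller than $n$, contradicting the definition of $D_n$ as the set of points whose first entry into $A$ occurs at time exactly $n$.

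The proof is essentially just combinatorics of the entry time; there is no real obstacle. The only subtlety, which is easy to overlook, is to apply the correct iterate ($f^{n-j}$ rather than $f^{n-i}$) so that the resulting contradiction pulls the time \emph{below} $n$ rather than above it. A symmetric application of $f^{n-i}$ would yield $f^{n-i+j}(y) \in A$, which does not contradict $k_y = n$ since $n-i+j > n$.
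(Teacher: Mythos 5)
Your proof is correct and is essentially the same argument as the paper's: the paper observes that $f^p(D_n)\subseteq D_{n-p}$ and that the $D_m$ are pairwise disjoint, which is exactly your step of applying $f^{n-j}$ to an alleged intersection point and deducing an entry time strictly less than $n$. Your version is a slightly more explicit unpacking of the same idea, and your remark about choosing $f^{n-j}$ rather than $f^{n-i}$ is the right observation to make.
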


\begin{proof}
Immediately from the definition, $f^p (D_n) \subseteq D_{n-p}$ and $D_n \cap D_m = \emptyset$ for every $p < n$ and $m \neq n$. Therefore, if $f^p(D_n) \cap f^s(D_n) \neq \emptyset$ for some $p \neq s$, then $D_{n - p} \cap D_{n - s} \neq \emptyset$, a contradiction.
\end{proof}

\begin{lemma}[Boundary maps to boundary for nice sets]
\label{Lem:A2}
If in the previous lemma we additionally assume that $f$ is continuous and $A$ is a nice open set, then every connected component of $D_n$:
\begin{itemize}
\item
is nice and open;
\item
is either contained in $A$, or is disjoint from $A$;
\item
after exactly $n$ steps maps over a connected component of $A$.
\end{itemize} 
\end{lemma}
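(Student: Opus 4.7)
The plan is to exploit the niceness of $A$ first to simplify the definition of $D_n$, and then to derive all three assertions from that reformulation.

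The key preliminary observation is that for any $x \in D_n$ one actually has $f^k(x) \notin \overline A$ for every $k = 1, \ldots, n-1$. Indeed, if $f^k(x)$ were in $\partial A$, then $f^{n-k}$ applied to it would equal $f^n(x) \in A$, violating the nice condition $f^{n-k}(\partial A) \cap A = \emptyset$. This rewrites
\[
D_n = f^{-n}(A) \cap \bigcap_{k=1}^{n-1} f^{-k}(V \setminus \overline A),
\]
and the right-hand side is open by continuity of $f$. The same niceness argument shows $\partial A \cap D_n = \emptyset$: a point on $\partial A$ whose $n$-th iterate lies in $A$ would again contradict niceness. Consequently, a connected component $D$ of $D_n$ is open and misses $\partial A$, so $D = (D \cap A) \sqcup (D \setminus \overline A)$ is a partition into two relatively open pieces; connectedness forces either $D \subset A$ or $D \cap A = \emptyset$, giving the first two bullets.

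For the third bullet, fix a component $D$ of $D_n$. By continuity $f^n(D)$ is a connected subset of the open set $A$, hence lies in a single component $W$ of $A$. To upgrade the inclusion $f^n(D) \subseteq W$ to equality I will use that in the dynamical setting of the paper $f$ is proper on each component of its domain, as is explicitly built into the definition of complex box mappings (see Definition~\ref{Def:BM}, item~(4)). Composing such proper branches makes $f^n \colon D \to W$ a proper continuous map, hence one with closed image; holomorphy (or just openness) of $f$ then makes the image also open, so $f^n(D)$ is a nonempty clopen subset of the connected set $W$ and equals $W$.

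Finally, to see that $D$ is nice, take $x \in \partial D$. Since components of the open set $D_n$ are themselves open, $x \notin D_n$. Continuity combined with the reformulation above gives $f^j(x) \in V \setminus A$ for $j = 1, \ldots, n-1$ and $f^n(x) \in \overline A$; because $x \notin D_n$ the latter forces $f^n(x) \in \partial A$. Now if some $f^k(x)$ with $k \ge 1$ lay in $D \subset D_n$, then $f^{n+k}(x) \in A$, i.e.\ $f^k$ sends the boundary point $f^n(x) \in \partial A$ into $A$ --- a direct violation of niceness of $A$. Thus $f^k(\partial D) \cap D = \emptyset$ for every $k \ge 1$, proving $D$ is nice. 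The main obstacle is the surjectivity step $f^n(D) = W$: it is genuinely stronger than what continuity and niceness alone deliver, and without a properness (or openness) hypothesis on $f$ the most one could extract is that each component of $D_n$ \emph{maps into} some component of $A$. In the box-mapping setting of the paper this step is for free, but it is the only point in the argument where one leaves purely topological considerations.
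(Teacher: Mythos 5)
Your proof is correct and in fact supplies details the paper's own argument glosses over. The paper's proof (after reducing to $D_n$ and $A$ connected) is little more than the chain of assertions "$D_n$ is nice since $A$ is nice; by continuity $D_n$ is open; by niceness $D_n\cap\partial A=\emptyset$; the last property follows since $f^n(\partial D_n)=\partial A$." Your opening observation that niceness upgrades $f^k(x)\notin A$ to $f^k(x)\notin\overline A$ for $1\le k<n$, giving the clean rewriting $D_n=f^{-n}(A)\cap\bigcap_{k<n}f^{-k}(V\setminus\overline A)$, is exactly what is needed to make openness of $D_n$ airtight; "by continuity" alone does not rule out an intermediate iterate sitting on $\partial A$. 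Your niceness and dichotomy arguments for the first two bullets are correct and are, up to the level of detail, the same as the paper's.

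Your caveat on the third bullet is the genuinely interesting point and is well placed. From continuity and niceness alone one gets only $f^n(D)\subseteq W$ and $f^n(\partial D)\subseteq\partial A$; the step to equality $f^n(D)=W$ does require some form of openness or properness of $f$. Indeed a continuous (non-open) radial self-map of $\C$ with the unit circle as a fixed invariant circle and a suitable plateau in the profile produces a nice open disk $A$ and a component of $D_1$ whose image is a proper subannulus of $A$, while still satisfying the first two bullets and $f(\partial D)=\partial A$. So the paper's inference is not a purely topological consequence of the stated hypotheses. As you note, in the setting in which this lemma is actually used the issue evaporates: the branches of a complex box mapping are proper holomorphic maps by Definition~\ref{Def:BM}~\eqref{It:DefBM4}, and properness of the entry and landing maps is recorded explicitly in Lemma~\ref{Lem:FirstReturnConstruction}~\eqref{It:FRM2}, so your clopen-image argument applies and yields $f^n(D)=W$. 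It would be cleaner for the lemma to say so, or to read "maps into" rather than "maps over" under the stated (merely continuous) hypotheses.
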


\begin{proof}
For simplicity assume that $D_n$ and $A$ are connected. Then $D_n$ is nice since $A$ is nice. By continuity of $f$, $D_n$ must be open. Because $A$ is nice, $D_n \cap \partial A = \emptyset$. This yields the second property. The last property follows since $f^n(\partial D_n) = \partial A$, again by continuity. 
\end{proof}

Recall the definitions of the first entry, landing and return maps from Section~\ref{SSSec:Return}. It is often the case that the first return map to a \emph{nice} union of topological disks has the structure of a complex box mapping. The following two lemmas should give the reader a ``flavor'' of what kind of formal statements one might expect in this direction (recall from Section~\ref{SSSec:Return} the definition of a strictly nice set).   

\begin{lemma}[Box mappings from strictly nice unions]
\label{Lem:A3}
Let $U \subset V \subset \Cc$ be a pair of domains, and let $f \colon U \to V$ be a holomorphic map with finitely many critical points. Let $\V \subset U$ be a \emph{strictly} nice set such that 
\begin{itemize}
\item
$\V$ is a union of finitely many open Jordan disks with disjoint closures;
\item
$\Crit(f) \subset \V$. 
\end{itemize}
Then the first return map to $\V$ under $f$ is a complex box mapping. \qed
\end{lemma}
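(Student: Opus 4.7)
The plan is to set $F := R_{\V} \colon \Dom(\V) \to \V$, the first return map to $\V$ under $f$, and to verify the four conditions of Definition~\ref{Def:BM}. Writing $\Dom(\V)$ as the disjoint union of the open sets $D_n = \{z \in \V \colon f^k(z) \notin \V \text{ for } 1 \le k < n \text{ and } f^n(z) \in \V\}$, on which $F$ coincides with $f^n$, makes it immediate that $F$ is holomorphic.

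Condition~(2) of Definition~\ref{Def:BM} is given by hypothesis. For~(1), on a component $W \subset D_n$ the chain rule gives $(f^n)'(z) = \prod_{j=0}^{n-1} f'(f^j(z))$, so any critical point $z$ of $F$ in $W$ must satisfy $f^j(z) \in \Crit(f)$ for some $0 \le j < n$. Since $\Crit(f) \subset \V$, if $j \ge 1$ then $f^j(z) \in \V$, contradicting the minimality of $n$ as the return time; hence $\Crit(F) \subseteq \Crit(f)$, which is finite.

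For~(3) and~(4), I would fix a component $V$ of $\V$ and a component $W$ of $\Dom(\V) \cap V$ with return time $n$. Lemmas~\ref{Lem:A1} and~\ref{Lem:A2} give that $W$ is open, that $f^n(W)$ equals some component $V'$ of $\V$, that $f^n$ extends continuously to $\ovl W$, and that $f^n(\partial W) \subset \partial V'$; this makes $F|_W \colon W \to V'$ a proper surjective map, yielding~(4). To obtain the compact containment $\ovl W \Subset V$ required in~(3), I would argue by contradiction: if $z \in \partial W \cap \partial V$, then $z \in \partial \V$ while $f^n(z) \in \partial V' \subset \ovl{\V}$, contradicting the strict niceness relation $f^n(\partial \V) \cap \ovl{\V} = \emptyset$. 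For pairwise disjoint closures: if $W \neq W'$ are distinct components of $\Dom(\V) \cap V$ with return times $n \le n'$ and $z \in \partial W \cap \partial W'$, then $f^n(z), f^{n'}(z) \in \partial \V$; if $n < n'$ the map $f^{n'-n}$ would send $f^n(z) \in \partial \V$ into $\partial \V \subset \ovl{\V}$, again violating strict niceness.

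The main obstacle is the remaining case $n = n'$, where strict niceness alone is not enough. The plan here is: pairwise disjoint closures of components of $\V$ force $f^n(W) = f^n(W') = V'$; moreover the orbit $z, f(z), \ldots, f^{n-1}(z)$ must avoid $\Crit(f)$, since otherwise some $f^j(z) \in \Crit(f) \subset \V$ with $1 \le j < n$ would place $z$ in the open set $D_m$ for some $m \le j$, contradicting $z \in \partial W$. Consequently $f^n$ is locally biholomorphic at $z$, so only one connected component of $f^{-n}(V')$ can accumulate at $z$, contradicting $W \neq W'$. Combining these ingredients establishes all four conditions and completes the proof.
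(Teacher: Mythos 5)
Your overall plan is the right one and handles conditions~(\ref{It:DefBM1}), (\ref{It:DefBM2}), (\ref{It:DefBM4}), and the compact-containment part of~(\ref{It:DefBM3}) convincingly, but there are two genuine gaps in the remainder of~(\ref{It:DefBM3}). You never check that a return component $W$ is an open \emph{Jordan disk}, which Definition~\ref{Def:BM}(\ref{It:DefBM3}) explicitly demands. Knowing that $F\colon W\to V'$ is proper and that $f^n(\partial W)\subset\partial V'$ does not yield simple connectivity of $W$, let alone that $\partial W$ is a Jordan curve; this has to be argued, e.g.\ by first observing that $W$ is in fact a full component of $f^{-n}(V')$ and then pulling back the Jordan curve $\partial V'$ along the chain $W, f(W),\dots, V'$ (using that the intermediate iterates avoid $\ovl{\V}\supset\Crit(f)$, so the pullback is a homeomorphism until the last step at $W$ itself), or via a Riemann--Hurwitz count. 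This is precisely where the Jordan-disk hypothesis on $\V$ earns its keep, and it is a non-trivial omission.

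Second, the closing step of your $n=n'$ case does not close. You deduce that ``only one connected component of $f^{-n}(V')$ can accumulate at $z$, contradicting $W\ne W'$'', but $W$ and $W'$ are \emph{a priori} components of $D_n$, which is in general a \emph{proper} subset of $f^{-n}(V')$: two distinct components of $D_n$ could sit inside a single component of $f^{-n}(V')$, in which case your sentence produces no contradiction. Two further observations are needed. (i) For $z\in\partial W$ the intermediate iterates satisfy $f^j(z)\notin\ovl{\V}$ for $1\le j<n$: the case $f^j(z)\in\V$ is excluded by your $D_m$ argument, and $f^j(z)\in\partial\V$ would give $f^n(z)=f^{n-j}(f^j(z))\notin\ovl{\V}$ by strict niceness, contradicting $f^n(z)\in\ovl{V'}$. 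Hence near $z$ the sets $D_n$ and $f^{-n}(V')$ agree, so $W$ really is a full component of $f^{-n}(V')$. (ii) Because $V'$ is a Jordan domain, $\ovl{V'}$ is locally connected, so by Carath\'eodory/Schoenflies one can choose a neighborhood $N$ of $z$ on which $f^n$ is univalent and for which $f^n(N)\cap V'$ is connected, whence $f^{-n}(V')\cap N$ is connected and $W=W'$. You also implicitly rely on the conclusion of~(i) when you assert, without justification, that $f^n(z)$ and $f^{n'}(z)$ lie in $\partial\V$.
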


In Lemma~\ref{Lem:A3}, the condition of being strictly nice ensures that the components of the first return domain are compactly contained in $\V$. One can weaken the conditions $\Crit(f) \subset \V$ and $\# \Crit(f) < \infty$ in the previous lemma, for example, as follows.    

\begin{lemma}[Box mappings from strictly nice unions, version 2]
\label{Lem:A3}
Let $U \subset V \subset \Cc$ be a pair of domains, and let $f \colon U \to V$ be a holomorphic map. Let $\V \subset U$ be a \emph{strictly} nice set such that 
\begin{itemize}
\item
$\V$ is a union of finitely many open Jordan disks with disjoint closures;
\item 
no iterate of a critical point intersects $\partial \V$. 
\end{itemize}
Let $\W$ be the union of $\V$ and all the components of the first landing domain to $\V$ under $f$ that intersect $\Crit(f)$. Assume that $\# (\W \cap \Crit(f)) < \infty$. Then the first return map to $\W$ under $f$ is a complex box mapping. \qed
\end{lemma}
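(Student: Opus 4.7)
The strategy is to verify the four conditions of Definition~\ref{Def:BM} for the first return map $F := R_\W \colon \U \to \W$, where $\U := \Dom(\W)$. Write $\W = \V \cup L_1 \cup \dots \cup L_N$, where the $L_i$ are the components of $\DomE(\V)$ of positive depth $k_i$ that intersect $\Crit(f)$, each satisfying $f^{k_i}(L_i) = V_{j_i}$ for some component $V_{j_i}$ of $\V$. Their number is finite because each $L_i$ contains at least one point of $\Crit(f) \cap \W$, and $\#(\W \cap \Crit(f)) < \infty$ by hypothesis. I would first check condition~(2): each $L_i$ is a Jordan disk, and the closures of the pieces comprising $\W$ are pairwise disjoint. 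Since no critical orbit meets $\partial \V$, the map $f^{k_i} \colon L_i \to V_{j_i}$ is a proper branched covering with no critical points on $\partial L_i$, so $f^{k_i}$ restricts to a finite covering of $\partial V_{j_i}$; combined with the Riemann--Hurwitz formula for connected preimages of Jordan disks, this gives that $L_i$ is a Jordan disk. Pairwise disjointness of the closures follows from strict niceness: any common boundary point of $\overline{L_i}, \overline{L_{i'}}$ or $\overline{L_i}, \overline{\V}$ would, under forward iteration, force some iterate of $\partial \V$ to land in $\overline{\V}$.

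Next I would show $\W$ itself is nice, i.e.\ $f^k(\partial \W) \cap \W = \emptyset$ for all $k \ge 1$. For $z \in \partial \V$, strict niceness of $\V$ immediately keeps all forward iterates out of $\overline{\V}$; and if $f^k(z) \in L_{i'}$ then applying $f^{k_{i'}}$ would return us to $\V$, again contradicting strict niceness. For $z \in \partial L_i$, the case $k \ge k_i$ reduces to the previous one because $f^{k_i}(z) \in \partial \V$. The delicate case is $0 < k < k_i$ with $f^k(z) \in L_{i'}$: by openness of $f^{k_{i'}}|_{L_{i'}}$, the point $f^{k_{i'}}(f^k(z)) = f^{k_{i'}+k}(z)$ would lie in the \emph{interior} of $V_{j_{i'}}$, while matching times and continuity force $f^{k_i}(z) \in \partial V_{j_i}$ and $k + k_{i'} = k_i$, $V_{j_{i'}} = V_{j_i}$, yielding the contradiction $f^{k_i}(z) \in \inter(V_{j_i}) \cap \partial V_{j_i}$.

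With $\W$ nice, conditions (3) and (4) follow from Lemmas~\ref{Lem:A1} and~\ref{Lem:A2} applied to $\W$. Components $D \subset \U$ with first return time $k$ are open, contained in a single component of $\W$, and $f^k \colon D \to f^k(D)$ is proper onto a component of $\W$; surjectivity is automatic for proper holomorphic maps between connected Riemann surfaces. Compact containment of $\overline{D}$ in its parent component of $\W$ is a direct consequence of niceness of $\W$: were $\overline{D}$ to meet $\partial \W$, then $f^k(\partial \W) \cap \W \ne \emptyset$. The Jordan-disk property for each $D$ uses the same Riemann--Hurwitz argument as in Step~1 together with the absence of critical orbits on $\partial \W$ (inherited from $\partial \V$). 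Finally, condition~(1) is verified by observing that any critical point of $F = f^k$ on a component $D$ of first return time $k$ is some $z$ with $f^j(z) \in \Crit(f)$ for $0 \le j < k$; since any critical point of $f$ whose orbit enters $\W$ must itself lie in $\W$ (being either in $\V$ or in some $L_i$), the first-return condition forces $j = 0$, so $\Crit(F) \subseteq \Crit(f) \cap \W$, which is finite.

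The main obstacle is the niceness step for $\W$, especially the delicate case tracking two distinct landing components and exploiting both the depth equation $k + k_{i'} = k_i$ and the openness of $f^{k_{i'}}|_{L_{i'}}$ to separate interior from boundary images. Once that is in hand, the Riemann--Hurwitz argument producing Jordan disks in Steps~1 and~3 is standard, but one must take care that no critical orbit is ever allowed to touch a piece of $\partial \W$, which traces back to the hypothesis on $\partial \V$.
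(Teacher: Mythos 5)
Your strategy—verify Definition~\ref{Def:BM} directly for the first return map to $\W$—is the natural one, and the niceness argument for $\W$ (including the delicate $0<k<k_i$ case), the Jordan disk property, and the inclusion $\Crit(F)\subset\Crit(f)\cap\W$ are sound. The genuine gap is in the compact containment step. You claim that $\overline D$ meeting $\partial\W$ would force $f^k(\partial\W)\cap\W\ne\emptyset$, contradicting niceness of $\W$; but this implication is false. By properness, a point $z\in\partial D\cap\partial\W$ is sent by $f^k$ into $\partial W_{j'}\subset\partial\W$, which is disjoint from the \emph{open} set $\W$, so niceness of $\W$ is not violated. (The inference would be valid if $\W$ were \emph{strictly} nice, but it is not: $f^{k_i}(\partial L_i)\subset\partial V_{j_i}\subset\overline\W$.) In fact compact containment genuinely fails for each $L_i$: every $z\in L_i$ satisfies $f^{k_i}(z)\in V_{j_i}\subset\W$, so $L_i\subset\Dom(\W)$; and since niceness of $\W$ makes the first-return-time classes within $\Dom(\W)$ pairwise disjoint open sets, connectedness forces $L_i$ into a single such class, so $L_i$ is itself a component of the return domain $\U$. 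Thus for $L_i$ the first alternative of condition~\eqref{It:DefBM3} of Definition~\ref{Def:BM} holds, with nothing to prove.

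The case actually requiring compact containment is a return component $D$ inside a component $V_m$ of $\V$, and here the correct tool is strict niceness of $\V$, not niceness of $\W$. Writing $f^k\colon D\to W_{j'}$ for the return branch, observe that $D$ is simultaneously a component of the first return domain to $\V$ inside $V_m$: the $\V$-return time is $k$ if $W_{j'}$ is a component of $\V$, and $k+k_{i'}$ if $W_{j'}=L_{i'}$, since the orbit of $D$ stays outside $\V$ until then. Then strict niceness of $\V$ gives $D\Subset V_m$: were there $z\in\overline D\cap\partial V_m$, properness would send $z$ under $f^{k+k_{i'}}$ into $\partial V_{j_{i'}}\subset\overline\V$, while $z\in\partial\V$ and strict niceness of $\V$ give $f^{k+k_{i'}}(z)\notin\overline\V$. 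This is the same mechanism as your Step~1 disjointness argument. Once the two cases ($D=L_i$ a whole component of $\U$ vs.\ $D\Subset V_m$ via strict niceness of $\V$) are separated, the rest of your proof goes through.
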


\begin{remark}
Note that $\W$ in the lemma above is a nice, but not necessarily strictly nice set.
\end{remark}

In the context of box mappings, Lemmas~\ref{Lem:A1} and \ref{Lem:A2} imply that every connected component $X$ of the first entry map to a nice union of puzzle pieces is a puzzle piece itself, and moreover, the branch of the entry map restricted to $X$ is a branched covering of uniformly bounded degree (with the bound that depends only on the number of critical points of the starting map and their multiplicities). The latter follows immediately from Lemma~\ref{Lem:A1} as the sequence $D_n, f(D_n), \ldots, f^{n-1}(D_n)$ can meet a critical point at most once. We summarize some properties of the entry, as well as landing and return, maps in the following lemma.  

%

\begin{lemma}[First return, entry and landing maps to a nice set]
\label{Lem:FirstReturnConstruction}
Let $F \colon \U \to \V$ be a complex box mapping, and let $ W \subset \U$ be a nice union of puzzle pieces of $F$. Then the first entry map $E \colon \DomE \to W$, the first return map $R\colon\Dom\to W$, and the first landing map $L\colon\DomL \to W$ have the following properties.:
\begin{enumerate}
\item
\label{It:FRM1}
$\DomE$ and $ \Dom$ are disjoint unions puzzle pieces of $F$, and hence are open;
\item
\label{It:FRM2}
for every component $Y'$ of $\DomE$ there is a component $W'$ of $W$ and an integer $n$ so that $E|_{Y'} = F^n|_{Y'}$, and $E\colon Y'\to W'$ is a proper map. The same property follows for $R$ and $L$ by restriction.

\item
\label{It:FRM3}
the local degrees of the  maps $R$, $E$, and $L$ are bounded in terms of $F\colon \U\to \V$. Moreover, if $\Crit(F) \subset  W$, then $\Crit(R)$, $\Crit(E)$ and $\Crit(L)$ are contained in  $\Crit(F)$;
\item
\label{It:FRM4}
If $F^n(x) \in W$ and $F^m(x) \in W$ for some $m > n$ and $x \in \U$, then $F^n(x) \in \Dom$. In particular, every orbit $x, F(x), F^2(x), \ldots$ that intersects $W$ infinitely often, lands in the set $\{x \in  \Dom \colon R^n(x) \in  \Dom \text{ for all }n\ge0\}$ (the \emph{non-escaping set} of $R$).   \qed
\end{enumerate}
\end{lemma}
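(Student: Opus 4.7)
The proof is essentially a careful bookkeeping exercise using the definitions together with the two elementary observations in Lemmas~\ref{Lem:A1} and \ref{Lem:A2}. The plan is to prove the four items roughly in the order stated, recycling the information produced at each step.

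For items \eqref{It:FRM1} and \eqref{It:FRM2}, I would start by stratifying $\DomE(W)$ according to entry time: set $D_n := \{x \in \U : F^j(x) \in \U \text{ for } 0 \le j \le n \text{ and } F^n(x) \in W, F^j(x) \notin W \text{ for } 1 \le j < n\}$. Each $D_n$ is a union of components of $F^{-n}(W)$, hence a union of puzzle pieces of depth equal to (depth of the relevant component of $W$) $+ n$. Lemma~\ref{Lem:A1} applied componentwise to each $D_n$ (with $A$ being $W$) then guarantees that $\DomE(W) = \bigsqcup_{n \ge 1} D_n$ really is a disjoint union, because a component of $D_n$ cannot meet a component of $D_m$ with $m \ne n$ (they would force an orbit to land in $W$ at two different first entry times). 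Lemma~\ref{Lem:A2} (or rather, the fact that $W$ is nice together with continuity) guarantees that each component $Y'$ of $D_n$ is mapped by $F^n$ onto a single component $W'$ of $W$. Properness of $F^n|_{Y'} \colon Y' \to W'$ follows by composing the $n$ proper maps between components that the definition of a complex box mapping supplies (each step $F \colon F^j(Y') \to F^{j+1}(Y')$ is a branch of $F$ on a component of $\U$, which is proper). This handles $E$; for $R$ intersect with $W$, and for $L$ add the identity on $W$.

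For item \eqref{It:FRM3} the key input is again Lemma~\ref{Lem:A1}: if $Y'$ is a component of $D_n$, then the puzzle pieces $Y', F(Y'), \dots, F^{n-1}(Y')$ are pairwise disjoint, so each critical point of $F$ is visited \emph{at most once} along this orbit. Consequently
\[
\deg(E|_{Y'}) \,=\, \prod_{j=0}^{n-1} \deg\bigl(F|_{F^j(Y')}\bigr) \,\le\, \prod_{c \in \Crit(F)} \deg_c(F),
\]
which is a bound depending only on $F$. The same bound applies to $R$ and $L$ by restriction and trivial extension. If moreover $\Crit(F) \subset W$, then for every component $Y'$ of $D_n$ with $n \ge 1$ the intermediate iterates $F(Y'),\ldots,F^{n-1}(Y')$ cannot meet any critical point (they are disjoint from $W \supset \Crit(F)$ by minimality of the entry time), so the only possible critical points of $E|_{Y'}$ lie in $Y'$ itself and come from $\Crit(F)$; this immediately gives $\Crit(E), \Crit(R), \Crit(L) \subset \Crit(F)$.

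Item \eqref{It:FRM4} is essentially definitional: given $F^n(x), F^m(x) \in W$ with $m > n \ge 0$, the point $y := F^n(x)$ satisfies $y \in W$ and $F^{m-n}(y) \in W$, so $y \in W \cap \DomE(W) = \Dom(W)$ by the definition of first return domain. Iterating this observation yields the second sentence of \eqref{It:FRM4}. The only mild subtlety here, and the one place where one needs to pay attention, is that ``first return'' in our conventions means $k \ge 1$, so the case $n = m$ is excluded from the hypothesis, which is already reflected in the statement $m > n$. Beyond that there is no substantive obstacle; the entire lemma is a packaging of the niceness of $W$ together with the Markov structure that puzzle pieces enjoy under $F$.
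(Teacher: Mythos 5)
Your proof is correct and follows the same route the paper takes: the paper treats the lemma as an immediate consequence of Lemmas~\ref{Lem:A1} and~\ref{Lem:A2} together with the stratification-by-entry-time argument sketched in the paragraph preceding the statement, and your write-up simply makes that sketch explicit. The one small imprecision is in the parenthetical of your properness argument for item~\eqref{It:FRM2}: $F^{j}(Y')$ need not itself be a component of $\U$ (it is a puzzle piece of depth $\ge 1$), but since $F^{j}(Y')$ is the full preimage of $F^{j+1}(Y')$ under the proper map $F$ restricted to the component of $\U$ containing it, the restricted map is still proper, so the conclusion stands.
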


\subsection{Koebe Distortion Theorem, bounded geometry and qc mappings}

\begin{theorem}[Invariant Koebe Distortion Theorem]
Let $\Delta \subset D \subset \C$ be a pair of open topological disks with $\modulus (D \setminus \overline{\Delta}) \ge \mu >0$. Let $f \colon D \to \C$ be a univalent map. Then there is a constant $C(\mu)$ such that the \emph{distortion} of $f$ on $\Delta$ is bounded by $C(\mu)$:
\[
\frac{|f'(x)|}{|f'(y)|} \le C(\mu) \quad \text{ for every }\quad x,y \in \Delta.
\]
Moreover, $C(\mu) \to 1$ as $\mu \to \infty$. \qed
\end{theorem}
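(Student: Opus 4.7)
The plan is to reduce the statement to the classical Koebe Distortion Theorem on the unit disk via Riemann uniformization, using the conformal invariance of modulus to turn the hypothesis $\modulus(D\setminus\overline{\Delta})\ge\mu$ into the statement that, after uniformizing $D$ with a chosen point of $\Delta$ sent to $0$, the image of $\Delta$ sits inside a concentric disk of radius $r(\mu)<1$.

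Concretely, I would fix $y\in\Delta$ and choose a Riemann map $\phi\colon\disk\to D$ with $\phi(0)=y$. Setting $\Delta':=\phi^{-1}(\Delta)$, conformal invariance of the modulus gives $\modulus(\disk\setminus\overline{\Delta'})\ge\mu$, and this annulus separates $0\in\Delta'$ from $\partial\disk$. By the Gr\"otzsch extremal annulus estimate, there exists $r=r(\mu)\in(0,1)$, depending only on $\mu$ and with $r(\mu)\to 0$ as $\mu\to\infty$, such that $\Delta'\subset\overline{\disk_r}$. The map $\phi$ is univalent on $\disk$, and since $f$ is univalent on $D$, so is $f\circ\phi\colon\disk\to\C$. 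Applying the classical Koebe Distortion Theorem to each of these two univalent functions on $\disk$ at the point $0$, I get, for every $w\in\overline{\disk_r}$,
\[
\frac{1-r}{(1+r)^3}\ \le\ \frac{|\phi'(w)|}{|\phi'(0)|},\ \frac{|(f\circ\phi)'(w)|}{|(f\circ\phi)'(0)|}\ \le\ \frac{1+r}{(1-r)^3}.
\]

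Finally, for any $x\in\Delta$, write $x=\phi(w)$ with $|w|\le r$ and use the chain rule identity
\[
\frac{|f'(x)|}{|f'(y)|}\ =\ \frac{|(f\circ\phi)'(w)|}{|(f\circ\phi)'(0)|}\cdot\frac{|\phi'(0)|}{|\phi'(w)|}.
\]
Combining the four Koebe inequalities yields
\[
\frac{|f'(x)|}{|f'(y)|}\ \le\ \Bigl(\frac{1+r}{1-r}\Bigr)^{4}\ =:\ C(\mu),
\]
and swapping the roles of $x$ and $y$ (or using a symmetric basepoint) gives the same bound for the reverse ratio. Since $r(\mu)\to 0$ as $\mu\to\infty$, we have $C(\mu)\to 1$. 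The only nontrivial step is the Gr\"otzsch modulus estimate bounding the ``hyperbolic radius'' of $\Delta'$ in $\disk$ purely in terms of $\mu$; after that, the rest is two applications of the classical Koebe theorem and the chain rule, so I do not expect any real obstacle.
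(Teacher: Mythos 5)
Your proof is correct, and it is the standard argument for this statement. Note that the paper itself gives no proof: the theorem appears in Appendix~\ref{A1} among ``well-known facts,'' and the \texttt{\textbackslash qed} simply signals that the reader is referred to textbook sources (e.g.\ \cite{RealBook}, \cite{McM}). Your reduction --- uniformize $D$ by a Riemann map $\phi$ with $\phi(0)=y$, transport the hypothesis by conformal invariance of modulus so that $\modulus(\disk\setminus\overline{\phi^{-1}(\Delta)})\ge\mu$, then use the Gr\"otzsch extremal estimate to trap $\phi^{-1}(\Delta)$ in $\overline{\disk_{r(\mu)}}$ with $r(\mu)=\mu_{\mathrm{Gr}}^{-1}(\mu)\to 0$, and finally combine two applications of classical Koebe distortion on $\disk$ to $\phi$ and $f\circ\phi$ via the chain rule --- is exactly how this result is proved in the standard references. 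One small point worth making explicit: since the construction works for any basepoint $y\in\Delta$ (the Gr\"otzsch bound $r(\mu)$ is uniform in $y$ because the modulus hypothesis transports independently of the choice of basepoint), the bound $\bigl(\tfrac{1+r}{1-r}\bigr)^4$ holds for \emph{every} pair $x,y\in\Delta$ directly; there is no need to square the constant by composing two one-sided estimates. Your parenthetical remark about swapping roles already gestures at this, but it is cleaner to just observe that the argument treats each pair symmetrically.
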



\begin{lemma}[Annulus pull-back and geometry control under branched covering]
\label{Lem:Fact}
Let $f \colon U' \to V'$ be a branched covering of degree at most $D$ between open topological disks, and suppose $V \subset V'$ is a topological disk with $\modulus(V' \sm \ovl V) \ge \mu > 0$. Let $U \subset U'$ be a component of $f^{-1}(V)$. Then 
\[
\modulus(U' \sm \ovl U) \ge \frac{\mu}{D}.
\]
If we additionally assume that $V$ has $\eta$-bounded geometry, then $U$ has $\eta' = \eta'(\eta, \mu, D)$-bounded geometry. \qed
\end{lemma}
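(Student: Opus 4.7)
The approach is a standard pullback of admissible metrics. Let $\tilde A := U' \setminus \ovl U$ and $A := V' \setminus \ovl V$, with respective curve families $\Gamma$ and $\Gamma'$ connecting the two boundary components of each annulus. Given any metric $\rho \ge 0$ supported on $A$ that is admissible for $\Gamma'$, I would extend it by zero to $V'$ and pull it back to $\tilde\rho(z) := \rho(f(z))|f'(z)|$ on $\tilde A$. The key geometric observation is that $\tilde\rho$ is admissible for $\Gamma$: any $\gamma \in \Gamma$ joins $\partial U$ to $\partial U'$, so $f \circ \gamma$ joins $\partial V = f(\partial U)$ to $\partial V'$ in $V'$, and the final sub-arc that lies entirely in $A$ (taken after the last visit of $f \circ \gamma$ to $\ovl V$) belongs to $\Gamma'$; hence $\int_\gamma \tilde\rho = \int_{f\circ\gamma}\rho \ge 1$. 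The change-of-variables formula with multiplicity bounded by $D$ gives $\int_{\tilde A}\tilde\rho^2 = \int \rho(f(z))^2|f'(z)|^2 dA(z) \le D\int_{V'}\rho^2 = D\int_A\rho^2$. Taking the infimum over all admissible $\rho$ yields $1/\modulus(\tilde A) \le D/\modulus(A) \le D/\mu$, which is the required bound.

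\textbf{Plan for the bounded-geometry assertion.} The idea is to combine the first part with an invariant Koebe argument and extract $\eta'$ from a compactness consideration. First, pick an intermediate disk $V \Subset V^+ \Subset V'$ obtained by cutting the annulus $V' \setminus \ovl V$ along a mid-equipotential, so both sub-annuli have modulus $\ge \mu/2$. Let $U^+ \supset U$ be the component of $f^{-1}(V^+)$ containing $U$; the first part applied to the pair $(U,U^+,V,V^+)$ gives $\modulus(U^+\setminus \ovl U) \ge \mu/(2D)$, yielding a definite collar of $U$ inside $U^+$. Let $y_0 \in V$ be the center of the inscribed ball $B(y_0,\eta\diam V)\subset V$, choose any preimage $x_0 \in f^{-1}(y_0)\cap U$, and take Riemann maps $\psi\colon \disk \to V^+$ and $\varphi\colon \disk\to U^+$ normalized by $\psi(0) = y_0$, $\varphi(0) = x_0$. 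The composition $\hat f := \psi^{-1}\circ f\circ \varphi \colon \disk \to \disk$ is then a proper holomorphic self-map of $\disk$ of degree at most $D$, i.e.\ a Blaschke product in a finite-dimensional space. By the invariant Koebe Distortion Theorem applied to $\psi$ on $V \Subset V^+$, the set $\hat V := \psi^{-1}(V)$ lies in a compact family of topological disks inside $\disk$ whose inscribed-radius-to-diameter ratio is bounded below in terms of $\eta$ and $\mu$; similarly, bounded geometry of $\hat U := \varphi^{-1}(U)$ transfers back to $U$ via $\varphi$ at the cost of a factor depending only on $\mu/D$. The tuples $(\hat f, \hat V, \hat U)$ satisfying the given constraints form a compact family, on which the ratio of the inscribed-ball radius of $\hat U$ to $\diam(\hat U)$ is continuous and strictly positive, so it admits a uniform lower bound $\eta_0 = \eta_0(\eta,\mu,D)$. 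Combining with the two Koebe transfers gives the required $\eta' = \eta'(\eta,\mu,D)$.

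\textbf{Main obstacle.} The chief difficulty is that $f|_U$ may have critical points, so there is no inverse branch of $f$ to which the Koebe Distortion Theorem could be directly applied in order to transport the inscribed round disk of $V$ to a round disk inside $U$. The compactness argument above sidesteps this by parametrizing all admissible configurations as a finite-dimensional family of normalized Blaschke products of degree at most $D$, and concluding by continuity. A purely quantitative estimate could be obtained by tracking local factorizations $f \sim z^{k_i}$ through each critical point $c_i \in U$ and using that $k_i \le D$, but this becomes combinatorially delicate when several critical points coexist in $U$, which is why the compactness route is the most efficient.
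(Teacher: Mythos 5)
The paper does not give its own proof here: the lemma sits in Appendix~A1 as a ``well-known fact'' with references to textbook sources, so your attempt is being judged on its own terms.

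Your argument for the modulus inequality is correct. The extremal-length pullback $\tilde\rho = (\rho\circ f)\,|f'|$, together with the observation that the sub-arc of $f\circ\gamma$ after its last visit to $\ovl V$ lies in $\Gamma'$, is exactly the standard proof, and the change-of-variables inequality with multiplicity at most $D$ closes it.

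The bounded-geometry part has two genuine gaps, both fixable. First, cutting $V'\setminus\ovl V$ at a \emph{mid}-equipotential only guarantees $\modulus(V^+\setminus\ovl V) \ge \mu/2$; it gives no upper bound. If $\modulus(V'\setminus\ovl V) = M$ is much larger than $\mu$, then $\modulus(V^+\setminus\ovl V) = M/2$ is also large, and in the normalized picture $\hat V = \psi^{-1}(V)$ shrinks to the point $0$ as $M\to\infty$ (conformal invariance of modulus forces $\diam\hat V \lesssim e^{-\pi M}$). The family you wish to run a compactness argument on is then not precompact, and the final constant degenerates with $M$ rather than depending only on the lower bound $\mu$. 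The fix is to choose $V^+$ so that $\modulus(V^+\setminus\ovl V) = \min(\mu/2, c_0)$ for a fixed $c_0>0$, keeping $\modulus(V'\setminus\ovl{V^+})\ge\mu/2$; conformal invariance then pins $\diam\hat V$ between two positive constants depending only on $\mu$ and $\eta$.

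Second, the compactness claim is not justified as stated. Blaschke products of degree $\le D$ fixing $0$ are not a compact family (zeros can escape to $\partial\disk$), and topological disks with $\eta_1$-bounded geometry do not form a Hausdorff-compact class; the usable topology is Carath\'eodory kernel convergence, in which the function $\hat U \mapsto \eta(\hat U)$ is \emph{not} upper semicontinuous (thin appendages of $\hat U_n$ that vanish in the kernel limit can make $\eta(\hat U_n)$ small while $\eta(\hat U_\infty)$ stays large), which is exactly the direction the contradiction argument would need. Once the cut is corrected, though, compactness is unnecessary: the Schwarz lemma ($\hat f(0)=0$ gives $|\hat f(z)|\le|z|$, so $\hat U \supset \Comp_0\hat f^{-1}(B(0,\eta_1\diam\hat V)) \supset B(0,\eta_1\diam\hat V)$) bounds the inradius of $\hat U$ at $0$ from below by a constant depending on $\eta,\mu$; the modulus inequality from the first part gives $\modulus(\disk\setminus\ovl{\hat U}) \ge \mu/(2D)$, which via Gr\"otzsch bounds $\diam\hat U$ from above in terms of $\mu, D$. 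These two explicit estimates already give $\eta(\hat U) \ge \eta_2(\eta,\mu,D)>0$, and the Koebe transfer back to $U$ works exactly as you describe.
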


There are several equivalent definitions for the notion of quasiconformal homeomorphism. 
The one which seems to be the most natural in our context is the one 
given by Heinonen and Koskela \cite{HK}: 

\begin{definition} For open domains $\Omega, \tilde \Omega \subset \C$ and $K \ge 1$, a homeomorphism  $\phi \colon \Omega\to \tilde{\Omega}$  is called 
{\em $K$-quasiconformal} (or \emph{$K$-qc} for short) if for each $x\in \Omega$, 
$$\liminf_{r\to 0}\frac{\sup_{|y-x|=r}|\phi(y)-\phi(x)|}
{\inf_{|y-x|=r}|\phi(y)-\phi(x)|}\le K < \infty.$$
\end{definition} 

This definition also motivates why the notion of bounded geometry 
appears in the QC-Criterion (Theorem~\ref{Thm:QCCriterion}).



\begin{thebibliography}{BCLOS}

\bibitem[AIM]{AIM}
Kari Astala, Tadeusz Iwaniec, Gaven Martin, \emph{Elliptic partial differential equations and quasiconformal mappings in the plane}.
Princeton Mathematical Series, 48. Princeton University Press, Princeton, NJ, 2009. xviii+677 pp.

\bibitem[AKLS]{AKLS}
Artur Avila, Jeremy Kahn, Mikhail Lyubich, Weixiao Shen,
\emph{Combinatorial rigidity for unicritical polynomials}.
Ann. of Math., \textbf{170} (2009), no. 2, 783--797.

\bibitem[AL1]{AL2} Artur Avila, Mikhail Lyubich, {\em Hausdorff dimension and conformal measures of Feigenbaum Julia sets}. J. Amer. Math. Soc. \textbf{21} (2008), 305--363 .

\bibitem[AL2]{AL}
Artur Avila, Mikhail Lyubich,
\emph{The full renormalization  horseshoe for unimodal maps of higherdegree: Exponential contraction along hybrid classes}. Publ. Math. IH\'ES, \textbf{114} (2011), 171--223.

\bibitem[AL3]{AL3} Artur Avila, Mikhail Lyubich,
{\em Lebesgue measure of Feigenbaum Julia sets}.
\url{https://arxiv.org/abs/1504.02986}.

\bibitem[ALdM]{ALdM}
Artur Avila, Mikhail Lyubich, Welington de Melo, \emph{Regular or stochastic dynamics in real analytic families of unimodal maps}. Invent. Math. \textbf{154} (2003), 451--550.

\bibitem[ALS]{ALS}
Artur Avila, Mikhail Lyubich, Weixiao Shen,
{\em Parapuzzle of the Multibrot set and typical dynamics of unimodal maps}. J. Eur. Math. Soc. \textbf{13} (2011), 27--56.

\bibitem[BL]{BL} Alexander Blokh, Mikhail Lyubich, {\em Attractors of maps of the interval. Dynamical systems and ergodic theory}.
 (Warsaw, 1986), 427–442, Banach Center Publ., 23, PWN, Warsaw, 1989. 

\bibitem[BM]{BM}
Mario Bonk, Daniel Meyer, \emph{Expanding Thurston maps}, volume 225 of \emph{Mathematical Surveys and Monographs}. American Mathematical Society, Providence, RI, 2017.

\bibitem[BH]{BH} Bodil Branner, John H. Hubbard,  {\em The iteration of cubic polynomials. I. The global topology of parameter space.} Acta Math. 160 (1988), no. 3-4, 143--206.

\bibitem[BKNS]{BKNS} Henk Bruin, Gerhard Keller, Tomasz Nowicki and Sebastian van Strien, {\em Wild Cantor attractors exist}. 
Ann. of Math. (2) 143 (1996), no. 1, 97--130.

\bibitem[BSvS]{BSvS} 
Henk Bruin, Weixiao Shen, Sebastian van Strien, {\em Existence of unique SRB-measures is typical for real unicritical polynomial families}. Ann. Sci. l'\'{E}cole Norm. Sup. (4) \textbf{39} (2006), no. 3, 381--414.

\bibitem[BvS]{BvS}
Henk Bruin, Sebastian van Strien, {\em  Monotonicity of entropy for real multimodal maps}.
J. Amer. Math. Soc. \textbf{28} (2015), no. 1, 1--61.

\bibitem[Cl]{C} Trevor Clark, {\em Regular or stochastic dynamics in families of higher-degree unimodal maps}. Ergodic Theory Dyn. Sys. {\bf 34} (2014), no. 5, 1538--1566.

\bibitem[CvS]{CvSlong} Trevor Clark, Sebastian van Strien, \emph{Quasisymmetric rigidity in one-dimensional dynamics}. \url{https://arxiv.org/abs/1805.09284}.

\bibitem[CvS2]{CvS2} Trevor Clark, Sebastian van Strien, \emph{Conjugacy classes of real analytic one-dimensional maps are analytic connected manifolds}, manuscript.

\bibitem[CvST]{CvST}
Trevor Clark, Sebastian van Strien, Sofia Trejo,
\emph{Complex Bounds for Real Maps}.
Comm. Math. Phys. \textbf{355} (2017), 1001--1119.

\bibitem[CT]{CT} Trevor Clark, Sofia Trejo, {\em The boundary of chaos for interval mappings}. Proc. Lond. Math. Soc. (3) {\bf 121} (2020), no. 6, 1427--1467.

\bibitem[CdFvS]{CdFvS} Trevor Clark, Edson de Faria, Sebastian van Strien, {\em Asymptotically holomorphic methods for infinitely renormalizable $C^r$ unimodal maps}. 
\url{https://arxiv.org/pdf/1804.06122.pdf}

\bibitem[dMP]{dMP} Laura DeMarco, Kevin Pilgrim,
{\em The classification of polynomial basins of infinity}. 
Ann. Sci. {\'E}c. Norm. Sup{\'e}r. (4) {\bf 50} (2017), no. 4, 799--877. 

\bibitem[Dob]{Dob}
Neil Dobbs, \emph{Nice sets and invariant densities in complex dynamics}. Math. Proc. Camb. Phil. Soc. (1) \textbf{150} (2011), 157--165. 

\bibitem[Do1]{Do}
Adrien Douady,
\emph{Descriptions of compact sets in $\C$}.
In: Topological methods in modern mathematics (Stony Brook, NY, 1991), Publish or Perish, Houston, TX, 1993, 429--465.

\bibitem[Do2]{Do2}
Adrien Douady, 
\emph{Topological entropy of unimodal maps: monotonicity for quadratic polynomials,} Real and complex dynamical systems (Hiller\o d, 1993) NATO Adv. Sci. Inst. Ser. C Math. Phys. Sci., vol. 464, Kluwer Acad. Publ., Dordrecht (1995), 65--87.


\bibitem[DH1]{PolyLike} Adrien Douady, John Hubbard, \emph{On the dynamics of polynomial-like mappings}. 
 Ann. Sci. \'Ecole. Norm. Sup\'er. 4e series \textbf{18} (1985), 287--343.

\bibitem[DH2]{DHThurston}
Adrien Douady, John Hubbard, \emph{A proof of Thurston's topological characterization of rational functions}. Acta Math., \textbf{171} (1993), no. 2, 263--297.


\bibitem[Dr]{Dr} Kostiantyn Drach, \emph{Rigidity of rational maps: an axiomatic approach}, manuscript in preparation.

\bibitem[DMRS]{DMRS} Kostiantyn Drach, Yauhen Mikulich, Johannes R\"uckert, Dierk Schleicher, \emph{A combinatorial classification of postcritically fixed Newton maps}. Ergod. Theor. Dyn. Syst. \textbf{39} (2019), no. 11, 2983--3014.

\bibitem[DLSS]{DLSS} Kostiantyn Drach, Russell Lodge, Dierk Schleicher, Maik Sowinski, \emph{Puzzles and the Fatou--Shishikura injection for rational Newton maps}, Trans. Amer. Math. Soc. \textbf{374} (2021), no. 4, 2753--2784.

\bibitem[DS]{DS} 
Kostiantyn Drach, Dierk Schleicher, \emph{Rigidity of Newton dynamics}, submitted, \url{https://arxiv.org/abs/1812.11919}.

\bibitem[GS]{GS} 
Jacek Graczyk, Grzegorz \'Swiatek, \emph{Generic hyperbolicity in the logistic family}. Ann. Math. \textbf{146} (1997), 1--52.

\bibitem[GJ]{GJ}
John Guckenheimer, Stewart Johnson, \emph{Distortion of S-unimodal maps}. Ann. of Math. \textbf{132} (1990), 71--130.

\bibitem[HK]{HK}
Juha Heinonen, Pekka Koskela, \emph{Definitions of quasiconformality}. Invent. Math. \textbf{120} (1995), 61--79.

\bibitem[Hub1]{HY} John Hubbard, \emph{Local connectivity of Julia sets and bifurcation loci: three theorems of J.-C.~Yoccoz}. In: \emph{Topological Methods in Modern Mathematics}. Publish or Perish, Houston (1992), 467--511.

\bibitem[Hub2]{HubBook1} John Hubbard, \emph{Teichm\"uller theory and applications to geometry, topology, and dynamics}, volume 1. Matrix Editions, Ithaca, 2006.

\bibitem[Hub3]{HubBook2} John Hubbard, \emph{Teichm\"uller theory and applications to geometry, topology, and dynamics}, volume 2. Matrix Editions, Ithaca, 2016.

\bibitem[Jak1]{Jak69}
Michael Jakobson, \emph{On the question of classification of polynomial endomorphisms of the plane}. (Russian) Mat. Sb. (N.S.) \textbf{80(122)} (1969), 365--387.

\bibitem[Jak2]{J}
Michael Jakobson, \emph{Absolutely continuous invariant measures for one-parameter families of one-dimensional maps}. Commun. Math. Phys. \textbf{81} (1981), 39--88.

\bibitem[JS]{JS}
Michael Jakobson, Grzegorz \'Swi\c atek, \emph{Metric properties of non-renormalizable S-unimodal maps. Part I: Induced expansion and invariant measures}.  Ergod. Theor. Dyn. Syst. \textbf{14} (1994), 721--755. 

\bibitem[Ji]{Ji} Yunping Jiang,  \emph{Renormalization and geometry in one-dimensional and complex dynamics}. 
Advanced Series in Nonlinear Dynamics, 10. World Scientific Publishing Co., Inc., River Edge, NJ, 1996. xvi+309.

\bibitem[Ka]{Ka} Jeremy Kahn, \emph{Holomorphic Removability of Julia Sets}. \url{https://arxiv.org/abs/math/9812164}.

\bibitem[KL1]{KL09} Jeremy Kahn, Mikhail Lyubich, \emph{The quasi-additivity law in conformal geometry}. Ann. Math. \textbf{169} (2009), no. 2, 561--593. 

\bibitem[KL2]{KLUnicr} Jeremy Kahn, Mikhail Lyubich, \emph{Local connectivity of Julia sets for unicritical polynomials}. Ann. Math. \textbf{170} (2009), no. 1, 413--426.

\bibitem[Ko1]{Kozlovski-thesis}  Oleg Kozlovski, {\em Axiom A maps are dense in the space of unimodal maps in the $C^k$ topology}. Ann. Math. \textbf{157} (2003), no. 1, 1--43.

\bibitem[Ko2]{Kozlovski-entropy}  Oleg Kozlovski, {\em On the structure of isentropes of real polynomials}. 
J. Lond. Math. Soc. (2) \textbf{100} (2019), no. 1, 159--182. 

\bibitem[KSS1]{KSS} Oleg Kozlovski, Weixiao Shen, Sebastian van Strien, \emph{Rigidity for real polynomials}. Ann. Math., \textbf{165} (2007), 749--841.

\bibitem[KSS2]{KSS-density} Oleg Kozlovski, Weixiao Shen, Sebastian van Strien, {\em Density of hyperbolicity in dimension one}. Ann. Math. \textbf{166} (2007), 145--182.

\bibitem[KvS]{KvS} Oleg Kozlovski, Sebastian van Strien, \emph{Local connectivity and quasi-conformal rigidity of non-renormalizable polynomials}. {Proc. London Math. Soc.} (3) \textbf{99} (2009), 275--296.

\bibitem[LvS1]{LvS-lc}
Genadi Levin, Sebastian van Strien, \emph{Local connectivity of the Julia set of real polynomials}.
Ann. Math. \textbf{147} (1998), 471--541.


\bibitem[LvS2]{LvSBox}
Genadi Levin, Sebastian van Strien, \emph{Bounds for maps of an interval with one critical point of inflection type. II}. Invent. Math. \textbf{141} (2000), 399--465.

\bibitem[LvS3]{LvS2} 
Genadi Levin, Sebastian van Strien, \emph{Total disconnectedness of Julia sets and absence of invariant linefields for real polynomials}. in G\'eom\'etrie complexe et syst{\`e}mes dynamiques---Colloque en l'honneur d'Adrien Douady Orsay, 1995, Ast\'erisque, no. \textbf{261} (2000), pp. 161--172. 

\bibitem[Lyu1]{Lyubich - measure}
Mikhail Lyubich, {\em On the Lebesgue measure of the Julia set of a quadratic polynomial}, Preprint IMS at Stony Brook, 1991. \url{https://arxiv.org/abs/math/9201285}  

\bibitem[Lyu2]{Lyu} Mikhail Lyubich, \emph{Dynamics of quadratic polynomials. I, II}. Acta Math. \textbf{178} (1997), no. 2, 185--297.

\bibitem[Lyu3]{Lyu2}
Mikhail Lyubich, \emph{Feigenbaum-Coullet-Tresser universality and Milnor’s Hairiness Conjecture}.
Ann. Math. \textbf{149} (1999), 319--420.

\bibitem[Lyu4]{Lyu3} 
Mikhail Lyubich,
\emph{Dynamics of quadratic polynomials III}. In: G\'eom\'etrie complexe et syst{\`e}mes dynamiques---Colloque en l'honneur d'Adrien Douady Orsay, 1995, Ast\'erisque, \textbf{261} (2000), pp. 173--200.

\bibitem[Lyu5]{Lyu4}
Mikhail Lyubich, \emph{Almost every real quadratic map is either regular or stochastic}.
Ann. Math. \textbf{156} (2002), 1--78.

\bibitem[Lyu6]{Lyu-Fib}
Mikhail Lyubich, \emph{Teichm\"uller Space of Fibonacci Maps}. In: Pacifico M., Guarino P. (eds) New Trends in One-Dimensional Dynamics. Springer Proceedings in Mathematics \& Statistics, vol 285. Springer, Cham (2019) 221--237.

\bibitem[Lyu7]{LyuBook} Mikhail Lyubich, \emph{Conformal Geometry and Dynamics of Quadratic Polynomials, vol I--II}, book in preparation, \url{http://www.math.stonybrook.edu/~mlyubich/book.pdf}.

\bibitem[LY]{LY}
Mikhail Lyubich, Michael Yampolsky,
\emph{Dynamics of quadratic polynomials: Complex bounds for real maps}. Ann. Inst. Fourier (Grenoble) \textbf{47} (1997),  no. 4, 1219--1255.

\bibitem[LM]{LM}
Mikhail Lyubich, John Milnor,
\emph{The Fibonacci unimodal map}.
J. Amer. Math. Soc. \textbf{6} (1993), no. 2, 425--457.

\bibitem[Man]{Man}
Richardo Ma\~n\'e, \emph{On a theorem of Fatou}. Bol. Soc. Brasil. Mat. (N.S.), {\bf 24} (1993), no. 1, 1--11.

\bibitem[Mar]{M}
Marco Martens, \emph{Distortion Results and Invariant Cantor sets for Unimodal maps}. Ergod. Th. Dynam. Sys. \textbf{14} (1994), 331--349.

\bibitem[MMvS]{MMvS}
Marco Martens, Welington de Melo, Sebastian van Strien, \emph{Julia-Fatou-Sullivan theory for real one-dimensional dynamics}.  Acta Math. 168 (1992), no. 3-4, 273--318. 
 
\bibitem[McM]{McM}
Curtis McMullen, \emph{Complex dynamics and renormalization}. {Annals of Mathematics Studies}, 135, Princeton University Press, Princeton, NJ, 1994.

\bibitem[McM2]{McM2} Curtis McMullen, \emph{Renormalization and 3-manifolds which fiber over the circle}. {Annals of Mathematics Studies}, 142, Princeton University Press, Princeton, NJ, 1996.

\bibitem[dMvS]{RealBook}
Welington de Melo, Sebastian van Strien, \emph{One-dimensional dynamics}. Springer-Verlag, New York, 1993.

\bibitem[Mil1]{Mil} John Milnor, 
\emph{Local connectivity of Julia sets: expository lectures}. The Mandelbrot set, theme and variations, 67--116, 
London Math. Soc. Lecture Note Ser., 274, Cambridge Univ. Press, Cambridge, 2000. 

\bibitem[Mil2]{MiIntro}
John Milnor,
\emph{Dynamics in one complex variable}, third edition. Annals of Mathematics Studies, 160, Princeton University Press, Princeton, NJ, 2006. 

\bibitem[Mil3]{milnor-lattes}
John Milnor, \emph{On Latt\`es maps}. In: Hjorth, P., Petersen, C. L., Dynamics on the Riemann
Sphere. A Bodil Branner Festschrift, Eur. Math. Soc., Z\"urich (2006) 9--43.

\bibitem[Mil4]{milnor-non-locally} 
John Milnor, {\em Non locally-connected Julia sets constructed by iterated turning}.
Birthday
lecture for {\lq}Douady 70{\rq}. Revised May 26, 2006, SUNY, 28 pp, 
\url{https://www.math.stonybrook.edu/~jack/tune-b.pdf}

\bibitem[MTh]{milnor-thurston}  John Milnor, William Thurston, {\em On iterated maps of the interval}, Dynamical systems (College Park, MD, 1986), Lecture Notes in Math., vol. 1342, Springer, Berlin, 1988, pp. 465--563.

\bibitem[MTr]{milnor-tresser} 
John Milnor, Charles Tresser, {\em On entropy and monotonicity for real cubic maps}. Comm. Math. Phys. \textbf{209} (2000), no. 1, 123--178. With an appendix by Adrien Douady and Pierrette Sentenac.

\bibitem[NvS]{NvS} Tomasz Nowicki, Sebastian van Strien, \emph{ Invariant measures exist under a summability condition for unimodal maps}. Invent. Math. \textbf{105} (1991), no. 1, 123--136.

\bibitem[PR]{PR}
Carsten Petersen, Pascale Roesch, \emph{Parabolic tools}. J. Difference Eq. Appl., \textbf{16} (2010), no. 5-6, 715--738.

\bibitem[Prz]{Pr} Feliks Przytycki, \emph{Interations of Holomorphic Collet-Eckmann Maps: Conformal and Invariant Measures. Appendix: On Non-Renormalizable Quadratic Polynomials}. Trans. Amer. Math. Soc., \textbf{350} (1998), no. 2, 717--742.

\bibitem[PRL1]{PRL07}
Feliks Przytycki, Juan Rivera-Letelier, \emph{Statistical properties of topological Collet--Eckmann maps}. Ann. Sci. \'Ecole Norm. Sup. \textbf{40} (2007), no. 1, 135--178.

\bibitem[PRL2]{PRL} 
Feliks Przytycki, Juan Rivera-Letelier, \emph{Nice inducing schemes and the thermodynamics of rational maps}. Commun. Math. Phys. \textbf{301} (2011), 661--707.

\bibitem[PZ]{PZ} Feliks Przytycki, Anna Zdunik, {\em On Hausdorff dimension of polynomial not totally disconnected Julia sets}.
\url{https://arxiv.org/abs/2003.12612}.

\bibitem[RvS1]{RvS1}
Lasse Rempe-Gillen, Sebastian van Strien, {\em Absence of line fields and Ma\~n\'e's theorem for nonrecurrent transcendental functions.} Trans. Amer. Math. Soc. 363 (2011), no. 1, 203–228. 

\bibitem[RvS2]{RvS2}
Lasse Rempe-Gillen, Sebastian van Strien,  {\em Density of hyperbolicity for classes of real transcendental entire functions and circle maps}. Duke Math. J. {\bf 164} (2015), no. 6, 1079--1137. 

\bibitem[Ri]{Ri}
Seppo Rickman, \emph{Removability theorems for quasiconformal mappings}. Ann. Ac. Scient. Fenn. \textbf{449} (1969), 1--8.

\bibitem[R-L]{RL}
Juan Rivera-Letelier, \emph{A connecting lemma for rational maps satisfying a no-growth condition}. Ergodic Theory and Dynamical Systems, \textbf{27} (2007), no. 2, 595--636.

\bibitem[RLS]{RLS}
Juan Rivera-Letelier, Weixiao Shen, \emph{Statistical properties of one-dimensional maps under weak hyperbolicity assumptions}. Ann. Sci. \'Ecole Norm. Sup. \textbf{47} (2014), no. 6, 1027--1083.

\bibitem[Roe]{Roe}
Pascale Roesch, \emph{On local connectivity for the Julia set of rational maps: Newton's famous example}. Ann. Math. \textbf{168} (2008), 1--48.

\bibitem[RY]{RY08}
Pascale Roesch, Yongcheng Yin, \emph{The boundary of bounded polynomial Fatou components}. C. R. Math. Acad. Sci. Paris 346 (2008), no. 15-16, 877--880 (\url{https://arxiv.org/abs/0909.4598v2}).

\bibitem[Sch1]{Fibers}
Dierk Schleicher, \emph{On fibers and local connectivity of compact sets in $\mathbb C$}. Stony Brook IMS preprint \#1998/12.

\bibitem[Sch2]{Fibers2}
Dierk Schleicher, \emph{On fibers and local connectivity of Mandelbrot and Multibrot sets}. 
In: M. Lapidus, M. van Frankenhuysen (eds): \emph{Fractal Geometry and Applications: A Jubilee of Benoit Mandelbrot}. Proceedings of Symposia in Pure Mathematics \textbf{72}, American Mathematical Society (2004), 477--507.

\bibitem[Sch3]{SApp}
Dierk Schleicher, appendix to William Thurston, \emph{On the geometry and dynamics of iterated rational maps}. Edited by Dierk Schleicher and Nikita Selinger. In D. Schleicher, editor, \emph{Complex dynamics: families and friends}, A K Peters, Wellesley, MA, 2009; 3--137.

\bibitem[She1]{Shen}
Weixiao Shen, \emph{On the measurable dynamics of real rational functions}. Ergod. Theory Dyn. Syst. \textbf{23} (2003), 957--983.

\bibitem[She2]{Sh}
Weixiao Shen, \emph{On the metric properties of multimodal interval maps and $C^2$ density of Axiom A}. Invent. Math. \textbf{156} (2004), 310--401.

\bibitem[Shi]{S} Mitsuhiro Shishikura, \emph{Unpublished.}

\bibitem[ST]{ST}
Mitsuhiro Shishikura, Tan Lei,  {\em An alternative proof of Ma\~n\'e's theorem on non-expanding Julia sets}. The Mandelbrot set, theme and variations, 265--279, London Math. Soc. Lecture Note Ser., 274, Cambridge Univ. Press, Cambridge, 2000.

\bibitem[Sm1]{Sm-cb}
Daniel Smania, \emph{Complex bounds for multimodal maps: bounded combinatorics}. 
Nonlinearity \textbf{14} (2001), no. 5, 1311--1330.

\bibitem[Sm2]{Sm-universality} Daniel Smania, {\em Phase space universality for multimodal maps}. Bull. Braz. Math. Soc. \textbf{36} (2005), no. 2, 225--274.

\bibitem[Sm3]{Sm}
Daniel Smania, \emph{Puzzle geometry and rigidity: the Fibonacci cycle is hyperbolic}. J. Amer. Math. Soc. \textbf{20} (2007), no. 3, 629--673.


\bibitem[vS]{vS}  Sebastian van Strien, {\em Real bounds in complex dynamics}. Real and complex dynamical systems (Hillerød, 1993), 211--229, NATO Adv. Sci. Inst. Ser. C: Math. Phys. Sci., 464, Kluwer Acad. Publ., Dordrecht, 1995.
 
\bibitem[vSV]{vSV} Sebastian van Strien, Edson Vargas, 
{\em Real bounds, ergodicity and negative Schwarzian for multimodal maps}. J. Amer. Math. Soc. \textbf{17} (2004), no. 4, 749--782. 

\bibitem[Su]{Sullivan} Dennis Sullivan, {\em Bounds, quadratic differentials and renormalization conjectures}, MS Centennial Publications, 2: Mathematics into Twenty-first Century, 1992.

\bibitem[Ts]{Tsuji} Masato Tsujii, {\em A simple proof for monotonicity of entropy in the quadratic family}. Ergod. Theory Dyn. Syst. \textbf{20} (2000), no. 3, 925--933.

\bibitem[QY]{QY}
Weiyuan Qiu, Yongcheng Yin, \emph{Proof of the Branner--Hubbard conjecture on Cantor Julia sets}. Sci. China Ser. A \textbf{52} (2009), 45--65.

\bibitem[QWY]{QWY}
Weiyuan Qiu, Xiaoguang Wang, Yongcheng Yin, \emph{Dynamics of McMullen maps}. Adv. Math. \textbf{229} (2012), 2525--2577.

\bibitem[WYZ]{WYZ}
Xiaoguang Wang, Yongcheng Yin, Jinsong Zeng, \emph{Dynamics of Newton maps}. \url{https://arxiv.org/abs/1805.11478}. 

\bibitem[Yan]{Jonguk}
Jonguk Yang, \emph{Local connectivity of polynomial Julia sets at bounded type Siegel boundaries}. \url{https://arxiv.org/abs/2010.14003}.

\bibitem[Yar]{Yar}
Brian Yarrington, \emph{Local connectivity and Lebesgue measure of polynomial Julia sets}. Thesis (Ph.D.)--State University of New York at Stony Brook (1995) 94 pp.

\bibitem[YZ]{YZ}
Yongcheng Yin, Yu Zhai, \emph{No invariant line fields on Cantor Julia sets}. Forum Math. \textbf{22} (2010), 75--94.

\bibitem[Zh]{Z}
Yu Zhai, \emph{On the ergodicity of conformal measures for rational maps with totally disconnected Julia sets}. Proc. Am. Math. Soc. \textbf{140} (2012), no. 10, 3453--3462.

\end{thebibliography}
\end{document}